\documentclass{amsart}

\IfFileExists{srcltx.sty}{\usepackage{srcltx}}

\addtolength{\oddsidemargin}{-.4in}
\addtolength{\evensidemargin}{-.4in}
\addtolength{\textwidth}{1 in} \addtolength{\textheight}{.4in}

\DeclareMathSizes{10}{8}{5}{4}

\usepackage{tikz}
\usetikzlibrary{arrows,matrix}

\IfFileExists{srcltx.sty}{\usepackage{srcltx}}

\numberwithin{equation}{section}

\usepackage[latin1]{inputenc}
\usepackage{xspace,amssymb,amsfonts,euscript}
\usepackage{amsthm,amsmath}
\usepackage{euscript}
\input xy \xyoption {all}

\RequirePackage{color}
\definecolor{myred}{rgb}{0.75,0,0}
\definecolor{mygreen}{rgb}{0,0.5,0}
\definecolor{myblue}{rgb}{0,0,0.65}

\usepackage{caption}
\usepackage{subcaption}

\RequirePackage{ifpdf}
\ifpdf
  \IfFileExists{pdfsync.sty}{\RequirePackage{pdfsync}}{}
  \RequirePackage[pdftex,
   colorlinks = true,
   urlcolor = myblue, 
   citecolor = mygreen, 
   linkcolor = myred, 
   pagebackref,
   bookmarksopen=true]{hyperref}
\else
  \RequirePackage[hypertex]{hyperref}
\fi

\RequirePackage{ae, aecompl, aeguill} 


  \def\ag{{\mathfrak a}}  
  \def\bg{{\mathfrak b}}  
  \def\cg{{\mathfrak c}}  \def\CM{{\mathbb{C}}}

  \def\gg{{\mathfrak g}}  
  \def\hg{{\mathfrak h}}

  \def\lg{{\mathfrak l}}  
  \def\mg{{\mathfrak m}}  
  \def\ng{{\mathfrak n}}  
    
    \def\PM{{\mathbb{P}}}
  \def\qg{{\mathfrak q}}  \def\QM{{\mathbb{Q}}}
    
\def\SG{{\mathfrak S}}  \def\sg{{\mathfrak s}}

  \def\zg{{\mathfrak z}}  
\def\0{{\mathcal O}}

    \def\BC{{\mathcal{B}}}
    \def\CC{{\mathcal{C}}}
    
    \def\EC{{\mathcal{E}}}

    \def\JC{{\mathcal{J}}}

    \def\NC{{\mathcal{N}}}
    \def\OC{{\mathcal{O}}}

    \def\SC{{\mathcal{S}}}

    \def\YC{{\mathcal{Y}}}
    
\def\MATHFRAK{\mathfrak}
\def\cit{\mathbb C}
\def\P{{\mathcal P}}
\def\qit{\mathbb Q}
\def\pit{\mathbb P}

\DeclareMathOperator{\diag}{diag}

\def\SL{\rm{SL}}
\def\Sp{\rm{Sp}}
\def\Frac{\rm{Frac}}
\DeclareMathOperator{\Int}{Int}
\def\GL{\rm{GL}}

\def\slice{\SC_{\0,e}}

\newcommand{\nc}{\newcommand} \newcommand{\renc}{\renewcommand}

\newcommand{\rdots}{\mathinner{ \mkern1mu\raise1pt\hbox{.}
    \mkern2mu\raise4pt\hbox{.}
    \mkern2mu\raise7pt\vbox{\kern7pt\hbox{.}}\mkern1mu}}

\DeclareMathOperator{\Ad}{Ad} \DeclareMathOperator{\ad}{ad}
\DeclareMathOperator{\Tr}{Tr} \DeclareMathOperator{\Irr}{Irr}

\DeclareMathOperator{\Res}{Res}

\def\ov{\overline}

\def\to{\rightarrow}

\nc{\triright}{\stackrel{[1]}{\to}}

\nc{\Br}{\mathcal{B}}
\nc{\HotRR}{{}_R\mathcal{K}_R}
\nc{\HotR}{\mathcal{K}_R}
\nc{\excise}[1]{}
\nc{\defect}{\text{df}}
\nc{\h}[1]{\underline{H}_{#1}}
\nc{\cl}{\ov\OC}

\nc{\Ga}{\mathbb{G}_a} 
\nc{\Gm}{\mathbb{G}_m} 

\nc{\Perv}{{\mathbf{P}}}

\nc{\IH}{{\mathrm{IH}}}

\nc{\ic}{\mathbf{IC}}

\nc{\gl}{{\mathfrak{gl}}}
\renc{\sl}{{\mathfrak{sl}}}
\renc{\sp}{{\mathfrak{sp}}}
\nc{\so}{{\mathfrak{so}}}

\def\SG{{\mathfrak S}}

\def\Res{\mathop{\rm Res}\nolimits}
\def\Pic{\mathop{\rm Pic}\nolimits}
\def\Lie{\mathop{\rm Lie}\nolimits}

\def\codim{\mathop{\rm codim}\nolimits}
\def\Sing{\mathop{\rm Sing}\nolimits}

\DeclareMathOperator{\im}{{\mathrm{Im}}}


\newtheorem{theorem}{Theorem}[section]
\newtheorem{lemma}[theorem]{Lemma}
\newtheorem{proposition}[theorem]{Proposition}
\newtheorem{corollary}[theorem]{Corollary}

\theoremstyle{definition}

\theoremstyle{remark}
\newtheorem{remark}[theorem]{Remark}

\newtheorem{example}[theorem]{Example}

\DeclareMathOperator{\Aut}{Aut}

\DeclareMathOperator{\Spec}{Spec}

\title[Generic singularities of nilpotent orbit closures]{Generic singularities of nilpotent orbit closures}


\author{Baohua Fu}
\address{Hua Loo-Keng Key Laboratory  of  Mathematics  and  AMSS, Chinese Academy of Sciences,
55 ZhongGuanCun East Road, Beijing, 100190, P. R. China}
\email{bhfu@math.ac.cn}

\author{Daniel Juteau}
\address{LMNO, Universite de Caen Basse-Normandie, CNRS, BP 5186, 14032 
Caen Cedex, France}
\email{daniel.juteau@unicaen.fr}

\author{Paul Levy}
\address{Department of Mathematics and Statistics
Fylde College,
Lancaster University
Lancaster
LA1 4YF, United Kingdom}
\email{p.d.levy@lancaster.ac.uk}

\author{Eric Sommers}
\address{Department of Mathematics and Statistics, University of Massachusetts, Amherst,
MA 01003-4515, USA}
\email{esommers@math.umass.edu}

\begin{document}

\begin{abstract}
According to a theorem of Brieskorn and Slodowy, the
intersection of the nilpotent cone of a simple Lie algebra with a
transverse slice to the subregular nilpotent orbit is a simple surface
singularity.  At the opposite extremity of the poset of nilpotent orbits, 
the closure of the minimal
nilpotent orbit is also an isolated symplectic singularity, called a
minimal singularity.  For classical Lie algebras, Kraft and Procesi
showed that these two types of singularities suffice to describe all
generic singularities of nilpotent orbit closures: specifically, any
such singularity is either a simple surface singularity, a minimal
singularity, or a union of two simple surface singularities of type
$A_{2k-1}$.  In the present paper, we complete the picture by
determining the generic singularities of all nilpotent orbit closures
in {\it exceptional} Lie algebras (up to normalization in a few
cases).  We summarize the results in some graphs at the end of the paper.

In most cases, we also obtain simple surface singularities or minimal singularities, though
often with more complicated branching than occurs in
the classical types.  
There are, however, six singularities that do 
not occur in the classical types.  Three of these are unibranch non-normal singularities:
an $\SL_2(\CM)$-variety whose normalization is ${\mathbb A}^2$,   
an $\Sp_4(\CM)$-variety whose normalization is ${\mathbb A}^4$,   
and a two-dimensional variety whose normalization is the simple surface singularity $A_3$.
In addition, there are three 4-dimensional isolated singularities each appearing once.   
We also study an intrinsic symmetry action on the singularities, 
extending Slodowy's work for the singularity of the nilpotent cone at a point in the subregular orbit.
\end{abstract}

\maketitle

\section{Introduction}

\subsection{Generic singularities of nilpotent orbit closures} \label{first part}

Let $G$ be a connected, simple algebraic group of adjoint type
over the complex numbers $\cit$, with Lie algebra $\gg$.
A nilpotent orbit $\OC$ in $\gg$ is the orbit of a nilpotent element
under the adjoint action of $G$.  
Its closure $\cl$ is a union of finitely many nilpotent orbits.
The partial order on nilpotent orbits is defined to be the closure ordering.

We are interested in the singularities of $\cl$ at points of maximal orbits of its singular locus.
Such singularities are known as the {\it generic singularities} of $\cl$.
Kraft and Procesi determined the generic singularities in the classical types, 
while Brieskorn and Slodowy determined the generic
singularities of the whole nilpotent cone $\NC$ for $\gg$ of any type.
The goal of this paper is to determine the generic singularities of $\cl$
when $\gg$ is of exceptional type.  

In fact, the singular locus of $\cl$ coincides with the boundary of $\OC$ in $\cl$, 
as was shown by Namikawa using results of Kaledin \cite{Nam3}, \cite{Kaledin}.  This result also 
follows from the main theorem in this paper in the exceptional types and from 
Kraft and Procesi's work in the classical types \cite{Kraft-Procesi:GLn}, \cite{Kraft-Procesi:classical}.
Therefore to study generic singularities of $\cl$, 
it suffices to consider each maximal orbit $\0'$ in the boundary of $\OC$ in $\cl$.
We call such an $\0'$ a {\em minimal degeneration} of $\OC$.

The local geometry of $\cl$ at a point $e\in\0'$ 
is determined by the intersection of $\cl$ with a
transverse slice in $\gg$ to $\0'$ at $e$.   
Such a transverse slice in $\gg$ always exists and 
is provided by the affine space $\SC_{e} = e + \gg^f$, known as the Slodowy slice.  
Here, $e$ and $f$ are the nilpotent parts of an $\sl_2$-triple and $\gg^f$ is the centralizer of $f$ in $\gg$.
The local geometry of $\cl$ at a point $e$ is therefore encoded in 
$\SC_{\0, e} = \cl \cap \SC_e$, which we call a nilpotent Slodowy slice.
If $\0'$ is a minimal degeneration of $\0$, then
$\SC_{\0, e}$ has an isolated singularity at $e$.  The generic singularities
of $\cl$ can therefore be determined by studying the various 
$\SC_{\0, e}$, as $\0'$ runs over all minimal degenerations and $e \in \0'$.
The isomorphism type of the variety $\SC_{\0, e}$ is independent of the choice of $e$.

The main theorem of this paper is a classification of $\SC_{\0, e}$ up to algebraic isomorphism for
each minimal degeneration $\0'$ of $\0$ in the exceptional types.  
In a few cases, however, we are only able to determine 
the normalization of $\SC_{\0, e}$ and in a few others, we have determined 
$\SC_{\0, e}$ only up to local analytic isomorphism.

\subsection{Symplectic varieties}\label{Gorenstein}\label{Symplectic_singularity}
Recall from \cite{Beauville} that a symplectic variety
is a normal
variety $W$ with a holomorphic symplectic form $\omega$ on its smooth
locus such that for any resolution $\pi: Z \to W$, the pull-back
$\pi^* \omega$ extends to a regular 2-form on $Z$. If this $2$-form is symplectic
(i.e. if it is non-degenerate everywhere), then $\pi$ is called a
symplectic resolution. By a result of
Namikawa \cite{Namikawa}, a normal variety is symplectic if and only
if its singularities are rational Gorenstein and its smooth part
carries a holomorphic symplectic form.

The normalization of a nilpotent orbit $\cl$ is a symplectic variety:
it is well-known that $\OC$ admits a holomorphic non-degenerate closed 2-form 
(see \cite[Ch.\,1.4]{C-M}) and by work of Hinich \cite{Hinich} and Panyushev \cite{Panyushev}, 
the normalization of $\cl$ has only rational Gorenstein singularities.
Hence the normalization of $\cl$ is a symplectic variety.

Since the normalization of $\cl$ has rational Gorenstein singularities,  
the normalization $\widetilde{\SC}_{\0, e}$  of $\SC_{\0,e}$ also has 
rational Gorenstein singularities.
The smooth locus of $\SC_{\0,e}$ admits a symplectic form  by
restriction of the symplectic form on $\0$ (\cite[Corollary 7.2]{Gan-Ginzburg}),
and this yields a symplectic form on the smooth locus of $\widetilde{\SC}_{\0, e}$ since $\slice$ is smooth in codimension one.
Thus by the aforementioned result of Namikawa, 
$\widetilde{\SC}_{\0, e}$ is also a symplectic variety.

The term {\it symplectic singularity} refers to a singularity of a symplectic variety.
A better understanding of isolated symplectic singularities could shed light on the
long-standing conjecture (e.g. \cite{LeBrun}) that a Fano contact manifold is
homogeneous.  The importance of finding new examples of isolated symplectic singularities was stressed in \cite{Beauville}.
It is therefore of interest to determine generic singularities of nilpotent orbits, as a means to find new examples of
isolated symplectic singularities.   Our study of the isolated symplectic singularity $\widetilde{\SC}_{\0, e}$ contributes to this program.

\subsection{Motivation from representation theory}

The topology and geometry of the nilpotent cone $\NC$ have played an important role in representation theory
centered around Springer's construction of Weyl group representations and the resulting Springer correspondence 
(e.g., \cite{Springer:trig},  \cite{Lusztig:Green_sing}, \cite{Borho-MacPherson:homologie}, \cite{Joseph}, \cite{Lusztig:Green_character}). 
The second author of the present paper defined a modular
version of Springer's correspondence \cite{juteau} 
to the effect that the modular representation theory of the Weyl group of $\gg$
is encoded in the geometry of $\NC$.  In particular, its decomposition matrix is a part of the decomposition
matrix for equivariant perverse sheaves on $\NC$.
The connection with decomposition numbers makes it desirable to be able to compute the
stalks of intersection cohomology complexes with modular
coefficients.  In this setting the Lusztig-Shoji algorithm to compute Green functions 
is not available and one has to use other methods, such as Deligne's
construction which is general, but hard to use in practice. To actually
compute modular stalks it is necessary to have a good understanding of
the geometry.  The case of a minimal degeneration is the most tractable.

The decomposition matrices of the exceptional Weyl groups
are known, so here we are not trying to use the geometry to obtain new
information in modular representation theory. However, it is
interesting to see how the reappearance of certain singularities in different nilpotent cones leads to
equalities (or more complicated relationships) between parts of
decomposition matrices. 
In the $GL_n$ case, the row and column removal rule for nilpotent
singularities of \cite{Kraft-Procesi:GLn} gives a geometric
explanation for a similar rule for decomposition matrices of symmetric
groups (\cite{James}, \cite{juteau}).

It would also be interesting to investigate whether the equivalences of
singularities that we obtain in exceptional nilpotent cones have some
significance for studying primitive ideals in finite $W$-algebras (see the survey article \cite{Losev}).

\subsection{Simple surface singularities and their symmetries}

\subsubsection{Simple surface singularities} 

Let $\Gamma$ be a finite subgroup of $\SL_2(\CM) \cong \Sp_2(\CM)$.   Then $\Gamma$ acts on $\CM^2$ and the 
quotient variety $\CM^2/\Gamma$ is an affine symplectic variety with an isolated singularity at the image of $0$.  
This variety is known as a simple surface singularity and also as a rational double point, a du Val singularity, or a Kleinian singularity.
   
Up to conjugacy in $\SL_2(\CM)$, such $\Gamma$ are in bijection with the simply-laced, simple Lie algebras over $\CM$.
The bijection is obtained via the exceptional fiber of a minimal resolution of $\CM^2/\Gamma$.
The exceptional fiber (that is, the inverse image of $0$) is a union of projective lines which intersect transversally.  
The dual graph of the resolution is given by one vertex for each projective line in the exceptional fiber and an edge joining two vertices when the corresponding projective lines intersect.  The dual graph is always a connected, simply-laced Dynkin diagram, which defines the Lie algebra attached
to $\CM^2/\Gamma$.
Hence we denote simple surface singularities using the upper-case letters
$A_k, D_k (k \geq 4), E_6, E_7, E_8$, according to the associated simple Lie algebra. 

In dimension two, an isolated symplectic singularity is equivalent to a simple surface singularity, that is, 
it is locally analytically isomorphic to some $\CM^2/\Gamma$ (cf. \cite[Section 2.1]{Beauville}).
An algebraic version of this result is provided by Proposition \ref{P:2-dim}.
More generally, if $\Gamma \subset {\rm Sp}_{2n}(\CM)$ is a finite subgroup which acts freely on $\CM^{2n} \backslash \{ 0 \}$, 
then the quotient $\CM^{2n}/\Gamma$ is an isolated symplectic singularity.

\subsubsection{Symmetries of simple surface singularities}  \label{surface_symmetry}

Any automorphism of the simple surface singularity $X = \CM^2/\Gamma$ fixes $0 \in X$ 
and induces a permutation of  the projective lines in the exceptional fiber of a minimal resolution.  
Hence it gives rise to a graph automorphism of the dual graph $\Delta$ of $X$.    
Let $\Aut(\Delta)$ be the group of graph automorphisms of $\Delta$. Then $\Aut(\Delta) = 1$ when $\gg$ is $A_1$, $E_7$, or $E_8$;  $\Aut(\Delta) = \SG_3$ when $\gg$ is $D_4$; and otherwise, $\Aut(\Delta) = \SG_2$.

We now address the question of when the action of $\Aut(\Delta)$ on the dual graph comes from an algebraic action on $X$
(cf.\ \cite[III.6]{Slodowy:book}).
When $X$ is of type $A_{2k-1} (k \geq 2)$, $D_{k+1} (k \geq 3)$, or $E_6$,  
then $\Aut(\Delta)$ comes from an algebraic action on $X$.   
In fact, the action is induced from a  subgroup $\Gamma' \subset SL_2(\CM)$ containing $\Gamma$ as a normal subgroup.   More precisely, there exists such a $\Gamma'$ with $\Gamma'/\Gamma \cong \Aut(\Delta)$ and the induced action 
of $\Gamma'/\Gamma$ on the dual graph of $X$ coincides with the action of $\Aut(\Delta)$ on $\Delta$ via this isomorphism.    
Such a $\Gamma'$ is unique.     
The result also holds for any subgroup of $\Aut(\Delta)$, which is relevant only for the $D_4$ case.

Slodowy denotes the pair $(X, K$) consisting of $X$ together with the induced action of $K = \Gamma'/\Gamma$ on $X$ by
\begin{eqnarray*} \label{non_simply_laced_list}
& B_k, & \text{when } X   \text{ is of type } A_{2k-1} \text{ and } K = \SG_2,  \\
& C_k, & \text{when } X  \text{ is of type } D_{k+1} \text{ and } K = \SG_2,  \\
& F_4, & \text{when } X  \text{ is of type } E_{6} \text{ and } K = \SG_2,  \\
& G_2, & \text{when } X  \text{ is of type } D_{4} \text{ and } K = \SG_3.  
\end{eqnarray*}
The reasons for this notation will become clear shortly.  
We also refer to corresponding pairs 
$(\Delta, K)$, where $\Delta$ is the dual graph and $K$ is a subgroup of $\Aut(\Delta)$, in the same way.
The symmetry of the cyclic group of order $3$ when $X$ is of type $D_4$ is not considered.

When $X$ is of type $A_{2k}$, the symmetry of $X$ did not arise in Slodowy's work.  
It does, however, make an appearance in this paper.   
In this case $\Aut(\Delta)= \SG_2$, but the action on the dual graph does not lift to an action on $X$.  
Instead, there is a cyclic group $\langle \sigma \rangle$ of order $4$ acting on $X$, with $\sigma$ acting by non-trivial involution on $\Delta$, but $\sigma^2$ acts non-trivially on $X$.  
This cyclic action is induced from a $\Gamma' \subset \SL_2(\CM)$ corresponding to $D_{2k+3}$.
We define the symmetry of $X$ to be the induced action of $\Gamma'$ on $X$ and denote it by $A^+_{2k}$.
Only the singularities $A_2^+$ and $A_4^+$ will appear in the sequel, and then only when $\gg$ is of type $E_7$ or $E_8$.

\subsection{The regular nilpotent orbit}

\subsubsection{Generic singularities of the nilpotent cone.}

The problem of describing the generic singularities of the nilpotent cone $\NC$ of $\gg$ was carried out by 
Brieskorn \cite{Brieskorn} and Slodowy \cite{Slodowy:book} in confirming a conjecture of
Grothendieck.   In their setting $\0$ is the regular nilpotent orbit and so $\cl$ equals $\NC$, 
and there is only one minimal degeneration, at the subregular nilpotent orbit $\0'$.
Slodowy's result from \cite[IV.8.3]{Slodowy:book} is that when $e \in \0'$, 
the slice $\SC_{\OC,e}$ is algebraically isomorphic to a simple surface singularity. 
Moreover, as in \cite{Brieskorn}, when the Dynkin diagram of $\gg$ is simply-laced, 
the Lie algebra associated to this simple surface singularity is $\gg$.
On the other hand, when $\gg$ is not simply-laced, the singularity $\slice$ is determined from 
the list in \S  \ref{surface_symmetry}.
For example, if $\gg$ is of type $B_k$, then $\SC_{\OC,e}$ is a type $A_{2k-1}$ singularity.   
This explains the notation in the list in \S  \ref{surface_symmetry}.  
Next we explain an intrinsic realization of the symmetry of $\SC_{\OC,e}$ 
when $\gg$ is not simply-laced.

\subsubsection{Intrinsic symmetry action on the slice}\label{symmetry_nsl_case}

Let  $\Delta$ be the Dynkin diagram of $\gg$ and $K \subset \Aut(\Delta)$ be a subgroup.  The group $\Aut(\Delta)$ is trivial 
unless $\gg$ is simply-laced.
The action of $K$ on $\Delta$ can be lifted to an action on $\gg$ as in \cite[Chapter 4.3]{Onishchik-Vinberg:book}:
namely, fix a canonical system of generators of $\gg$.
Then there is a subgroup $\tilde{K} \subset \Aut(\gg)$, isomorphic to $K$, 
which permutes the canonical system of generators, and whose induced action on $\Delta$ coincides with $K$.  
Any two choices of systems of generators define conjugate subgroups of $\Aut(\gg)$.
The automorphisms in $\tilde{K}$ are called {\it diagram automorphisms} of $\gg$.

Now given $\gg$ we can associate a pair $(\gg_s, \tilde{K})$ 
where  $\gg_s$ is a simple, simply-laced Lie algebra with Dynkin diagram $\Delta_s$
and ${\tilde K} \subset \Aut(\gg_s)$ is a lifting of some $K \subset \Aut(\Delta_s)$ 
and $\gg \cong (\gg_s)^{\tilde K}$.
If $\gg$ is already simply-laced, then $\gg = \gg_s$ and $\tilde{K} =1$. 
If $\gg$ is not simply-laced, then the pair $(\Delta_s, K)$
appears in the list in \S  \ref{surface_symmetry}, but according to the type of $^L \gg$, 
where $^L \gg$ is the Langlands dual Lie algebra of $\gg$.

Recall $\sg \cong \sl_2(\CM)$ is the subalgebra of $\gg$ generated by $e$ and $f$.  
Let $C(\sg)$ be the centralizer of $\sg$ in $G$.
Then $C(\sg)$ acts on $\SC_{\0, e}$ for any nilpotent orbit $\0$, fixing the point $e$. 
Also the component group $A(e)$ of the centralizer in $G$ of $e$ is isomorphic to the component group of $C(\sg)$ (see \S \ref{transverse_slices} for more details).
When $e$ is in the subregular nilpotent orbit, 
Slodowy observed that $C(\sg)$ is a semidirect product of its connected component $C(\sg)^\circ$ and a subgroup $H \cong A(e)$.
Moreover $H$ is well-defined up to conjugacy in $C(\sg)$.  
This is immediate except when $\gg$ is of type $B_k$, since otherwise $C(\sg)^\circ$ is trivial. 
Also, $A(e) \cong K$.  In particular, $A(e)$ is trivial if $\gg$ is simply-laced since $G$ is adjoint.

Now let $((^L \Delta)_s, ^L\! K)$ be the pair attached above to $^L \gg$.
We have $A(e) \cong H \cong K \cong \! ^L\!K$.  Then Slodowy's classification and symmetry result can be summarized as follows:
the pair $(\slice, H)$, of $\slice$ together with the action of $H$, corresponds to the pair $((^L \! \Delta)_s, ^L \! K)$
\cite[IV.8.4]{Slodowy:book}.  
\subsection{The other nilpotent orbits in Lie algebras of classical type}
Kraft and Procesi described the generic singularities of nilpotent orbit closures for all the classical groups, up to smooth equivalence (see \S \ref{smooth_equivalence} for the definition of smooth equivalence) \cite{Kraft-Procesi:GLn}, \cite{Kraft-Procesi:classical}. 

\subsubsection{Minimal singularities} \label{def:minimal_singularity}

Let $\OC_{\text{min}}$ be the minimal (non-zero) nilpotent orbit in a simple Lie algebra $\gg$.  
Then $\cl_{\text{min}}$ has an isolated symplectic singularity at $0 \in \cl_{\text{min}}$.    
Following Kraft and Procesi \cite[14.3]{Kraft-Procesi:classical}, 
we refer to $\cl_{\text{min}}$ by the lower case letters for the ambient simple Lie algebra:  
$a_k, b_k, c_k, d_k (k \geq 4), g_2, f_4, e_6, e_7, e_8$.  
The equivalence classes of these singularities, under smooth equivalence, are called {\it minimal singularities}.

\subsubsection{Generic singularities in the classical types}

The results of Kraft and Procesi for Lie algebras of classical type can be summarized as follows:  
an irreducible component of a generic singularity is 
either a simple surface singularity or a minimal singularity, up to smooth equivalence.
Moreover, when a generic singularity is not irreducible, then it is smoothly equivalent to 
a union of two simple surface singularities of type $A_{2k-1}$ meeting transversally in the singular point.  This is denoted $2A_{2k-1}$.
In more detail:

\begin{theorem} \cite{Kraft-Procesi:GLn}, \cite{Kraft-Procesi:classical} \label{classical_results}
Assume $\OC'$ is a minimal degeneration of $\OC$
in a simple complex Lie algebra of classical type.  Let $e \in \OC'$.
Then
\begin{itemize}
\item[(a)] If the codimension of $\OC'$ in $\cl$ is two, then $\SC_{\OC, e}$ is smoothly equivalent to 
a simple surface singularity of type $A_k$, $D_k$, or $2 A_{2k-1}$.
The last two singularities do not occur for $\sl_n(\CM)$, and 
the singularity $A_k$ for $k$
even does not occur in the classical Lie algebras besides $\sl_n(\CM)$.

\smallskip
\item[(b) ] If the codimension is greater than two, then $\SC_{\OC, e}$ is smoothly equivalent to $a_k, b_k, c_k$, or $d_k$.
The last three singularities do not occur for $\sl_n(\CM)$.
\end{itemize}
\end{theorem}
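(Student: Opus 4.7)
The plan is to exploit the combinatorial parametrization of nilpotent orbits in classical types by partitions (of $n$ for $\sl_n$; partitions of $2n+1$ with odd parts of even multiplicity for $\so_{2n+1}$; partitions of $2n$ with the analogous parity constraint for $\so_{2n}$; partitions of $2n$ with even parts of even multiplicity for $\sp_{2n}$) and to reduce the problem to a short list of atomic minimal degenerations described purely in terms of partition moves.

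First I would identify the minimal degenerations combinatorially. The closure order on nilpotent orbits is the dominance order on partitions (subject to the type constraint), so a minimal degeneration $\OC' < \OC$ corresponds to a covering relation in the resulting poset. For $\sl_n$, this forces the partitions to differ in exactly two rows, with one box sliding from row $i$ down to row $j$; in the orthogonal and symplectic types the parity constraint occasionally forces a pair of rows to collapse jointly. The workhorse is then a row/column removal reduction: if $\lambda$ and $\mu$ share a row or column that is unaffected by the degeneration, then the Slodowy slice $\SC_{\OC,e}$ is smoothly equivalent to the corresponding slice in a smaller classical Lie algebra obtained by deleting that row (or column). Iterating this reduction collapses every minimal degeneration to one of a short list of basic configurations whose Young diagrams consist essentially of only the affected rows.

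It then remains to compute each basic singularity directly. For $\sl_n$, the codimension-two case reduces to a $\gl_2$-type computation producing a type $A_k$ singularity, while the deeper degenerations reduce to the minimal orbit closure in a smaller $\sl_m$, i.e.\ to an $a_k$ singularity. In the orthogonal and symplectic types the same strategy yields $b_k$, $c_k$ or $d_k$ in the non-surface case, and $A_k$, $D_k$ or $2A_{2k-1}$ in the surface case; the reducible $2A_{2k-1}$ appears exactly when the parity constraint forces two rows to collapse simultaneously, so that two nearby $G$-orbits meet transversally above a single partition. The refinements (that $A_k$ with $k$ even does not occur outside $\sl_n$ and that $D_k$ and $2A_{2k-1}$ do not occur in $\sl_n$) fall out of direct inspection of the atomic moves.

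The main obstacle, as I see it, is establishing the row/column removal theorem rigorously. It requires constructing, for each partition, an explicit normal form for points of the Slodowy slice that is compatible with the change of classical group, and then verifying that the unaffected row (or column) contributes a smooth factor so that smooth equivalence is preserved under the reduction. Once this reduction is in place, the residual basic-case computations are small matrix exercises and the classification into simple surface singularities or minimal singularities of the listed types follows.
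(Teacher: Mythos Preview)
The paper does not actually prove this theorem: it is stated with citations to Kraft--Procesi \cite{Kraft-Procesi:GLn}, \cite{Kraft-Procesi:classical} and no proof is given, since the present paper's goal is the exceptional types. So there is no ``paper's own proof'' to compare against.

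That said, your outline is essentially the Kraft--Procesi argument. The row/column cancellation you describe is precisely their key reduction (see \cite[\S3]{Kraft-Procesi:GLn} for $\sl_n$ and \cite[\S13]{Kraft-Procesi:classical} for the other classical types), and the remaining atomic cases are exactly the small computations they perform. Your identification of the reducible case $2A_{2k-1}$ as arising from the parity constraint forcing a simultaneous two-row collapse is correct. The one point worth sharpening is that in the orthogonal and symplectic cases Kraft--Procesi do not literally reduce to a smaller classical Lie algebra of the same type by deleting rows; rather they set up a general cancellation theorem for arbitrary symplectic/orthogonal pairs via an auxiliary variety $Z$ and a quotient map, and the smooth-equivalence step requires a nontrivial normality/smoothness argument for $Z$. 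Your phrase ``contributes a smooth factor'' glosses over this; the actual mechanism is more delicate than a product decomposition. But the overall strategy you propose is the right one and matches the cited proofs.
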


\subsection{The case of type $G_2$}  \label{G2_intro}
This case was studied by Levasseur-Smith \cite{Levasseur-Smith} and Kraft \cite{Kraft}.
Let $\0_8$ denote the $\widetilde{A}_1$ orbit and let $\0_6$ denote the minimal orbit.
Kraft showed that the closure of the subregular orbit has $A_1$ singularity along $\0_8$.
Levasseur-Smith showed that $\cl_8$ has non-normal locus equal to 
$\cl_6$ and that the natural map from the closure of the minimal orbit in $\so_7(\CM)$ to 
$\cl_8$ is the normalization map and is bijective. 
From these results it follows that the singularity of $\cl_8$ along $\0_{6}$ is non-normal with smooth normalization.
We describe this singularity in \S \ref{mexample} and show that its normalization is $\CM^2$. 

\subsection{Main results}

We now summarize the main results of the paper describing the classification of generic singularities in the exceptional Lie algebras. 
Here and in the sequel, we may write the degeneration $\0'$ of $\0$ as $(\0, \0')$, that is, with the larger orbit appearing first.
In this subsection $\0'$ is a minimal degeneration of $\0$ and $e \in \0'$. 
  
\subsubsection{Overview}
Most generic singularities are like those in the classical types:  
the irreducible components are either simple surface singularities or minimal 
singularities.   
But some new features occur in the exceptional groups.  
There is more complicated branching and several singularities occur which did not occur in the classical types.
Among the latter are three singularities whose irreducible components are not normal 
(one of these already occurs in $G_2$ as the singularity of $\widetilde{A}_1$ in the minimal orbit), and three additional singularities of dimension four.

A key observation is that all irreducible components of $\SC_{\0, e}$ are mutually isomorphic 
since the action of $C(\sg)$ is transitive on irreducible components  (\S \ref{Component group action}).   
This result is not true in general when $\0'$ is not a minimal degeneration of $\0$.

For most minimal degenerations we determine the isomorphism type of $\SC_{\0, e}$,
a stronger result than classifying the singularity up to smooth equivalence.
In ten of these cases, all in $E_8$ (\S \ref{remaining_surfaces}), 
we can only determine the isomorphism type of $\SC_{\0, e}$ up to normalization.  In 
the remaining four cases, $\SC_{\0, e}$ is determined only up to smooth equivalence (\S \ref{dim4_exceptions}).
It is possible to use the methods here to establish that Kraft and Procesi's results in Theorem \ref{classical_results}
hold up to algebraic isomorphism (rather than smooth equivalence), but we defer the details to a later paper.   

We also calculate the symmetry action on $\SC_{\0,e}$ induced from $A(e)$, as Slodowy did when $\0$ is the regular nilpotent orbit.  
This involves extending Slodowy's result on the splitting of $C(\sg)$ and introducing the notion of symmetry on a minimal singularity.
Again, it is possible to carry out this program for the classical groups, but we also defer the details to a later paper.

\subsubsection{Symmetry of a minimal singularity}   \label{def:symmetry_min_sing}

Let $\gg$ be a simple, simply-laced Lie algebra with Dynkin diagram $\Delta$.  
As in \S \ref{symmetry_nsl_case}, let $\tilde{K} \subset \Aut(\gg)$ be a subgroup of diagram automorphisms lifting a subgroup $K \subset \Aut(\Delta)$.
We call a pair $(\cl_{\text{min}}, \tilde{K})$, consisting of $\cl_{\text{min}}$ with the action of $\tilde{K}$, a symmetry of a minimal singularity.   
We write these pairs as $a^+_k$, $d^+_k (k \geq 4)$, $d^{++}_4$ (for the action of the full automorphism group), and $e^+_6$.   As in the surface cases,
$|K|=3$ in $D_4$ does not arise.

\subsubsection{Intrinsic symmetry action on a slice: general case}  \label{symmetry_action}

In \S \ref{C-splitting} it is shown 
that the splitting of $C(\sg)$ that Slodowy observed for the subregular orbit holds in general, with four exceptions.    
More precisely, there exists a subgroup $H \subset C(\sg)$ such that $C(\sg) \cong C(\sg)^{\circ} \rtimes H$.  So in particular 
$H \cong C(\sg)/C(\sg)^{\circ} \cong A(e)$. 
The choice of splitting is in general no longer unique up to conjugacy in $C(\sg)$, but if we choose $H$ to represent diagram automorphisms
of the semisimple part of $\cg(\sg)$, then the image of $H$ in $\Aut(\cg(\sg))$ is unique up to conjugacy in $\Aut(\cg(\sg))$.
The four exceptions to the splitting of $C(\sg)$ have $|A(e)|=2$, but the best possible result is that there exists 
$H \subset C(\sg)$, cyclic of order $4$, with $C(\sg) = C(\sg)^{\circ} \cdot H$ \cite[\S 3.4]{Sommers:Bala-Carter}.
  
Next, imitating \S \ref{symmetry_nsl_case}, we describe the action of $H$ on $\SC_{\0, e}$.
The four cases where $C(\sg)$ does not split give rise to the symmetries which include $A^+_2$ and $A^+_4$
(\S \ref{surface_symmetry}).
Three of these four cases (when $\0'$ has type $A_4+A_1$ in $E_7$ and $E_8$ or type $E_6(a_1)+A_1$ in $E_8$) 
are well-known:  
under the Springer correspondence, their Weyl group representations 
lead to unexpected phenomena (see, for example, \cite[pg. 373]{Carter}).   The phenomena observed here 
for these three orbits is directly 
related to the fact that $A(e) = \SG_2$ acts without fixed
 points on the irreducible components of the Springer fiber over $e$.   
 It is not clear why the fourth orbit
(of type $D_7(a_2)$ in $E_8$) appears in the same company as these three orbits.

\subsubsection{Additional singularities}  \label{unexpected_singularities}

In the Lie algebras of exceptional type, 
there are six varieties, arising as components of slice singularities, which are neither 
simple surface singularities nor minimal singularities.
The ten cases in type $E_8$ where we know the singularity only up to normalization would give further examples if they turned out to be non-normal.

\vspace{0.2cm}
\noindent{\bf Non-normal cases}.  

\vspace{0.2cm}
\noindent
{\it The variety $m$.}
Let $V(i)$ denote the irreducible representation of highest weight $i \in \mathbb Z_{\geq 0}$ of $\SL_2(\CM)$.
Consider the linear representation of $\SL_2(\CM)$ on $V = V(2) \oplus V(3)$.
Let $v_2 \in V(2)$ and $v_3 \in V(3)$ be highest weight vectors for a Borel subgroup of $\SL_2(\CM)$.
The variety $m$ is defined to be the closure in $V$ of the 
$\SL_2(\CM)$-orbit through $v = v_2+v_3$, a two-dimensional variety with an isolated singularity at zero.  
It is not normal, but has smooth normalization, equal to the affine plane ${\mathbb A}^2$.
This is an example of an $S$-variety (i.e., the closure of the orbit of a sum of highest weight vectors) studied in \cite{PV:S_variety}, 
where these properties are proved (see \S \ref{S_vars}).
The first case where $m$ appears is for the minimal degeneration $(\tilde{A}_1, A_1)$ in $G_2$.  
This singularity appears at least once in each exceptional Lie algebra, always for two non-special orbits which lie in the same special piece (see \S \ref{special_pieces}).

\vspace{0.2cm}
\noindent
{\it  The variety $m'$.}  This is a four-dimensional analogue of $m$, with $\SL_2(\CM)$ replaced by $\Sp_4(\CM)$.  
It is an $S$-variety \cite{PV:S_variety} with respect to the $\Sp_4(\CM)$-representation on $V = V(2\omega_1) \oplus V(3\omega_1)$ 
where $V(\omega_1)$ is the defining $4$-dimensional representation of $\Sp_4(\CM)$, so 
that $V(2\omega_1)$ is the adjoint representation. 
Let $v_2 \in V(2\omega_1)$ and $v_3 \in V(3\omega_1)$ be highest weight vectors for a Borel subgroup of $\Sp_4(\CM)$.
The variety $m'$ is defined to be the closure in $V$ of the 
$\Sp_4(\CM)$-orbit through $v = v_2\!+\!v_3$,
a four-dimensional variety with an isolated singularity at zero.  
It is not normal, but has smooth normalization, equal to ${\mathbb A}^4$ (see \S \ref{S_vars}).
The singularity $m'$ occurs exactly once, for the minimal degeneration 
$(A_3\!+\!2A_1, 2A_2\!+\!2A_1)$ in $E_8$.

\vspace{0.2cm}
\noindent
{\it  The variety $\mu$.}  The coordinate ring of the simple surface singularity $A_3$ is $R = {\mathbb C}[st,s^4,t^4]$, as a hypersurface in ${\mathbb C}^3$.
We define the variety $\mu$ by $\mu = \Spec R'$ where $R' = {\mathbb C}[(st)^2,(st)^3,s^4,t^4,s^5t,st^5]$.
This variety is non-normal and its normalization is isomorphic to $A_3$ via the inclusion of $R'$ in $R$. 
Using the methods of \S \ref{section:codimension2}, the normalization of $\SC_{\0, e}$ for 
$(D_7(a_1),E_8(b_6))$ in $E_8$ is shown to be isomorphic to $A_3$
with an order two symmetry arising from $A(e)$.
In \cite{Fu-Juteau-Levy-Sommers:GeomSP} we will show that 
$\SC_{\0, e}$ is smoothly equivalent to $\mu$.
The closure of $\0$ was known to be non-normal, but our result establishes that it is non-normal in codimension two.

\vspace{0.2cm}

\noindent{\bf Normal cases}.  
These three singularities are each of dimension four and normal.

\vspace{0.2cm}
\noindent
{\it The degeneration $(2A_2+A_1,A_2+2A_1)$ in $E_6$}.
Let $\zeta=e^{\frac{2\pi i}{3}}$ and let $\Gamma$ be 
the cyclic subgroup of $\Sp_4(\CM)$ of order three generated 
by the diagonal matrix with eigenvalues $\zeta$, $\zeta$, $\zeta^{-1}$, $\zeta^{-1}$.
Then ${\mathbb C}^4/\Gamma$ has an isolated singularity at $0$, and we denote this variety by $\tau$.
We show in \S \ref{hard_E6} that $\SC_{\0, e}$ is smoothly equivalent to $\tau$ for the minimal degeneration 
$(2A_2+A_1,A_2+2A_1)$ in $E_6$.

\vspace{0.2cm}
\noindent
{\it The degeneration $(A_4+A_1,A_3+A_2+A_1)$ in $E_7$}.
Let ${\mathfrak S}_2$ be the cyclic group of order two acting on $\sl_3(\CM)$ via an outer involution.  
All such involutions are conjugate in $\Aut(\sl_3(\CM))$.
The quotient $a_2/{\mathfrak S}_2$ 
has an isolated singularity at $0$ 
since there are no minimal nilpotent elements in $\sl_3(\CM)$ which are fixed by an outer involution.
We will prove in \cite{Fu-Juteau-Levy-Sommers:GeomSP} that 
$\SC_{\0, e}$ is smoothly equivalent to $a_2/{\mathfrak S}_2$ for the minimal degeneration 
$(A_4+A_1,A_3+A_2+A_1)$ in $E_7$.

\vspace{0.2cm}
\noindent
{\it The degeneration $(A_4+A_3,A_4+A_2+A_1)$ in $E_8$}.
Let $\Delta$ be a dihedral group of order 10, acting on ${\mathbb C}^4$ via the sum of the reflection representation and its dual.
Then it turns out that the blow-up of ${\mathbb C}^4/\Delta$ at its singular locus 
has an isolated singularity at a point lying over $0$.  We denote this blow-up by $\chi$.
We show in \S \ref{hard_E8} that $\SC_{\0, e}$ is smoothly equivalent to $\chi$
for the minimal degeneration $(A_4+A_3,A_4+A_2+A_1)$ in $E_8$.
\footnote{In private communication with the authors, Bellamy has pointed out that it can be deduced from his work \cite{bellamy} that the symplectic quotient of $\CM^4$ by a dihedral group of order $4n+2$ has a unique $\QM$-factorial terminalization.  Since $\0$ is a rigid orbit, the singularity $\chi$ is also $\QM$-factorial terminal (see \S \ref{q_fact_term}). 
Hence $\chi$ can be identified with the unique $\QM$-factorial terminalization of $\CM^4/\Delta$.}

\subsubsection{Statement of the main theorem}
In our main theorem we classify the generic singularities of nilpotent orbit closures in a simple Lie algebra of exceptional type.
The graphs at the end of the paper list the precise results.

\begin{theorem}  \label{main_theorem}
Let $\0'$ be a minimal degeneration of $\0$ in a simple Lie algebra of exceptional type.  Let $e \in \0'$.  Taking into consideration the 
intrinsic symmetry of $A(e)$, we have 
\begin{itemize}

\item[(a)] If the codimension of $\OC'$ in $\ov{\OC}$ is two, then, with one exception, $\SC_{\0, e}$ 
is isomorphic either to a simple surface singularity of type $A-G$ or to one of the following
$$A_2^+, A_4^+, 2A_1,  3A_1, 3C_2, 3C_3, 3C_5, 4G_2,5G_2,10G_2, \;\mbox{or}\;\; m,$$
up to normalization for ten cases in $E_8$.   
Here, $kX_n$ denotes $k$ copies of $X_n$ meeting pairwise transversally at the common singular point.
In the one remaining case, the singularity is smoothly equivalent to $\mu$.

\smallskip
\item[(b)] If the codimension is greater than two, then, with three exceptions, $\SC_{\0, e}$ is isomorphic
either to  a minimal singularity of type $a-g$ or to one of the following types:
$$a_2^+,a_3^+,a_4^+,a_5^+, 2a_2,d_4^{++}, e_6^+, 2g_2, \;\mbox{or}\;\; m',$$
where the branched cases $2a_2$ and $2g_2$ denote two minimal singularities meeting transversally at the common singular point.
The singularities for the three remaining cases are smoothly equivalent to $\tau, a_2/{\mathfrak S}_2, \text{and} \ \chi,$ respectively.
\end{itemize}
\end{theorem}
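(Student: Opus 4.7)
The proof proposal is a large case-by-case analysis, since the theorem is a classification over all minimal degenerations in the five exceptional types $G_2, F_4, E_6, E_7, E_8$. The plan is to enumerate the Hasse diagrams of nilpotent orbits in each exceptional type, then for each cover relation $\OC' \subset \ov\OC$ compute the isomorphism type of $\SC_{\OC,e}$ using the structural results already available. Several uniform reductions will drastically reduce the bookkeeping. First, by the transitivity of $C(\sg)$ on the irreducible components of $\SC_{\OC,e}$, it suffices to determine a single component together with its stabilizer and the action of $A(e)$. Second, whenever $\OC'$ is contained in the image of a nilpotent orbit closure $\ov\OC_L$ of a proper Levi subalgebra $\lg \subset \gg$ via a parabolic induction, the Slodowy slice $\SC_{\OC,e}$ can often be identified with (or determined by) a corresponding slice in $\lg$; iterating on Levi rank reduces a great many exceptional cases to the classical or to already-settled exceptional cases.

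For part (a), codimension-two minimal degenerations, the plan is to exploit the two-dimensional analogue of Namikawa's result on symplectic singularities: since $\widetilde{\SC}_{\OC,e}$ is a normal symplectic surface with an isolated singularity, it is a Kleinian singularity $\CM^2/\Gamma$, classified by a simply-laced Dynkin diagram. To determine the type, I will pick a convenient representative $e \in \OC'$, exhibit a partial (or full) resolution of $\SC_{\OC,e}$ by choosing an appropriate $G$-equivariant resolution of $\ov\OC$ (for example, via a Springer-type resolution or a parabolic induction), and read off the exceptional divisor. The number of irreducible components and the transverse intersection pattern is detected from the decomposition of the resolution fiber under $C(\sg)$ and $A(e)$, which yields the branched types $kX_n$. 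In the ten $E_8$ cases where we cannot pin down the singularity on the nose, the argument will establish only that the normalization has the claimed form; the non-normal case $(D_7(a_1),E_8(b_6))$ giving $\mu$ is handled by explicitly identifying the subring of invariants inside the coordinate ring of $A_3$, using that $A(e)$ permutes the two $\CM^2$-charts.

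For part (b), higher-codimension minimal degenerations, the plan is to identify $\SC_{\OC,e}$ with the closure of the minimal orbit of an appropriate smaller Lie algebra. The mechanism is: $C(\sg)^\circ$ acts on $\SC_{\OC,e}$ with an open dense orbit (the intersection with $\OC$), and in most cases one can show that $\SC_{\OC,e}$ coincides with the image of $\ov{\OC_{\min}}(\cg(\sg)')$ for a suitable reductive subgroup $\cg(\sg)' \subset \cg(\sg)$; this gives the types $a$--$g$, their symmetric variants (via the splitting $C(\sg) \cong C(\sg)^\circ \rtimes H$ of \S\ref{C-splitting}), and the transverse unions $2a_2, 2g_2$. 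The $S$-variety singularity $m'$ arising at $(A_3{+}2A_1, 2A_2{+}2A_1)$ in $E_8$ is handled by recognizing the pair $(\cg(\sg)', V)$ as the prescribed $\Sp_4$-representation $V(2\omega_1)\oplus V(3\omega_1)$ and invoking the Popov-Vinberg results cited in \S\ref{S_vars}.

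The hardest step is the handful of degenerations that are not reduced to any of the above recipes: the three four-dimensional normal singularities $\tau, a_2/\SG_2, \chi$, and the non-normal $\mu$. For $\tau$ and $\chi$ the strategy is to explicitly exhibit a finite étale cover of $\SC_{\OC,e}$ by $\CM^4$, respectively by a toric partial resolution, via an $\sl_2$-triple and direct computation inside $e + \gg^f$; identifying the covering group with the cyclic group of order $3$ in $\Sp_4$ (for $\tau$) or the dihedral group $\Delta$ (for $\chi$) is where the coincidence of ambient representations must be detected. For $a_2/\SG_2$, the $A(e) = \SG_2$ action on the $a_2$ component produced by the Levi reduction must be shown to coincide with the outer involution of $\sl_3$; this and the $\mu$ case we defer to \cite{Fu-Juteau-Levy-Sommers:GeomSP}, so in the present paper the statement is only up to smooth equivalence. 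Finally, the symmetry statement for $A(e)$ throughout is obtained by tracking, alongside the isomorphism type, the action of the chosen lift $H \subset C(\sg)$ of $A(e)$, comparing with the intrinsic symmetries catalogued in \S\ref{surface_symmetry} and \S\ref{def:symmetry_min_sing}; the four exceptional non-split cases (the three Springer-pathological orbits and the $D_7(a_2)$ orbit in $E_8$) are precisely those producing $A^+_2, A^+_4$ singularities in the list.
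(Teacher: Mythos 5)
Your proposal correctly identifies the paper's overall strategy: a case-by-case classification over all minimal degenerations, reduced to a two-dimensional Kleinian classification (normalization $\CM^2/\Gamma$ plus a minimal resolution obtained by restricting a $\QM$-factorial terminalization, with the Borho--MacPherson formula of Lemma~\ref{formula} and Corollary~\ref{cor_action} as the concrete computational engine) and to the dense $C(\sg)$-orbit / $S$-variety framework of Proposition~\ref{dim4+_1} in higher codimension, together with the splitting $C(\sg)\cong C(\sg)^\circ\rtimes H$ to record the $A(e)$-symmetry. One point needs correction in your plan for the four exceptional singularities $\tau$, $\chi$, $a_2/\SG_2$, $\mu$. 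You propose to construct the cover (or partial resolution) ``via an $\sl_2$-triple and direct computation inside $e+\gg^f$'' at a point $e\in\OC'$, i.e.\ in the nilpotent Slodowy slice itself. The paper does \emph{not} do this: it introduces an alternative transverse slice $\SC'_{e+e_0}$ at $e+e_0\in\OC'$ (Lemma~\ref{slicelemma}), based at a point $e$ of a deeper orbit $\OC''$ (the $A_2$ orbit in $E_6$, the $A_4$ orbit in $E_8$) where $\SC_{\OC,e}$ admits a clean description as the affinization of a finite cover of the regular nilpotent orbit in $\cg(\sg)$ (a $3$-fold cover in $\sl_3\oplus\sl_3$ for $\tau$, the universal cover in $\sl_5$ for $\chi$), and then parametrizes $\cl\cap\SC'_{e+e_0}$ explicitly. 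Because $\SC'_{e+e_0}$ is a different transverse slice from the Slodowy slice at $\OC'$, this is precisely why those results --- all four of them, not only $a_2/\SG_2$ and $\mu$ --- are stated only up to smooth equivalence rather than up to isomorphism; your proposal attaches the caveat to the wrong subset. You should also reconcile the internal inconsistency in your treatment of $\mu$: you first assert that the invariant subring of $A_3$ is identified in the present paper, and a few lines later say the $\mu$ case is deferred; the paper only determines the normalization ($A_3$ with its order-two symmetry, via Corollary~\ref{cor_action}) here, deferring the identification with $\mu$ itself.
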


\noindent In the statement of the theorem, we have recorded the induced symmetry of $A(e)$ relative to 
the stabilizer in $A(e)$ of an irreducible component of $\SC_{\0, e}$.  See \S \ref{full_intrinsic_action} for a complete statement
of the intrinsic symmetry action.

\subsubsection{Brief description of methods}

The methods in \S \ref{tools_explicit} are relevant for cases when $\slice$ has a 
dense $C(\sg)$-orbit and are motivated by arguments in \cite{Kraft-Procesi:GLn}.
For the higher codimension minimal degenerations (except the three which are normal of codimension four)
and the codimension two minimal degenerations where the singularity is $kA_1$ or $m$, 
we show that $\slice$ has a dense $C(\sg)$-orbit. 
This allows us to show that the irreducible components of $\slice$ are $S$-varieties for $C(\sg)^\circ$ (which are permuted transitively by $C(\sg)$), and we determine their isomorphism type.
Proposition \ref{dim4+_1} contains precise information about the connection
between $C(\sg)$ and $\slice$ in these cases.

The methods in \S \ref{section:codimension2} are applicable to the surface cases.
The idea is to use the fact that the normalization of a transverse slice is a simple surface singularity
and then obtain a minimal resolution of the singularity by restricting the
$\qit$-factorial terminalizations of the nilpotent orbit closure
to the transverse slice. Then we can apply a formula of
Borho-MacPherson to compute the number of projective lines in the
exceptional fiber and the action of $A(e)$ on the projective lines.
Proposition \ref{prop:dim2} summarizes the surface cases.

These two methods, as summarized in Propositions \ref{prop:dim2} and \ref{dim4+_1},
are sufficient to handle all the cases in the main theorem 
except when $\mu$, $\chi$, $\tau$, or $a_2/{\mathfrak S}_2$ occur. 
The two cases where $\chi$ or $\tau$ occur are dealt with separately in \S \ref{dim4_exceptions}.
The two cases where $\mu$ or $a_2/{\mathfrak S}_2$ occur are deferred to subsequent papers.

The determination of the symmetry action is given in \S \ref{section:split_and_intrinsic},
and the calculations supporting 
Propositions \ref{prop:dim2} and \ref{dim4+_1} are given in \S \ref{results_F4}, \S \ref{sec:E6}, \S \ref{sec:E7}, \S \ref{e8}.

\subsection{Some consequences}
\subsubsection{Isolated symplectic singularities coming from nilpotent orbits} 
Examples of isolated symplectic singularities include  $\overline{\0}_{\text{min}}$ and quotient singularities $\mathbb{C}^{2n}/\Gamma$, where
$\Gamma \subset {\Sp}_{2n}(\CM)$ is a finite subgroup acting freely on ${\mathbb C}^{2n}\setminus\{ 0\}$.
It was established in \cite{Beauville} that an isolated symplectic singularity with smooth projective tangent cone is locally analytically isomorphic to some $\overline{\0}_{\text{min}}$.
It turns out that all of the isolated symplectic singularities we obtain, with one exception, are finite quotients of $\overline\0_{\text{min}}$ or ${\mathbb C}^{2n}$.  It seems very likely that the singularity $\chi$ described above is not equivalent to a singularity of this form.%

Another byproduct is the discovery of examples of symplectic
contractions to an affine variety whose generic positive-dimensional fiber is of type
$A_2$ and with a non-trivial monodromy action.
These examples correspond to minimal degenerations with singularity $A_2^+$.   The orbits $\0$ in these cases have closures 
which admit a generalized Springer resolution.   Examples include the even orbits $A_4+A_2$ in $E_7$ and $E_8(b_6)$ in $E_8$.  
In \cite{Wierzba}, a symplectic contraction to a projective variety of the same type is constructed.
 As far as we know,  our examples are the first affine examples that have been constructed.
 This disproves Conjecture 4.2 in \cite{Andreatta-Wisniewski}.

\subsubsection{Special pieces}\label{special_pieces}

For a special nilpotent orbit $\0$, the {\em special
piece} $\P(\0)$ containing $\0$ is the union of all nilpotent orbits 
$\0' \subset \cl$ 
which are not contained in $\cl_1$ for any special
nilpotent orbit $\0_1$ with $\0_1 \subset \cl$ and $\0_1 \neq \0$.
This is a locally-closed subvariety of $\cl$ and 
it is rationally smooth (see \cite{Lusztig:unipotent} and the references therein).
To explain rational smoothness  geometrically, Lusztig conjectured in
\cite{Lusztig:unipotent} that every special piece is a finite
quotient of a smooth variety.  This conjecture is known for
classical types by \cite{Kraft-Procesi:special}, but for
exceptional types it is still open.

Each special piece contains a unique minimal orbit under the closure ordering.  
Motivated by the aforementioned  conjecture of Lusztig, we studied
the transverse slice of $\P(\0)$ to this minimal orbit. We
shall prove in \cite{Fu-Juteau-Levy-Sommers:GeomSP} the following:
\begin{theorem}\label{special_slice}
Consider a special piece $\P(\0)$ in any simple Lie algebra. Then a nilpotent Slodowy slice in $\P(\0)$
to the minimal orbit in $\P(\0)$ is isomorphic to
$$
(\mathfrak{h}_n \oplus \mathfrak{h}_n^*)^k/\mathfrak{S}_{n+1}
$$
where $\mathfrak{h}_n$ is the $n$-dimensional reflection representation
of the symmetric group $\mathfrak{S}_{n+1}$ and
$k$ and $n$ are uniquely determined integers.
\end{theorem}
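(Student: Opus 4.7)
My approach would be a case-by-case verification organized by simple Lie algebra and by special orbit. For each special piece $\P(\0)$ with $|\P(\0)| > 1$, the plan is to: (i) identify the minimal orbit $\0_0 \subset \P(\0)$ from the closure ordering; (ii) compute the slice $\SC_{\0, e_0} \cap \P(\0)$ using Theorem \ref{main_theorem} (for $\gg$ exceptional) or Kraft--Procesi \cite{Kraft-Procesi:special} (for $\gg$ classical); and (iii) match it to $(\mathfrak{h}_n \oplus \mathfrak{h}_n^*)^k/\mathfrak{S}_{n+1}$ for uniquely determined integers $n, k$ with $2nk = \dim\0 - \dim\0_0$. The integers $n$ and $k$ are pinned down a priori by the codimension and by Lusztig's canonical quotient $\bar{A}(\0)$, which in the relevant cases is always a symmetric group.

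For $\gg$ of classical type, Kraft and Procesi realize each special piece explicitly as a finite quotient $Y/W$ of a smooth variety $Y$ constructed from partition data, with $W$ a symmetric group determined by $\bar A(\0)$. Following their construction, I would identify the preimage in $Y$ of the slice at $\0_0$ as a smooth linear $W$-invariant subspace, and verify by direct calculation that the action is $k$ copies of the diagonal $\mathfrak{S}_{n+1}$-action on $\mathfrak{h}_n \oplus \mathfrak{h}_n^*$, the reflection plus coreflection representation. Taking the quotient then gives the desired form.

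For $\gg$ of exceptional type, I would proceed inductively up from $\0_0$ along a chain of minimal degenerations inside $\P(\0)$, using Theorem \ref{main_theorem} at each step. The classified local building blocks already match pieces of the target global quotient: the minimal singularity $a_n$ equals $(\mathfrak{h}_n \oplus \mathfrak{h}_n^*)/\mathfrak{S}_{n+1}$, the non-normal singularity $m$ (with normalization $\mathbb{A}^2$) matches $\mathbb{A}^2/\mathfrak{S}_2$ up to normalization, and the exceptional four-dimensional singularities $m'$, $\mu$, $\tau$, $\chi$, and $a_2/\mathfrak{S}_2$ each arise as the appropriate piece of an $\mathfrak{S}_{n+1}$-quotient of $(\mathfrak{h}_n \oplus \mathfrak{h}_n^*)^k$. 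The key check is that these local pieces glue coherently along the chain of degenerations to produce a single smooth cover $(\mathfrak{h}_n \oplus \mathfrak{h}_n^*)^k \to \SC_{\0, e_0} \cap \P(\0)$ compatible with a single $\mathfrak{S}_{n+1}$-action.

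The main obstacle is the most symmetric exceptional cases, namely those with canonical quotient $\mathfrak{S}_4$ (such as $F_4(a_3) \subset F_4$ and $E_7(a_5) \subset E_7$) or $\mathfrak{S}_5$ (the orbit $E_8(a_7) \subset E_8$). In these cases the acting symmetric group $\mathfrak{S}_{n+1}$ is strictly larger than the component group $A(e_0)$ of Section \ref{symmetry_action}, so the full symmetry is not visible from the intrinsic $C(\sg)$-action on the slice. Instead, the additional symmetries must arise as a monodromy action on the irreducible components of the fibers of a $\QM$-factorial terminalization of $\P(\0)$, in the spirit of \S \ref{section:codimension2}. Constructing such a resolution with the correct $\mathfrak{S}_{n+1}$-action and verifying that the quotient of the generic linear fiber by this action recovers the slice --- in particular, exhibiting the extra transpositions not coming from $A(e_0)$ --- is where the main technical work of the proof will lie.
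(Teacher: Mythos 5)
The paper does not contain a proof of Theorem~\ref{special_slice}; it explicitly defers the argument to the forthcoming reference \cite{Fu-Juteau-Levy-Sommers:GeomSP}. So there is no in-paper proof to compare against, and I can only assess the plausibility of your outline. Your high-level framework --- pin down $n$ from Lusztig's canonical quotient, $k$ from the codimension, restrict to a case-by-case check, use Theorem~\ref{main_theorem} for the local data, and locate the extra symmetry as monodromy on a terminalization --- is reasonable and the last observation (that $\mathfrak{S}_{n+1}$ strictly exceeds $A(e_0)$ for $F_4(a_3)$, $E_7(a_5)$, $E_8(a_7)$) is genuinely the crux of the problem. But there is a hard error in the dictionary you use to match local building blocks to the quotient. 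You assert that $a_n = \overline{\0}_{\min}(\mathfrak{sl}_{n+1})$ equals $(\mathfrak{h}_n\oplus\mathfrak{h}_n^*)/\mathfrak{S}_{n+1}$. For $n\geq 2$ this is false: $a_n$ is an \emph{isolated} symplectic singularity whose projective tangent cone is the smooth incidence variety in $\PM^n\times(\PM^n)^*$, while $(\mathfrak{h}_n\oplus\mathfrak{h}_n^*)/\mathfrak{S}_{n+1}$ is singular in codimension two along the images of the reflection hyperplanes (the $\mathfrak{S}_{n+1}$-action is not free off the origin) and its projective tangent cone $\PM^{2n-1}/\mathfrak{S}_{n+1}$ is singular. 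The transverse slice to the codimension-two stratum of $(\mathfrak{h}_n\oplus\mathfrak{h}_n^*)^k/\mathfrak{S}_{n+1}$ is an $A_1$ surface singularity (the quotient by a transposition), \emph{not} $a_n$ --- which is exactly what Kraft--Procesi find for minimal degenerations between special orbits inside a special piece. Your proposed local-to-global bookkeeping therefore starts from a wrong premise. The claim about $m$ is also confused: $\mathbb{A}^2/\mathfrak{S}_2$ (under either action of $\mathfrak{S}_2$) is a normal variety, whereas $m$ is non-normal with normalization $\mathbb{A}^2$, so they do not coincide even after normalization; the actual role of $m$ in the theorem is the subtler fact that the normal global quotient restricts non-normally to the closure of a lower non-special stratum, which is part of what makes the isomorphism in the theorem (as opposed to a smooth-equivalence statement) genuinely nontrivial.

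The other gap is structural: even granting the local dictionary, you offer no mechanism for the "gluing" step, which is the actual content. Formal-neighborhood data along each stratum (which is what Theorem~\ref{main_theorem} gives you) does not determine the affine variety $\overline{\0}\cap\SC_{e_0}$ up to isomorphism; one needs to exhibit an explicit finite morphism $(\mathfrak{h}_n\oplus\mathfrak{h}_n^*)^k \to \SC_{\P,e_0}$ that is the quotient by a single $\mathfrak{S}_{n+1}$-action. Your proposal to obtain the extra transpositions from monodromy on the fibers of a $\QM$-factorial terminalization is a plausible \emph{ansatz}, but it requires constructing such a terminalization over the entire special piece (not just over a minimal degeneration), showing it restricts to a symplectic resolution of $\SC_{\P,e_0}$, identifying the cover $(\mathfrak{h}_n\oplus\mathfrak{h}_n^*)^k$ with an explicit $\mathfrak{S}_{n+1}$-equivariant section or Stein factorization, and verifying the match with the local data --- none of which is set up here. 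As written this is a research program, not a proof.
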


This theorem also includes the Lie algebras of classical types where $n=1$, but $k$ can be arbitrarily large.
In the exceptional types Theorem \ref{main_theorem} covers the cases where 
$\P(\0)$ consists of two orbits, in which case $n=k=1$ (that is, the slice is isomorphic to the $A_1$ simple surface singularity).  
This leaves only those special pieces containing more than two orbits.   Some of these remaining cases can be
handled quickly with the same techniques, but others require more difficult calculations.

\subsubsection{Normality of nilpotent orbit closures}

By work of Kraft and Procesi \cite{Kraft-Procesi:classical}, together with the
remaining cases covered in \cite{Sommers:D}, 
in classical Lie algebras the failure of $\cl$ to be normal 
is explained by branching for a minimal degeneration, and then only with two branches and in codimension two.
In exceptional Lie algebras, the question of which nilpotent orbit closures 
are normal has not been completely solved in $E_7$ or $E_8$, but 
in \cite[Section 7.8]{Broer:Decomposition} a list of non-normal nilpotent orbit closures is given, which is expected to be the complete list.

Our analysis sheds some new light on the normality question.   
The occurrence of $m$, $m'$, and $\mu$ at a minimal degeneration of $\0$
gives a new geometric explanation for why $\cl$ is not normal.  Previously the only geometric explanation for the failure of normality
was branching (see \cite{Beynon-Spaltenstein}) 
and the appearance of the non-normal singularity in the closure of the $\tilde{A}_1$ orbit in $G_2$, which was known to be unibranch
(see \cite{Kraft}).

We also establish:  (1) for many $\cl$ known to be non-normal that $\cl$ is normal at points in some minimal degeneration; 
and (2) for many $\cl$ that are expected to be normal that $\cl$ is indeed normal at points in all of its minimal degenerations.   
So we are able to make a contribution to determining the non-normal locus of $\cl$.
Examples of the above phenomena are given starting in \S \ref{F4_surface}.
Along these same lines, we also note that a consequence of Theorem \ref{special_slice} 
is that the special pieces are normal, a question studied by Achar and Sage in \cite{Achar-Sage}.

\subsubsection{Duality}\label{dualitysubsection}

An intriguing result from \cite{Kraft-Procesi:GLn} for $\gg
= \sl_n(\CM)$ is the following: a simple surface singularity of type $A_k$  is always
interchanged with a minimal singularity of type $a_k$ under the
order-reversing involution on the set of nilpotent orbits in $\gg$ given by transposition of partitions.

This result leads to a generalization in the other Lie algebras, both classical and exceptional, 
by restricting to the set of special nilpotent orbits, which are
reversed under the Lusztig-Spaltenstein involution.
For a minimal degeneration of one special orbit to another,
in most cases a simple surface singularity is interchanged with a singularity corresponding
to the closure of the minimal {\it special} nilpotent orbit of dual type.
There are a number of complicating factors outside of $\sl_n(\CM)$, 
related to Lusztig's canonical quotient and the existence of multiple branches.
The duality can also be formulated as one from special orbits in $\gg$ to those in $^L \gg$, the more natural 
setting for Lusztig-Spaltenstein duality.

Numerical evidence for such a duality was discovered by Lusztig in the classical groups using the tables in \cite{Kraft-Procesi:classical}.
The duality is already hinted at by Slodowy's result for the regular nilpotent orbit in \S \ref{symmetry_nsl_case}, which requires passing from $\gg$ to $^L \gg$.
In a subsequent article \cite{FJLS:Duality} we will give a more complete account of the phenomenon of duality for degenerations between special orbits.

\subsection{Notation} 

$G$ will be a connected, simple algebraic group of adjoint type over the complex numbers $\CM$
with Lie algebra $\gg$,  and $\0$ and $\0'$ will be nilpotent $\Ad G$-orbits in $\gg$.
We use the notation in \cite[p. 401-407]{Carter} to refer to nilpotent orbits.
For $x\in\gg$, $\0_x$ refers to the orbit $\Ad G(x)$, also written $G \cdot x$.
For $x\in G$ or $\gg$ we denote by $G^x$ (resp. $\gg^x$) the centralizer of $x$ in $G$ (resp. $\gg$).
Similar notation applies to other algebraic groups which arise, including as subgroups of $G$.
For a subalgebra $\zg \subset \gg$, we denote by $C(\zg)$ its centralizer in $G$ and 
$\cg(\zg)$ its centralizer in $\gg$.

Generally, $e$ is a nilpotent element in an $\sl_2(\CM)$-subalgebra $\sg$ with standard basis $e, h, f$.  
If $e_0 \in \cg(\sg)$ is a nilpotent element, we use
$\sg_0$ for an $\sl_2(\CM)$-subalgebra in $\cg(\sg)$ with standard basis $e_0, h_0, f_0$.
Usually $\0'$ is a nilpotent orbit in $\cl$ with $\0' \neq \0$ and $e \in \0'$.
We write $(\0, \0')$ for such a pair of nilpotent orbits. 
Often, but not always, $\0'$ is a minimal degeneration of $\0$.
The nilpotent Slodowy slice $\cl \cap (e + \gg^f)$ is denoted $\slice$.

The field of fractions of an integral domain $A$ will be denoted $\Frac(A)$.
The symmetric group on $n$ letters is denoted $\mathfrak{S}_n$.
Where we refer to explicit elements of $\gg$, we use the structure constants in GAP \cite{GAP4}.

\subsection{Acknowledgements}
The authors thank 
Anthony Henderson, Thierry Levasseur, Vladimir Popov, Miles Reid, 
Dmitriy Rumynin, Gwyn Bellamy and Michel Brion for helpful comments and/or conversations.
B. Fu was supported
by NSFC 11225106 and 11321101 and the KIAS Scholar Program.
D. Juteau was supported by the ANR grant VARGEN (ANR-13-BS01-0001-01).
P. Levy was supported in part by Engineering and Physical Sciences Research Council grant EP/K022997/1.
E. Sommers was supported by NSA grant H98230-11-1-0173 and 
through a National Science Foundation Independent Research and Development plan.
Computer calculations were carried out in 
GAP (including the SLA package \cite{deGraaf}) and Python.
Research visits were also supported by the 
CNRS
and the AMSS of Chinese Academy of Sciences. 

\section{Transverse slices} \label{section:slices}

\subsection{Smooth equivalence} \label{smooth_equivalence}

To study singularities it is useful to introduce the notion of smooth equivalence.
Given two varieties $X$ and $Y$ and two points $x \in X$ and $y \in Y$, 
the singularity of $X$ at $x$ is {\it smoothly equivalent} to the singularity of $Y$ at $y$ if there exists a variety $Z$, a point $z\in Z$ and morphisms 
$$\varphi:Z\rightarrow X \text{ and } \psi:Z\rightarrow Y$$ 
which are smooth at $z$ and such that $\varphi(z)=x$ and $\psi(z)=y$ (see \cite[1.7]{Hesselink}).
This defines an equivalence relation on pointed varieties $(X,x)$ and the equivalence class of $(X,x)$ will be denoted $\Sing(X,x)$.  As in \cite[\S 2.1]{Kraft-Procesi:GLn}, two singularities $(X,x)$ and $(Y,y)$ with $\dim Y=\dim X+r$ are equivalent if and only if $(X\times{\mathbb A}^r,(x,0))$ is locally analytically isomorphic to $(Y,y)$.

Let $\0'$ and $\0$ be nilpotent orbits in ${\mathfrak g}$ with $\0' \subset \cl$.   Let $e \in \0'$.
The local geometry of $\cl$ at $e$ is captured by 
the equivalence class of $(\cl,e)$ under smooth equivalence.  The equivalence class of the singularity $(\cl,e)$ 
will be denoted $\Sing(\0, \0')$ since the equivalence class is independent of the choice of element in $\0' = \0_e$.


\subsection{Transverse slices}  \label{transverse_slices}

The main tool in studying $\Sing(\0, \0')$ 
is the transverse slice.   Both \cite[III.5.1]{Slodowy:book} and \cite[\S 12]{Kraft-Procesi:classical} are references for what follows.

Let $X$ be a variety on which $G$ acts, and let $x\in X$.
A {\it transverse slice} in $X$ to $G\cdot x$ at $x$ is a locally closed subvariety $S$ of $X$ with $x\in S$ such that the morphism 
$$G\times S\rightarrow X, (g,s)\mapsto g\cdot s$$ is smooth at $(1,x)$ and such that the dimension of $S$ is minimal subject to these requirements.
It is immediate  that $\Sing(X,x)=\Sing(S,x)$.  
If $X$ is a vector space then it is easy to construct such a transverse slice as $x+{\mathfrak u}$ where ${\mathfrak u}$ is a vector space complement to $T_x(G\cdot x)= [\gg, x]$ in $X$.
More generally, this also suffices to construct a transverse slice to a $G$-stable subvariety $Y\subset X$ containing $x$ by taking the intersection $(x+{\mathfrak u})\cap Y$ (\cite[III.5.1, Lemma 2]{Slodowy:book}).
In such a case $\codim_{Y}(G\cdot x ) = \dim_x ((x+{\mathfrak u})\cap Y)$.

These observations are especially helpful for nilpotent orbits in the adjoint representation, 
where there is a natural choice of transverse slice.
As before, pick $e \in \0'$.  Then there exists $h, f \in \gg$ 
so that $\{ e,h,f\}\subset\gg$ is an $\mathfrak{sl}_2$-triple.
Then by the representation theory of $\mathfrak{sl}_2(\CM)$, we have $[e, \gg]\oplus\gg^f=\gg$.
The affine space 
$$\SC_e = e + \gg^f$$ is a transverse slice of $\gg$ at $e$, called the {\it Slodowy slice}.
The variety
$$\SC_{\0,e}: = \SC_e \cap \cl$$
is then a transverse slice of $\cl$ to $\0'$ at the point $e$, 
which we call a nilpotent Slodowy slice.
In this setting
\begin{equation}  \label{dim=codim}
\codim_{\cl} (\OC') = \dim \SC_{\0,e}.
\end{equation}
Since any two $\sl_2$-triples for $e$ are conjugate by an element of $G^e$, 
the isomorphism type of $\SC_{\0,e}$ is independent of the choice of $\sl_2$-triple. 
Moreover, the isomorphism type of $\SC_{\0,e}$ is independent of the choice of $e \in \0'$.
By focusing on $\slice$ we reduce the study of $\Sing(\0, \0')$ to the study of the singularity of $\SC_{\0, e}$ at $e$.
In fact, most of our results will be concerned with determining the isomorphism type  of  $\SC_{\0, e}$.

\subsection{Group actions on $\slice$}  \label{group action}

An important feature of the transverse slice $\SC_{\0,e}$ is that it carries the action of two commuting algebraic groups, which both fix $e$. 
Let $\sg$ be the subalgebra spanned by $\{ e,h,f\}$ and $C(\sg)$ the centralizer of $\sg$ in $G$.  
Then $C(\sg)$ is a maximal reductive subgroup of $G^e$ and $C(\sg)$ acts on $\SC_{\0,e}$, fixing $e$.

The second group which acts is ${\mathbb C}^*$.
Since $[h,f] = -2f$, $\ad h$ preserves the subspace $\gg^f$ and by 
$\mathfrak{sl}_2$-theory all of its eigenvalues are nonpositive integers.
Set $$\gg^f(i)=\{ x\in\gg^f : [h,x]=ix\}$$ for $i \leq 0$.
The special case $\gg^f(0)$ is simply $\cg(\sg)$, the centralizer of $\sg$ in $\gg$, which coincides with  $\Lie(C(\sg))$.

Define $\phi: SL_2(\CM)  \to G$ such that the image of $d\phi$ is equal to $\sg$,  with 
$d\phi \left( \begin{smallmatrix}
 0 & 1 \\ 0 & 0 \end{smallmatrix} \right) = e$ and 
 $d\phi \left( \begin{smallmatrix}
 1 & 0 \\ 0 & -1 \end{smallmatrix} \right) = h$.  Set $\gamma(t) = \phi \left( \begin{smallmatrix}
 t^{-1} & 0 \\ 0 & t \end{smallmatrix} \right)$ for $t \in {\mathbb C}^*$.
On the one hand, $\Ad \gamma(t)$ preserves $\cl$ and so does the scalar action of ${\CM}^*$ on $\gg$ since $\cl$ is
conical in $\gg$. 
On the other hand, for $x_i\in\gg^f(-i)$ and $t \in {\mathbb C}^*$,
$$\Ad \gamma(t)( e+x_0+\ldots + x_m) =  t^{-2}e+x_0+t x_1+\ldots+t^{m}x_m.$$ 
Composing this action with the scalar action of $t^2$ on $\gg$, 
gives an action of $t \in \CM^*$  on $e + \gg^f$ by 
\begin{equation} \label{C-star}
t \cdot (e+x_0+x_1+\ldots)=e+t^2 x_0+t^3x_1+\ldots,
\end{equation}
which preserves
$\SC_{\0,e} = \cl \cap \SC_e.$   The $\CM^*$-action fixes $e$ and commutes with the action of $C(\sg)$,
since $C(\sg)$ commutes with $\ad h$ and so preserves each $\gg^f(i)$.
Thus $C(\sg) \times \CM^*$ acts on $\slice$.


 \subsection{Branching and component group action}   \label{Component group action}

The $C(\sg) \times \CM^*$-action on $\slice$ has consequences for the irreducible components of $\slice$.  

An irreducible variety $X$ is {\it unibranch} at $x$ if the normalization $\pi: (\tilde{X},\tilde{x})\rightarrow (X,x)$ of $(X,x)$ is locally a homeomorphism at $x$.   Since the ${\mathbb C}^*$-action on $\SC_{\0,e}$ in \eqref{C-star} is attracting to $e$, 
$\slice$ is connected and its irreducible components are unibranch at $e$.
Consequently the number of irreducible components of $\SC_{\0,e}$ is equal to the 
number of branches of $\cl$ at $e$.  The latter 
can be determined from the tables of Green functions in \cite{Beynon-Spaltenstein,Sho}, as discussed in
\cite[5(E)--(F)]{Beynon-Spaltenstein}.

The identity component $C(\sg)^\circ$ of $C(\sg)$, being connected, preserves each irreducible component of $\SC_{\0,e}$, hence
there is a natural action of $C(\sg)/C(\sg)^\circ$ on the irreducible components of $\SC_{\0,e}$.
The finite group $C(\sg)/C(\sg)^\circ$ is isomorphic to the component group $A(e):= G^e/ (G^e)^\circ$ of $G^e$ via
$C(\sg) \hookrightarrow  G^e \twoheadrightarrow G^e/ (G^e)^\circ$.
Since any two $\mathfrak{sl}_2$-triples containing $e$ are conjugate by an element of $(G^e)^\circ$, this gives a well-defined
action of $A(e)$ on the set of irreducible components of $\SC_{\0,e}$.
Moreover, as noted in {\it op. cit.}, the permutation representation of
$A(e)$ on the branches of $\cl$ at $e$, and hence on the irreducible components of $\SC_{\0,e}$, can be computed.
For a minimal degeneration, the situation is particularly nice.
We observe by looking at the tables in \cite{Beynon-Spaltenstein,Sho} that
\begin{proposition} \label{transitive_components}
When $\0'$ is a minimal degeneration of $\0$ in an exceptional Lie algebra, 
the action of $A(e)$ on the set of irreducible components of $\SC_{\0,e}$ is transitive.  
In particular, the irreducible components of $S_{\0,e}$ are mutually isomorphic.
\end{proposition}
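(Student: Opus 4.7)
The plan is a finite case-by-case check using the published tables of Green functions. The bijection between irreducible components of $\SC_{\0,e}$ and branches of $\cl$ at $e$ has already been arranged in the text: the $\CM^*$-action from \eqref{C-star} contracts $\SC_{\0,e}$ onto $e$, so each irreducible component is unibranch at $e$ and the total number of components equals the number of branches of $\cl$ at $e$. Moreover, under this bijection the action of $A(e) \cong C(\sg)/C(\sg)^\circ$ on components agrees with its natural permutation action on the branches, which is computable from the Green functions tables following the recipe of \cite[5(E)--(F)]{Beynon-Spaltenstein}.

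With this in hand, I would proceed as follows. First, for each of the five exceptional types $G_2, F_4, E_6, E_7, E_8$, enumerate the Hasse diagram of the closure order on nilpotent orbits (as recorded in \cite{Carter}) and list every pair $(\0,\0')$ for which $\0'$ is a minimal degeneration of $\0$. Second, for each such pair extract from the Green functions tables of Beynon--Spaltenstein \cite{Beynon-Spaltenstein} (for $F_4,E_6,E_7,E_8$) or Shoji \cite{Sho} (for $G_2$) the permutation character of $A(e)$ on the branches of $\cl$ at $e$. Third, verify in every single case that this character is a transitive permutation character, i.e.\ that the branches form a single $A(e)$-orbit.

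The principal difficulty is purely combinatorial: $E_8$ has seventy nilpotent orbits and a correspondingly rich Hasse diagram, and one must be meticulous not to miss a minimal degeneration or misread a table. A useful sanity check is to cross-reference the number of branches obtained for each minimal degeneration against the explicit singularity tables at the end of the paper, and also against the known non-normality data in \cite{Broer:Decomposition} and \cite{Sommers:D}. Once transitivity is verified, the final conclusion is immediate: since $A(e)$ acts on $\SC_{\0,e}$ by algebraic automorphisms inherited from the $C(\sg)$-action, transitivity on components implies that every two components are related by an algebraic automorphism, hence are mutually isomorphic as algebraic varieties.
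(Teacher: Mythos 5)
Your proposal is essentially the paper's own argument: the authors explicitly state that the proposition is observed by inspecting the Green functions tables of \cite{Beynon-Spaltenstein,Sho}, after having set up (in the same subsection) exactly the identification you use between irreducible components of $\SC_{\0,e}$ and branches of $\cl$ at $e$, and between the $A(e)$-action on components and the computable permutation action on branches. The only cosmetic quibble is your attribution of the two references to specific Lie types (Shoji's tables cover $F_4$, not $G_2$; the handful of $G_2$ cases are trivial), but this does not affect the correctness or the method.
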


The proposition also holds in the classical types, which can be deduced using the results in \cite{Kraft-Procesi:classical}.
In \S \ref{full_intrinsic_action} we will discuss the full symmetry action on $\slice$ induced from  $A(e)$.

\section{Statement of the key propositions} \label{section:main_results} 


In this section we state the key propositions which underlie Theorem \ref{main_theorem}.  The propositions give more precise information about $\slice$.   
Throughout this section $\0'$ is always a minimal degeneration of $\0$.

\subsection{Surface cases}  \label{summary:surface_cases}

The case of a minimal degeneration of codimension two is summarized by the following proposition.
\begin{proposition} \label{prop:dim2}
Let  $\0'$  be a minimal degeneration of $\0$ of codimension $2$.  
Then there exists a finite subgroup $\Gamma \subset SL_2(\CM)$ such that 
the normalization $\widetilde{\SC}_{\0,e}$ of $\slice$ 
is isomorphic to a disjoint union of $k$ copies of $X$ where 
$X = \CM^2/\Gamma.$
\end{proposition}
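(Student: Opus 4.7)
The plan is to exploit three ingredients: the dimension count, the isolated-singularity and unibranch properties of the irreducible components of $\slice$, and the symplectic structure carried by its normalization. Combined with the transitivity of $A(e)$ on components (Proposition~\ref{transitive_components}), these should force the normalization to be a disjoint union of copies of a single Kleinian singularity $\CM^2/\Gamma$.

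First, by \eqref{dim=codim} the codimension-two hypothesis gives $\dim \slice = 2$, and as recalled in Section~\ref{first part}, $\slice$ has an isolated singularity at $e$. Write $\slice = Y_1 \cup \cdots \cup Y_k$ for its decomposition into irreducible components. Each $Y_i$ is two-dimensional, and by the $\CM^*$-attracting argument of Section~\ref{Component group action}, each $Y_i$ is unibranch at $e$. Consequently, the normalization $\pi\colon \widetilde{\SC}_{\0,e} \to \slice$ is the disjoint union of the normalizations $\widetilde Y_i$, and each $\widetilde Y_i$ has a unique point $\tilde e_i$ lying above $e$. Away from $\tilde e_i$, the map $\widetilde Y_i \to Y_i$ is an isomorphism onto a smooth open subset of $\slice$, so each $\widetilde Y_i$ is smooth away from $\tilde e_i$.

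Next, by the results of Section~\ref{Symplectic_singularity}, $\widetilde{\SC}_{\0,e}$ is a symplectic variety, so each connected component $\widetilde Y_i$ is a two-dimensional symplectic variety with at most an isolated singularity at $\tilde e_i$. Applying the algebraic classification of two-dimensional isolated symplectic singularities (Proposition~\ref{P:2-dim}), each $\widetilde Y_i$ is isomorphic to $\CM^2/\Gamma_i$ for some finite $\Gamma_i \subset SL_2(\CM)$, where $\Gamma_i = 1$ is allowed when $\widetilde Y_i$ happens to be smooth.

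Finally, Proposition~\ref{transitive_components} asserts that $A(e)$ acts transitively on $\{Y_1,\ldots,Y_k\}$, so the $\widetilde Y_i$ are mutually isomorphic as algebraic varieties and the $\Gamma_i$ are mutually conjugate in $SL_2(\CM)$. Picking a common representative $\Gamma$, we obtain $\widetilde{\SC}_{\0,e} \cong \bigsqcup_{i=1}^k \CM^2/\Gamma$, as asserted. The main input is Proposition~\ref{P:2-dim}, which upgrades the local analytic classification of Kleinian singularities (cf.\ \cite[Section~2.1]{Beauville}) to an algebraic isomorphism; the only other subtle point is checking that unibranchness of each $Y_i$ at $e$ indeed identifies the $\widetilde Y_i$ with the connected components of $\widetilde{\SC}_{\0,e}$, which is standard.
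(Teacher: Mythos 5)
Your argument follows the paper's own route almost step for step: decompose the normalization along irreducible components, observe they are unibranch and mutually isomorphic by $A(e)$-transitivity, invoke the symplectic-variety result from Section~\ref{Symplectic_singularity} to see each component is a two-dimensional isolated symplectic singularity, and then upgrade to an algebraic isomorphism with $\CM^2/\Gamma$ via Proposition~\ref{P:2-dim}. The one hypothesis of Proposition~\ref{P:2-dim} you never actually verify is the existence of a \emph{contracting $\CM^*$-action on each $\widetilde{Y}_i$}. You characterize Proposition~\ref{P:2-dim} as ``the algebraic classification of two-dimensional isolated symplectic singularities,'' but the proposition does not apply to an arbitrary such singularity --- it requires the contracting $\CM^*$-action as input, and that extra structure is precisely what promotes the local analytic Kleinian classification to a global algebraic isomorphism (by~\cite{Flenner-Zaidenberg}). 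You do use the $\CM^*$-action on $\SC_{\0,e}$ to argue unibranchness, so you know it is there on the slice, but you need to also record that this action has strictly positive weights on $\gg^f$ (see the grading in~\eqref{C-star}), so it is contracting, and hence it lifts to a contracting $\CM^*$-action on the normalization $\widetilde{\SC}_{\0,e}$ and restricts to each connected component $\widetilde{Y}_i$; this is exactly the last sentence of the paper's preparatory lemma in \S\ref{section:codimension2}. With that one sentence added, your proof is complete and coincides with the paper's.
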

This is proved in \S \ref{section:codimension2} where techniques for determining $\Gamma$ and $k$ are given.
For most cases in Proposition \ref{prop:dim2} we can show that the irreducible components of $\slice$ are normal either by knowing that $\cl$ is normal, by using Lemma \ref{singequi} to move to a smaller subalgebra where the slice is known to be normal, or by doing an explicit computation using Lemma \ref{kp_prop}.
In the surface case, we found only two ways that an irreducible component of $\slice$ fails to be normal:
\begin{itemize}
\item When $\Gamma = 1$, we show below that $\slice \cong m$ (\S \ref{unexpected_singularities}).   This happens for several different minimal degenerations.
\item When $(\0, \0') = (D_7(a_1), E_8(b_6))$, we have $\Gamma \cong {\mathbb Z}/4$, but $\slice$ is not normal.  Instead, $\slice$
is smoothly equivalent to $\mu$ (\S \ref{unexpected_singularities}).
\end{itemize}
A handful of cases in $E_8$ are determined only up to normalization (see \S \ref{remaining_surfaces}).

\begin{remark} \label{C_star_compatibility}
The isomorphism in Proposition \ref{prop:dim2} is compatible with the natural $\CM^*$-actions on both sides. 
On $\widetilde{\SC}_{\0,e}$ the $\CM^*$-action 
is the one induced from \S \ref{group action}; on $\CM^2/\Gamma$ it is the one coming from the central torus in $\GL_2(\CM)$.   
This follows from Proposition \ref{P:2-dim}.
\end{remark}

\subsection{Cases with a dense $C(\sg)$-orbit}

Next we consider all the cases of codimension $4$ or greater other than the three normal cases in \S \ref{unexpected_singularities}.
Together with the surface cases where $|\Gamma| =1$ or $2$,  these $\slice$ have a dense orbit for the action of $C(\sg)$.
More is true, their irreducible components are examples of $S$-varieties \cite{PV:S_variety}.

\subsubsection{$S$-varieties} \label{S_vars}

Let  $\{\Lambda_1, \dots \Lambda_r\}$ be a set of dominant weights for a maximal torus in a fixed Borel subgroup of $C(\sg)^\circ$.
It will also be convenient to view the $\Lambda_i$'s as weights for the Lie algebra of this maximal torus.
Let $V(\Lambda_i)$ be the irreducible representation of $C(\sg)^\circ$ of highest weight $\Lambda_i$ 
and $v_i \in V(\Lambda_i)$ a non-zero highest weight vector.
Then the $S$-variety $X(\Lambda_1, \dots \Lambda_r)$ is 
defined to be the closure in 
$V:=V(\Lambda_1) \oplus \dots \oplus V(\Lambda_r)$ of the $C(\sg)^\circ$-orbit through $v:= (v_1, \dots, v_r)$.
In \cite{PV:S_variety} it is shown that $S$-varieties are exactly the varieties which carry a dense $C(\sg)^\circ$-orbit and every point in this dense orbit has stabilizer containing a maximal unipotent subgroup of $C(\sg)^\circ$.

In all the situations encountered in this paper, we find $\Lambda_i = b_i \lambda$ for each $i$, where $b_i \in \mathbb N$ and $\lambda$ is a fixed dominant weight.
In such cases Theorems 6 (and its Corollary), 8 and 10 from \cite{PV:S_variety} reduce to the following, respectively:
(1) the complement of $C(\sg)^\circ \cdot v$ in $X$ is the origin in $V$;
(2) the determining invariant of the $C(\sg)^\circ$-isomorphism type of $X:= X(b_1 \lambda, \dots, b_r \lambda)$ is the monoid in 
${\mathbb N}$ generated by $b_1, \ldots, b_r$; and (3) the normalization of $X$ is $X(d \lambda)$, where $d$ is the greatest common divisor of $b_1, \dots, b_r$.
More is true in (2):  if $b_1, \ldots, b_j$ generate the same monoid as  $b_1, \ldots, b_r$ for $j <r$, then the
natural projection from $X$ to $X(b_1 \lambda, \dots, b_j \lambda)$ is an isomorphism.
We will assume that $b_1 \leq b_2 \leq \dots \leq b_r$.
If $V$ factors through $Z \subset C(\sg)^\circ$ with Lie algebra $\sl_2(\CM)$, then we sometimes write
$X(b_1, b_2, \dots)$ instead of $X(b_1 \lambda, b_2 \lambda, \dots)$ where $\lambda$ is the fundamental weight for $\sl_2(\CM)$.

\subsubsection{}  \label{natural_proj} 

Let $\pi_{0}: \slice \to \cg(\sg)$ be the restriction of the 
$C(\sg) \times {\mathbb C}^*$-equivariant linear projection of $\SC_{e}$ onto $\cg(\sg) = \gg^f(0)$.
Let $\pi_{0,1}: \slice \to \gg^f(0) \oplus \gg^f(-1)$ be 
the restriction of the $C(\sg) \times {\mathbb C}^*$-equivariant linear projection of $\SC_{e}$ onto $ \gg^f(0) \oplus \gg^f(-1)$.   
Recall $v \in \cg(\sg)$ belongs to a minimal nilpotent $C(\sg)^\circ$-orbit 
if and only if $v$ is a highest weight vector (relative to a Borel subgroup of $C(\sg)$)
in a simple summand of $\cg(\sg)$.
The proof of the next proposition is given in \S \ref{section:dim4}.  

\begin{proposition}  \label{dim4+_1} 
Let $\0'$  be a minimal degeneration of $\0$ of codimension at least four (other than the three normal cases in \S \ref{unexpected_singularities}) or of codimension two with  $|\Gamma| =1$ or $2$.

Then there exists $J = \{ i_1, \dots, i_r \} \subset \mathbb N$ so that for each $i \in J$ there exists
a highest weight vector $x_i \in \gg^f(-i)$ for the action of $C(\sg)^\circ$, 
and there exists $x_0 \in \cg(\sg)$ minimal nilpotent, such that
\begin{itemize}
\item $e + x_0 + \sum_{i \in J}  x_i \in \slice$,
\item if the weight of $x_0$ is given by $\lambda$, then the weight of $x_i$ equals $(\tfrac{i}{2}+1) \lambda$,
\item one of the irreducible components of $\slice$ is $e + X$, where $X$ is
the corresponding $S$-variety 
$$X:= X\left( \lambda,  (\tfrac{i_1}{2}\!+\!1) \lambda, (\tfrac{i_2}{2}\!+\!1) \lambda, \dots,  (\tfrac{i_r}{2}\!+\!1) \lambda \right) \subset \gg^f$$
for $C(\sg)^\circ$,
through the vector $v := x_0 + \sum_{i \in J}  x_i$,
\item the irreducible components of $\slice$ are permuted transitively by $C(\sg)$, whence $\slice = \ov{C(\sg)\cdot(e+v)}$.
\end{itemize}
Moreover, there are two cases:
\begin{enumerate}
\item All $i \in J$ are even.  Then $\pi_0$ induces an isomorphism of each irreducible component of $\slice$ with $X(\lambda)$. Furthermore, $X(\lambda)$ is a minimal singularity, being isomorphic 
to the closure of the $C(\sg)^{\circ}$-orbit 
through the minimal nilpotent element 
$x_0 \in V(\lambda) \subset \cg(\sg)$.  Hence each component is a normal variety.
\item We have $1 \in J$.  
This case occurs only if $\cg(\sg)$ contains a simple factor $\zg$ of type $\sl_2(\CM)$ or $\sp_4(\CM)$ and 
$\zg = V(\lambda) $. 
Note that $\lambda = 2 \omega$ where $V(\omega)$ is the defining representation of $\zg$.  
Then  $\pi_{0,1}$ gives an isomorphism
$\slice \cong X(\lambda, \tfrac{3}{2} \lambda) = X(2\omega, 3\omega)$.  
In the $\zg = \sl_2(\CM)$ case, $\slice \cong m$ and in the $\zg = \sp_4(\CM)$ case, $\slice \cong m'$.   In both cases $\slice$ is irreducible and non-normal. 
\end{enumerate}

\end{proposition}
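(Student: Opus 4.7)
The plan is to (i) produce an explicit point $e+v\in\slice$ with $v=x_0+\sum_{i\in J}x_i$ of the asserted form, (ii) identify the $C(\sg)^{\circ}$-orbit closure of $e+v$ with the $S$-variety $X$ via Popov-Vinberg, (iii) compare dimensions to see that $e+X$ is a full irreducible component of $\slice$, and (iv) propagate the description to all irreducible components using the transitivity from Proposition \ref{transitive_components}. The weight constraint $b_i=\tfrac{i}{2}+1$ is forced by compatibility: the $\CM^{*}$-action of (\ref{C-star}) scales $\gg^f(-i)$ by $t^{i+2}$, while the central cocharacter of the simple factor $\zg\subset\cg(\sg)$ containing $x_0\in V(\lambda)$ scales $V(b\lambda)$ by $t^{2b}$. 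For $e+v$ to represent a well-defined point modulo the combined $(C(\sg)^{\circ}\times\CM^{*})$-action, these must balance, giving $2b_i=i+2$; in particular, odd $i$ are allowed only when $\lambda=2\omega$ for some fundamental weight $\omega$ of $\zg$.

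For step (i), I would proceed case by case through the minimal degenerations, as carried out in \S\ref{results_F4}--\S\ref{e8}. In each case one picks a minimal nilpotent $x_0\in V(\lambda)\subset\cg(\sg)$, extends it to an $\sl_2$-triple $\{e_0,h_0,f_0\}\subset\cg(\sg)$ (so that $e+e_0$ embeds in the $\sl_2$-triple $(e+e_0,h+h_0,f+f_0)$ determining the predicted larger orbit $\0$), and then solves linearly for $C(\sg)^{\circ}$-highest weight vectors $x_i\in\gg^f(-i)_{b_i\lambda}$ such that $e+v\in\cl$. The codimension-two cases with $|\Gamma|\le 2$ are handled analogously with $x_0=0$ and a small $J$. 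Once $v$ is in hand, it is a sum of highest weight vectors in $V:=\bigoplus_j V(b_j\lambda)$, so by \S\ref{S_vars} the closure of $C(\sg)^{\circ}\cdot v$ in $V\subset\gg^f$ is precisely the $S$-variety $X$ of the statement. Comparing $\dim X$ with $\dim\slice=\codim_{\cl}\0'$ from (\ref{dim=codim}) shows that $e+X$ fills a full irreducible component, and Proposition \ref{transitive_components} then gives $\slice=\ov{C(\sg)\cdot(e+v)}$.

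Finally, the dichotomy is a direct consequence of the weight analysis and the $S$-variety theory of \S\ref{S_vars}. In case (1), all $b_i=\tfrac{i}{2}+1$ are integers $\geq 2$, so the monoid generated by $\{1\}\cup\{b_i:i\in J\}$ is all of $\NM$; Popov-Vinberg Theorem 8 then forces $\pi_0:X\to X(\lambda)$ to be an isomorphism, and $X(\lambda)$ is the closure of the minimal nilpotent $C(\sg)^{\circ}$-orbit through $x_0$, hence a normal minimal singularity. In case (2), $b_1=\tfrac{3}{2}$ forces $\lambda=2\omega$ with $\omega$ fundamental, and the requirement that $V(\lambda)=\zg$ be its own adjoint representation holds among simple complex Lie algebras only when $\zg=\sl_2(\CM)$ or $\zg=\sp_4(\CM)$. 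In each of these two cases $X=X(2\omega,3\omega)$ is the variety $m$ or $m'$ defined in \S\ref{unexpected_singularities}, and $\pi_{0,1}$ is the tautological closed embedding $X\hookrightarrow V(\lambda)\oplus V(3\omega)$, hence an isomorphism. The main obstacle is step (i): for each exceptional minimal degeneration one must exhibit the appropriate $x_0$ and correction vectors $x_i$ and verify the nilpotency and orbit class of $e+v$, which is the computational content of \S\ref{results_F4}--\S\ref{e8}.
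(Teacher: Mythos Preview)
Your overall architecture---produce $x=e+v$ with $v=x_0+\sum x_i$, identify $\overline{C(\sg)^\circ\cdot v}$ as an $S$-variety via \S\ref{S_vars}, match dimensions to get a component (Lemma~\ref{kp_prop}), and propagate via Proposition~\ref{transitive_components}---is exactly the paper's strategy, and your deferral of step~(i) to the case-by-case work in \S\ref{results_F4}--\S\ref{e8} is appropriate. But two of your supporting claims are wrong.

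First, the assertion that ``the codimension-two cases with $|\Gamma|\le 2$ are handled analogously with $x_0=0$'' is false. In every case of the Proposition, $x_0$ is a \emph{nonzero} minimal nilpotent element of $\cg(\sg)$; it is the weight of $x_0$ that defines $\lambda$, so without it the entire $S$-variety description collapses. In the codimension-two situations one still has $x_0\neq 0$: when $|\Gamma|=2$ the component is $X(\lambda)\cong A_1$ (the nilcone in $\sl_2$), and when $|\Gamma|=1$ one lands in case~(2) with $\slice\cong m$. Second, your derivation of the restriction $\zg\in\{\sl_2,\sp_4\}$ in case~(2) does not work. The statement ``$V(\lambda)=\zg$ is its own adjoint representation'' holds for every simple $\zg$ and imposes nothing; the actual constraint $\tfrac{3}{2}\lambda\in\Lambda_W$ (equivalently, the highest root lies in $2\Lambda_W$) is satisfied by \emph{every} $\sp_{2n}$, not just $n=1,2$. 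That only $\sl_2$ and $\sp_4$ occur is an empirical outcome of the exhaustive case analysis, not an a~priori representation-theoretic fact.

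Finally, your ``must balance'' weight argument is the right intuition but is not a proof as written. The paper supplies the missing step: one first shows (Lemma~\ref{oh_yeah}) that the dimension hypothesis~\eqref{both_conditions} forces the $\CM^*$-action to preserve $C(\sg)\cdot x$, and then (Lemma~\ref{2+}) that the resulting centralizer inclusion $(\CC^{x_0})^\circ\subset(\CC^{x_i})^\circ$ forces each nonzero $x_i$ to be a highest weight vector of weight $(\tfrac{i}{2}+1)\lambda$. That is, the highest-weight form of the $x_i$ is a \emph{consequence} of finding \emph{some} $x\in\slice\cap\0$ with $\dim C(\sg)\cdot x=\dim\slice$, not an ansatz one imposes before searching.
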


\begin{remark}  \label{which_J}
In case (1) of Proposition \ref{dim4+_1}, 
when the codimension is at least four, we find that $J = \emptyset$ except for 
the two minimal degenerations ending in the orbit $D_4(a_1)+A_2$ in $E_8$, 
where $J = \{ 2 \}$ (see \S \ref{dim_4_case_2}). 
On the other hand, when the codimension is two in case (1), then $\slice \cong kA_1$.
If $k >1$, then always $J = \emptyset$.  
If $k=1$, then $J$ can be either $\emptyset, \{2\},$ or $\{2, 4\}$.

In case (2) of Proposition \ref{dim4+_1},
the possibilities for $J$ that arise are $\{ 1 \}, \{1, 2 \},$ or $\{1, 2, 3\}$; however, 
the singularity $m'$ only occurs for $(A_3+2A_1, 2A_2+2A_1)$ in $E_8$,
where $J = \{ 1 \}$. 
\end{remark}

\begin{remark}
From the previous remark, we see that $\slice$ is not irreducible only when $J = \emptyset$. 
In that case, each irreducible component of $\slice$ corresponds to the minimal orbit closure in a unique simple summand of $\cg(\sg)$.
The direct sum of these summands is an irreducible representation for $C(\sg)$
and the summands are permuted transitively by $C(\sg)/C(\sg)^{\circ}$ as expected from Proposition \ref{transitive_components}.
The first non-irreducible example covered by the Proposition is $(C_3(a_1), B_2)$, see Table \ref{table1_f4}.
\end{remark}

\begin{remark} \label{not_everyone_contributes}
In each case of Proposition \ref{dim4+_1}, the map $\pi_0$ is surjective onto the closure 
of a minimal nilpotent $C(\sg)$-orbit in $\cg(\sg)$ (namely, the one through $x_0$).
It is not true, however, that every such minimal nilpotent orbit will arise in this way.
For example, when  $e \in \0' = 3A_1$ in $E_6$, the centralizer $\cg(\sg)$ has type $A_2+A_1$.   
If $e_0$ belongs to the minimal orbit in the summand of type $A_2$
and $e'_0$ belongs to the minimal orbit in the summand of type $A_1$, 
then $\0' \subset \cl_{e+e'_{o}} \subset \cl_{e+e_{o}}$
and $\0'$ is a minimal degeneration of $\0_{e+e'_{o}}$ and of no other orbit.  So the minimal orbit in the $A_2$ summand 
is not in the image of $\pi_0$ for any minimal degeneration ending in $\0'$.  
\end{remark}

\section{Tools for establishing Proposition \ref{dim4+_1}}  \label{tools_explicit}

In this section we give some tools for identifying $\slice$, which can often be applied even when the degeneration is not minimal. Therefore we do not in general assume degenerations are minimal in this section.  At the end of the section in \S \ref{section:dim4}, we prove Proposition \ref{dim4+_1}.
We keep the notation that $\0$ and $\0'$ are nilpotent $G$-orbits in $\gg$ with $\0' \subset \cl$ and $e \in \0'$.   
Unless specified otherwise, $\0'$ is not assumed to be a minimal degeneration of $\0$.

\subsection{Some reduction lemmas}  

The first two lemmas give frameworks to relate $\slice$ to a variety attached to a proper subalgebra of $\gg$.
Both lemmas are variants of \cite[Cor 13.3]{Kraft-Procesi:classical}. 

\subsubsection{Passing to a reductive subalgebra} \label{passing_to_subalgebra}

Let $M \subset G$ be a closed reductive subgroup and $\mg =  \Lie(M)$.
Assume that $e \in \mg \cap \0'$.  Let $x \in \mg \cap \0$ and suppose that $M \cdot e \subset \ov{M\cdot x}$.
Since $\mg$ is reductive, we may assume the  $\sl_2$-subalgebra $\sg$ containing $e$ lies in $\mg$.
Let $\SC_{M\cdot x, e}$ be the nilpotent Slodowy slice 
$\ov{M\cdot x} \cap (e + \mg^f)$ in $\mg$.
Of course, $\SC_{M\cdot x, e} \subset \slice$.

\begin{lemma}\label{singequi}
Suppose that  $\codim_{\ov{M\cdot x}} (M \cdot e) = \codim_{\cl} (\0')$ and $\SC_{M\cdot x, e}$ is equidimensional.
Then $\SC_{M\cdot x, e}$ is a union of irreducible components of $\slice$.
Moreover if $\cl$ is unibranch at $e$  or  if the number of branches of $\cl$ at $e$ equals the number of branches of $\ov{M\cdot x}$ at $e$, then $\SC_{M\cdot x, e} = \slice$.
\end{lemma}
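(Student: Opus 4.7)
The plan is to prove both assertions by a pure dimension/connected-component argument combined with the branch count from \S\ref{Component group action}.

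First I would apply the equality \eqref{dim=codim} on both sides. Since the $\sl_2$-triple $\{e,h,f\}$ lies in $\mg$, we have $\mg^f \subset \gg^f$, and so $\SC_{M\cdot x,e} = \ov{M\cdot x}\cap(e+\mg^f) \subset \ov{\0}\cap(e+\gg^f) = \slice$. Applying \eqref{dim=codim} once inside $\mg$ and once inside $\gg$ gives
\[
\dim \SC_{M\cdot x, e} \;=\; \codim_{\ov{M\cdot x}}(M\cdot e) \;=\; \codim_{\cl}(\0') \;=\; \dim \slice,
\]
so by hypothesis every irreducible component of $\SC_{M\cdot x, e}$ has dimension exactly $\dim \slice$.

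Next, for any irreducible component $Y$ of $\SC_{M\cdot x, e}$, I would pick an irreducible component $Z$ of $\slice$ containing $Y$. Since $Y$ is closed in $\slice$ and $\dim Y = \dim \slice \geq \dim Z \geq \dim Y$, the inclusion $Y \subset Z$ is an equality of irreducible closed subsets. This shows that $\SC_{M\cdot x, e}$ is a union of irreducible components of $\slice$, which gives the first assertion.

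For the "moreover" part, I would invoke the $\CM^*$-action from \S\ref{group action}, which is defined using the same $\sl_2$-triple and hence preserves $\SC_{M\cdot x, e}$ as well. Exactly as in \S\ref{Component group action}, the attracting $\CM^*$-action at $e$ implies that $\SC_{M\cdot x, e}$ is connected and that each of its irreducible components is unibranch at $e$; hence the number of irreducible components of $\SC_{M\cdot x, e}$ equals the number of branches of $\ov{M\cdot x}$ at $e$, and likewise for $\slice$ and $\cl$. If $\cl$ is unibranch at $e$, then $\slice$ is irreducible, and since $\SC_{M\cdot x, e}$ is a nonempty union of irreducible components of $\slice$, we conclude $\SC_{M\cdot x, e} = \slice$. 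If instead the number of branches of $\cl$ at $e$ equals the number of branches of $\ov{M\cdot x}$ at $e$, then the two varieties have the same number of irreducible components, and the inclusion of components from the previous step forces equality.

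No step should be a serious obstacle: the only thing to be careful about is that the attracting $\CM^*$-action restricts correctly to $\SC_{M\cdot x, e}$, which is automatic because $\sg \subset \mg$ and the construction of the action in \eqref{C-star} is intrinsic to the choice of $\sl_2$-triple.
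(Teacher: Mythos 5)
Your proof is correct and follows exactly the approach the paper intends: use \eqref{dim=codim} (and its analogue inside $\mg$) together with the inclusion $\SC_{M\cdot x,e}\subset\slice$ and equidimensionality to match components by dimension, and then count components via the unibranch/connectedness consequences of the contracting $\CM^*$-action from \S\ref{group action}. You have simply spelled out the details that the paper's two-sentence proof leaves implicit, including the observation that the $\CM^*$-action restricts to $\SC_{M\cdot x,e}$ because $\sg\subset\mg$.
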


\begin{proof}
The first conclusion follows from \eqref{dim=codim} and the fact that $\SC_{M\cdot x, e} \subset \slice$, together with the hypotheses of the lemma. 
The second conclusion follows from the fact that the irreducible components 
of $\slice$ and $\SC_{M\cdot x, e}$ are unibranch (\S \ref{Component group action}).
\end{proof}

\begin{example}
Let $\gg$ be of type $E_8$, $\mg$ a Levi subalgebra of $\gg$ of type $E_6$, and $(\0, \0')$ of type $(D_5, E_6(a_3))$.
Since $\cl \cap \mg$ is known to be normal in $E_6$, we are able to conclude (see \S \ref{surfaces_E6}) that $\SC_{M\cdot x, e}$
is geometrically a simple surface singularity of type $D_4$.   Since $\cl$ is unibranch at $e$ and the codimension hypothesis of the lemma holds (both sides equal two), it follows that $\slice = \SC_{M\cdot x, e}$ and so $\slice$ has the same singularity.  
Here, $\cl$ is conjectured to be normal, but this is still an open question.
\end{example}

Lemma \ref{singequi} is needed for the cases in Table \ref{restriction_subalgebra}, but we also use it to check results that can be obtained by other methods. 

\subsubsection{The case of a $C(\sg)$-orbit of maximal possible dimension}  \label{C_closures}

Every $x \in \slice$ can be written as 
\begin{equation} \label{x_form1}
x = e + x_0 + \sum_{i > 0 }  x_i 
\end{equation} 
with $x_i \in \gg^f(-i)$.  
Set $$x_{+} = \sum_{i \geq 0} x_i  \hspace{.1in} \text{    and  }  \hspace{.1in} X = \ov{C(\sg) \cdot  x_+ }.$$ 
Since $C(\sg)$ fixes $e$, we have  
$C(\sg) \cdot x = e + C(\sg) \cdot x_+$ and thus
$\ov{C(\sg)\cdot x}  = e + X .$
Also, $e + X \cong X$ as $C(\sg)$-varieties.
By construction $X$ is equidimensional, with irreducible components permuted transitively by $C(\sg)/C(\sg)^\circ$.
Of course $\ov{C(\sg)\cdot x} \subset \slice$.
Hence the same argument as in the previous lemma gives



\begin{lemma}  \label{kp_prop}
Let $x \in \slice$ be written as in \eqref{x_form1}.
Suppose that $\dim (C(\sg) \cdot x) = \codim_{\cl} (\0')$.
Then $e+X$ is a union of irreducible components of $\slice$. 
Moreover, if the number of branches of $\cl$ at $e$ equals the number of irreducible components of $X$,
then $e+X =\slice$. 
\end{lemma}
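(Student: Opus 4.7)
The plan is to mimic the proof of Lemma \ref{singequi}, replacing the role of $\ov{M\cdot x}$ by the $C(\sg)$-orbit closure $\ov{C(\sg)\cdot x}=e+X$. The key dimension identity is that, since $C(\sg)$ fixes $e$, translation by $e$ is an isomorphism of $C(\sg)$-varieties, so
\[
\dim(e+X)=\dim X=\dim\bigl(C(\sg)\cdot x_+\bigr)=\dim\bigl(C(\sg)\cdot x\bigr),
\]
which by hypothesis equals $\codim_{\cl}(\OC')$. Combining this with \eqref{dim=codim} gives $\dim(e+X)=\dim\slice$.

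First I would observe that $e+X\subset\slice$: indeed $x\in\slice$ and $\slice$ is $C(\sg)$-stable and closed, so $\ov{C(\sg)\cdot x}\subset\slice$; a direct computation shows $\ov{C(\sg)\cdot x}=e+X$ since $C(\sg)$ fixes $e$. Next, $X$ is equidimensional because its irreducible components are permuted transitively by the finite group $C(\sg)/C(\sg)^{\circ}$ (as noted in the paragraph preceding the lemma), and hence so is $e+X$. Therefore every irreducible component of $e+X$ has dimension $\dim\slice$, and since $e+X\subset\slice$, each such component must be an irreducible component of $\slice$. This proves the first assertion.

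For the second assertion, I would invoke the discussion in \S\ref{Component group action}: the $\CM^*$-action on $\slice$ in \eqref{C-star} contracts to $e$, so the number of irreducible components of $\slice$ equals the number of branches of $\cl$ at $e$ (the irreducible components being unibranch at $e$). Under the stated hypothesis this number equals the number of irreducible components of $X$, hence of $e+X$. Since we have already shown that every irreducible component of $e+X$ is an irreducible component of $\slice$, and both have the same number of components, the inclusion $e+X\subset\slice$ must be an equality.

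No serious obstacle is expected; the only subtle point is the identification of the number of irreducible components of $\slice$ with the number of branches of $\cl$ at $e$, which has already been set up in \S\ref{Component group action} via the contracting $\CM^*$-action. Everything else is a direct dimension count parallel to Lemma \ref{singequi}.
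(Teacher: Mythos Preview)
Your proposal is correct and follows essentially the same approach as the paper, which simply says that the same argument as in Lemma~\ref{singequi} applies once one notes $\ov{C(\sg)\cdot x}=e+X\subset\slice$; you have just spelled out the details (equidimensionality via the transitive $C(\sg)/C(\sg)^\circ$-action on components, the dimension count via \eqref{dim=codim}, and the branch count via \S\ref{Component group action}) that the paper leaves implicit.
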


The previous lemma allows us to study $\slice$ by studying $\ov{C(\sg) \cdot  x_+ }$, which is often easier to understand.  
Of course any $x \in \slice$ satisfying the dimension hypothesis in the lemma must lie in $\slice \cap \0$.  Furthermore 
we have

\begin{lemma}  \label{oh_yeah}
Let $x \in \slice$ be written as in \eqref{x_form1}.
Then $\dim( C(\sg) \cdot x ) = \codim_{\cl} (\0')$ if and only if
\begin{equation}  \label{both_conditions}
x_0 \text{ is nilpotent and } \dim (C(\sg) \cdot x_0)  = \codim_{\cl} (\0').
\end{equation}
\end{lemma}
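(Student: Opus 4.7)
The direction $(\Leftarrow)$ is straightforward: since $\pi_0\colon \slice \to \cg(\sg)$ from \S\ref{natural_proj} is $C(\sg)$-equivariant with $\pi_0(x)=x_0$, we have $\pi_0(C(\sg)\cdot x)=C(\sg)\cdot x_0$, so $\dim(C(\sg)\cdot x)\ge \dim(C(\sg)\cdot x_0)=\codim_\cl(\0')$; together with the inclusion $C(\sg)\cdot x\subseteq \slice$ and \eqref{dim=codim}, equality holds.

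For $(\Rightarrow)$ I plan to exploit the commuting actions of $C(\sg)$ and $\CM^*$ on $\slice$. Assuming $\dim(C(\sg)\cdot x)=\codim_\cl(\0')=\dim \slice$, the orbit $C(\sg)^\circ\cdot x$ is open dense in some irreducible component $X_1$ of $\slice$. Since $\CM^*$ is connected it preserves $X_1$, and since it commutes with $C(\sg)$, the orbit $C(\sg)^\circ\cdot(t\cdot x)$ is likewise dense in $X_1$ for every $t\in\CM^*$. Uniqueness of a dense $C(\sg)^\circ$-orbit in an irreducible variety then produces elements $g_t\in C(\sg)^\circ$ with $g_t\cdot x=t\cdot x$. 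Because $g_t$ commutes with $\ad h$ and therefore preserves each graded piece $\gg^f(-i)$, matching components of $g_t\cdot x = t\cdot x$ yields the key identity
\[
g_t \cdot x_i = t^{i+2}x_i \qquad\text{for every } i\ge 0.
\]

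Taking $i=0$ gives $t^{2}x_0\in C(\sg)^\circ\cdot x_0$ for all $t$, and letting $t\to 0$ forces $0\in \overline{C(\sg)^\circ\cdot x_0}$. By the standard closed-orbit argument for reductive group actions (the unique closed orbit in the closure of any orbit is that of the semisimple part of the element), the semisimple part of $x_0$ must vanish, so $x_0$ is nilpotent.

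The main obstacle is the dimension equality $\dim(C(\sg)\cdot x_0)=\codim_\cl(\0')$. The upper bound $\dim(C(\sg)\cdot x_0)\le \dim(C(\sg)\cdot x)$ is clear from $C(\sg)$-equivariance of $\pi_0$, but the reverse amounts to showing that the surjection $\pi_0\colon C(\sg)\cdot x\to C(\sg)\cdot x_0$ is generically finite, equivalently that $(C(\sg)^{x_0})^\circ \subseteq (C(\sg)^x)^\circ$. I expect this to follow from the full strength of the relations $g_t\cdot x_i=t^{i+2}x_i$: the one-parameter family $\{g_t\}\subset C(\sg)^\circ$ acts on each $x_i$ with a distinct positive weight, which, together with the density of $C(\sg)^\circ\cdot x$ in $X_1$, constrains any element of $(C(\sg)^{x_0})^\circ$ to fix all of the $x_i$ and hence to lie in $(C(\sg)^x)^\circ$. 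Making this last deduction precise is the crux of the proof; once established, the dimension equality follows and the lemma is complete.
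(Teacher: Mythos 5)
Your reverse implication and the nilpotence half of the forward implication are both sound and track the paper's own reasoning: the $\CM^*$-action preserves $C(\sg)\cdot x$ because the latter is dense in $(C(\sg)\times\CM^*)\cdot x$, projecting to graded pieces gives $t\cdot x_0 \in C(\sg)\cdot x_0$, and contraction forces $x_0$ to be nilpotent. That part is essentially identical to what the paper does.

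The gap is exactly where you flag it, and I don't think the route you sketch can close it. You want to show $(C(\sg)^{x_0})^\circ \subseteq (C(\sg)^x)^\circ$, equivalently $\cg(\sg)^{x_0} \subseteq \cg(\sg)^x$, and you propose to extract this from the relations $g_t\cdot x_i = t^{i+2}x_i$. But those relations tell you how a particular family $\{g_t\}$ (not even a priori a one-parameter subgroup) acts on the $x_i$; they place no constraint whatsoever on an arbitrary element $k\in(C(\sg)^{x_0})^\circ$, which need not commute with the $g_t$ and need not preserve any line $\CM x_i$. Density of $C(\sg)^\circ\cdot x$ in a component of $\slice$ also doesn't force $k$ to fix the higher components $x_i$ — it only tells you $k\cdot x$ lies in the same component. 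So the "crux" is genuinely missing, not merely unpolished.

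The paper closes this with an idea you don't have: it invokes Gan--Ginzburg's result that $\slice\cap\0$ is a symplectic subvariety of $\0$, so the Kostant--Kirillov form on $T_x\0\cong[x,\gg]$ restricts nondegenerately to $T_x(\slice\cap\0)$, which (by the dimension hypothesis) equals $[x,\cg(\sg)]$. For $u\in\cg(\sg)$ with $[x_0,u]=0$, one computes $\langle[x,u],[x,v]\rangle=\kappa(x_0,[u,v])=\kappa([x_0,u],v)=0$ for all $v\in\cg(\sg)$ (the passage from $x$ to $x_0$ works because $[u,v]\in\gg^f(0)$ pairs only with the $0$-graded piece). Nondegeneracy then forces $[x,u]=0$, giving the inclusion of centralizers and hence the dimension equality. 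This symplectic step is the real content of the forward direction; your argument establishes nilpotence of $x_0$ but not the dimension statement.
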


\begin{proof}
We always have $\dim (C(\sg) \cdot x_0)  \leq \dim( C(\sg) \cdot x ) \leq \dim \slice = \codim_{\cl} (\0')$,
so the reverse direction is straightforward and does not use the hypothesis that $x_0$ is nilpotent.

For the forward direction, we are given that $\dim( C(\sg) \cdot x ) = \dim \slice$.
Consider the $\CM^*$-action (\S \ref{group action}) on $\slice$.
We have $C(\sg) \cdot x \subset (C(\sg) \times \CM^*) \cdot x \subset \slice$ and
all have the same dimension, so $C(\sg) \cdot x$ is dense in $(C(\sg) \times \CM^*) \cdot x$.
Therefore for $\lambda \in \CM^*$, we have $\lambda \cdot (C(\sg) \cdot x)$ meets $C(\sg) \cdot x$,
from which it follows that $\lambda \cdot x \in C(\sg) \cdot x$.  So in fact $\CM^*$ preserves 
$C(\sg) \cdot x$.  But if $\CM^* \cdot x \subset  C(\sg) \cdot x$, then 
$\CM^* \cdot x_0 \subset  C(\sg) \cdot x_0$.  
The $\CM^*$-action on $\cg(\sg)$ is contracting, hence $0 \in \ov{C(\sg) \cdot x_0}$, and $x_0$ 
must be nilpotent in $\cg(\sg)$.

Next by \cite[Corollary 7.2]{Gan-Ginzburg}, $\slice \cap \0$ is a symplectic subvariety of $\0$, so the symplectic form on  $T_x(\0)$
remains non-degenerate on restriction to $T_x(\slice \cap \0)$.
As usual, we identify $T_x(\0)$ with $[x, \gg]$ and the symplectic form on $T_x(\0)$ is 
then expressed as $\langle [x,u], [x, v] \rangle := \kappa (x, [u,v])$
for $u, v  \in \gg$, where $\kappa$ is the Killing form on $\gg$.
But since $C(\sg) \cdot x$ has dimension equal to $\slice \cap \0$, we can identify $T_x(\slice \cap \0)$ with $[x, \cg(\sg)]$.

Now suppose  $u \in \cg(\sg)$ satisfies $[x_0, u] = 0$.  
Then for  any $v \in \cg(\sg)$, we have 
$$\langle [x,u], [x, v] \rangle  = \kappa (x, [u,v]) = \kappa (x_0, [u,v])$$ 
since $[u,v] \in  \cg(\sg) = \gg^f(0)$ pairs nontrivially only with elements in the $0$-eigenspace of $\ad h$.
But then  $\langle [x,u], [x, v] \rangle  =\kappa (x_0, [u,v]) = \kappa ([x_0, u], v) =  0$.  Hence
$[x,u]$ is in the kernel of the symplectic form and thus $[x,u]=0$ by non-degeneracy of the form.  This shows that
$\cg(\sg)^{x_0} \subset \cg(\sg)^{x}$, which forces
$\dim (C(\sg) \cdot x)  = \dim (C(\sg) \cdot x_0)$, as desired.
\end{proof}

The case where $x_i = 0$ for all $i \geq 1$ in \eqref{x_form1}, and \eqref{both_conditions} holds, is particularly common 
and is also easier to handle.  In that case, $x = e + x_0$ is a sum of two commuting nilpotent elements and 
$X =  \ov{C(\sg) \cdot x_0}$ is the closure of a nilpotent orbit in $\cg(\sg)$, which is 
a union of irreducible components of $\slice$.  We discuss this case in detail in \S \ref{dim_condition}.
We next prove a lemma useful for when some $x_i$ is nonzero with $i \geq 1$.

\subsubsection{}  \label{add_in_dimension_condition}

Assume $x \in \slice \cap \0$, written as in \eqref{x_form1},
satisfies \eqref{both_conditions}.   The next lemma uses the $\CM^*$-action on $\slice$
to say more about the $x_i$'s which appear in  \eqref{x_form1}.
Since $x_0$ is nilpotent, we can find an $\sl_2$-subalgebra $\sg_{x_0}$ in $\cg(\sg)$ containing $x_0$, with
standard semisimple basis element $h_{x_0}$.

\begin{lemma}  \label{2+}
Let $x \in \0$ be written as in \eqref{x_form1} so that 
 \eqref{both_conditions} holds.
Then the following are true.
\begin{enumerate}
\item $[x_0 , x_i] = 0$ for $i \geq 0$.
\item $[h_{x_0}, x_i] = (i+2) x_i$ for $i \geq 0$.
\item If $x_0$ lies in a minimal nilpotent $C(\sg)^\circ$-orbit of $\cg(\sg)$, then 
each nonzero $x_i$ is a highest weight vector for a Borel subgroup $B$ of $C(\sg)^\circ$.
In particular, $X$ is a union of $S$-varieties.  If $x_0$  has weight $\lambda$ relative to a maximal torus  of $B$, 
then $x_i$ has weight $(\tfrac{i}{2} + 1) \lambda$.
\end{enumerate}
\end{lemma}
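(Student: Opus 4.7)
The plan is to exploit the inclusion $\cg(\sg)^{x_0} \subset \cg(\sg)^{x}$ established in the proof of Lemma~\ref{oh_yeah}, together with the $\CM^*$-action on $\slice$ from \S\ref{group action}. The pivotal observation for (1) is that for any $u \in \cg(\sg)$ centralizing $x_0$, the relation $0 = [u,x] = \sum_{i\geq 0}[u, x_i]$ (using $[u,e]=0$) decomposes into distinct weight components under $\ad h$, because $\ad u$ commutes with both $\ad h$ and $\ad f$ and therefore preserves each $\gg^f(-i)$; hence $[u, x_i] = 0$ for every $i \geq 0$, and specialising to $u = x_0$ proves (1).

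For (2), the $\CM^*$-action preserves the orbit $C(\sg)\cdot x$ (as shown in the proof of Lemma~\ref{oh_yeah}), so the curve $t\mapsto t\cdot x = e + \sum_{i\geq 0} t^{i+2} x_i$ lies in $C(\sg)\cdot x$ and its tangent vector $2 x_0 + \sum_{i>0}(i+2) x_i$ at $t=1$ can be written as $[u, x]$ for some $u \in \cg(\sg)$. Matching $\gg^f(-i)$-components yields $[u, x_0] = 2 x_0$ and $[u, x_i] = (i+2) x_i$ for $i > 0$. By $\sl_2$-theory applied to $x_0$ and its Jacobson--Morozov triple $\sg_{x_0}$, any such $u$ has the form $h_{x_0} + z$ with $z \in \cg(\sg)^{x_0}$, and step (1) shows $z$ annihilates each $x_i$; therefore $[h_{x_0}, x_i] = (i+2) x_i$ for every $i \geq 0$.

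For (3), suppose $x_0$ lies in the minimal nilpotent orbit of a simple summand $\mathfrak{g}_1 \subset \cg(\sg)$, with highest root $\lambda$. Choose a Borel $B = TU \subset C(\sg)^\circ$ making $x_0$ a highest weight vector, so that $[\Lie U, x_0] = 0$; by (1) this forces $[\Lie U, x_i] = 0$, hence each nonzero $x_i$ is a highest weight vector for $B$ of some dominant weight $\mu_i$. This gives the $S$-variety description of $X$, and Proposition~\ref{transitive_components} accounts for the ``union of $S$-varieties'' structure via the $C(\sg)/C(\sg)^\circ$-action. To identify $\mu_i$, note that $\cg(\sg)^{x_0}$ also contains the other simple summands of $\cg(\sg)$, the centre of $\cg(\sg)$, and the Levi $\mathfrak{l} \subset \mathfrak{g}_1(0)$ of the Jacobson--Morozov grading of $x_0$; applying (1) shows $\mu_i$ vanishes on all these factors, so in particular $\mu_i$ is supported on the line in $\mathfrak{h}_1^*$ vanishing on the Cartan $\mathfrak{l}^0$ of $\mathfrak{l}$. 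Together with the eigenvalue relation $\mu_i(h_{x_0}) = i+2$ from (2), this pins down $\mu_i = (\tfrac{i}{2}+1)\lambda$.

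The only subtle point is the identification in (3) of characters of $\mathfrak{g}_1$ trivial on $\mathfrak{l}^0$ with the line $\CM\lambda$, which reduces to the standard decomposition $\mathfrak{h}_1 = \mathfrak{l}^0 \oplus \CM h_\lambda$ attached to the Jacobson--Morozov grading of a minimal nilpotent in a simple Lie algebra. Everything else is a mechanical manipulation of $\sl_2$-decompositions and weight-space matching once Lemma~\ref{oh_yeah} is in place.
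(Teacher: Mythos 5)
Your proof is correct, and the overall strategy matches the paper's: both parts (1)--(3) rest on the centralizer inclusion $\cg(\sg)^{x_0}\subset\cg(\sg)^{x_i}$ coming from Lemma~\ref{oh_yeah}, combined with the contracting $\CM^*$-action and the Jacobson--Morozov cocharacter $\phi$ attached to $x_0$. The difference is one of level: where the paper works group-theoretically, you work infinitesimally. For (2) the paper produces the auxiliary one-dimensional torus $\{\chi(t^{-1})\phi(t)\}$ fixing $x_0$ (hence each $x_i$) and reads off the weight, while you take the tangent vector $\sum(i+2)x_i$ to the $\CM^*$-curve, observe it must be $[u,x]$ for some $u\in\cg(\sg)$, and decompose $u=h_{x_0}+z$; these are the same idea packaged differently. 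For (3) the paper factors $B=U\cdot T^{x_0}\cdot\phi(\CM^*)$ and shows $B$ preserves the line $\CM x_i$ because the connected subgroup $\{ut_0\phi(t)\chi(t^{-1})\}$ centralizes $x_i$, whereas you use the Lie-algebra decomposition $\mathfrak{h}_1=\mathfrak{l}^0\oplus\CM h_{x_0}$ and the fact that $\mathfrak{l}^0$, the center, and the other simple summands all lie in $\cg(\sg)^{x_0}$.

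One presentational wrinkle in your (3) is worth flagging: the sentence ``$[\Lie U,x_i]=0$, hence each nonzero $x_i$ is a highest weight vector of some weight $\mu_i$'' overstates what has been established at that point. Being annihilated by $\Lie U$ in a (possibly reducible) representation only places $x_i$ in a sum of highest-weight lines; it does not by itself make $x_i$ a $T$-weight vector. Your subsequent observations --- that $\mathfrak{l}^0$, the center, and the other simple factors (all inside $\cg(\sg)^{x_0}$) centralize $x_i$, while $h_{x_0}$ acts by the scalar $i+2$ --- are exactly what is needed to show the full Cartan acts on $x_i$ by a single character and hence that $x_i$ is a genuine weight vector. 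So the proof is sound, but those two sentences need to be reordered so that the weight-vector property is \emph{derived} rather than asserted before the Cartan computation. The paper avoids this subtlety by directly exhibiting that $B$ preserves the line $\CM x_i$.
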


\begin{proof}
(1) By Lemma \ref{oh_yeah} we know that $\dim (C(\sg) \cdot x_0)  = \dim (C(\sg) \cdot x)$,
which is equivalent to $\cg(\sg)^{x_0} = \cg(\sg)^{x}$, which is equivalent to
$\cg(\sg)^{x_0} \subset \cg(\sg)^{x_i}$ for $i \geq 0$. 
Since $x_0$ commutes with itself, we get $[x_0 , x_i] = 0$ for $i \geq 0$.

(2)  
From the proof of Lemma \ref{oh_yeah},
the dimension condition in \eqref{both_conditions} implies that 
the $\CM^*$-action on $\slice$ preserves $C(\sg) \cdot x$.
Let $\CC:=C(\sg) \times \CM^*$. 
The equality $\dim (C(\sg) \cdot x_0)  = \dim (C(\sg) \cdot x)$ therefore
implies $\dim (\CC \cdot x_0) = \dim (\CC \cdot x)$, which means
we have the inclusion of identity components of centralizers
\begin{equation} \label{centralizer_containment}
(\CC^{x_0})^{\circ} \subset(\CC^{x_i})^{\circ} \text{ for all } i. 
\end{equation}


Now write $\chi(t)$ for the element $(1, t) \in C(\sg) \times \CM^*$.
Let $\phi: \CM^* \to C(\sg) \times \CM^*$ be the cocharacter coming from $h_{x_0}$.
Of course $\phi(\CM^*) \subset C(\sg)$ commutes with $\chi(\CM^*)$.
Consider now the action of the element
$\chi(t^{-1}) \phi(t)$ on $x_0$ for $t \in \CM^*$.
We have $\phi(t).x_0 = t^2 x_0$ since $[h_{x_0},x_0]=2$ and $\chi(t).x_0 = t^{0+2}x_0$ since $x_0 \in \gg^f(0)$,
so the one-dimensional torus $\{ \chi(t^{-1}) \phi(t) \ | \ t \in \CM^*\}$ fixes $x_0$ and hence by \eqref{centralizer_containment},
it also fixes each $x_i$.  The result follows from $\phi(t).x_i = \chi(t).x_i = t^{i+2}x_i$ since $x_i \in \gg^f(-i)$.
Combining with part (1), we see that each nonzero $x_i$ is a highest weight vector for the Borel subalgebra of $\sg_{x_0}$ spanned by $x_0$ and $h_{x_0}$.

(3)  Since $x_0$ lies in a minimal nilpotent $C(\sg)$-orbit of $\cg(\sg)$, 
the stabilizer of the line through $x_0$ is a parabolic subgroup $P$ of $C(\sg)$,
containing $\phi(\CM^*)$.  Let $B$ be a Borel subgroup of $P$ containing $\phi(\CM^*)$ and 
let $T$ be a maximal torus in $B$ with $\phi(\CM^*) \subset T$.  Let $U$ be the unipotent radical of $B$,
which acts trivially on $x_0$.
Now $x_0$ is a root vector relative to $T$,
so $T^{x_0}$ is codimension one in $T$.
Thus $T$ is generated by $T^{x_0}$ and $\phi(\CM^*)$, so
each element of $B$ can be written as $u t_0 \phi(t)$ for $u \in U$, $t_0 \in T^{x_0}$, and $t \in \CM^*$.
It follows that the connected subgroup 
$\{ u t_0 \phi(t) \chi(t^{-1}) \}$ of $C(\sg) \times \CM^*$ centralizes $x_0$ and hence centralizes 
each $x_i$ by \eqref{centralizer_containment}.  Hence $B$ (and indeed also $P$) preserves the line through $x_i$ 
since $\chi(\CM^*)$ does;  in other words, $x_i$ is 
a highest weight vector relative to $(B,T)$.   
Moreover, the weight of $x_i$ must equal $r \lambda$, with $r$ rational, since $x_i$ and $x_0$ are both
acted upon trivially by $T^{x_0}$.  
On the one hand, $\phi(t).x_i = t^{i+2}x_i$ from part (2), and 
on the other hand, $\phi(t).x_i = (r\lambda)(\phi(t))x_i = (\lambda(\phi(t)))^r x_i = (t^{2})^r x_i$.
We conclude that $r = (\tfrac{i}{2} + 1)$.
\end{proof}

Another way to phrase part (2) of Lemma \ref{2+} is that 
$x$ must be in the $2$-eigenspace for $\ad(h+h_{x_0})$.   
This fact can be used to help locate an $x$ written as in \eqref{x_form1} so that 
 \eqref{both_conditions} holds, see Lemma \ref{slice_in_2_space}.

Part (3) of Lemma \ref{2+} will be used in the proof of Proposition \ref{dim4+_1} in \S \ref{section:dim4},
since for each minimal degeneration covered by the proposition,
it turns out that there exists $x \in \slice$ 
satisfying  \eqref{both_conditions} with $x_0$ in a minimal nilpotent orbit of $\cg(\sg)$.


\subsubsection{}


When the degeneration is codimension two and 
 $\cg(\sg)$ has a simple summand isomorphic to $\sl_2(\CM)$, 
the next lemma gives a criterion which guarantees that 
$\slice$ has a dense $C(\sg)$-orbit, allowing us to apply the previous lemmas.

\begin{lemma}  \label{S_var_sl2}
Let $\0'$ be a degeneration of $\0$ of codimension two (necessarily minimal). 
Suppose $C(\sg)^\circ$ contains a simple factor $Z$ 
with Lie algebra $\zg \cong \sl_2(\CM)$.  
Let $C(\zg)$ be the centralizer of $\zg$  in $G$, with Lie algebra $\cg(\zg)$.
\begin{enumerate}
\item If $Z$ acts non-trivially on $\slice$, then there exists $x \in \slice \cap \0$, written as in \eqref{x_form1},
satisfying \eqref{both_conditions} with $x_0 \in \zg$.
\item If $x \not \in \cg(\zg)$, or if $x \in \cg(\zg)$ but $e \not \in \overline{C(\zg) \cdot x}$, then $Z$ acts non-trivially on $\slice$.
\end{enumerate}
\end{lemma}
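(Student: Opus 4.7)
For (1), since $Z$ acts non-trivially on $\slice$, I would pick an irreducible component $\Sigma$ of $\slice$ on which $Z$ acts non-trivially. By \eqref{dim=codim}, $\dim \Sigma = 2$, and $\Sigma$ is an affine variety. Since $\SL_2(\CM)$ cannot have a one-dimensional orbit on an affine variety (the only candidate, $\mathbb{P}^1 = \SL_2(\CM)/B$, is projective), the non-trivial $Z$-action on the affine surface $\Sigma$ forces a dense two-dimensional $Z$-orbit. Any $x$ in this orbit lies in $\slice \cap \0$ (the latter being open and dense in $\slice$) and satisfies $\dim(C(\sg)\cdot x) = 2 = \codim_{\cl}(\0')$. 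Writing $x = e + x_0 + \sum_{i \geq 1} x_i$ as in \eqref{x_form1}, Lemma \ref{oh_yeah} then yields that $x_0 \in \cg(\sg)$ is nilpotent with $\dim(C(\sg)\cdot x_0) = 2$.

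To identify $x_0$, I would decompose $\cg(\sg) = \zg_1 \oplus \cdots \oplus \zg_r \oplus \mathfrak{a}$ into simple ideals and a central torus subalgebra (with $\zg_1 = \zg$), so that $C(\sg)^\circ = Z_1 \cdots Z_r \cdot A$ acts factorwise. Every nonzero nilpotent orbit in a simple Lie algebra has dimension at least two, with equality only for the minimal orbit in $\sl_2(\CM)$, so $\dim(C(\sg)^\circ \cdot x_0) = 2$ forces $x_0$ to lie in a single $\sl_2$-summand $\zg_j$ and to be nonzero there. In particular $x_0$ belongs to a minimal nilpotent $C(\sg)^\circ$-orbit of $\cg(\sg)$, so Lemma \ref{2+}(3) applies: each nonzero $x_i$ is a highest weight vector for $C(\sg)^\circ$ of weight $(\tfrac{i}{2}+1)\lambda$, where $\lambda$ is the weight of $x_0$. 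Since $\lambda$ is supported only on the $Z_j$-factor, for each $k \neq j$ the factor $Z_k$ has trivial character on $x_i$; the irreducible $C(\sg)^\circ$-submodule generated by $x_i$ is then trivial as a $Z_k$-module, so $Z_k$ fixes $x_i$. If $j \neq 1$, then $Z = Z_1$ fixes every $x_i$, hence fixes $x$, contradicting the density of $Z \cdot x$ in $\Sigma$. Thus $j = 1$ and $x_0 \in \zg$.

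For (2), read as a statement about an arbitrary $x \in \slice$, I would argue by contrapositive. Suppose $Z$ acts trivially on $\slice$. Then $\slice \subseteq \gg^Z = \cg(\zg)$, so $x \in \cg(\zg)$. Moreover the $\CM^*$-action of \S \ref{group action} is implemented by the cocharacter $\gamma$ inside $\phi(\SL_2(\CM))$; since $\sg = \Lie \phi(\SL_2(\CM))$ commutes with $\zg$, the connected group $\phi(\SL_2(\CM))$ is contained in the centralizer of $Z$ in $G$, which equals $C(\zg)$. Hence $\CM^* \cdot x \subseteq C(\zg) \cdot x$, and the contraction $\lim_{t \to 0} t \cdot x = e$ from \eqref{C-star} gives $e \in \overline{\CM^* \cdot x} \subseteq \overline{C(\zg) \cdot x}$, completing the contrapositive. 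The main obstacle is the structural step in (1): combining Lemma \ref{oh_yeah} with the orbit-dimension analysis on simple factors of $\cg(\sg)$ to confine $x_0$ to a single $\sl_2$-summand, and then using the weight constraint from Lemma \ref{2+}(3) to force that summand to be $\zg$ rather than any other $\sl_2$-factor. Everything else reduces to formal consequences of the $Z$- and $\CM^*$-equivariance of $\slice$.
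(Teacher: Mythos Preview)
Your argument is correct. Part~(2) matches the paper's proof essentially verbatim, while part~(1) reaches the same conclusion by a genuinely different route.

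For the existence of a two-dimensional $Z$-orbit, the paper projects $\slice$ onto a non-trivial irreducible $Z$-submodule $V \subset \gg^f(-j)$, passes to the projectivization in $\PM(V)$, and observes that a closed $Z$-orbit there cannot be a point (else $V$ would contain a $Z$-fixed line), hence is a copy of $\PM^1$ with Borel stabilizer; this yields $\dim(Z \cdot x) = 2$. Your argument---that a non-trivial action of a group with Lie algebra $\sl_2$ on an irreducible affine surface must have a dense orbit, since a one-dimensional orbit would be a complete curve---is more direct and dispenses with the auxiliary projection.

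For pinning $x_0$ to $\zg$ rather than another $\sl_2$-summand, the paper argues tersely that, for dimension reasons, the remaining simple factors of $C(\sg)^\circ$ preserve $Z \cdot x_0$ and, commuting with the transitive $Z$-action, must act trivially. Your detour through Lemma~\ref{2+}(3) is valid but heavier than needed: once you know $\dim(Z \cdot x) = 2$, the inclusion $\cg(\sg)^{x_0} \subset \cg(\sg)^x$ established in the proof of Lemma~\ref{oh_yeah} restricts to $\zg$ and gives $\dim(Z \cdot x_0) = 2$ directly, so $Z$ does not fix $x_0$ and $x_0$ must lie in $\zg$.
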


\begin{proof}

(1) Consider the decomposition of $\gg^f$ under $Z$.  Since the action of $Z$ is non-trivial on $\slice$, 
there exists $x= e+ x_+ \in \slice$ and a non-trivial irreducible $Z$-submodule $V$ of some $\gg^f(-j)$ so that 
$x_+$ has nonzero image $x_V$ under the $C(\sg) \times \CM^*$-equivariant projection $\pi_V$ of $\gg^f$ onto $V$.
The equivariance of $\pi_V$ ensures that the image $\pi_V(\slice)$ is conical for the scalar action on $V$.
Let $Y$ be the projectivization of $\pi_V(\slice)$, which is in $\PM(V)$.  Since $\dim \slice =2$ by hypothesis,
we know $\dim Y \leq 1$.  Now $Y$ has a closed $Z$-orbit, necessarily irreducible.  
If $Y$ contains a closed orbit which is a point, then $Z$ preserves a line in $V$, contradicting that $V$ is non-trivial irreducible.
Thus, since $\dim Y \leq 1$, each irreducible component of $Y$ must be a one-dimensional (closed) $Z$-orbit.  
The stabilizer of any point in $Y$ is therefore a proper parabolic subgroup in $Z$, namely a Borel subgroup $B$. 
Since then $\dim (Z \cdot x_V) = 2 = \dim \slice$, we have also 
$\dim (Z \cdot x) = \dim \slice$.
Then Lemma \ref{oh_yeah} ensures that $x$ 
satisfies \eqref{both_conditions} with $x_0 \in \cg(\sg)$,
when $x$ is expressed as in \eqref{x_form1}.    
But then for dimension reasons the other connected simple factors of $C(\sg)$ 
must preserve $Z \cdot x_0$, hence act trivially, which implies that $x_0 \in \zg$.

(2) If $Z$ acts trivially on $\slice$, then
$\slice \subset \cg(\zg)$.  Hence $x \in \cg(\zg)$.  Since $\sg \subset \cg(\zg)$, it follows that the $\CM^*$-action 
on $\slice$ preserves nilpotent $C(\zg)$-orbits.
Hence $e \in  \overline{C(\zg) \cdot x}$.
(This part did not use the assumption on the codimension of $\0'$ in $\0$).
 \end{proof}



 \begin{corollary} \label{A1_or_m}
 Let $Z$ act non-trivially on $\slice$.
 Then there exist ${b_{i}} \in \mathbb N$ satisfying  $2 < b_2 < b_3 < \dots$
 so that each irreducible component of $\slice$ is an $S$-variety for $Z$
 of the form $X(2, b_2, b_3, \dots)$.  
 
 Consequently, the components of $\slice$ are
 normal if and only if all the $b_i$ are even, in which case each component is isomorphic to the nilcone in $\sl_2(\CM)$.
In the non-normal case, the normalization of an irreducible component is $\mathbb A^2$.
 \end{corollary}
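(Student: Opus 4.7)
The plan is to assemble the three preceding lemmas. Since $Z$ acts non-trivially on $\slice$, Lemma~\ref{S_var_sl2}(1) produces $x \in \slice \cap \0$, written as in \eqref{x_form1}, with $x_0 \in \zg$ nilpotent and $\dim(C(\sg)\cdot x_0) = 2$. Because $\zg \cong \sl_2(\CM)$ has a unique non-zero nilpotent orbit, $x_0$ lies in the minimal nilpotent orbit of $\zg$, which, as $\zg$ is a simple direct summand, is also a minimal nilpotent $C(\sg)^\circ$-orbit in $\cg(\sg)$. So the hypothesis of Lemma~\ref{2+}(3) is met.

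Let $\omega$ denote the fundamental weight of $Z$, extended by zero on the other simple factors of $C(\sg)^\circ$, so that $x_0$ has weight $\lambda = 2\omega$. By Lemma~\ref{2+}(3), each non-zero $x_i$ with $i \geq 1$ is a highest weight vector for a Borel of $C(\sg)^\circ$ of weight $(\tfrac{i}{2}+1)\lambda = (i+2)\omega$. Moreover, the centralizer containment \eqref{centralizer_containment} from the proof of Lemma~\ref{2+} shows that every simple factor of $C(\sg)^\circ$ distinct from $Z$ already centralizes $x_0$, and therefore also centralizes $x_i$; hence the irreducible $C(\sg)^\circ$-submodule generated by $x_i$ is trivial on the other factors and isomorphic to $V((i+2)\omega)$ as a $Z$-module. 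Writing $\{i_2 < i_3 < \cdots\}$ for the positive indices with $x_i \neq 0$ and setting $b_j := i_j + 2$, we obtain integers with $2 < b_2 < b_3 < \cdots$.

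By Lemma~\ref{kp_prop}, $e + X$ is a union of irreducible components of $\slice$, where $X = \ov{C(\sg)\cdot x_+}$; Proposition~\ref{transitive_components} then forces $\slice = e + X$. Since the other simple factors fix $x_+$ pointwise, the component of $X$ through $x_+$ coincides with $\ov{Z\cdot x_+}$, which is precisely the $S$-variety $X(2, b_2, b_3, \ldots)$ in the notation of \S\ref{S_vars}. The remaining components are $C(\sg)$-conjugate and hence isomorphic.

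The normality claim then follows from the results of \cite{PV:S_variety} recalled in \S\ref{S_vars}. With $b_1 = 2$, the greatest common divisor $d = \gcd(2, b_2, \ldots)$ equals either $1$ or $2$, and the normalization of $X(2, b_2, \ldots)$ is $X(d)$. If every $b_i$ is even, then $d=2$ and the submonoid of $\mathbb{N}$ generated by $\{2, b_2, \ldots\}$ already equals $2\mathbb{N}$ (the single generator $2$ suffices), so each component is normal and isomorphic to $X(2)$, the closure of the highest weight vector orbit in the adjoint representation $V(2)$ of $\sl_2(\CM)$, i.e.\ the nilpotent cone. If some $b_i$ is odd, then $d=1$, but the monoid cannot contain $1$ as every generator exceeds $1$; thus each component is non-normal with normalization $X(1) = V(\omega) = \mathbb{A}^2$. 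The only substantive step is the centralizer-containment argument forcing each $x_i$ to lie in a single $Z$-isotypic submodule on which the other factors act trivially; everything else is formal assembly of the cited results.
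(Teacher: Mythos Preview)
Your proof is correct and follows essentially the same route as the paper's: invoke Lemma~\ref{S_var_sl2}(1) to get $x$ with $x_0$ minimal nilpotent in $\zg$, apply Lemma~\ref{2+} to identify the weights of the $x_i$, deduce the $S$-variety structure, use Proposition~\ref{transitive_components} to propagate to all components, and read off the normality statement from \S\ref{S_vars}. The paper's argument is slightly leaner: it uses parts (1) and (2) of Lemma~\ref{2+} directly (giving highest weight vectors for a Borel of $\zg$ with weight $i+2$) and simply notes that $\ov{Z\cdot x}$ is already an irreducible component because $\dim(Z\cdot x)=2$ was established inside the proof of Lemma~\ref{S_var_sl2}(1). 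Your detour through $\slice = e+\ov{C(\sg)\cdot x_+}$ and the centralizer-containment argument to kill the other simple factors is correct but not needed---once you know $\dim(Z\cdot x)=2$, the $Z$-orbit closure is automatically a full component, and Lemma~\ref{2+}(3) already forces the weight $(\tfrac{i}{2}+1)\lambda$ to vanish on the other factors since $\lambda$ does.
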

 
 \begin{proof}
 By part (1) of the lemma, there exists $x \in \slice \cap \0$, when written as in \eqref{x_form1},
satisfying \eqref{both_conditions} with $x_0 \in \zg$.
Hence $x_0$ must belong to the minimal nilpotent orbit in $\zg$.
 By Lemma \ref{2+} the $x_i$'s are highest weight vectors, of weight $i+2$, 
 for a Borel subalgebra of $\zg$.
Thus the irreducible component $\ov{Z \cdot x}$ of $\slice$ 
is an $S$-variety for $Z$ of the form $X(2, b_2, b_3, \dots)$ with $2 < b_2 < b_3 < \dots$.  
 But since the degeneration is minimal, $C(\sg)$ acts transitively on the irreducible components of $\slice$ by Proposition \ref{transitive_components}, and thus 
 each irreducible component of $\slice$ takes this same form.

By \S \ref{S_vars}, $X(2, b_2, b_3, \dots)$ is normal if and only if the $b_i$'s are all even, 
in which case it is isomorphic to $X(2)$, the nilcone in $\sl_2(\CM)$, which is the $A_1$-singularity.
Otherwise, its normalization is $X(1) \cong \mathbb A^2$. 
\end{proof}

\subsubsection{An example in $C_3$}  \label{C3_example_1} \label{C3_example_2}
Let $\gg = \sp_6(\mathbf C)$.  
Nilpotent orbits in $\gg$ can be parametrized by the Jordan partition for any element in the orbit, 
viewed as a $6 \times 6$-matrix.
Pick $e \in \gg$ with partition $[2^3]$. 
Then 
$\cg(\sg) \cong \sl_2(\CM)$.  
Set $\zg = \cg(\sg)$.  
A nonzero nilpotent $e_0 \in \zg$ has type $[3^2]$ in $\gg$, 
so $\cg(\zg) = \sg$ and thus the only non-zero nilpotent $G$-orbit that meets $\cg(\zg)$
is the one through $e$.
Consequently, part (2) of Lemma \ref{S_var_sl2} applies 
to any $\0$ of which $\0' = \0_e$ is a degeneration of codimension two.
Let $\0_{[4,1^2]}$ and $\0_{[3^2]}$ be the nilpotent orbits with given partition type.  Then
$\0'$ is a minimal degeneration of both orbits, in each case of codimension two.
So in both cases Corollary \ref{A1_or_m} applies.  
Now $C(\sg)$ acts on $\gg^f$ with $\gg^f(0) = \cg(\sg) = V(2)$ and $\gg^f(-2) \cong V(4) \oplus V(0)$.  
Also $e+e_0$ belongs to $\0_{[4,2]}$ (see \S \ref{subsection:elements_in_red_centralizer}), 
so we can say that for both $\0 = \0_{[4,1^2]}$ and $\0 = \0_{[3^2]}$,  each irreducible component of 
$\slice$ is of the form $X(2,4) \cong X(2)$, which is the nilcone in $\sl_2(\CM)$. 
But since $\cl$ is normal for both orbits $\0$ by \cite{Kraft-Procesi:classical}, it follows that $\slice$ is irreducible.
In particular the singularity of $\cl$ at $e$ is an $A_1$-singularity in both cases,
as was already known from \cite{Kraft-Procesi:classical}. 


\subsection{Locating nilpotent elements in $\cg(\sg)$.}  \label{subsection:elements_in_red_centralizer}

In order to make use of Lemma \ref{oh_yeah} or part (2) of Lemma \ref{S_var_sl2}, we will need to 
describe nilpotent elements in $c(\sg)$ relative to the embedding of $c(\sg)$ in $\gg$. 
We will also need to be able to start with nilpotent $e_0 \in \cg(\sg)$ and then compute the $G$-orbit to which $e+e_0$ belongs.

First, if $e_0 \in \cg(\sg)$, then $e_0$ centralizes the semisimple element $h \in \sg$.  
Hence $e_0 \in \gg^h$, which is a Levi subalgebra of $\gg$.   Assume $h$ lies in a chosen Cartan subalgebra $\hg \subset \gg$ 
and is dominant for a chosen Borel subalgebra $\bg \subset \gg$ containing $\hg$.
The type of the Levi subalgebra $\gg^h$ can then be read off from the weighted Dynkin diagram for $h$:  
the Dynkin diagram for the semisimple part of $\gg^h$ corresponds to the zeros of the diagram.   
Therefore in order to locate a nilpotent element in $\cg(\sg)$, we first choose a nilpotent element $e_0 \in \gg^h$;  
the $G^h$-orbits of such elements are known by Dynkin's and Bala and Carter's results \cite{Carter}.   
In particular we can compute the semisimple element $h_0 \in \gg^h \cap \hg$ of an 
$\sl_2$-subalgebra $\sg_0$ through $e_0$ in $\gg^h$.   

Next, we compute $h+h_0$ and see whether it corresponds to a nilpotent orbit in $\gg$:  
for if $e$ and $e_0$ (or some conjugate of $e_0$ under $G^h$) commute, then $h+h_0$ will be the semisimple element in an 
$\sl_2$-subalgebra through the nilpotent element $e+e_0$.
Together with knowledge of the Cartan-Killing type of the reductive Lie algebra $\cg(\sg) \subset \gg^h$ (see \cite{Carter}), 
this search usually suffices to locate the nilpotent orbit through $e_0$ in $\gg$ for nilpotent elements $e_0 \in \cg(\sg)$ and 
the resulting nilpotent orbit through $e + e_0$.
In particular we carried out this approach for all the minimal nilpotent $C(\sg)$-orbits in $\cg(\sg)$.  
Two special situations are worth mentioning.  

\subsubsection{}  \label{Minimal in g}

One special situation is when $e_0$ is minimal in $\gg$, that is, of type $A_1$.
Then the semisimple part of $\cg(\sg_0)$ is the semisimple part of a Levi subalgebra of $\gg$, the one corresponding to the nodes 
in the Dynkin diagram which are not adjacent to the affine node in the extended Dynkin diagram.
Of course $e \in \cg(\sg_0)$.   Consequently it is easy to locate all $e$ which have $e_0 \in \cg(\sg)$ when $e_0$ is of type $A_1$ in $\gg$.

We will see in Corollary \ref{slice_corollary} that Lemma \ref{kp_prop} always applies in this setting with $x = e+e_0$.
Moreover the type of $x$ in $\gg$ is easy to determine: 
if we know the type of $e$ in $\cg(\sg_0)$, call it $X$, then $x$ has generalized 
Bala-Carter type $X+A_1$.  Then the usual type can be looked up in \cite{Sommers:Bala-Carter} or in Dynkin's seminal paper \cite{Dynkin}.

For example, in $E_8$ when $e_0$ is of type $A_1$, then $\cg(\sg)$ is of Cartan-Killing type $E_7$.   Any nilpotent element $e$ in a Levi subalgebra of type $E_7$ will have a conjugate of $e_0$ in $c(\sg)$.  If, for instance, $e$ is a regular nilpotent element, then $e+e_0$ has generalized Bala-Carter type $E_7+A_1$, which is the same as $E_8(a_3)$.

There is another way to determine $e+e_0$ when $e_0$ is minimal in $\gg$.  
It has the advantage of locating the simple summand of $\cg(\sg)$ in which $e_0$ lies.    
As above, assume $h$ is dominant relative to $\bg$.
Since $e_0 \in \gg^h$ has type $A_1$,
the semisimple element $h_0 \in \hg$ is equal to the coroot of a long root $\theta$ for $\gg^h$.   Therefore,
$\alpha(h_0) \geq -2$ for any root of $\gg$ and equality holds if and only $\alpha = - \theta$.
Now choose $h_0$ dominant in $\gg^h$ (relative to $\bg \cap \gg^h$).
Then $\alpha(h_0) \geq -1$ for all simple roots $\alpha$ of $\gg$ since $-\theta$ is a negative root.
Moreover $\alpha(h_0) = -1$ only if $\alpha$ is not a simple root for $\gg^h$.  In that case $\alpha(h) \geq 1$ since 
the simple roots of $\gg^h$ correspond to the zeros of the weighted Dynkin diagram for $h$.   
This shows that $\alpha(h+h_0) \geq 0$ for all simple roots $\alpha$ of $\gg$ and thus $h+h_0$ 
yields the weighted Dynkin diagram for $e+e_0$ without 
having to conjugate by an element of the Weyl group.

For example,  let $e$ belong to the orbit $E_7(a_3)$ in $E_8$, which has weighted Dynkin diagram
$$\begin{smallmatrix}  2 & 0 & 1 & 0 & 1 & 0  & 2 \\  & & 0 \end{smallmatrix}.$$   
Then $\gg^h$ has type $4A_1$ 
and $\cg(\sg)$ is isomorphic to $\sl_2(\CM)$ since $\cg(\sg)$ has rank one (because $e$ is distinguished in a Levi subalgebra of rank $7$) 
and $\cg(\sg)$ contains $e_0$, a nonzero nilpotent element. 
We want to know in which summand of $\gg^h$ 
the element $e_0$ lies and what is $e+e_0$.   The diagram for $h_0$ relative to $\gg$, and dominant for $\gg^h$, is either:
$$\begin{smallmatrix} 0 & 0 & 0  & 0 & -1 & 2 & -1   \\  & & 0 \end{smallmatrix}   \text{ or } \begin{smallmatrix}  0 & 0 & -1 & 2 & -1 & 0  & 0 \\   & & 0 \end{smallmatrix} \text{ or } \begin{smallmatrix}   -1 & 2 & -1 & 0 & 0 & 0 & 0 \\   & & 0 \end{smallmatrix}  
\text{ or } \begin{smallmatrix}  0 & 0 & -1 & 0 & 0 & 0 & 0 \\  & & 2 \end{smallmatrix}.$$
Only the second choice leads to a weighted Dynkin diagram for $h+h_0$, namely for $D_7(a_1)$.  
Hence we know the type of $e+e_0$ and the embedding of $\cg(\sg)$ in $\gg^h$.

\subsubsection{} 
The other special situation occurs when $\cg(\sg)$ has rank $1$.  
Let $\lg$ be a minimal Levi subalgebra containing $e$.  Then $\lg$ has semisimple rank equal to the rank of $\gg$ minus one.  Assume that $\lg$ is a standard Levi subalgebra.   Let $\alpha_i$ be the simple root of $\gg$ which is not a simple root of $\lg$.  
For nonzero $e_0 \in \cg(\sg)$, the corresponding $h_0$ centralizes $\lg$ and 
hence lies in the one-dimensional subalgebra of $\hg$ spanned by the coweight 
$\omega^{\vee}_i$ for 
$\alpha_i$.  Since the values in any weighted Dynkin diagram are $0, 1,$ or $2$, if $h_0$ is dominant, then $h_0$ must be either $\omega^{\vee}_i$ or $2 \omega^{\vee}_i$.

For example, let $e$ be of type $A_7$ in $E_8$, which has weighted Dynkin diagram 
$\begin{smallmatrix}  1 & 0 & 1 & 0 & 1 & 1  & 0 \\  & & 0 \end{smallmatrix}$.  Then $\cg(\sg)$ has type $A_1$ and 
the weighted Dynkin diagram of a nonzero $h_0 \in \cg(\sg)$ must either be 
$$\begin{smallmatrix}  0 & 0 & 0 & 0 & 0 & 0  & 0 \\  & & 1 \end{smallmatrix}  
  \text{ or }  \begin{smallmatrix}  0 & 0 & 0 & 0 & 0 & 0  & 0 \\  & & 2 \end{smallmatrix}.$$  
 Both of these are actual weighted Dynkin diagrams in $E_8$, the first is $4A_1$ and the second is $D_4(a_1)+A_2$. 
Only the orbit $4A_1$ meets $\gg^h$ (which has semisimple type exactly $4A_1$).  
Therefore a nonzero nilpotent element $e_0 \in \cg(\sg) \subset \gg^h$ has type $4A_1$ in $\gg$.

\subsection{The case where $x_i=0$ for $i \geq 1$ in \eqref{x_form1}, and \eqref{both_conditions} holds.}\label{dim_condition}
Once a nilpotent $e_0 \in \cg(\sg)$ is located, as in the previous section, 
with corresponding semisimple element $h_0 \in \cg(\sg)$,
we can compute 
$h+h_0$  and check by hand whether the dimension condition
\begin{equation} \label{dim_equality_full_e0}
\dim C(\sg)\cdot e_0 =\codim_{\cl}( \OC_e) 
\end{equation}
holds for the orbit $\0$ through $e+e_0$.   
If it does,  then certainly $x:= e+e_0$ satisfies \eqref{x_form1} with $x_0 = e_0$ and $x_i = 0$ for $i \geq 1$, 
and the dimension condition in \eqref{both_conditions} just becomes \eqref{dim_equality_full_e0}.
By Lemmas \ref{kp_prop} and \ref{oh_yeah},
the union of some 
of the irreducible components of $\slice$ is thus isomorphic to $\ov{C(\sg) \cdot e_0}$.
Next we give a condition for \eqref{dim_equality_full_e0} to hold for the orbit $\0 = \0_{e+e_0}$
and show that this condition is always true when $e_0$ belongs to the minimal orbit in $\gg$.


As before, let $\sg_0$  be an $\sl_2$-subalgebra in $\cg(\sg)$ with standard basis $e_0, h_0, f_0$.  
Clearly, $\sg$ and $\sg_0$ commute.   
We will now establish an equivalent condition
to the dimension condition \eqref{dim_equality_full_e0} 
in terms of the decomposition of $\gg$ into irreducible subrepresentations for 
$\sg \oplus \sg_0 \cong \sl_2(\CM) \oplus \sl_2(\CM)$. 

Let $V_{m,n}$ denote an irreducible
representation of $\sg \oplus \sg_0$ 
with $h \in \sg$ acting by $m$ and $h_0 \in \sg_0$ acting by $n$ 
on a highest weight vector $u \in V_{m,n}$ annihilated by both $e$ and $e_0$.
The eigenvalues of $h+h_0$ on $V_{m,n}$ are either all even if 
$m$ and $n$ have the same parity or all odd if 
$m$ and $n$ have opposite parities. 
In the former case the quantity 
$$\min(m,n)+1$$ 
is equal to the dimension of the 0-eigenspace of $h+h_0$; in the latter case,
it is equal to the dimension of the 1-eigenspace of $h+h_0$.   This is analogous to what occurs 
in the proof of the Clebsch-Gordan formula.

Let
\begin{equation}  \label{sl2_decomp}
\gg = \bigoplus_{i=1}^N V^{(i)}_{m_i, n_i}
\end{equation}
be a decomposition into irreducible subrepresentations $V^{(i)}_{m_i, n_i} \cong V_{m_i, n_i}$ 
for the action of $\sg \oplus \sg_0$.
The relationship between 
\eqref{dim_equality_full_e0}
and this decomposition in \eqref{sl2_decomp} is the following:

\begin{proposition}   \label{prop:codimensions_match}
Let $\0$ be the orbit through $e+e_0$.  
The dimension condition \eqref{dim_equality_full_e0}
holds if and only if
\begin{equation}  \label{constraints}
m_i \geq n_i \text{ whenever }  m_i > 0.
\end{equation} 
\end{proposition}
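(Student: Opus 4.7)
The plan is to translate both sides of the proposed equality into arithmetic conditions on the decomposition data $\{(m_i,n_i)\}$. The key identity is
$$\codim_{\cl}(\OC_e) \;=\; \dim \gg^e - \dim \gg^{e+e_0},$$
since $\cl = \ov{\OC_{e+e_0}}$ gives $\dim \cl = \dim G - \dim G^{e+e_0}$, while $\dim \OC_e = \dim G - \dim G^e$. Similarly, by the orbit-stabilizer theorem, $\dim C(\sg) \cdot e_0 = \dim \cg(\sg) - \dim \cg(\sg)^{e_0}$. I will compute each of the three dimensions $\dim\gg^e$, $\dim\gg^{e+e_0}$, and $\dim\cg(\sg)^{e_0}$ directly from the decomposition \eqref{sl2_decomp}, and then the equivalence will fall out by inspection.

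First I would treat the easy two terms. Viewing each summand $V_{m_i,n_i} \cong V_{m_i}\otimes V_{n_i}$ as an $\sg$-module, it is $(n_i+1)$ copies of the irreducible $V_{m_i}$, so each contributes $(n_i+1)$ to $\dim \gg^e$, whence $\dim \gg^e = \sum_i (n_i+1)$. For $\cg(\sg)$, note that $\sg$ acts trivially exactly on those $V_{m_i,n_i}$ with $m_i = 0$, so $\cg(\sg) = \bigoplus_{m_i=0} V^{(i)}_{0,n_i}$ is a sum of $\sg_0$-irreducibles of dimensions $n_i+1$. Hence $\dim\cg(\sg)^{e_0}$ equals the number of indices $i$ with $m_i=0$, and consequently
$$\dim C(\sg)\cdot e_0 \;=\; \sum_{i:\,m_i=0}(n_i+1) - \#\{i:m_i=0\} \;=\; \sum_{i:\,m_i=0} n_i.$$

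The nontrivial computation is $\dim \gg^{e+e_0}$. Since $e_0, f_0 \in \cg(\sg)$, the algebras $\sg$ and $\sg_0$ commute, so
$$\sg'' := \langle\, e+e_0,\, h+h_0,\, f+f_0\,\rangle$$
is an $\sl_2$-triple embedded diagonally in $\sg\oplus\sg_0$. By the Clebsch-Gordan formula, under this diagonal $\sl_2$,
$$V_{m}\otimes V_{n} \;\cong\; V_{m+n}\oplus V_{m+n-2}\oplus\cdots\oplus V_{|m-n|},$$
a sum of $\min(m,n)+1$ irreducibles (and this is exactly the reason the parity statement preceding the proposition counts $\min(m,n)+1$ as the dimension of the $0$- or $1$-eigenspace of $h+h_0$). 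Since $\dim (V_{m_i,n_i})^{e+e_0}$ equals the number of summands in this decomposition, we get $\dim\gg^{e+e_0} = \sum_i(\min(m_i,n_i)+1)$.

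Putting the pieces together,
$$\codim_{\cl}(\OC_e) \;=\; \sum_i \bigl(n_i - \min(m_i,n_i)\bigr) \;=\; \sum_i \max(0,\,n_i-m_i),$$
and the desired equality $\dim C(\sg)\cdot e_0 = \codim_{\cl}(\OC_e)$ becomes
$$\sum_{i:\,m_i=0} n_i \;=\; \sum_{i:\,m_i=0} n_i \;+\; \sum_{i:\,m_i>0} \max(0,\,n_i-m_i),$$
which holds if and only if every term in the second sum on the right vanishes, i.e.\ $n_i \le m_i$ for each $i$ with $m_i>0$. There is no real obstacle here: the only step requiring care is confirming that $\sg''$ is a genuine $\sl_2$-triple (using $[\sg,\sg_0]=0$) so that Clebsch-Gordan applies on the nose to each $V_{m_i,n_i}$; everything else is bookkeeping.
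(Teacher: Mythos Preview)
Your proof is correct and essentially identical to the paper's. The only cosmetic difference is that the paper phrases the computation of $\dim\gg^{e+e_0}$ in terms of the $0$- and $1$-eigenspaces of $\ad(h+h_0)$ (which, as noted just before the proposition, count $\min(m_i,n_i)+1$ per summand), whereas you invoke the diagonal $\sl_2$-triple and Clebsch--Gordan directly; these are the same observation.
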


\begin{proof}
By $\mathfrak{sl}_2(\CM)$-theory, the sum of the dimensions of the
0-eigenspace and the 1-eigenspace for $\ad(h+h_0)$ on $\gg$ 
equals the dimension of the centralizer of $x=e+e_0$ in $\gg$. 
It therefore follows
that 
$$\dim {\gg}^x  = \sum_{i=1}^N  \left(\min(m_i, n_i) + 1\right).$$
At the same time 
$$\dim {\gg}^e = \sum_{i=1}^N \left(n_i + 1\right)$$
since the kernel of $\ad(e)$ on $V^{(i)}_{m_i, n_i}$ is isomorphic to $V(n_i)$.  Here, 
$V(n_i)$ is an irreducible representation of $\sg_0 \cong \sl_2(\CM)$ of highest weight $n_i$, hence of dimension $n_i+1$.
Putting the two formulas together, the codimension of $\OC_{e}$ in $\overline{\OC}_{x}$ is equal to 
$$\sum_{i=1}^N \left(n_i - \min(m_i, n_i)\right).$$

It is also necessary to compute $\dim \cg(\sg)^{e_0}$. 
Since $\sg_0 \subset \cg(\sg)$ and $\cg(\sg)$ is exactly $\ker \ad e \cap \ker \ad h$,
it follows that $\cg(\sg)$ coincides with the sum of all $V^{(i)}_{m_i, n_i}$ where $m_i=0$.   
The centralizer $\cg(\sg)^{e_0}$ is then the span of the highest weight vectors of
these $V^{(i)}_{0, n_i}$ and hence its dimension is given by the number
of these subrepresentations.  That is,
$$\dim \cg(\sg)^{e_0} = \#  \{ 1 \leq i \leq N \ | \ m_i = 0 \}.$$
Thus 
$$\dim C(\sg)\cdot e_0=  \dim \cg(\sg) - \dim \cg(\sg)^{e_0} = \sum_{m_i=0} (n_i+1) - \sum_{m_i=0} 1 = \sum_{m_i=0} n_i.$$
The equality of $\dim C(\sg)\cdot e_0$ and the codimension of $\0_{e}$ in $\cl_x$ 
is therefore equivalent to
$\min(m_i, n_i) = n_i$ for all $i$ with $m_i \neq 0$.  
\end{proof}

It follows from the above proof that if $\JC= \{  i  \ | \  n_i > m_i >0 \}$, then
\begin{equation} \label{dimension_diff}
\dim \slice - \dim C(\sg) \cdot e_0 = \sum_{i \in \JC} (n_i - m_i).
\end{equation}

The element $e_0 \in \gg$ is called height $2$ if all the eigenvalues of $\ad h_0$ on $\gg$ are at most $2$, and
$e$ is called even if all the eigenvalues of $\ad h$ on $\gg$ are even.

\begin{corollary} \label{slice_corollary}
Suppose that either (1) $e_0$ belongs to the minimal nilpotent orbit in $\gg$, or (2)
$e_0$ is of height $2$ in $\gg$ and $e$ is even.
Then
the dimension condition \eqref{dim_equality_full_e0} holds.
\end{corollary}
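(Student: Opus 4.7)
The plan is to invoke Proposition~\ref{prop:codimensions_match}, which reduces the identity \eqref{dim_equality_full_e0} to checking the constraint \eqref{constraints} --- namely that $m_i \geq n_i$ whenever $m_i > 0$ --- in the decomposition \eqref{sl2_decomp} of $\gg$ under $\sg \oplus \sg_0$. In both cases the strategy is the same: first bound $n_i \leq 2$ for all $i$, then use a structural constraint on the $m_i$ to rule out the only remaining potential violations.

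For hypothesis (1), since $e_0$ lies in the minimal nilpotent orbit of $\gg$, I may (up to $G$-conjugacy, which does not affect the multiset of pairs $\{(m_i,n_i)\}$) assume that $e_0 = x_\theta$ is a root vector for the highest root $\theta$, so that $h_0 = \theta^\vee$. Standard properties of the highest root show that $\langle \alpha, \theta^\vee \rangle \in \{-2,-1,0,1,2\}$ for every root $\alpha$, and that the extreme values $\pm 2$ are attained only at $\alpha = \pm\theta$. This immediately yields $n_i \leq 2$ for all $i$ and identifies the $2$-eigenspace of $\ad h_0$ on $\gg$ as the line $\CM e_0$. The key step is then the observation that the highest weight vector of any summand $V^{(i)}_{m_i, n_i}$ with $n_i = 2$ must be a scalar multiple of $e_0$; but $e_0 \in \cg(\sg)$ has $\ad h$-weight zero, forcing $m_i = 0$. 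Hence whenever $m_i > 0$ one has $n_i \leq 1 \leq m_i$, and \eqref{constraints} holds.

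For hypothesis (2), the height-$2$ assumption on $e_0$ directly gives $n_i \leq 2$ for all $i$. The assumption that $e$ is even means all eigenvalues of $\ad h$ on $\gg$ are even, which forces each $m_i$ to be even. Thus $m_i > 0$ implies $m_i \geq 2 \geq n_i$, and \eqref{constraints} holds in this case too.

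The only step with any real content is the identification of the top eigenspace of $\ad h_0$ with $\CM e_0$ in case (1); everything else is immediate from Proposition~\ref{prop:codimensions_match} and the definitions, so I do not anticipate any serious obstacle.
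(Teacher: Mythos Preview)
Your proof is correct and follows essentially the same approach as the paper's: both reduce to verifying condition \eqref{constraints} via Proposition~\ref{prop:codimensions_match}, and in case (1) both use that the $2$-eigenspace of $\ad h_0$ is the line through $e_0$ (since $h_0$ is conjugate to $\theta^\vee$) together with $e_0\in\cg(\sg)$ to force $m_i=0$ for the unique summand with $n_i=2$, while case (2) is handled identically. One minor remark: rather than conjugating $e_0$ to $x_\theta$ (which would drag $\sg$ along), it is cleaner to argue, as the paper does, that the eigenvalue structure of $\ad h_0$ is determined by the $G$-conjugacy class of $h_0$ alone.
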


\begin{proof}
If $e_0$ belongs to the minimal nilpotent orbit of $\gg$, then 
$e_0$ is of height two and the $2$-eigenspace of $\ad h_0$ is spanned by $e_0$.
This is the case since $h_0$ is conjugate to the coroot of the highest root.   
But since $\sg_0 \subset \cg(\sg)$, it follows that $\sg_0 \cong V_{0,2}$ is the unique subrepresentation 
of $\gg$ isomorphic to $V_{m,n}$ with $n \geq 2$.
Therefore all other $V^{(i)}_{m_i, n_i}$ must have $n_i = 0$ or $n_i = 1$ and so condition (\ref{constraints}) holds.

Next assume the second hypothesis.  Since $e$ is even, all $V^{(i)}_{m_i, n_i}$ with $m_i > 0$ satisfy $m_i \geq 2$.  Since 
$e_0$ is of height two, $n_i \leq 2$ and 
thus condition (\ref{constraints}) is true. 
\end{proof}

\subsection{The case where $x_i \neq 0$ for some $i \geq 1$ in \eqref{x_form1}, and \eqref{both_conditions} holds.}  \label{extra_eigenvalues}

Let $e_0 \in \cg(\sg)$ be nilpotent 
and suppose that the dimension condition 
\eqref{dim_equality_full_e0}
does not hold for $\0 = \0_{e + e_0}$.   It may happen instead that 
Lemma \ref{kp_prop} applies for a different nilpotent orbit $\0$ 
with $\0_e \subset \cl \subset \cl_{e+e_{\circ}}$.  More precisely,
it may be possible to find $x \in \slice$, written as in \eqref{x_form1},
so that $x_0 = e_0$ and \eqref{dim_equality_full_e0} does hold for this $\0$.
Then Lemma \ref{oh_yeah} ensures that Lemma \ref{kp_prop} applies to $\slice$.
Now in such a situation, Lemma \ref{2+} implies that $x$ must lie in the $2$-eigenspace for $\ad(h+h_0)$.
We now use this information to give one way to help locate such an $x$ when it exists.

\subsubsection{A smaller slice result}  \label{small_slice}

Let $y = e + e_0$, 
which is nilpotent with corresponding semisimple element 
$h_y = h+h_0$.
Write 
$\gg_j$ for the $j$-eigenspace of $\ad h_y$ on $\gg$.    
The centralizer $G_0 := G^{h_y}$ has Lie algebra $\gg_0$ and $G_0$ acts on each $\gg_j$.
Then $y \in \gg_2$ and the $G_0$-orbit through $y$ is the unique dense orbit in $\gg_2$.
Now $e \in \gg_2$ since 
\begin{equation} \label{lies_in_2_space}
[h_y, e] = [h+h_0, e] = 2e + 0 = 2e.
\end{equation}
We want to find a transverse slice in $\gg_2$ to the $G_0$-orbit through $e$.
In fact, since $\gg_2$ is a direct sum of $\ad h$-eigenspaces,
the decomposition $\gg = \im \ad e \oplus \ker \ad f$
restricts to a decomposition
$$\gg_2 = [e,\gg_0] \oplus (\gg_2 \cap  \ker \ad f).$$
Therefore, setting $\SC^{(2)}_e = e + (\gg_2 \cap  \ker \ad f)$, it follows that
the affine space $\SC^{(2)}_{e}$ is a tranverse slice of $\gg_2$ at $e$ with respect to the $G_0$-action.
Consequently, every $G_0$-orbit in $\gg_2$ containing $e$ in its closure meets $\SC^{(2)}_{e}$.

Let $\gg(r,s)$ denote the subspace of $\gg$ where $\ad h$ has eigenvalue $r$ and $\ad h_0$ has eigenvalue $s$.
Define $\gg^f(r,s) = \gg(r,s) \cap \ker \ad f$.
Then $$\gg_2 \cap  \ker \ad f = \displaystyle \bigoplus_{r \geq 0} \gg^f(-r,r+2).$$ 
Next, we relate this decomposition to the decomposition \eqref{sl2_decomp} of $\gg$ under $\sg \oplus \sg_0$.
Let $$\EC = \{  i  \ | \  n_i > m_i >0 \text{ and } n_i - m_i \text{ even} \}$$
where $(m_i, n_i)$ are defined in \eqref{sl2_decomp}.   Then $\EC \subset \JC$.
For each $i \in \EC$, let $w_i$ be a nonzero vector in the one-dimensional space 
$V^{(i)}_{m_i, n_i} \cap \gg(-m_i, m_i+2)$.  
Then $w_i$ is a lowest weight vector for $\sg$, but not in general
a highest weight vector for $\sg_0$.   The set
$\{ w_i \ | \ i \in \EC\}$ is then a basis for
$$\displaystyle \bigoplus_{r \geq 1} \gg^f(-r,r+2)$$ 
since each vector in $\gg^f(-r, r+2)$ lies in a sum of subrepresentations of type $V_{r,s}$ with $r+2 \leq s$ and $s-r$ even.
The subspace $\gg^f(0,2)$ is just the $2$-eigenspace of $\ad h_0$ in $\cg(\sg)$, which coincides with 
$\cg(\sg) \cap \gg(0,2)$.  It contains $e_0$.
A consequence of the above observations is the following
\begin{lemma}  \label{slice_in_2_space}
Let $x \in \gg_2$.   If $e \in \overline{G_0 \cdot x}$, 
then some $G_0$-conjugate of $x$ can be expressed as
\begin{equation} \label{coeffs}
e + w + \sum_{i \in \EC} d_i w_i
\end{equation}
where $w \in \cg(\sg) \cap \gg(0,2)$ and $d_i \in \CM$.  
\end{lemma}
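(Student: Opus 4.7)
The plan is to deduce this lemma directly from the transverse slice structure already set up in the preceding discussion, essentially by interpreting the parenthetical remark ``every $G_0$-orbit in $\gg_2$ containing $e$ in its closure meets $\SC^{(2)}_e$'' and then making the slice explicit.

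First I would invoke the fact that $\SC^{(2)}_e = e + (\gg_2 \cap \ker \ad f)$ is a transverse slice at $e$ for the $G_0$-action on $\gg_2$. By definition this means the action morphism $\mu : G_0 \times \SC^{(2)}_e \to \gg_2$, $(g,s) \mapsto g\cdot s$, is smooth at the point $(1,e)$. Smooth morphisms are open, so the image $G_0 \cdot \SC^{(2)}_e$ contains a Zariski open neighborhood $U$ of $e$ in $\gg_2$. Since $U$ is automatically $G_0$-stable (because the image of $\mu$ is), and $e\in\ov{G_0\cdot x}$ by hypothesis, the open set $U$ must meet $G_0\cdot x$. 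Thus some $G_0$-conjugate $g \cdot x$ of $x$ lies in $\SC^{(2)}_e$.

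Next I would translate the condition $g\cdot x \in \SC^{(2)}_e$ into the explicit form claimed. Using the equality
\[
\gg_2 \cap \ker \ad f \;=\; \bigoplus_{r \geq 0} \gg^f(-r,\,r+2)
\]
established just before the lemma, together with the identifications $\gg^f(0,2) = \cg(\sg) \cap \gg(0,2)$ and the fact that $\{w_i \mid i \in \EC\}$ is a basis of $\bigoplus_{r\geq 1}\gg^f(-r,r+2)$, every element of $\SC^{(2)}_e$ can be uniquely written as $e + w + \sum_{i\in\EC} d_i w_i$ with $w\in\cg(\sg)\cap\gg(0,2)$ and $d_i \in \CM$. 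Applying this to $g\cdot x$ yields the conclusion.

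The substantive point is really the first step, namely that smoothness of the action map at $(1,e)$ forces the saturation $G_0 \cdot \SC^{(2)}_e$ to be a (Zariski) open neighborhood of $e$; this is the only place where one uses that $\SC^{(2)}_e$ is a \emph{transverse} slice and not merely a complement to the tangent space. Everything else is bookkeeping with the weight decomposition, and no further new ideas are needed.
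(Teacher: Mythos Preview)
Your proof is correct and follows exactly the approach the paper intends: the paper states the lemma as ``a consequence of the above observations,'' and you have simply spelled out those observations --- namely, that smoothness of the action map at $(1,e)$ makes $G_0\cdot\SC^{(2)}_e$ contain an open neighborhood of $e$, forcing it to meet $G_0\cdot x$, after which the explicit basis description of $\gg_2\cap\ker\ad f$ gives the form \eqref{coeffs}. The aside about $U$ being $G_0$-stable is unnecessary (openness alone, together with density of $G_0\cdot x$ in its closure, already gives $U\cap G_0\cdot x\neq\emptyset$), but it does no harm.
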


Given a nilpotent orbit $\0$, Lemma \ref{slice_in_2_space} gives a way to show the existence of 
some $x \in \0$ that can be written as in \eqref{x_form1} with $x_0 = w$ nilpotent.
But it does not guarantee that
$w$ is equal to the prescribed $e_0$ or that \eqref{dim_equality_full_e0} holds. 
When $w=e_0$ and \eqref{dim_equality_full_e0} holds, which is the case we are interested in,
then we also know by Lemma \ref{2+} that the $w_i$ which appear in \eqref{coeffs} with $d_i \neq 0$
must satisfy $n_i - m_i = 2$, so only the terms in $\EC$ with $n_i-m_i =2$ will contribute in this case.

\subsubsection{Applying Lemma \ref{slice_in_2_space}}  \label{ideal_containment}

In order to apply Lemma \ref{slice_in_2_space}
for some $x \in \gg$ with $\cl_e \subset \cl_x \subset \cl_y$,  
we need to check two things, after possibly replacing $x$ by a conjugate:
\begin{enumerate} 	
\item $x \in \gg_2$
\item $e \in \overline{G_0 \cdot x}$
\end{enumerate}

The first condition can often be shown as follows.  
Let $\sg_x$ be an $\sl_2$-subalgebra through some conjugate of $x$ with standard semisimple element $h^x \in \hg$.  
In all cases we are interested in, there exists nilpotent $e^x_0 \in \cg(\sg_x)$ with semisimple element $h^x_0 \in \hg$,
such that $h^x + h^x_0 = h_y$, after possibly replacing $x$ again by a conjugate.
Then just as in \eqref{lies_in_2_space}, $x \in \gg_2$ and the first condition holds.

We may further assume that $h_y$ is dominant with respect to the Borel subalgebra $\bg \subset \gg$ and 
$h^x$ is dominant for the corresponding Borel subalgebra $\bg_y$ of $\gg^{h_y}$.
Then since $[h^x, x] = 2x$ and $[h_y, x] = 2x$, it follows that $x$ belongs to 
$$I_x := \gg_2 \cap \bigoplus_{i \geq 2} \gg(h^x; i),$$ 
where $\gg(h^x;i)$ are the eigenspaces for $\ad h^x$.   This subspace 
is preserved by the action of $\bg_y$.  
Thus $G_0 \cdot I_x = \overline{G_0 \cdot x}$. 
We can carry out a similar process for $e$ and obtain a subspace $I_e \subset \gg_2$,
with $G_0 \cdot I_e = \overline{G_0 \cdot e}$. 
Then if $I_e \subset I_x$, it necessarily follows that 
$$G_0 \cdot I_e \subset G_0 \cdot I_x$$ and the second condition holds.
For the cases we are interested in, this approach will suffice to check the hypothesis in Lemma \ref{slice_in_2_space}.

\subsubsection{Example: $(\tilde{A}_1,A_1)$ in type $G_2$}  \label{mexample}

Let $\gg$ be of type $G_2$ and let $e \in \gg$ be minimal nilpotent.
Then $\cg(\sg) \cong \sl_2(\CM)$.  Let $e_0 \in \cg(\sg)$ be minimal nilpotent, which has 
type $\tilde{A}_1$ in $\gg$. 
The decomposition in \eqref{sl2_decomp} is  $\gg = V(0,2) \oplus V(2,0) \oplus V(1,3).$
Therefore $(m_i,n_i) = (1,3)$ for the unique $i \in \EC$ and \eqref{dim_equality_full_e0} fails for $\0 = \0_{e+e_0}$
by Proposition \ref{prop:codimensions_match}.  Indeed,  $e+e_0$ has type $G_2(a_1)$ and thus 
if $\0$ is the orbit of type $\tilde{A}_1$, then $\0_e \subset \cl \subset \cl_{e+e_0}$.  
Since $\sg$ and $\cg(\sg)$ are mutual centralizers, and $\cl$ is unibranch at $e$,
the argument in Example \ref{C3_example_1} gives that $\slice$ takes the form
of the $S$-variety $X(2,3)$ for $SL_2(\CM)$, which is isomorphic to $m$.

We can show also that Lemma \ref{slice_in_2_space} holds by checking the
two conditions in \S \ref{ideal_containment}.   
Fix nonzero $w_i \in V(1,3)$ satisfying $[e_0, w_i] = 0$ and $[f, w_i] = 0$.
Choose $h_y$ so that its weighted diagram is the usual weighted Dynkin diagram
$\begin{smallmatrix}  2 & 0 \end{smallmatrix}$ of $y$ and choose $h^x$ and $h^x_0$ to have weighted diagrams
$\begin{smallmatrix}  0 & 1  \end{smallmatrix} \text{ and } \begin{smallmatrix}  2 & -1 \end{smallmatrix},$ respectively.
Then $h_y = h^x + h^x_0$ and 
thus by the above discussion we may replace $x$ by a conjugate and assume $x \in \gg_2$.   
Similarly, let $h$ and $h_0$ have weighted diagrams 
$\begin{smallmatrix}  -1 & 1  \end{smallmatrix} \text{ and } \begin{smallmatrix}  3 & -1  \end{smallmatrix},$
respectively.  Then $I_e$  is one-dimensional and $I_e \subset I_x$.   
The two conditions in \S \ref{ideal_containment} are met, so by Lemma \ref{slice_in_2_space}
there exists $x \in \0$ with $x = e + ae_0 + dw_i$ for $a, d \in \CM$ .
Now $d \neq 0$ since $x$ and $e+ae_0$ are not in the same $G$-orbit for any value of $a$,
and $a \neq 0$ by Lemma \ref{oh_yeah}, 
and thus we get another proof that $\slice$ takes the form $X(2,3)$.

\subsubsection{Finding $w_i$ for $i \in \EC$}  \label{powers_of_X}

We sometimes need to do explicit computations to verify 
\eqref{coeffs} or to show that $w=e_0$, especially for degenerations
which are not minimal (e.g.,  \S \ref{F4_non_min}) or where Lemma \ref{S_var_sl2} does not apply.
In these cases there arises the need for an analogue of 
the result describing isomorphisms between $S$-varieties (\S \ref{S_vars}).
Here we describe a way that is often helpful in finding $w_i$ for $i \in \EC$, 
which frequently leads to an isomorphism of
$\ov{C(\sg) \cdot  x }$ with 
$\ov{C(\sg) \cdot  e_0}$ in Lemma \ref{kp_prop}, when such an isomorphism exists.

Write $\gg(h;j)$ for the $j$-eigenspace of $\ad h$.
Since $\cg(\sg) \subset \gg^h = \gg(h; 0)$, the $\gg(h;j)$ are $\cg(\sg)$-modules.
Also $\gg^h \oplus \gg(h;1)$ is isomorphic to $\gg^f$, as $\cg(\sg)$-modules.  
Indeed, for $j \geq 0$, 
\begin{eqnarray*}
\gg^f(-2j) \cong (\ad f)^j (\gg^h) \cap \gg^f \text{ and } \\
\gg^f(-2j-1) \cong (\ad f)^{j+1} (\gg(h;1)) \cap \gg^f,
\end{eqnarray*}
as $\cg(\sg)$-modules.

Suppose that $\gg^h$ is a direct sum of classical Lie algebras.
Then for $M \in \gg^h$, the matrix power $M^r$ is in $\gg^h$
for $r$ odd, or if all the factors of $\gg^h$ are type $A$, then for any $r$.
Of course $[M, M^r] = 0$ in $\gg^h$, and hence in $\gg$.
Set $M:=e_0$, where as before $e_0 \in \cg(\sg)$ is nilpotent.
The identity $[h_0, M^r] = 2r M^r$ holds in $\gg^h$ because $[h_0, M] = h_0M - Mh_0 = 2M$, where
matrix multiplication takes place in $\gg^h$; hence this identity also holds in $\gg$.   
Thus $M^r$, if nonzero, is a highest weight vector for $\sg_0$ relative to $e_0$ and $h_0$.
Now assume $M^r$ is not zero.  Then for some largest $j$, 
$$(\ad f)^j M^r \in \gg^f(-2j)$$ is nonzero.
Since $\sg$ and $\sg_0$ commute, $(\ad f)^j M^r$ is both a highest weight vector for $\sg_0$ 
and a lowest weight vector for $\sg$ (relative to $f$ and $h$).

Now suppose $\EC \neq \emptyset$
and consider $(m_i, n_i)$ for $i \in \EC$.  Suppose $m_i$ is even.
In the cases of interest (see Lemma \ref{2+} and the paragraph after Lemma \ref{slice_in_2_space}), we have $n_i - m_i = 2$.
In such cases we often find that $w_i$ can be taken to be 
$$(\ad f)^{\tfrac{m_i}{2}}( M^{\tfrac{m_i}{2}+1}).$$
Moreover, if $x$ in \eqref{coeffs}  is a linear combination of such $w_i$'s and $w=e_0$, then it follows that 
$\ov{C(\sg) \cdot x} \cong \ov{C(\sg) \cdot e_0}$ via the projection $\pi_0$
(\S \ref{natural_proj}) since the $G^h$-action, and thus the $C(\sg)$-action, commutes with taking matrix powers.

\subsubsection{Example:  the non-minimal degeneration $(C_3, \tilde{A}_2)$ in $F_4$}  \label{F4_non_min} 

We illustrate the previous discussion in $F_4$ in proving that $\slice$ contains an irreducible component isomorphic to the nilpotent cone $\NC_{G_2}$ in $G_2$, 
when $\0$ is of type $C_3$ and $e$ lies in the $\tilde{A}_2$ orbit.  For this choice of $e$, the centralizer $C(\sg)$ is connected, simple of type $G_2$.   
Let $e_0 \in \cg(\sg)$ be regular nilpotent.
Then $e + e_0$ lies in the orbit $F_4(a_2)$ and the decomposition of $\gg$ in \eqref{sl2_decomp} is 
$$V(0,2) \oplus V(0,10) \oplus V(2,0) \oplus V(4,6),$$ so $\EC$ has a single element, with $(m_i,n_i) = (4,6)$.
Then $\0_e \subset \cl \subset \cl_{e+e_0}$ and we could use \S \ref{ideal_containment} to show that there exists
$x \in \0$ satisfying \eqref{coeffs} with some additional work.  Instead, we report on a direct computation using GAP.
Let $$e=e_{0010}+e_{0001},\;\;\; f=2f_{0010}+2f_{0001},\;\;\; h=[e,f],$$ 
and
$$e_0 = e_{0111}-e_{0120} + e_{1000}.$$
The space $\gg(-4,6)$ is one-dimensional, spanned by $w_i := e_{1220}$.
This is also a highest weight vector for the full action of $C(\sg)$ on $\gg^f(-4) \cong V(\omega_2)$, the $7$-dimensional 
irreducible representation of $G_2$.
We computed in GAP that there is an $x \in \0$ with 
$x= e+e_0 -\tfrac{1}{4}w_i,$  which establishes \eqref{x_form1}
with $x_0 = e_0$ and $x_4 = w_i$.
Since $\dim \SC_{\0,e} = \dim C(\sg) \cdot e_0$, Lemma \ref{kp_prop} applies 
and thus $\slice$ contains $e + X$ as an irreducible component, 
where $X := \overline{C(\sg) \cdot (e_0 -\tfrac{1}{4}w_i})$.  

We now show that $X$ is isomorphic to $\overline{C(\sg) \cdot e_0}$, which is the nilcone of $\cg(\sg)$, 
by relating the choice of $w_i$ to the discussion in \S \ref{powers_of_X}.   
We have $\gg^h \cong \so_7(\CM) \oplus \CM$ and the $\so_7(\CM)$ component contains $\cg(\sg)$
and decomposes under $\cg(\sg)$ into $\cg(\sg) \oplus V(\omega_2)$. 
Now $\ad f$ annihilates $\cg(\sg)$, while $(\ad f)^2$ carries the $V(\omega_2)$ summand isomorphically onto $\gg^f(-4)$. 
Let $M = e_0 \in \cg(\sg) \subset \so_7(\CM)$.  
Then $M^3 \in \so_7(\CM)$ and $M^3 \neq 0$ since $e_0$ has type $B_3$ in $\gg$ 
(i.e., the embedding of $\cg(\sg)$ of type $G_2$ in $\so_7(\CM)$ is the expected one).
Since $M^3$ is centralized by $e_0$ and is an eigenvector for $\ad h_0$ with eigenvalue $6$, 
we have $M^3 \in V(\omega_2)$ since only the
eigenvalues $2$ and $10$ are possible for the $\cg(\sg)$ summand.  Moreover, 
$(\ad f)^2 (M^3)$ is a nonzero vector in $\gg(-4,6)$ and so must be a multiple of $w_i$.   
Although $X$ is not an $S$-variety (since $e_0$ is not minimal in $\cg(\sg)$), 
it is the closure of the $C(\sg)$-orbit through $(e_0, w_i) \in \cg(\sg) \oplus \gg^f(-4)$, which can now be described as
the set of elements $(M,M^3) \in \cg(\sg) \oplus V(\omega_2) = \so_7(\CM)$ with $M \in \cg(\sg)$ nilpotent.  
Hence, there is a $C(\sg)$-equivariant isomorphism 
of $X$ with $\overline{ C(\sg) \cdot e_0} \cong \NC_{G_2}$ coming from $\pi_0$.

\begin{remark}
There are two branches of $\cl$ in a neighborhood of $e$.  These two branches are not conjugate under the action of $G^e$, 
which shows that Proposition \ref{transitive_components} does not generally hold for degenerations which are not minimal.
The other branch of $\cl$ at $e$ splits into three separate branches in a neighborhood of a point in the orbit $F_4(a_3)$
(see \S \ref{4G2}).
\end{remark}

\subsection{Proof of Proposition \ref{dim4+_1}.} \label{section:dim4}

The proof is case-by-case until we exhaust all minimal degenerations covered by the Proposition.
First, we consider those $e$ for which there exists $e_0 \in \cg(\sg)$ that is minimal nilpotent in $\gg$,  
and then compute the $G$-orbit $\0$ to which $e+e_0$ belongs (\S \ref{Minimal in g}).  
Corollary \ref{slice_corollary} ensures that 
\eqref{dim_equality_full_e0} holds for this $\0$, and 
then applying Lemma \ref{kp_prop} to $x:= e + e_0$, we 
conclude that $e+ \overline{C(\sg) \cdot e_0}$ is a union of
irreducible components of $\slice$.   Such degenerations turn out always to 
be minimal degenerations, and so $C(\sg)$ acts transitively on the 
irreducible components of $\slice$ by Proposition \ref{transitive_components}.  
Hence $\slice = e+ \overline{C(\sg) \cdot e_0}$.     
The results are recorded in Tables \ref{table1_f4},  \ref{table1_e6}, \ref{table1_e7}, and \ref{table1_e8} 
for each of the exceptional groups $F_4$, $E_6$, $E_7$, and $E_8$, respectively.
Next, we consider all other cases where $e_0$ belongs to a minimal nilpotent $C(\sg)$-orbit in $\cg(\sg)$ 
and check whether or not \eqref{dim_equality_full_e0} holds for $\0 = \0_{e+e_0}$.
In the cases where it does hold, the degeneration $(\0, \0_e)$ turns out to be
a minimal degeneration, and thus $\slice= e + \overline{C(\sg) \cdot e_0}$ as in the first step.
The results are recorded in the first lines of Tables \ref{table4_f4},  \ref{table4_e6}, \ref{table4_e7}, and \ref{table4_e8}.
These two sets of calculations cover all the minimal degenerations in Proposition \ref{dim4+_1} where $J = \emptyset$.

For the remaining cases, we study those $e_0$ which are minimal in $\cg(\sg)$, but where
\eqref{dim_equality_full_e0} does not hold for the orbit through $e+e_0$.
For such $e$ and $e+e_0$, we look for nilpotent orbits $\0$ with $\0_e \subset \cl \subset \cl_{e+e_{\circ}}$
such that $\0_e$ is a minimal degeneration of $\0$ and $\dim C(\sg) \cdot e_0 =\codim_{\cl}( \0_e)$.
Then $\0' = \0_e$ and $\0$ are candidate orbits to apply Lemma \ref{kp_prop}.
For the cases where the degeneration is dimension two, which 
is the vast majority, we can show that Lemma \ref{S_var_sl2} (and hence Corollary \ref{A1_or_m}) applies.  
Sometimes, though, we have to restrict to a subalgebra as in Lemma \ref{singequi} or carry out 
a computer calculation to determine for which $i \in \mathbb N$ the corresponding $x_i$ is nonzero in \eqref{x_form1}.
There are just three others cases, all of dimension four, and for these
we can show that there exists $x \in \0$ satisfying \eqref{both_conditions} 
by restricting to a subalgebra as in Lemma \ref{singequi} (\S \ref{dim_4_case_1}, \S \ref{dim_4_case_2}).
Thus for all the remaining cases, which are the ones in the Proposition where  $J \neq \emptyset$, 
we find that  Lemma \ref{kp_prop} applies and 
Lemma \ref{2+} ensures that the $x_i$'s 
are highest weight vectors for $C(\sg)$ with weights as prescribed in the Proposition.
The possibilities for $J$ turn out to be  $\{2\}, \{2, 4\}, \{ 1 \}, \{1, 2 \},$ and $\{1, 2, 3\}$, as noted in 
Remark \ref{which_J}.  By \S \ref{S_vars}, the first two possibilities give the isomorphism under $\pi_0$ in (1) of the Proposition,
and the last three possibilities give the isomorphism under $\pi_{0,1}$ in (2) of the Proposition.

Comparing with the surface cases treated in \S \ref{section:codimension2}, in order to know which surface cases have $|\Gamma| = 1$ or $2$,
we find that all the cases in Proposition \ref{dim4+_1} have been addressed.  
The results are recorded in Tables \ref{table4_f4},  \ref{table4_e6}, \ref{table4_e7}, and \ref{table4_e8},
where $\EC \neq \emptyset$.  The set $J$ consists of those $m_i$ with $i \in \EC$ and $d_i \neq 0$ in \eqref{coeffs}, or
equivalently, $x_{m_i} \neq 0$ in \eqref{x_form1}. Such $m_i$
are the ones in the boldface pairs $(m_i, n_i)$ in these tables.  They all must satisfy $n_i-m_i =2$ by Lemma \ref{2+}.



\section{Geometric method for surface singularities}  \label{section:codimension2}

In this section we consider a minimal degeneration $\0'$ of $\0$
such that  $\0'$ is of codimension 2 in $\cl$.  Let $e \in \0'$.  
We show that the normalization of each irreducible component of 
$\slice$  is isomorphic to 
$\cit^2/\Gamma$ for some finite subgroup $\Gamma \subset {\rm SL}_2(\CM)$.
Our method allows us to determine the group $\Gamma$, hence 
we determine $\slice$ up 
to normalization.   As mentioned in \S \ref{summary:surface_cases}, we can often use results on normality of
nilpotent orbit closures or other methods (e.g. Lemma \ref{singequi}) to decide whether the irreducible components of $\slice$ are normal.
Sometimes we have to state our results up to normalization.

\subsection{Two-dimensional Slodowy slices}

Recall that a contracting $\cit^*$-action on a variety $X$ is a $\cit^*$-action on $X$ with a unique fixed point $o \in X$ such that for any 
$x\in X$, we have $\lim_{\lambda \to 0} \lambda \cdot x = o$.  Recall from \cite{Beauville} that a symplectic variety is a normal
variety $W$ with a holomorphic symplectic form $\omega$ on its smooth
locus such that for any resolution $\pi: Z \to W$, the pull-back
$\pi^* \omega$ extends to a regular 2-form on $Z$. 
For a nilpotent orbit, we write $\widetilde{\0}$ for the normalization of $\cl$.

\begin{lemma}
The normalization $\widetilde{\mathcal{S}}_{\0, e}$ of $\mathcal{S}_{\0, e}$ is an affine normal variety with each irreducible component having at most an isolated symplectic singularity and endowed with a contracting $\cit^*$-action.
\end{lemma}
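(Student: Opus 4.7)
The plan is to verify each of the four assertions (affine, normal, isolated symplectic singularity on each component, contracting $\CM^*$-action) separately, assembling facts that have already been recorded in the excerpt.

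First I would dispose of the affineness and the $\CM^*$-action together. Since $\SC_{\0,e}$ is a closed subvariety of the affine space $\SC_e = e+\gg^f$, it is affine; the normalization morphism $\pi\colon \widetilde{\SC}_{\0,e}\to \SC_{\0,e}$ is finite, so $\widetilde{\SC}_{\0,e}$ is affine as well, and normal by construction. For the torus action, the formula \eqref{C-star} exhibits a $\CM^*$-action on $\SC_{\0,e}$ with all strictly positive weights, hence with unique fixed point $e$ and contracting every point to $e$. By the universal property of normalization this action lifts uniquely to $\widetilde{\SC}_{\0,e}$. Since each irreducible component of $\SC_{\0,e}$ is unibranch at $e$ (see \S\ref{Component group action}), the fibre $\pi^{-1}(e)$ consists of exactly one point on each irreducible component of $\widetilde{\SC}_{\0,e}$, and on such a component the lifted action is contracting to that unique preimage.

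Next I would address the claim that each irreducible component has at most an isolated singularity. By the codimension-two hypothesis together with \eqref{dim=codim}, every irreducible component of $\SC_{\0,e}$ is $2$-dimensional, and so is every component of $\widetilde{\SC}_{\0,e}$. A $2$-dimensional normal variety is automatically smooth in codimension $1$, so its singular locus is finite. Moreover $\SC_{\0,e}$ has an isolated singularity at $e$ (\S\ref{first part}), so $\SC_{\0,e}\setminus\{e\}$ is smooth and the normalization map is an isomorphism away from $\pi^{-1}(e)$. Each irreducible component of $\widetilde{\SC}_{\0,e}$ is therefore smooth off the single point of $\pi^{-1}(e)$ that it contains.

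It remains to establish the symplectic structure. The symplectic form on $\OC$ restricts to a symplectic form on the smooth locus of $\SC_{\0,e}$ by \cite[Corollary~7.2]{Gan-Ginzburg} (cf.\ \S\ref{Gorenstein}); pulling back by $\pi$ yields a symplectic form on $\widetilde{\SC}_{\0,e}\setminus \pi^{-1}(e)$. Since $\pi^{-1}(e)$ has codimension $2$ in each component, Hartogs-type extension for regular $2$-forms on a normal variety extends this form to the entire smooth locus of $\widetilde{\SC}_{\0,e}$. The normalization of $\cl$ has rational Gorenstein singularities by Hinich and Panyushev (\S\ref{Gorenstein}), and this property is inherited by transverse slices, so $\widetilde{\SC}_{\0,e}$ is rational Gorenstein; Namikawa's criterion \cite{Namikawa} then promotes each irreducible component to a symplectic variety.

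The only nontrivial step is the extension of the pulled-back symplectic form across $\pi^{-1}(e)$, and this is exactly where the codimension-two hypothesis pays off: it is what allows Hartogs to apply. Everything else is a repackaging of results already collected in \S\ref{first part}--\S\ref{Gorenstein}.
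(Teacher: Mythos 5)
Your proof is correct and follows essentially the same route as the paper: rational Gorenstein singularities inherited from $\widetilde{\0}$ via Hinich--Panyushev, the Gan--Ginzburg symplectic form restricted to the smooth locus, Namikawa's criterion applied componentwise, and a lift of the contracting $\CM^*$-action through the finite normalization map. You make explicit a few steps the paper leaves implicit --- the Hartogs extension across $\pi^{-1}(e)$, normality plus two-dimensionality giving isolated singularities, and unibranchedness supplying a single fixed point per component --- but these are elaborations of the same argument rather than a different approach.
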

\begin{proof}
As $\widetilde{\0}$ has rational Gorenstein singularities by \cite{Hinich} and \cite{Panyushev},   $\widetilde{\mathcal{S}}_{\0, e}$ 
has only rational Gorenstein singularities.  On the other hand, there exists
a symplectic form on its smooth locus, hence  $\widetilde{\mathcal{S}}_{\0, e}$ has only symplectic singularities by \cite{Namikawa} (Theorem 6).
By construction, the contracting $\cit^*$-action on $\slice$ in \S \ref{group action} has positive weights, 
hence it lifts to a contracting $\cit^*$-action on  $\widetilde{\mathcal{S}}_{\0, e}$.
\end{proof}

The two-dimensional symplectic singularities are exactly rational double points (cf. \!\cite[Section 2.1]{Beauville}).   
The following is immediate from  \cite[Lemma 2.6] {Flenner-Zaidenberg}.
\begin{proposition} \label{P:2-dim}
Let $X$ be an affine irreducible  surface with an isolated rational double point at $o$. If there exists a contracting $\cit^*$-action on $X$,
then  $X$ is isomorphic to $\cit^2/\Gamma$ for some finite subgroup $\Gamma \subset {\rm SL}_2(\CM)$.
\end{proposition}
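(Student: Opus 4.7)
The plan is to promote the classical analytic normal form for a rational double point to an algebraic isomorphism by exploiting the contracting $\CM^*$-action. First I would use the $\CM^*$-action to equip $R := \CO(X)$ with the structure of a positively graded $\CM$-algebra $R = \bigoplus_{n \geq 0} R_n$ with $R_0 = \CM$ and maximal ideal $\mg_o = \bigoplus_{n \geq 1} R_n$: every regular function decomposes into $\CM^*$-weight components, and the hypothesis that $o$ is the unique fixed point with every orbit contracting to it forces the weights to be non-negative, with only constants occurring in weight zero.

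Next, I would invoke the analytic classification of rational double points (quoted in \cite[Section 2.1]{Beauville}) to produce a finite subgroup $\Gamma \subset \SL_2(\CM)$ and an isomorphism of complete local rings $\widehat{R}_{\mg_o} \cong \CM[[u,v]]^\Gamma$. The heart of the argument is to realize the grading on $\widehat{R}_{\mg_o}$ inherited from $R$ as coming from a linear diagonal $\CM^*$-action on $\CM[[u,v]]$ commuting with (a suitable conjugate of) $\Gamma$. A priori the grading lives only on the invariant subring, so one has to lift the associated Euler derivation along the Galois cover $\CM^2 \setminus \{0\} \to (\CM^2 \setminus \{0\})/\Gamma$ and then extend it across the origin, appealing to the classical fact that every $\CM^*$-action on $\AM^2$ fixing a point is linearizable there. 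This lifting plus linearization is precisely the content of \cite[Lemma 2.6]{Flenner-Zaidenberg}.

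I expect the lifting/linearization step to be the main obstacle, because the grading on $\CM[[u,v]]^\Gamma$ is only abstract data and has to be matched with the weights of an honest cocharacter of $\GL_2(\CM)$ normalizing $\Gamma$. Once that step is in place the conclusion is formal: both $R$ and $\CM[u,v]^\Gamma$ are finitely generated positively graded $\CM$-algebras with finite-dimensional homogeneous pieces, and each embeds into its $\mg_o$-adic completion as the direct sum of its graded pieces inside the direct product. The graded isomorphism of the completions therefore restricts to an isomorphism of the graded subalgebras themselves, yielding $X \cong \CM^2/\Gamma$ as affine algebraic varieties with compatible $\CM^*$-actions, as recorded in Remark \ref{C_star_compatibility}.
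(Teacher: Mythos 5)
Your approach coincides with the paper's: the proof there is a one-line appeal to \cite[Lemma 2.6]{Flenner-Zaidenberg}, the same lemma you single out as the crux of the lifting and linearization step. Your surrounding scaffolding (grading the coordinate ring by $\CM^*$-weights, invoking the analytic classification of rational double points, and recovering the graded algebra from its completion) is a sound reconstruction of what that reference provides.
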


Note that by Proposition \ref{transitive_components}, the irreducible components of $\slice$ are mutually isomorphic. 
As an immediate corollary, we get 
\begin{corollary} \label{Cor:2-dim-isom}
Let $\slice$ be a two-dimensional nilpotent Slodowy slice.
Then there exists a finite subgroup  $\Gamma \subset {\rm SL}_2(\CM)$ 
such that each irreducible component of the normalization $\widetilde{\mathcal{S}}_{\0, e}$ is isomorphic to 
$\cit^2/\Gamma$.
\end{corollary}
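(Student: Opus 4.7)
The plan is to assemble three ingredients that are already in place: the structural lemma immediately preceding the corollary, Proposition \ref{P:2-dim}, and Proposition \ref{transitive_components}. There is no serious new content required; the work is in tracking irreducible components carefully through the normalization map.

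First I would decompose $\widetilde{\SC}_{\0,e}$ into its irreducible components $Y_1,\dots,Y_k$. Since the normalization of a reduced affine scheme is the disjoint union of the normalizations of its irreducible components, each $Y_j$ is the normalization of a unique irreducible component of $\slice$. By the immediately preceding lemma, each $Y_j$ is an affine normal surface with at most an isolated symplectic singularity. In dimension two an isolated symplectic singularity is the same as a rational double point, as recalled in the paragraph preceding Proposition \ref{P:2-dim}.

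Next I would verify that the contracting $\CM^*$-action on $\widetilde{\SC}_{\0,e}$ restricts to a contracting $\CM^*$-action on each $Y_j$ with a unique fixed point. Since $\CM^*$ is connected, it preserves every irreducible component of $\slice$, and therefore, by the universal property of normalization, every $Y_j$. Recall from \S \ref{Component group action} that each irreducible component of $\slice$ is unibranch at $e$, so the preimage of $e$ in $Y_j$ is a single point $\tilde{e}_j$. Since every point of $\widetilde{\SC}_{\0,e}$ contracts under the lifted $\CM^*$-action to a point lying over $e$, the restricted action on $Y_j$ contracts every point to $\tilde{e}_j$. Proposition \ref{P:2-dim} applied to each $Y_j$ then yields a finite subgroup $\Gamma_j\subset\SL_2(\CM)$ with $Y_j\cong\CM^2/\Gamma_j$.

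Finally, by Proposition \ref{transitive_components}, $A(e)$ permutes the irreducible components of $\slice$ transitively; since the $C(\sg)$-action on $\slice$ lifts to $\widetilde{\SC}_{\0,e}$ and permutes the $Y_j$'s accordingly, all the $Y_j$'s are mutually isomorphic. Hence the subgroups $\Gamma_j$ are pairwise conjugate in $\SL_2(\CM)$ and we may take a single $\Gamma\subset\SL_2(\CM)$ for every component. The only step requiring any care is the unibranch observation, which ensures that the preimage of $e$ in each $Y_j$ is a single point and hence that the $\CM^*$-action on $Y_j$ is genuinely contracting to a unique fixed point; once this is noted, the result is immediate from the two cited propositions.
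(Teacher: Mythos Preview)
Your proposal is correct and follows essentially the same approach as the paper, which simply notes (immediately before the corollary) that Proposition~\ref{transitive_components} makes the irreducible components of $\slice$ mutually isomorphic and then declares the result an immediate consequence of the preceding lemma and Proposition~\ref{P:2-dim}. Your write-up is more detailed than the paper's---in particular your explicit check, via the unibranch observation of \S\ref{Component group action}, that each component of the normalization carries a contracting $\CM^*$-action with a \emph{single} fixed point is a point the paper leaves implicit---but the logical structure is identical.
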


Hence to determine $\widetilde{\mathcal{S}}_{\0, e}$, we only need to determine the subgroup $\Gamma$. In the following, we shall
describe a way to construct the minimal resolution of $\widetilde{\mathcal{S}}_{\0, e}$. Then the configuration of exceptional $\pit^1$'s
in the minimal resolution will determine $\Gamma$.

\subsection{$\mathbb{Q}$-factorial terminalization for nilpotent orbit closures}\label{q_fact_term}
A general reference for the minimal model program in algebraic geometry is 
\cite{Matsuki}.
Here we recall some basic definitions.

Let $X$ be a normal variety.  A Weil divisor $D$ on $X$ is called {\em $\mathbb{Q}$-Cartier} if $N D$ is a Cartier divisor for some non-zero integer $N$. We say that $X$ is {\em $\mathbb{Q}$-Gorenstein} if its canonical divisor
$K_X$ is $\mathbb{Q}$-Cartier. 
The variety $X$ is called {\em $\mathbb{Q}$-factorial} if every Weil divisor on $X$ is $\mathbb{Q}$-Cartier. 
A $\mathbb{Q}$-Gorenstein variety $X$ is said to have {\em terminal singularities}
if  there exists a resolution $\pi: Z \to X$ such that
$K_Z = \pi^* K_X + \sum_{i=1}^k a_i E_i$ with $a_i >0$ for all $i$, where $E_i, i=1, \cdots, k$ are the irreducible components of the exceptional divisor of $\pi$.
A {\em $\mathbb{Q}$-factorial terminalization} of a $\mathbb{Q}$-Gorenstein variety $X$ is a projective birational morphism $\pi: Z \to X$ such that  $K_Z = \pi^* K_X$ and $Z$ is $\mathbb{Q}$-factorial with  only terminal singularities. 

It is well-known that two-dimensional terminal singularities are necessarily smooth (cf. Theorem 4-6-5 \cite{Matsuki}), hence a normal variety $X$ with only terminal singularities is smooth in codimension 2, that is, ${\rm codim}_X {\rm Sing}(X) \geq 3$.

For the normalization of the closure of a nilpotent orbit, 
one way to obtain its $\mathbb{Q}$-factorial terminalization
is by the following method. 
Consider a parabolic subgroup $Q$ in $G$. Let $L$ be a Levi
subgroup of $Q$. For a nilpotent element $t \in \Lie(L)$, we
denote by $\OC^L_t$ its orbit under $L$ in $\Lie(L)$. Let $\ng(\qg)$
be the nilradical of $\Lie(Q)$.   Then the natural map 
$p: G \times^Q (\ng(\qg)+\overline{\OC}^L_t) \to \gg$
has image equal to $\cl$ for some nilpotent orbit $\OC$ and $p$ is
called a {\em generalized Springer map} for $\OC$.
Then $\OC$ is said to be induced from $(L, \OC^L_t)$ \cite{Lusztig-Spaltenstein}.
When $t=0$,  then $\0$ is called the Richardson orbit for $Q$ and
$G \times^Q \ng(\qg)$ identifies with the cotangent bundle $T^*(G/Q)$;
if in addition $p$ is birational, then we call $p$ a generalized Springer resolution.
By \cite{Fu:sympl}, those are the only symplectic resolutions of nilpotent orbit closures.
More generally, if $p$ is birational and the normalization of $\overline{\OC}^L_t$ is $\QM$-factorial terminal, then
the normalization of $p$ gives a $\QM$-factorial
terminalization of  $\widetilde{\0}$, the normalization of $\cl$.   In \cite{Fu:Qfact}, it was proved in confirming
a conjecture of Namikawa that for a nilpotent orbit $\0$ in an exceptional Lie algebra, 
either $\widetilde{\0}$ is $\qit$-factorial terminal 
or every $\qit$-factorial terminalization of $\cl$ is given by a generalized Springer map.

\subsection{Minimal resolutions of two-dimensional nilpotent Slodowy slices}  \label{minimal_resolution}

We now use the generalized Springer maps to construct a minimal resolution of $\widetilde{\mathcal{S}}_{\0, e}$
when $\slice$ is two-dimensional.

Recall from \cite{Fu:Qfact} that in a simple Lie algebra of exceptional
type, 
$\widetilde{\0}$
has only terminal singularities if and only if $\0$ is either a
rigid orbit or it belongs to the following list: $2A_1, A_2+A_1,
A_2+2A_1$ in $E_6$; $A_2+A_1$, $A_4+A_1$ in $E_7$; $A_4+A_1$,
$A_4+2A_1$ in $E_8$.

First consider the case where $\widetilde{\0}$ has only terminal singularities.
Then  $\widetilde{\0}$
is smooth in codimension two by the previous subsection. 
This implies that the
singularities of $\cl$ along $\OC_e$ are smoothable by its
normalization.  In other words, $\widetilde{\mathcal{S}}_{\0, e}$ is smooth, 
which is then isomorphic to $\cit^2$ by Proposition \ref{P:2-dim} and we are done.  

\begin{example}
Consider again the minimal degeneration $(\widetilde{A}_1, A_1)$ in $G_2$ from \S \ref{mexample}. 
As $\0=\0_{\widetilde{A}_1}$ is a rigid orbit, 
its normalization has $\QM$-factorial terminal
singularities by \cite{Fu:Qfact}.
In particular, the singular locus of
$\widetilde{\0}$ has codimension at least $4$.  Since the orbit
$A_1$ is of codimension two in $\cl$, this implies
that $\cl$ is non-normal and  $\widetilde{\mathcal{S}}_{\0, e} \cong \CM^2$
for $e \in \0_{A_1}$, which is consistent with the description $\slice \cong m$ in \S \ref{mexample}.
\end{example}

Next, assume that the normalization $\widetilde{\0}$ is not terminal.  Then by \cite{Fu:Qfact},
$\0$ is an  induced orbit and $\widetilde{\0}$ admits a $\QM$-factorial terminalization $\pi: Z \to
\widetilde{\0}$ given by the normalization of a generalized Springer
map.
We denote by $U$ the open subset $\0 \cup \OC_e$
of $\cl$ and $\nu: \widetilde{U} \to U$ the normalization map.
As $Z$ has only terminal singularities, it is smooth in codimension two.
As $\pi$ is $G$-equivariant and $\OC_e \subset \cl$ is of codimension two, we get that $\pi(\Sing(Z)) \cap
\nu^{-1}(\OC_e) =\emptyset$.
We deduce that $V:=\pi^{-1}(\widetilde{U})$ is smooth.
In particular, we obtain a symplectic resolution $\pi|_V: V \to \widetilde{U}$.
By restriction, 
we get a resolution $\pi: \pi^{-1}(\widetilde{\SC}_{\0, e}) \to \widetilde{\SC}_{\0, e}$, which is a symplectic, hence minimal, resolution.

Let $y \in  \nu^{-1}(e)$.  If we know:
(1) the number of $\mathbb{P}^1$'s in $\pi^{-1}(y)$ and in $\pi^{-1}(\nu^{-1}(e))$; and
(2) the action of $A(e)$ on the $\mathbb{P}^1$'s in $\pi^{-1}(\nu^{-1}(e))$, then in most cases we
can determine the configuration of $\mathbb{P}^1$'s in $\pi^{-1}(\nu^{-1}(e))$, and hence in $\pi^{-1}(y)$, and therefore determine
$\widetilde{\SC}_{\0, e}$.  We next introduce some methods to compute this information.

\subsection{The method of Borho-MacPherson}   \label{Bo-M method} 

 Let $W$ be the Weyl group of $G$. The Springer
correspondence assigns to any irreducible $W$-module a unique pair
$(\OC, \phi)$ consisting of a nilpotent orbit $\OC$ in $\gg$ and
an irreducible representation $\phi$ of the component group $A(x)$
where $x \in \OC$.
 The corresponding irreducible $W$-module will be denoted by  $\rho_{(x, \phi)}.$

Let $W_L$ denote the Weyl group of $L$, viewed as a subgroup of $W$.
Let $\BC_x$ denote the Springer fiber over $x$ for the resolution
of the nilpotent cone $\NC$ in $\gg$ and let $\BC^L_t$ be the Springer fiber of
$t$ for the group $L$.  
If $\0^L_t$ is the orbit of $L$ through the nilpotent element $t \in \mbox{Lie}(L)$, we denote by $\rho^{\tiny L}_{(t, 1)}$
the $W_L$-module corresponding to the pair $(\0^{\small L}_t, 1)$ via the Springer correspondence for $L$.

\begin{lemma}\label{formula}
Let $Z = G \times^Q (\ng(\qg)+\overline{\OC}^L_t)$. 
Let $p: Z \to \cl$ be the generalized Springer map.  Let $\OC' \subset \cl$ be a
nilpotent orbit of codimension $2d$.  Assume that $Z$ is rationally
smooth at all points of $p^{-1}(e)$ for $e \in \OC'$. Then the
number of irreducible components of $p^{-1}(e)$ of dimension $d$
is given by the formula
\[
\frac{\deg \rho^L_{(t,1)}}{\dim H^{\mathrm top}(\BC^L_t)}
\bigoplus_{\phi\in \Irr A(e)} \deg \phi \cdot [\Res^W_{W_L} \rho_{(e,\phi)} : \rho^{\tiny{L}}_{(t,1)}],
\]
where the sum is over
the irreducible representations $\phi$ of $A(e)$ appearing in the Springer correspondence for $G$.
\end{lemma}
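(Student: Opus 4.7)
My strategy is to apply the decomposition theorem to the proper morphism $p: Z \to \cl$, combine the result with Borho--MacPherson's interpretation of multiplicities via the Springer correspondence, and read off the top Betti number of the fiber $p^{-1}(e)$.

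First, I would apply the decomposition theorem to write
\[
p_* \underline{\CM}_Z[\dim Z] \;\cong\; \bigoplus_{(\OC_y,\LC)} M_{(y,\LC)} \otimes IC(\overline{\OC_y}, \LC)[\dim \OC_y],
\]
the sum running over pairs of a nilpotent orbit $\OC_y \subset \cl$ and an irreducible $A(y)$-equivariant local system $\LC$ on $\OC_y$. The assumed rational smoothness of $Z$ at all points of $p^{-1}(e)$ forces $H^*(p^{-1}(e);\CM)$ to be concentrated in even degrees, with its top Betti number equal to the number of $d$-dimensional irreducible components of $p^{-1}(e)$. Taking stalks at $e$ in the top relevant degree, using that the stalk of $IC(\overline{\OC_e}, \LC_\phi)$ there has rank $\deg \phi$, and noting that summands with $\OC_y \neq \OC_e$ contribute only in strictly lower degree, I extract
\[
N \;=\; \sum_{\phi \in \Irr A(e)} M_{(e, \LC_\phi)} \cdot \deg \phi,
\]
where $N$ denotes the number of $d$-dimensional components of $p^{-1}(e)$.

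Second, I would identify the multiplicities $M_{(e, \LC_\phi)}$ using the fact that the generalized Springer map realizes a form of parabolic induction. Up to normalization, $p_* \underline{\CM}_Z[\dim Z]$ is the parabolic induction from $L$ to $G$ of the Springer sheaf on $\overline{\OC^L_t}$, and the Springer correspondence for $L$ distinguishes the summand attached to the trivial local system on $\OC^L_t$, namely the $W_L$-module $\rho^L_{(t,1)}$. The functoriality of Springer induction (Borho--MacPherson, Lusztig) then translates multiplicities of $IC(\overline{\OC_e}, \LC_\phi)$ into branching multiplicities for $\Res^W_{W_L}$ in the Springer correspondence for $G$, producing
\[
M_{(e, \LC_\phi)} \;=\; \frac{\deg \rho^L_{(t,1)}}{\dim H^{\mathrm{top}}(\BC^L_t)}\,[\Res^W_{W_L} \rho_{(e,\phi)} : \rho^L_{(t,1)}].
\]
Substituting this into the displayed formula for $N$ gives the lemma.

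The main obstacle will be pinning down the normalization factor $\deg \rho^L_{(t,1)}/\dim H^{\mathrm{top}}(\BC^L_t)$. It appears because the generalized Springer sheaf on $\overline{\OC^L_t}$ carries the full $W_L \times A_L(t)$-bimodule $H^{\mathrm{top}}(\BC^L_t)$ on its generic stalk, whereas only the $A_L(t)$-isotypic component indexed by the trivial character --- of dimension $\deg \rho^L_{(t,1)}$ --- enters the branching formula. Making this accounting precise amounts to comparing parabolic induction of perverse sheaves from $L$ to $G$ with the Springer correspondence for $L$, and is the step that requires the most care.
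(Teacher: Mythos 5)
The paper's own proof is a one-line invocation of \cite[Thm.\ 3.3]{Borho-MacPherson}, which supplies the isomorphism of $W_L$-modules
$H^{\mathrm{top}}(p^{-1}(e)) \otimes H^{\mathrm{top}}(\BC^L_t) \cong H^{\mathrm{top}}(\BC_e)^{\rho^L_{(t,1)}}$,
followed by a dimension count using $H^{\mathrm{top}}(\BC_e) = \bigoplus_\phi \rho_{(e,\phi)} \otimes \phi$. You instead try to re-derive that isomorphism from the decomposition theorem, which is a legitimate but much more ambitious route: the entire content of the lemma is packed into your step~3, the identification
\[
M_{(e,\LC_\phi)} \;=\; \frac{\deg \rho^L_{(t,1)}}{\dim H^{\mathrm{top}}(\BC^L_t)}\,[\Res^W_{W_L} \rho_{(e,\phi)} : \rho^L_{(t,1)}],
\]
which you assert and explicitly flag as ``the step that requires the most care.'' That step is exactly what Borho--MacPherson proved, so what you have written is a proof sketch with the crux outsourced.

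Beyond incompleteness, there is a concrete problem with the asserted multiplicity formula. The multiplicity of a simple perverse sheaf $\ic(\overline{\OC_e},\LC_\phi)$ in a semisimple complex is a non-negative integer, but the right-hand side of your formula is generally a proper fraction: whenever $A_L(t) \neq 1$ one has $\dim H^{\mathrm{top}}(\BC^L_t) = \sum_\psi (\deg \rho^L_{(t,\psi)})(\deg\psi) > \deg\rho^L_{(t,1)}$. The factor $1/\dim H^{\mathrm{top}}(\BC^L_t)$ is not attached to individual $\ic$-summand multiplicities; in the Borho--MacPherson isomorphism $H^{\mathrm{top}}(\BC^L_t)$ appears as a global tensor factor on the $p^{-1}(e)$-side, and dividing by its dimension is only valid after summing over all $\phi$. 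Stating it as you do makes the intermediate claim false as written even though the final count is correct. Also, two smaller points: rational smoothness of $Z$ along $p^{-1}(e)$ does not by itself force $H^*(p^{-1}(e))$ into even degrees --- nor do you need that; what it is actually used for is to identify the stalk of $p_*\underline{\CM}_Z[\dim Z]$ at $e$ with the stalk of $p_*\ic_Z$, so that the decomposition theorem (which governs $p_*\ic_Z$, since $Z$ may be singular) controls $H^*(p^{-1}(e))$. You gesture at applying the decomposition theorem to $p_*\underline{\CM}_Z[\dim Z]$ directly, but that object need not be semisimple when $Z$ is singular; this is precisely where the rational-smoothness hypothesis enters and should be made explicit.
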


\begin{proof}
By \cite[Thm. 3.3]{Borho-MacPherson}, we have $H^{\mathrm top} (p^{-1}(e)) \otimes
H^{\mathrm top}(\BC^L_t) \cong H^{\mathrm top}(\BC_e)^{\rho^{\tiny{L}}_{(t,1)}}$, where the
right hand side denotes
the $\rho^L_{(t,1)}$-isotypical component of the restriction of $H^{\mathrm top}(\BC_e)$ to $W_L$.
Recall that $H^{\mathrm top}(\BC_e) = \oplus_\phi \rho_{(e,\phi)} \otimes \phi$, which gives
\[
h^{\mathrm top}(p^{-1}(e)) \cdot h^{\mathrm top} (\BC^L_t) =
\deg \rho^L_{(t,1)} \sum_\phi \deg \phi \cdot [\Res^W_{W_L} \rho_{(e,\phi)} : \rho^L_{(t,1)}].
\]
where $h^{\mathrm top}(X)$ denotes the dimension of $H^{\mathrm top}(X)$.
\end{proof}

Now the component group $A(e)$ acts on the left-hand side of
$$
H^{\mathrm top} (p^{-1}(e)) \otimes H^{\mathrm top}(\BC^L_t) \cong H^{\mathrm top}(\BC_e)^{\rho^L_{(t,1)}}
$$
where it acts trivially on $H^{\mathrm top}(\BC^L_t)$.
It also acts on the right-hand side since the $A(e)$-action commutes with the $W$-action, and hence the
$W_L$-action.
Note that the action of $A(e)$ is compatible with the isomorphism (see Corollary  3.5 \cite{Borho-MacPherson}). 
This gives the following
\begin{corollary}\label{cor_action}
The permutation action of $A(e)$ on the irreducible components of dimension $d$ of $p^{-1}(e)$ gives rise 
to the linear representation 
\begin{equation}\label{perm_rep}
\bigoplus_{\phi\in\Irr A(e)} \deg \rho^L_{(t,1)} [\Res^W_{W_L} \rho_{(e,\phi)} : \rho^L_{(t,1)}]  \phi
\end{equation}
\end{corollary}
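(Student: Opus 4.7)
The plan is to refine the dimension-count of Lemma \ref{formula} by carrying the $A(e)$-action through each step, rather than only comparing dimensions at the end. The key input is the Borho-MacPherson isomorphism
$$H^{\mathrm top}(p^{-1}(e)) \otimes H^{\mathrm top}(\BC^L_t) \;\cong\; H^{\mathrm top}(\BC_e)^{\rho^L_{(t,1)}}$$
of $W_L$-modules, which according to \cite[Cor.~3.5]{Borho-MacPherson} is in fact $A(e)$-equivariant once both sides are given their natural $A(e)$-structure. This equivariance is essentially the only nontrivial point; once it is in hand, everything else is a matter of unwinding definitions.

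First I would describe the $A(e)$-action on the left-hand side. The factor $H^{\mathrm top}(\BC^L_t)$ is built entirely from the Levi $L$ and the $L$-nilpotent $t$, with no reference to $e$, so $A(e)$ acts trivially on it. On $H^{\mathrm top}(p^{-1}(e))$, whose dimension is the number of top-dimensional components of $p^{-1}(e)$, the $A(e)$-action is by permutation of these components, since $A(e)$ stabilizes $p^{-1}(e)$ and preserves dimension. Thus as an $A(e)$-representation the left-hand side is (the permutation representation on components of dimension $d$) tensored with the trivial module $H^{\mathrm top}(\BC^L_t)$.

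Next I would analyse the right-hand side using the Springer decomposition $H^{\mathrm top}(\BC_e) = \bigoplus_{\phi\in\Irr A(e)} \rho_{(e,\phi)} \otimes \phi$ as $W\times A(e)$-modules. Restricting the $W$-action to $W_L$ and extracting the $\rho^L_{(t,1)}$-isotypic part, I get, as $W_L\times A(e)$-modules,
$$H^{\mathrm top}(\BC_e)^{\rho^L_{(t,1)}} \;\cong\; \bigoplus_{\phi\in\Irr A(e)} [\Res^W_{W_L}\rho_{(e,\phi)} : \rho^L_{(t,1)}]\cdot \rho^L_{(t,1)}\otimes \phi,$$
with $A(e)$ acting only on the $\phi$-factor. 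Forgetting the $W_L$-structure, this becomes, as an $A(e)$-module, exactly the expression \eqref{perm_rep}.

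Finally I would equate the two descriptions of $A(e)$-characters, recovering \eqref{perm_rep} as the permutation representation on the top-dimensional irreducible components of $p^{-1}(e)$ (up to the trivial tensor factor $H^{\mathrm top}(\BC^L_t)$, whose dimension accounts for the factor $\deg \rho^L_{(t,1)}$ that appears in the lemma's dimension formula). The only place where something could go wrong is the $A(e)$-equivariance of the Borho-MacPherson isomorphism itself; since this is already recorded in \cite[Cor.~3.5]{Borho-MacPherson}, the corollary follows essentially by bookkeeping.
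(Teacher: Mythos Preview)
Your proof is correct and follows essentially the same route as the paper: the paper's argument is precisely to invoke the Borho--MacPherson isomorphism, observe that $A(e)$ acts trivially on $H^{\mathrm{top}}(\BC^L_t)$ and commutes with the $W_L$-action on the right-hand side, cite \cite[Cor.~3.5]{Borho-MacPherson} for $A(e)$-equivariance, and then read off the $A(e)$-module structure from the Springer decomposition $H^{\mathrm{top}}(\BC_e)=\bigoplus_\phi \rho_{(e,\phi)}\otimes\phi$. One small caution: in your final parenthetical, the factor $\deg\rho^L_{(t,1)}$ in \eqref{perm_rep} comes from the dimension of $\rho^L_{(t,1)}$ in the isotypic component, not from $\dim H^{\mathrm{top}}(\BC^L_t)$; these coincide in the applications (where typically $t=0$) but are a priori distinct.
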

In particular the number of orbits of $A(e)$ on the irreducible components of $p^{-1}(e)$ of 
dimension $d$ equals the multiplicity of the trivial representation of $A(e)$ in \eqref{perm_rep}.
The number of $A(e)$-orbits is therefore equal to
$\deg \rho^L_{(t,1)}  [\Res^W_{W_L} \rho_{(e,1)} : \rho^L_{(t,1)}].$  

\begin{example} \label{example:B3inF4}
Let $\gg$ be of type $F_4$.   Let $\0$ be the nilpotent orbit of type $B_3$ and $\0'$  of type $F_4(a_3)$.
Then $\0' \subset \cl$ is codimension two.
Since $\0$ is even, its weighted Dynkin diagram 
shows that $\0$ is Richardson for the parabolic subgroup $Q$
with Levi subgroup $L$ of semisimple type $\widetilde{A}_2$.
This gives rise to the generalized Springer map $p: G \times^Q \ng(\qg) \to \cl$
as in Lemma \ref{formula}, with $t=0$.
The map  $p$ is birational because $e$ is even.
Since $\cl$ is normal and $p$ is birational, the restriction of $p$
gives a minimal resolution of $\widetilde{\SC}_{\0, e} = \slice$
where $e \in \0'$ as in \S \ref{minimal_resolution}.

Now $A(e)=\mathfrak{S}_4$.  
Since $t=0$, the representation $\rho^L_{(t, 1)}$ is the sign representation of $W_L$.
By the Springer correspondence for $F_4$,  
$\rho_{(e, [21^2])} = \phi'_{1, 12}$, $\rho_{(e, [2^2])} = \phi''_{6,6}$,
$\rho_{(e, [31])} = \phi'_{9,6}$ and $\rho_{(e,[4])} =\phi_{12, 4}$ (see \cite[pg. 428]{Carter}).
The multiplicity of the sign representation in the restriction of $\rho_{(e, [2^2])}$ to $W_L$
is 1 and in the restriction of $\rho_{(e, [4])}$ is 2 and it is zero otherwise.
By  Lemma \ref{formula}, the number of $\mathbb{P}^1$'s in $p^{-1}(e)$ 
is $1 \cdot 2+ 2 \cdot 1 = 4$ and by Corollary \ref{cor_action}, the group 
$A(e)$ fixes one component and permutes the remaining three components transitively.
Consequently the dual graph of $\slice = \widetilde{\SC}_{\0, e}$ is the Dynkin diagram of type $D_4$
and $A(e)$ acts on the dual graph via the unique quotient of $A(e)$ isomorphic to $\mathfrak{S}_3$.
Hence the singularity is $G_2$.  

The fact that the dual graph is $D_4$ 
could also be obtained by restricting to a maximal subalgebra of type $B_4$ (\S \ref{passing_to_subalgebra}).  
In this way we would only need to know 
that the degeneration in $F_4$ is unibranch, instead of the stronger statement that $\cl$ is normal.
\end{example}

\subsection{Orbital varieties and the exceptional divisor of $\pi$}  \label{orbital_exceptional}

The next lemma (see  \cite[Lemma 4.3]{Fu:Qfact})  can sometimes be used to simplify computations.

\begin{lemma} \label{qfact}
Let $\0$ be a nilpotent orbit with $\Pic(\OC)$ finite and such that there is a generalized Springer resolution
$\pi: G \times^Q \ng(\qg) \to \cl$ (see \S \ref{q_fact_term}).
Then the number of irreducible exceptional divisors of $\pi$ equals $b_2(G/Q)$, the second Betti number of $G/Q$, which is equal to
 the rank of $G$ minus the semisimple rank of a Levi subgroup of $Q$.
\end{lemma}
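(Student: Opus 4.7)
The plan is to compute the number $k$ of irreducible exceptional divisors of $\pi$ by comparing the Picard group of $Z = G\times^Q\ng(\qg)$ with that of the open orbit $\OC$, after identifying $Z$ with the cotangent bundle $T^*(G/Q)$.

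First I would identify the rank of $\Pic(Z)$. Pullback along the vector bundle projection $p: Z\to G/Q$ yields $\Pic(Z)\cong\Pic(G/Q)$, and for the generalized flag variety $G/Q$ one has $\Pic(G/Q)\otimes\QM \cong X^*(L)\otimes\QM$, of rank $\dim Z(L)^{\circ} = \mathrm{rk}(G) - \mathrm{rk}_{ss}(L)$. The Bruhat decomposition of $G/Q$ has its $2$-cells in bijection with the simple roots outside the Levi $L$, so this integer also equals $b_2(G/Q)$, handling the second equality in the statement.

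Next I would set up the excision sequence for the open immersion $Y := \pi^{-1}(\OC) \hookrightarrow Z$, whose complement is the union of the irreducible exceptional divisors $E_1,\dots,E_k$ (each $G$-stable, since $\pi$ is $G$-equivariant and $G$ is connected):
\[
\OC^*(Z) \to \OC^*(Y) \xrightarrow{\mathrm{div}} \bigoplus_{i=1}^k \ZM\cdot[E_i] \xrightarrow{\phi} \Pic(Z) \to \Pic(Y) \to 0.
\]
Since $\pi$ restricts to an isomorphism over $\OC$, we have $Y \cong \OC$, so $\Pic(Y) = \Pic(\OC)$ is finite by hypothesis, and tensoring with $\QM$ gives the surjection of $\phi_{\QM}$ onto $\Pic(Z)\otimes\QM$, hence the lower bound $k \geq b_2(G/Q)$.

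The main obstacle is the reverse inequality, equivalently the injectivity of $\phi_{\QM}$. To handle it I would note that $\OC^*(Z) = \OC^*(\cl) = \CM^*$: the first equality because $\pi$ is proper birational with $\cl$ affine and normal (so $\pi_*\OC_Z = \OC_\cl$), and the second because $\cl$ is a cone in $\gg$ admitting a contracting $\CM^*$-action. Since $\OC \cong G/G^e$ is a homogeneous space for the semisimple adjoint group $G$, the quotient $\OC^*(\OC)/\CM^*$ identifies with $X^*(G^e)/X^*(G)$, whose rank coincides with that of $\Pic(\OC)$ and is therefore zero. Consequently $\OC^*(Y)/\OC^*(Z)$ is torsion, so its image under $\mathrm{div}$ in the torsion-free group $\bigoplus_i\ZM\cdot[E_i]$ vanishes, $\phi$ is injective, and combining with the lower bound yields $k = b_2(G/Q) = \mathrm{rk}(G) - \mathrm{rk}_{ss}(L)$.
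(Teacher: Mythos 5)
The paper does not actually prove this lemma: it cites \cite[Lemma 4.3]{Fu:Qfact}, so there is no in-house argument to compare against. On its own terms, your proof is correct and uses the standard machinery one would expect: the vector bundle isomorphism $\Pic(T^*(G/Q)) \cong \Pic(G/Q)$, the excision sequence for the open immersion $\pi^{-1}(\OC)\hookrightarrow Z$, and the computation of units via Rosenlicht.

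One step is misstated, though it does not affect the conclusion. You assert that $\OC^*(\OC)/\CM^*$ identifies with $X^*(G^e)/X^*(G)$ and then infer torsion from the finiteness of $\Pic(\OC)$. What Rosenlicht's theorem actually gives is $\OC^*(G/G^e)/\CM^* \cong \ker\bigl(X^*(G)\to X^*(G^e)\bigr)$ (invertible functions on $G/G^e$ pull back to $G^e$-invariant units on $G$, and these are $\CM^*$ times characters of $G$ trivial on $G^e$). Since $G$ is semisimple, $X^*(G)=0$, so $\OC^*(\OC)=\CM^*$ outright, with no need to invoke the finiteness of $\Pic(\OC)$ at this point. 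The group $X^*(G^e)/X^*(G)$ is something else: by the Popov/Knop--Kraft--Vust exact sequence it \emph{surjects} onto $\Pic(\OC)$ up to the finite group $\Pic(G)$, which is why their ranks agree, but that group is not $\OC^*(\OC)/\CM^*$. With the corrected identification, the image of $\mathrm{div}$ in $\bigoplus_i\ZM\cdot[E_i]$ is trivial (not merely torsion), so $\phi$ is injective and the count $k = b_2(G/Q)$ follows exactly as you argue. It is also worth recording explicitly that every codimension-one component of $Z\setminus\pi^{-1}(\OC)$ is genuinely exceptional: each such component has dimension $\dim\cl - 1$ while its image lies in $\cl\setminus\OC$, which has codimension at least two in $\cl$, so $\pi$ cannot be generically finite there.
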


From \cite[Prop 4.4]{Fu:Qfact} it follows that $\Pic(\OC_x)$ is finite 
whenever the character group of $G^x$ is finite.  Picking an $\sl_2$-subalgebra $\sg_x$ containing $x$,
the latter is equivalent to the finiteness of the character group of $C(\sg_x)$, or equivalently, 
to the finiteness of the center
of $C(\sg_x)$.  The latter can be read off from the tables in
 \cite{alexeevsky} or deduced from the tables in \cite{Sommers:Bala-Carter}.  
 Such calculations are closely related to those in \S \ref{section:split_and_intrinsic}.
In the exceptional groups, $\Pic(\OC)$ is finite unless $\0$ is one of the following orbits in $E_6$:  $2A_1, A_2+A_1, A_2+2A_1, A_3, A_3+A_1, A_4, A_4+A_1, D_5(a_1), D_5$. 
For these orbits in $E_6$, the number of irreducible exceptional divisors of a generalized Springer resolution or a $\mathbb{Q}$-factorial terminalization has been explicitly computed in the proof of \cite[Prop 4.4]{Fu:Qfact}.

Let $\0_1, \dots \0_s$ be the maximal orbits in the complement of $\0$ in $\cl$.  We restrict to the case where all $\0_i$'s are 
codimension two in $\cl$.  
Then the irreducible exceptional divisors of $\pi$ have a description in terms 
of the orbital varieties for the $\0_i$'s.
Recall that an orbital variety for $\0_i$ is an irreducible component of 
$\cl_i \cap \ng$ where $\ng:=\ng(\bg)$ is the nilradical of the Borel subalgebra $\bg$.
It is known that each orbital variety has dimension $\tfrac{1}{2} \dim \0_i$.
Let $X$ be an orbital variety for $\0_i$ which is contained in $\ng(\qg)$.
Then $X$ is of codimension one in $\ng(\qg)$ 
since $\0_i$ is of codimension two in $\cl$ and $\dim \ng(\qg) = \tfrac{1}{2} \dim \0$.
Moreover $X$ is stable under the action of the connected group $Q$ 
since $X \subset Q \cdot X \subset \cl_i \cap \ng$ and $X$ is maximal irreducible in $\cl_i \cap \ng$.

Let $\pi_X$ be the restriction of $\pi$ to $G \times^Q  X$.
The image of $\pi_X$ is $\cl_i$ since $X$ is irreducible and $Q$ is a parabolic.
By dimension considerations, $\pi_X^{-1}(\cl_i) = G \times^Q  X$ is an irreducible exceptional divisor  of $\pi$. 
Conversely, any irreducible  exceptional divisor  of $\pi$ equals
$G \times^Q Y$  for some irreducible component $Y$ of $\cl_i \cap \ng(\qg)$.    Now $\dim Y$ can only
equal $\dim \ng(\qg) -1$ or $\dim \ng(\qg) -2$ since $\im \pi_Y = \cl_i$.  In the former case,
$Y$ is an orbital variety of $X$ contained in $\ng(\qg)$.  In the latter case, $\pi^{-1}_Y(e_i)$ is finite where $e_i \in \0_i$, 
contradicting the fact, from above, that the irreducible components of $\pi^{-1}(e_i)$ are $\PM^1$'s.   
This shows that the irreducible exceptional divisors  of $\pi$ are exactly the $G \times^Q  X$ 
where $X$ is an orbital variety of some $\0_i$ lying in $\ng(\qg)$.

Next, the map $G \times^B X \to G \times^Q  X$ has connected fibers isomorphic to $Q/B$.  It follows from \cite{Spaltenstein}
that the $\PM^1$'s in $\pi_X^{-1}(e_i)$ are permuted transitively under the induced action of $A(e_i)$ since the analogous statement
holds for the irreducible components of $p_X^{-1}(e_i)$ where $p_X: G \times^B X \to \NC$.
Consequently, if  $\Pic(\OC)$ is finite and $r_i$ equals the number of $A(e_i)$-orbits on $\pi^{-1}(e_i)$,
then $\sum r_i = b_2(G/Q)$ by Lemma \ref{qfact}.   See, for example, \cite[Thm 1.3]{Wierzba}) for a more general setting where this phenomenon occurs.

\begin{example} 
Consider the minimal degeneration where $\0$ has type $\widetilde{A}_2$ and $\0'$ has type $A_1+\widetilde{A_1}$ in $F_4$. 
The codimension of $\0'$ in $\cl$ is two.  
The orbit $\0$ is Richardson for the parabolic subgroup $Q$ whose Levi subgroup has type $B_3$.
Moreover the map $\pi: Z:= G \times^Q \ng(\qg) \to \cl$ is birational, hence a generalized Springer resolution.
The hypotheses of Lemma \ref{qfact} hold.
Since $b_2(Z)=1$ and there is no other minimal degeneration of $\0$, there must be exactly one
irreducible component in $\pi^{-1}(\overline{\0'})$. 
Since $A(e)=1$ for $e \in \0'$, there is only one irreducible component in $\pi^{-1}(e)$.
Since $\cl$ is normal, the singularity of $\cl$ at $e$ is of type $A_1$.
\end{example}

\subsection{Three remaining cases}  \label{orbital_varieties_I}

There are three cases where the 
information in Lemma \ref{formula} and Corollary \ref{cor_action} is not sufficient
to determine a minimal surface degeneration, even up to normalization.
They are $(E_6(a_1), D_5)$ in $E_6$, $(E_7(a_1), E_7(a_2))$ in $E_7$, and $(E_8(a_1), E_8(a_2))$ in $E_8$.
In this section we give an ad hoc way to determine the singularity.   

In each of the three cases, the larger orbit $\0$ is the subregular nilpotent orbit and so $\cl$ is normal.  
Since $\gg$ is simply-laced, $A(x)$ is trivial 
for $x \in \0$.  Hence for any parabolic subgroup
$Q$ with Levi factor $A_1$ the map $\pi: G \times^Q \ng(\qg) \to \cl$ is birational.
Moreover in each case the smaller orbit $\0'$ is the unique maximal orbit in $\cl \backslash \0$.  Since $A(e)=1$
for $e \in \0'$, there are $\mbox{rank}(\gg) -1$ $\PM^1$'s in $\pi^{-1}(e)$ by \S \ref{orbital_exceptional}.
At the same time, this uniqueness means that 
$\0'$ is the Richardson orbit for any parabolic $Q'$ with Levi factor of semisimple type $A_1 \times A_1$, so if $Q'$
is such a parabolic, then
$\ng(\qg')$ is an orbital variety for $\0'$.   Hence if we fix $Q$ corresponding to a simple root $\alpha$, then we find
an orbital variety $\ng(\qg') \subset \ng(\qg)$ for $\0'$ for each simple root $\beta$ not connected to $\alpha$ in the Dynkin diagram.
Since $A(e)$ is trivial, each of these $\ng(\qg')$ gives rise to a unique $\PM^1$ in $\pi^{-1}(e)$.  
By looking in the Levi subalgebra corresponding to the simple roots not connected to $\alpha$, 
it is possible to determine the intersection pattern of these $\PM^1$'s.

\subsubsection{ The case of $(E_6(a_1), D_5)$ in $E_6$} \label{example: E6_subregular}

There are $5$ $\PM^1$'s in $\pi^{-1}(e)$.  The singularity could only be $A_5$ or $D_5$ since $\cl$ is normal.
If we choose $\alpha$ so that the remaining simple roots form a root system of type $A_5$, then there are 4 orbital varieties
of the form $\ng(\qg')$ in $\ng(\qg)$.  The $4$ $\PM^1$'s have intersection diagram of type $A_2 + A_2$.  
This could only happen for a dual graph of type $A_5$, so $\slice \cong A_5$.

\subsubsection{ The case of $(E_7(a_1), E_7(a_2))$ in $E_7$} \label{example: E7_subregular}

There are $6$ $\PM^1$'s in $\pi^{-1}(e)$.  The singularity could only be $A_6$, $D_6$, or $E_6$ since $\cl$ is normal.
Choosing $\alpha $ so that the remaining simple roots form a system of type $E_6$, there are 5 orbital varieties
of the form $\ng(\qg')$ in $\ng(\qg)$.  Then the $5$ $\PM^1$'s have intersection diagram of type $D_5$.  This eliminates $A_6$ as a possibility.   
If we choose $\alpha $ so that the remaining simple roots form a system of type $A_6$, then there are 5 orbital varieties
of the form $\ng(\qg')$ in $\ng(\qg)$ and the corresponding $5$ $\PM^1$'s have intersection diagram of type $A_2+A_3$.  
This eliminates $E_6$, hence $\slice \cong  D_6$.

\subsubsection{ The case of $(E_8(a_1), E_8(a_2))$ in $E_8$} \label{example: E8_subregular}

There are $7$ $\PM^1$'s in $\pi^{-1}(e)$.  The singularity could only be $A_7$, $D_7$, or $E_7$ since $\cl$ is normal.
If we choose $\alpha $ so that the remaining simple roots form a system of type $E_7$, then there are 6 orbital varieties
of the form $\ng(\qg')$ in $\ng(\qg)$.  The corresponding $6$ $\PM^1$'s have intersection diagram of type $E_6$.  
Hence $\slice \cong E_7$.

\begin{remark}
Ben Johnson and the fourth author have also confirmed these three results using Broer's description of the ideal defining the closure of the subregular nilpotent orbit and the Magma algebra system.
\end{remark}

\section{On the splitting of $C(\sg)$ and intrinsic symmetry action}   \label{section:split_and_intrinsic}

\subsection{The splitting of $C(\sg)$}   \label{C-splitting}
In this section we establish the splitting on $C(\sg)$ discussed in \S \ref{symmetry_action}.  
That is, we determine when
$$C(\sg) \cong C(\sg)^{\circ} \rtimes H$$
for some $H \subset C(\sg)$. Necessarily $H \cong A(e)$.  We continue to assume that $G$ is of adjoint type.

In the classical groups, $C(\sg)$ is a product of orthogonal groups and a connected group, possibly up to a quotient by a central subgroup of order two.   Since the result holds for any orthogonal group, it holds for $C(\sg)$.

Let $\CC  \subset A(e)$ be a conjugacy class.  
There exists $s \in C(\sg)$ whose image $\bar{s}$ in $A(e)$ 
lies in $\CC$ such that the order of $s$ equals the order of $\bar{s}$, except when 
$e$ belongs to one the following four orbits:
\begin{equation} \label{the_four}
A_4+A_1 \text{ in } E_7;  \qquad A_4+A_1,  D_7(a_2) \text{ and } E_6(a_1)+A_1 \text{ in } E_8.
\end{equation}
For these four orbits, which all have $A(e) = \SG_2$, the best result is an $s$ of order $4$ to represent the non-trivial $\CC$  in $A(e)$ 
\cite[\S 3.4]{Sommers:Bala-Carter}.   Hence the splitting holds for all other orbits where $A(e) = \SG_2$, with $H = \{1, s\}$.

This leaves the cases where $A(e) = \SG_3, \SG_4,$ or $\SG_5$.   If $e$ is distinguished, meaning $C(\sg)^{\circ} =1$, there is nothing to check.
This leaves a handful of cases where $A(e) = \SG_3$ and $e$ is not distinguished.   The first such case is $e = D_4(a_1)$ in $E_6$, which we now explain.

\subsubsection{$\SG_3$ cases}
 Let $G$ be of type $E_6$ and $s \in G$ be an involution  with $G^s$ of semisimple type $A_5+A_1$.   
Then there exist $\tilde{e} \in \gg^s$ nilpotent of type $2A_2$.  
Let $\tilde{\sg} \subset \gg^s$ be an $\sl_2$-triple through $\tilde{e}$.
Then $\cg(\tilde{\sg})$ has type $G_2$.  
It is easy to compute $\gg^s \cap \cg(\tilde{\sg})$ inside of $A_5+A_1$; it is a semisimple subalgebra of type $A_1+A_1$.
Let $\tilde{e}_0$ be regular nilpotent in $\gg^s \cap \cg(\tilde{\sg})$.   Then $\tilde{e}_0$ is in the subregular nilpotent orbit in $\cg(\tilde{\sg})$.
Clearly $s$ belongs to the centralizer of $\tilde{e}_0$ in $C(\tilde{\sg})$, 
which is a finite group $H \cong \SG_3$, from the case of the subregular orbit in $G_2$.
Next, a calculation in $A_5+A_1$ shows that $\tilde{e}+\tilde{e}_0$ has 
generalized Bala-Carter type $A_3+2A_1$.  From this we conclude that 
$e = \tilde{e}+\tilde{e}_0$ belongs to the nilpotent orbit $D_4(a_1)$ in $E_6$ and $s$ represents an involution in $A(e)$ 
\cite[\S 4]{Sommers:Bala-Carter}.

A similar argument works if $s \in G$ is an element of order $3$ with $G^s$ of semisimple type $3A_2$.   Therefore the centralizer $H \cong \SG_3$ of 
$\tilde{e}_0$ in $C(\tilde{\sg})$ also centralizes $\tilde{e}+\tilde{e}_0$ and the image of $H$ in $A(e)$ is all of $A(e)$.  
This proves the splitting for $e = D_4(a_1)$ in $E_6$.
The same procedure works for the other $\SG_3$ cases.

\subsubsection{}
We have shown
\begin{proposition}  \label{prop:split}
There exists $H \subset C(\sg)$ such that
$$C(\sg) \cong C(\sg)^{\circ} \rtimes H,$$
except when $e$ belongs to one of the four orbits in \eqref{the_four}.
For those four cases, $A(e) = \SG_2$ and 
$$C(\sg) = C(\sg)^{\circ} \cdot H$$
where $H \subset C(\sg)$ is cyclic of order $4$.
\end{proposition}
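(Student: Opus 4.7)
The plan is to split the analysis into classical types and exceptional types, and within the exceptional types to further subdivide by the isomorphism class of $A(e)$.

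For the classical types, I would invoke the standard description of $C(\sg)$ (in adjoint form, up to a central quotient of order at most two) as a product of orthogonal, symplectic, and general linear factors determined by the Jordan-block data of $e$. Each orthogonal factor $\mathrm{O}_k$ admits the obvious splitting $\mathrm{SO}_k \rtimes \langle \sigma \rangle$, while the symplectic and general linear factors are connected; assembling the splittings across factors gives $C(\sg) = C(\sg)^\circ \rtimes H$ with $H \cong A(e)$.

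For the exceptional types, the key input is \cite[\S 3.4]{Sommers:Bala-Carter}: outside of the four orbits listed in \eqref{the_four}, every conjugacy class $\CC \subset A(e)$ has a representative $s \in C(\sg)$ whose order matches the order of its image in $A(e)$. This immediately gives the desired splitting whenever $|A(e)| \leq 2$, by taking $H = \langle s \rangle$, and it also delivers the weakened conclusion for the four exceptional orbits (with $H$ cyclic of order four), since the same reference supplies a lift of order four in those cases. When $|A(e)| \geq 3$ one notes that if $e$ is distinguished then $C(\sg)^\circ = 1$ and there is nothing to prove; by inspection of the Bala-Carter tables the only remaining cases are a small handful of non-distinguished orbits, all with $A(e) \cong \SG_3$.

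For each such $A(e) \cong \SG_3$ orbit my plan is to construct the splitting by hand: realize $e$ as a sum $\tilde{e} + \tilde{e}_0$ of commuting nilpotents, where $\tilde{e} \in \gg^s$ for a suitable finite-order semisimple element $s \in G$ (either an involution or an order-three element with prescribed fixed-point type) and $\tilde{e}_0$ is subregular inside $\cg(\tilde{\sg})$, which turns out to be of type $G_2$. The already-established splitting for the subregular orbit of $G_2$ then supplies a subgroup $H \cong \SG_3$ of $C(\tilde{\sg})$ centralizing $\tilde{e}_0$ and containing both an involution projecting to $s$ and an analogous order-three element; a Bala-Carter identification of $\tilde{e} + \tilde{e}_0$ inside $\gg$ confirms that its $G$-orbit is the intended one and that $H$ surjects onto $A(e)$. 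The main obstacle will be checking, orbit by orbit, that the fixed-point subalgebra $\gg^s \cap \cg(\tilde{\sg})$ and the Bala-Carter label of $\tilde{e} + \tilde{e}_0$ match the prescribed data; but this reduces to a finite, tabular verification carried out using \cite{alexeevsky} and \cite{Sommers:Bala-Carter}.
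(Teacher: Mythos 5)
Your proposal takes essentially the same route as the paper: classical types via the orthogonal-group splitting, exceptional types via the order-matching lifts from \cite[\S 3.4]{Sommers:Bala-Carter} for $|A(e)| \le 2$ and the four exceptional orbits, reduction to distinguished orbits when $|A(e)| \ge 3$, and the by-hand construction through a $G_2$-type subregular centralizer for the remaining non-distinguished $\SG_3$ cases. The argument is sound and matches the paper's proof in both structure and the key references invoked.
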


While the above splitting is unique up to conjugacy in $C(\sg)$ in the subregular case (\S \ref{symmetry_nsl_case}), 
this is not the case in general, as the next example shows.

\begin{example}
Let $e$ be in the $A_2$ orbit in $\gg$ of type $E_8$.  Then $\cg(\sg)$ has type $E_6$ and $A(e) = \SG_2$.  The generalized Bala-Carter notation for the non-trivial class $\CC$ in $A(e)$ is $(4A_1)''$.  From this it follows that both conjugacy classes of involution in $G$ can represent $\CC$.
For one choice of involution $s_1 \in C(\sg)$ lifting $\CC$, $\gg^{s_1}$ has type $D_8$.  The partition of $e$ in $\gg^{s_1}$ is $[2^8]$, so the reductive centralizer of $e$ in $\gg^{s_1}$ is $\sp_8$.   For the other choice $s_2 \in C(\sg)$ lifting $\CC$, 
$\gg^{s_2}$ has type $E_7+A_1$ and $e$ corresponds to $(3A_1)'' + A_1$.  Hence the reductive centralizer of $e$ in $\gg^{s_2}$ is of type $F_4$.
Consequently, there are two choices of splitting in Proposition \ref{prop:split} that are not only non-conjugate under $C(\sg)$, but also in $\Aut(\cg(\sg))$.
\end{example}

Although the choice of splitting in Proposition \ref{prop:split} is not unique up to conjugacy in $C(\sg)$ or even $\Aut(\cg(\sg))$, 
we can restrict the choice of $H$ further so that the image of $H$ in $\Aut(\cg(\sg))$
will be well-defined up to conjugacy in $\Aut(\cg(\sg))$.
Let $\cg(\sg)^{\mathrm{ss}}$ be the semisimple summand of $\cg(\sg)$.
Let $$a: C(\sg) \to \Aut(\cg(\sg)^{\mathrm{ss}})$$  be the natural map.  
Then $\im a = \Int(\cg(\sg)^{\mathrm{ss}}) \rtimes K$ for some subgroup of diagram automorphisms (\S \ref{symmetry_nsl_case}).
By a case-by-case check, $H$ in the Proposition \ref{prop:split} can be chosen 
so that $H$ maps onto $K$ via $a$.   Then the image of $H$ in $\Aut(\cg(\sg))$ is well-defined
up to conjugacy in $\Aut(\cg(\sg))$.   In the above example, $H = \langle s_2 \rangle$ has the desired property, since $F_4$ is the fixed subalgebra under the non-trivial diagram automorphism of $E_6$.  
We note that \cite{alexeevsky} is the original source for computing the image of the map $a$.

\subsection{Computing the intrinsic symmetry}  \label{full_intrinsic_action}

Having chosen $H$ with $a(H) = K$ as above, we can determine the action of $H$ on $\slice$.   
Here, we restrict to the exceptional groups and to a minimal degeneration $\0'$ of $\0$, with $e \in \0'$.
We summarize the possibilities and record the action of $H$ on $\slice$ in the graphs at the end of the paper.

\subsubsection{Minimal singularities: $A(e) = \SG_2$ cases} 

Let $\slice$ be an irreducible minimal singularity admitting an involution as in \S \ref{def:symmetry_min_sing}.
If $|H|=2$, then it turns out that $H$ realizes this involution.  There is one case of this kind when $|H|=4$,
when $e=A_4+A_1$ in $E_8$ and $\slice \cong a_2$.   Let $H = \langle s \rangle$.  
Then $s \in H$ realizes the involution on $\slice$ and $s^2$ acts trivially on $\slice$.  
We will still refer to this singularity with induced symmetry by $a_2^+$.

If $\slice$ is a reducible minimal singularity, then it is turns out that $\slice$ has exactly two irreducible components and $H$ interchanges the two components.  
The only three cases which occur are the singularities with symmetry action $[2A_1]^{+}, [2a_2]^+$, and $[2g_2]^+$.

\subsubsection{Minimal singularities: $A(e) = \SG_3$ cases} 

If $\slice$ is the unique irreducible minimal singularity admitting an action of $\SG_3$ as in \S \ref{def:symmetry_min_sing}, then $H$ 
realizes the full symmetry $d_4^{++}$.  This only occurs once, in $E_8$.

If $\slice$ is a reducible minimal singularity, then $\slice$ turns out to have $3$ irreducible components and $H$ acts by permuting transitively the three components.  In other words, the stabilizer of a component acts trivially on the component.  
All of these cases are of the form $3A_1$ and the singularity with symmetry action is denoted $[3A_1]^{++}$.

\subsubsection{Simple surface singularities: $A(e) = \SG_2$ cases} 

If $\slice$ is an irreducible simple surface singularity admitting an involution as in \S \ref{surface_symmetry}
(or in the case of $A_2$ and $A_4$, admitting the appropriate cyclic action of order $4$), then $H$ realizes this symmetry.  
To show this, we first checked that $A(e)$ has the appropriate action on the dual graph of a minimal resolution in Corollary \ref{cor_action}.  
Then since $C(\sg)$ acts symplectically on $\slice$, Corollary 1.1 and Theorem 1.2 in \cite{Catanese} imply 
that $H$ corresponds to the $\Gamma' \subset \SL_2(\CM)$ which defines the symmetry involution. 

The only reducible surface singularities with $A(e) = \SG_2$ are those with $\slice \cong 2A_1$, hence covered previously.

\subsubsection{Simple surface singularities: $A(e) = \SG_3$ cases} 
If $\slice$ is an irreducible simple surface singularity admitting an $\SG_3$ action as in \S \ref{surface_symmetry},
then $H$ realizes the symmetry action and so $\slice \cong G_2$. 

An unusual situation occurs for the minimal degeneration $(D_7(a_1),E_8(b_6))$.   Here, $A(e) = \SG_3$, but $\slice$ only admits a two-fold  symmetry, compatible with its normalization $\widetilde{\SC}_{\0, e}$ which is $A_3$.   
Here, $\Gamma \subset  \SL_2(\CM)$ corresponding to $\widetilde{\SC}_{\0, e}$ is cyclic of order $4$.   The normal cyclic subgroup of $H \cong \SG_3$ is generated by an element $s$ with $\gg^s$ of type $E_6+A_2$ and hence $s$ acts without fixed point on the orbit $D_7(a_1)$ since the latter orbit does not meet the subalgebra $E_6+A_2$. 
On the other hand, using Corollary \ref{cor_action}, 
we see that $A(e)$ induces the involution on the dual graph of a minimal resolution of $\widetilde{\SC}_{\0, e}$.
Since $C(\sg)$ acts symplectically on $\slice$ and $\widetilde{\SC}_{\0, e}$, Corollary 1.1 and Theorem 1.2 in \cite{Catanese} imply  
that $H$ acts on $\widetilde{\SC}_{\0, e} = \CM^2/\Gamma$ 
via the action of $\Gamma' \subset \SL_2(\CM)$, the binary dihedral group of order $24$ containing 
$\Gamma$ as normal subgroup.


If $\slice$ is a reducible surface singularity, then 
$\slice$ is isomorphic to $3C_2, 3C_3, 3(C_5),$ or the previously covered $[3A_1]^{++}$.  We have omitted
the superscript in $3C_2$, etc.   The notation means that $H$ permutes the three components transitively and the stabilizer of any component
is order $2$, which acts by the indicated symmetry.
The notation $(C_5)$ refers to the fact that we do not know whether an irreducible component is normal.

\subsubsection{Simple surface singularities: $A(e) = \SG_4$ case} 

This only occurs in $F_4$.   One degeneration has $\slice \cong G_2$ (see \S \ref{example:B3inF4}).  Here, the Klein $4$-group in $H$ acts trivially on $\slice$ and the quotient action realizes
the full symmetry of $\SG_3$ on $\slice$.  This follows either from the list of possible symplectic automorphisms of $\slice$ or from a direct calculation that the Klein $4$-group in $H$ fixes $\slice$ pointwise.

The other degeneration has $\slice \cong 4G_2$ (see \S \ref{example:C3inF4}).  Here, $H$ permutes the four components transitively and the stabilizer of any component
is an $\SG_3$, which acts by the indicated symmetry.

\subsubsection{Simple surface singularities: $A(e) = \SG_5$ case} 

This only occurs in $E_8$.   One degeneration has $\slice \cong 10G_2$.  Here, 
$H$ permutes the ten components transitively and the stabilizer of any component
is a Young subgroup $\SG_3 \times \SG_2$.  The $\SG_2$ factor acts trivially on the given component and the 
$\SG_3$ factor acts by the indicated symmetry.

The other degeneration has $\slice \cong 5G_2$.  Here, 
$H$ permutes the five components transitively and the stabilizer of any component
is a $\SG_4$.  The $\SG_4$ factor acts on the given component as in the $F_4$ case above. 

\begin{remark}
Even when $A(e)$ is non-trivial, it might not induce a non-trivial symmetry on any $\slice$.  
For example, when $e = C_3(a_1)$ in $F_4$, the only degeneration above $\0_e$ has $\slice \cong A_1$.
Here, $H$ acts trivially on $\slice$, reflecting the fact that $\SL_2(\CM)$ has no outer automorphisms.  
Indeed, $C(\sg)$ is just the direct product $C(\sg)^{\circ} \times H$.
\end{remark}

\section{Results for $F_4$}\label{results_F4}

\subsection{Details in the proof of Proposition \ref{dim4+_1}}

Here we record the details for establishing Proposition \ref{dim4+_1} for $\gg$ of type $F_4$, as outlined in \S \ref{section:dim4}. 
First, we enumerate the $G$-orbits of those $e$ such that $\cg(\sg)$ has non-trivial intersection with the minimal nilpotent orbit in $\gg$.
To that end, let $e_0 \in \gg$ be minimal nilpotent and recall that $\sg_0$ is an $\sl_2(\CM)$-subalgebra through $e_0$.  The centralizer $\cg(\sg_0)$ 
is a simple subalgebra of type $C_3$, equal to the semisimple part of a Levi subalgebra of $\gg$.
The relevant nonzero nilpotent elements $e \in \cg(\sg_0)$ are therefore those in the $G$-orbits
$$A_1, \tilde{A}_1, A_1+\tilde{A}_1, \tilde{A}_2, B_2, C_3(a_1) \text{ and } C_3$$
and hence Corollary \ref{slice_corollary} applies to these elements.  
The computation of $e+e_0 \in \0$ proceeds as in \S \ref{subsection:elements_in_red_centralizer}.  The results are in Table \ref{table1_f4}. 
We use boldface font in Table \ref{table1_f4} to locate the simple factors whose minimal nilpotent orbit is of type $A_1$ in $\gg$. 
Where more than one such simple factor is in boldface, this indicates that the factors are conjugate under the action of $C(\sg)$.
The first two lines of Table \ref{table4_f4} have the remaining cases where Lemma \ref{kp_prop} applies 
with $x = e + e_0$ for an element $e_0$ in a minimal nilpotent orbit of $\cg(\sg)$.  
This now exhausts all minimal degenerations covered by Proposition \ref{dim4+_1} with $J = \emptyset$.

\begin{table}[htp]
\caption{${\bf F_4}$:  cases with $e_0 \in \cg(\sg)$ of type $A_1$ in $\gg$}%
\begin{center}
\begin{tabular}{|c|c|c|c|}  \hline
$e$  &  $e+e_0 \in \0$  & $\cg(\sg)$   &   Isomorphism type of $\SC_{\0, e}$  \\ \hline 
$A_1$  &  $2A_1 = \tilde{A}_1$ & $C_3$  &  $c_3$\\ \hline 
$\tilde{A}_1$   & $\tilde{A}_1+A_1$ &  $A_3$ &  $a^+_3$ \\ \hline 
$A_1 + \tilde{A}_1$  &  $2A_1 + \tilde{A}_1 = A_2$  &  $A_1+{\bf A_1}$ & $A_1$ \\ \hline 
$\tilde{A_2}$  &   $\tilde{A}_2+A_1$ & $G_2$ & $g_2$   \\ \hline
$B_2$   &   $B_2 + A_1 = C_3(a_1)$  &  ${\bf 2A_1}$  &  $[2A_1]^+$  \\ \hline 
$C_3(a_1)$ &  $C_3(a_1) + A_1 = F_4(a_3)$  &  $A_1$ & $A_1$\\  \hline
$C_3$   &   $C_3 + A_1 = F_4(a_2)$  &   $A_1$ &  $A_1$ \\ \hline
\end{tabular}
\end{center}
\label{table1_f4}
\end{table}

The remaining minimal degenerations in the proposition, of which there are four, are all codimension two and unibranch.
We use \S \ref{F4_surface} to determine that these exhaust the remaining codimension two cases with $|\Gamma| =1$ or $2$.
We now show that all four cases are $S$-varieties for $SL_2(\CM)$ of the form $X(2, i_1+2, i_2+2,\dots)$, with 
$J = \{i_1, i_2, \dots \}$
among those listed in Remark  \ref{which_J}.  
All cases can be handled with Lemma \ref{S_var_sl2} and Corollary \ref{A1_or_m},
but in one case we need to pass to a subalgebra (as in Lemma \ref{singequi}) 
and in another, do an explicit computation to find the exact form of $\slice$.
The values of $(m_j, n_j)$ for $j \in \EC$ are listed in Table \ref{table4_f4}.  
Boldface is used for those $(m_j, n_j)$ 
where $x_{m_j} \neq 0$ in \eqref{x_form1}. 
Equivalently, the set $J$ consists of the $m_j$'s in boldface.

\subsubsection{The degeneration $(\tilde{A}_2, A_1+\tilde{A}_1)$}  \label{first_restriction_example}
For $e$ of type $A_1+\tilde{A}_1$, 
$\cg(\sg) \cong  \sl_2(\CM) \oplus \sl_2(\CM)$.
The nonzero nilpotent elements in one simple factor of $\cg(\sg)$ are minimal in $\gg$ and this case was handled earlier.   
The nonzero  nilpotent elements in the other simple factor $\zg$ are of type $\tilde{A}_2$ in $\gg$.  
Let $e_0 \in \zg$ be such an element.  The centralizer
$\cg(\zg)$ is contained in a Levi subalgebra of $\gg$ whose semisimple type is $B_3$, 
and thus $\cg(\zg)$ does not meet the $G$-orbit $\0$ of type $\tilde{A}_2$.  
Hence Lemma \ref{S_var_sl2} applies
to $\slice$.  Now $e+e_0$ is of type $C_3(a_1)$ and $(m_i,n_i)=(2,4)$ for the unique element $i \in \EC$.
The argument in Example \ref{C3_example_1} then gives that
$\slice = e + X(2,4) \cong X(2)$ is an $A_1$-singularity.

In fact Example \ref{C3_example_1} can be used more directly.  
This also illustrates the process of passing to a subalgebra
to establish that $\slice$ has the desired form as an $S$-variety.  
Let $\lg$ be a Levi subalgebra of $\gg$ whose semisimple type is $C_3$.  
The $G$-orbit through $e$ meets $\lg$ in the orbit $[2^3]$, so we may assume $e \in \sg \subset \lg$.  
Then $\cg(\sg) \cap \lg$ coincides with $\zg$ and $\0 \cap \lg$ coincides with the orbit in $\lg$ of type $[3^2]$.
For dimension reasons it follows that $\SC_{\0 \cap \lg, e} \subset \lg$ equals $\slice$.  
Thus Example \ref{C3_example_1} directly gives $\slice = e + X(2,4)$.

\subsubsection{The degeneration $(C_3(a_1), \tilde{A}_2 + A_1)$}  \label{import_G2_1}
For $e$ of type $\tilde{A}_2 + A_1$, $\cg(\sg) \cong  \sl_2(\CM)$.  
 Let $e_0 \in \cg(\sg)$ be a nonzero element in $\zg = \cg(\sg)$, which has 
  type  $A_1+\tilde{A}_1$ in $\gg$.
The orbit $\0$ of type $C_3(a_1)$ does not meet $\cg(\zg)$, so Lemma \ref{S_var_sl2} applies.
The sum $e+e_0$ is of type $F_4(a_3)$ and $(m_i,n_i)=(1,3)$ for the unique element $i \in \EC$.
Hence as in Example \ref{mexample}, we have $\SC_{\0, e}  = e + X(2,3) \cong m$.
The result can also be obtained by reducing to the subalgebra $\sg' \oplus \cg(\sg')$, 
where $\sg'$ is the $\sl_2$-subalgebra through an element $e'$ of type $\tilde{A}_2$.
A key factor making this work is that $\cg(\sg')$ has type $G_2$ and we can directly
use Example \ref{mexample}. We omit the details.

\subsubsection{The degenerations $( B_2,  A_2+\tilde{A}_1)$ and $(\tilde{A}_2 + A_1,  A_2+\tilde{A}_1)$}  \label{F4hard_m}
For $e$ of type $A_2+\tilde{A}_1$, $\cg(\sg) \cong \sl_2(\CM)$.  
The nonzero nilpotent orbit in $\cg(\sg)$ also has type  $A_2+\tilde{A}_1$ in $\gg$.
Hence for $\zg:=\cg(\sg)$, we have $\cg(\zg) = \sg$ and so Lemma \ref{S_var_sl2}
applies for $\0$ both of type $B_2$ and of type $\tilde{A}_2 + A_1$.
Let $e_0 \in \zg$ be nonzero nilpotent.
The sum $e+e_0$ is of type $F_4(a_3)$ 
and $\{ (1,3), (2,4)\}$ are the values for the two elements in $\EC$.   
Indeed the decomposition of $\gg$ in \eqref{sl2_decomp} is
$$V(0,2) \oplus V(1,3) \oplus V(2,0) \oplus   V(2,4)  \oplus V(3,1) \oplus V(4,2) .$$
So the only remaining question to determine the isomorphism type of $\slice$
is whether $x_1$ is nonzero when expressing $x \in \0$ 
as in \eqref{x_form1}.

For $\0$ of type $B_2$ in $\gg$, we see that 
$\0$ meets the maximal simple subalgebra $\lg \cong \so_9(\CM)$ in $F_4$
in the orbit with partition $[4^2,1]$, while the orbit $\0_e$ meets $\lg$ in the orbit
with partition $[3^3]$.
So we may assume $\sg \subset \lg$ and consider $\SC_{\0 \cap \lg, e}$.
The centralizer of $\sg$ in $\lg$ remains $\sl_2(\CM)$, so we may also assume that 
$\sg_0 = \cg(\sg) \subset \lg$.   
Calculating $\EC$ for $\sg$ and $\sg_0$ relative to $\lg$, we find that 
only $(2,4)$ occurs.  Hence we can identify $\SC_{\0 \cap \lg, e}$ with $\slice$ since both are dimension two.  
We conclude that $x_1 = 0$ in \eqref{x_form1}.
It follows that $\SC_{\0, e} = e + X(2,4) \cong X(2)$. 

On the other hand, for the orbit $\0$ of type $\tilde{A}_2 + A_1$, we have to carry out an explicit computation in GAP.
We find that both $x_1$ and $x_2$ are nonzero in \eqref{x_form1} and
thus $\SC_{\0, e} = e +  X(2,3,4)$, which is isomorphic to $m$ by \S \ref{S_vars}.


\begin{table}[htp]
\caption{${\bf F_4}$:  Remaining relevant cases with $e_0$ minimal in $\cg(\sg)$}
\begin{center}
\begin{tabular}{|c|c|c|c|c|c|c|c|}  \hline
$e$  &  $e_0$   &  $e+e_0$  & $\0$ & $\cg(\sg)$   &  $(m_i, n_i)$ for  & Isomorphism \\
&&&&& $i \in \EC$ &  type of $\SC_{\0, e}$   \\ \hline 
$A_2$ &  $\tilde{A}_1$  & $A_2 + \tilde{A}_1$ & $A_2 + \tilde{A}_1$ &   $A_2$   & $\emptyset$   & $a_2^+$   \\ \hline 
$B_3$  & $\tilde{A}_2$  &  $F_4(a_2)$  &  $F_4(a_2)$ &  $A_1$   & $\emptyset$   & $A_1$ \\ \hline
\hline
$A_1 + \tilde{A}_1$  &   $\tilde{A}_2$ &  $C_3(a_1)$  &   $\tilde{A}_2$   & ${\bf A_1}+A_1$ &   ${\bf (2,4)}$ & $A_1$ \\ \hline 
$A_2+\tilde{A}_1$  &  $A_2+\tilde{A}_1$   &  $F_4(a_3)$  &  $\tilde{A}_2+A_1$  &   $A_1$   & ${\bf (1,3), (2,4)}$ &  $m$   \\ \hline
			  &     & &    $B_2$ &   & $(1,3), {\bf (2,4)}$ &  $A_1$   \\ \hline
$\tilde{A}_2+A_1$ &  $A_1+ \tilde{A}_1$   &   $F_4(a_3)$ &  $C_3(a_1)$   & $A_1$ &  ${\bf (1,3)}$ & $m$   \\ \hline
\end{tabular}
\end{center}
\label{table4_f4}
\end{table}%

\subsection{Remaining surface singularities}  \label{F4_surface}  \label{example:C3inF4}
This section summarizes the calculations of the singularities of the minimal degenerations of codimension two, using the methods in \S \ref{section:codimension2}.

For the cases in Proposition \ref{dim4+_1}, we did not need to know whether a nilpotent orbit has closure which is normal to determine 
the singularity type of a minimal degeneration.  Knowing the branching was sufficient.
Indeed, the closure of the orbit $B_2$ is non-normal, 
but it was shown above that it is normal at points in the orbit $A_2 + \tilde A_1$ since the singularity of that degeneration is of type $A_1$.  
Similarly for the orbit $\tilde{A}_2$.  The remaining non-normal orbit closures, of which there are three \cite{Broer}, are detected through a minimal degeneration:  either the closure is branched at a minimal degeneration (as for $C_3$) or is isomorphic to $m$ at a minimal degeneration (as for $C_3(a_1)$ and for $\tilde{A}_2+A_1$).
In what follows we use the fact that the orbit $F_4(a_1)$ has closure which is normal \cite{Broer}
to classify the type of its minimal degeneration.  
This is the only case where we need to know whether the closure is normal in order to resolve 
the type of a minimal degeneration in $F_4$.

\begin{enumerate}
\item $(\0, \0') = (F_4(a_1), F_4(a_2))$.
The even orbit $F_4(a_1)$ is Richardson for the parabolic subgroup $Q$ with Levi factor of type $\tilde{A}_1$
and the resulting map $p: G \times^Q \ng(\qg)  \to \cl$ is birational, hence a generalized Springer resolution.
The hypotheses of Lemma \ref{qfact} hold and $b_2(G/Q) = 3$. 
Since $\0'$ is the unique orbit of codimension two in $\cl$,
it follows from \S \ref{orbital_exceptional} that there 
are $3$ orbits of $A(e)=\MATHFRAK{S}_2$ on the
irreducible components of $p^{-1}(e)$.
On the other hand, there are a total of four irreducible components of $p^{-1}(e)$ by \S \ref{Bo-M method}.
Thus the singularity must be $C_3$, given that $\cl$ is normal.

\item 
$(\0, \0') = (C_3, F_4(a_3))$. The orbit $\0$ is 
Richardson for the parabolic subgroup $Q$ with Levi factor of type $A_2$.
The map $p: G \times^Q \ng(\qg)  \to \cl$ is birational, hence a generalized Springer resolution,
since $A(x)=1$ for $x \in \0$.
If $e \in \0'$, then $A(e)=\SG_4$.  By Lemma \ref{formula} and Corollary \ref{cor_action}, there 
are 16 irreducible components in $p^{-1}(e)$ with two orbits under $A(e)$.  
The number of orbits can also be deduced from \S \ref{orbital_exceptional}.
Looking at the possibilities for the dual graph, it is clear that $\cl$ is non-normal and 
the normalization map $\nu: \widetilde{\OC} \to \cl$ restricts to a degree 4 map over $\OC'$.
This also follows from \cite{Sho} (\S \ref{Component group action}).  
By Corollary \ref{cor_action}, there is a fixed component of $\pi^{-1}(y)$ under the $A(e)$-action 
for $y = \nu^{-1}(e)$.  This implies that the singularity of $\widetilde{\OC}$ at $y$ is $G_2$.
We show in \S \ref{4G2} that $\slice$ is isomorphic to $4G_2$.  In other words,
the irreducible components of $\slice$ are normal and hence each is isomorphic to $G_2$.

\item $(\0, \0') = (B_3, F_4(a_3))$.   The singularity is $G_2$ by \S \ref{example:B3inF4}.
\end{enumerate}

\subsection{The degeneration $(C_3,F_4(a_3))$ is $4G_2$} \label{4G2}

We now show each irreducible component of this slice is normal. 
By \S \ref{F4_non_min} 
the nilpotent Slodowy slice $\SC$ of $C_3$ at $\tilde{A}_2$ contains an irreducible component isomorphic to the nilpotent
cone $\NC_{G_2}$.   Recall $e$ belongs to the $\tilde{A}_2$ orbit, with corresponding $\sl_2$-subalgebra
$\sg$.  Let $e_0 \in \cg(\sg)$ be subregular nilpotent.   Then a calculation shows that $e':=e+e_0$ lies in the 
$F_4(a_3)$ orbit in $\gg$ and also that $e'$ lies in the component of $\SC$ isomorphic to $\NC_{G_2}$.
Hence the nilpotent Slodowy slice of $C_3$ at $F_4(a_3)$ contains a component that
is smoothly equivalent to the nilpotent Slodowy slice in $\cg(\sg)$ of $G_2$ at $G_2(a_1)$.
But then this component must be isomorphic to the simple surface singularity $D_4$ by Lemma \ref{Cor:2-dim-isom}.
Incorporating the symmetry of $A(e') = \mathfrak{S}_4$, 
the nilpotent Slodowy slice of $C_3$ at $F_4(a_3)$ is isomorphic to $4G_2$.

\section{Results for $E_6$}\label{sec:E6}

\subsection{Details in the proof of Proposition \ref{dim4+_1}}

In Table \ref{table1_e6} we list the cases where Corollary \ref{slice_corollary} holds for $e_0$ in the minimal orbit of $E_6$.  
Here $\cg(\sg_0)$ is the semisimple part of a Levi subalgebra and has type $A_5$.
The relevant nonzero nilpotent $G$-orbits are those that have non-trivial intersection with $\cg(\sg_0)$. 
In the first line of Table \ref{table4_e6}  is the remaining case 
where Lemma \ref{kp_prop} applies 
with $x = e + e_0$ for an element $e_0$ in a minimal nilpotent orbit of $\cg(\sg)$.
This exhausts all minimal degenerations covered by the proposition with $J = \emptyset$.
There are only two cases where $J \neq \emptyset$, both of codimension two.
The degeneration $(A_3, A_2+2A_1)$ follows
 from working in the Levi subalgebra of semisimple type $D_5$, 
similar to \S \ref{first_restriction_example}.
The degeneration $(A_3+A_1, 2A_2+A_1)$ is similar to \S \ref{import_G2_1}.
Details are given in Table \ref{table4_e6}. 

\begin{table}[htp]
\caption{${\bf E_6}$:  cases with $e_0 \in \cg(\sg)$ of type $A_1$ in $\gg$}
\begin{center}
\begin{tabular}{|c|c|c|c|}  \hline
$e$  &  $e+e_0 \in \0$  & $\cg(\sg)$   &  Isomorphism type of $\SC_{\0, e}$  \ \\ \hline 
$A_1$  &   $2A_1$ &  $A_5$  &    $a_5$ \\ \hline
$2A_1$  &  $3A_1$  & $B_3+T_1$ &   $b_3$   \\ \hline
$3A_1$  &  $4A_1 = A_2$  &   $A_2+ {\bf A_1}$ &   $A_1$   \\ \hline
$A_2$   &  $A_2 + A_1$  &   ${\bf 2A_2}$   &  $[2a_2]^+$   \\ \hline
$A_2+A_1$  &  $A_2 + 2A_1$    &  $A_2 +T_1$  & $a_2$  \\ \hline 
$2A_2$ &   $2A_2+A_1$ &  $G_2$  & $g_2$   \\ \hline
$A_3$   &    $A_3 + A_1$  &   $B_2+T_1$   &  $b_2$   \\ \hline
$A_3+A_1$ & $A_3+2A_1 = D_4(a_1)$ &  $A_1+T_1$ &  $A_1$   \\ \hline 
$A_4$   &   $A_4 + A_1$  &   $A_1+T_1$   &   $A_1$  \\ \hline
$A_5$   &   $A_5 + A_1 = E_6(a_3)$  &   $A_1$    &   $A_1$  \\ \hline
\end{tabular}
\end{center}
\label{table1_e6}
\end{table}

%
%

\begin{table}[htp]
\caption{${\bf E_6}$:  Remaining relevant cases with $e_0$ minimal in $\cg(\sg)$}
\begin{center}
\begin{tabular}{|c|c|l|c|c|c|c|c|}  \hline
$e$  &  $e_0$   &  $e+e_0$  & $\0$ & $\cg(\sg)$   & $(m_i, n_i)$ for  & Isomorphism  \\
&&&& & $i \in \EC$ &  type of $\SC_{\0, e}$ \\ \hline 
$D_4$   &  $2A_1$  &   $D_5(a_1)$  &  $D_5(a_1)$  &  $A_2$  &  $\emptyset$ &$a_2$   \\ \hline \hline
$A_2+2A_1$  & $A_2+2A_1$ & $D_4(a_1)$  &  $A_3$ &  $A_1 +T_1$ &   $(1,3), (1,3), {\bf (2,4)}$ &  $A_1$  \\ \hline
$2A_2+A_1$  & $3A_1$ & $D_4(a_1)$ & $A_3+A_1$ &  $A_1$ &  ${\bf (1,3)}$ & $m$   \\ \hline
\end{tabular}
\end{center}
\label{table4_e6}
\end{table}

\subsection{Remaining surface singularities, and an exceptional degeneration}  \label{surfaces_E6}

The results are listed in Table \ref{extra3_e6}.   
In the first four entries of the table, we use the fact that the larger orbit has closure which is normal \cite{Sommers:E6}.
The entry for $(E_6(a_1), D_5)$ is from \S \ref{example: E6_subregular}.  
The entry for $(A_4, D_4(a_1))$ is $3C_2$ since the irreducible components are isomorphic and one of them is isomorphic to $C_2$
from Table \ref{full_nilcones}.  Alternatively, it follows from working in the Levi subalgebra of semisimple type $D_5$
and using Lemma \ref{singequi} and \cite{Kraft-Procesi:classical}. 
The entry for $(D_4, D_4(a_1))$ is also clear from working in the Levi subalgebra of semisimple type $D_4$.  
The degenerations $(E_6(a_3), D_5(a_1))$ and $(2A_2, A_2+A_1)$ are both $A_2$ using larger slices (see Table \ref{full_nilcones}).   
Note that the $2A_2$ orbit is unibranch at $A_2+A_1$, but its closure is not normal.

The exceptional degeneration $(2A_2+A_1, A_2+2A_1)$ of codimension four is treated in \S \ref{dim4_exceptions}.

\begin{table}[htp]
\caption{Surface singularities using \S \ref{section:codimension2}: $E_6$}
\begin{center}
\begin{tabular}{|c|c|c|c|c|c|c|}
\hline Degeneration &  Induced from  & $\sharp \mathbb{P}^1$'s &$A(e)$ & $\sharp$ orbits of $A(e)$ 
&  $\slice$ \\
\hline
$(E_6(a_1), D_5)$ & $(A_1, 0)$ & 5 & $1$ &  & $A_5$ \\
\hline
$(D_5, E_6(a_3))$ & $(2A_1, 0)$ & 4 & $S_2$ & 3  & $C_3$ \\
\hline
$(D_5(a_1),A_4+A_1)$ & $(A_2+A_1, 0)$ & 2 & $1$ &   & $A_2$ \\
\hline
$(A_5, A_4+A_1)$ & $(D_4, 32^21)$ & 2 & $1$ &   & $A_2$ \\
\hline
$(D_4, D_4(a_1))$ & $(2A_2, 0)$ & 4 & $S_3$ & 2 
& $G_2$ \\
\hline
$(A_4, D_4(a_1))$ & $(A_3, 0)$ & 9 & $S_3$ & 2 & $3C_2$ \\
\hline
\end{tabular}
\end{center}
\label{extra3_e6}
\end{table}


\section{Results for $E_7$}\label{sec:E7}

\subsection{Details in the proof of Proposition \ref{dim4+_1}}

In Table \ref{table1_e7} we list the cases where Corollary \ref{slice_corollary} applies. 
Here $\cg(\sg_0)$ is the semisimple part of a Levi subalgebra and has type $D_6$.
The relevant nonzero nilpotent $G$-orbits are those that have non-trivial intersection with $\cg(\sg_0)$. 
In the first several lines of Table \ref{table4_e7}  are the remaining cases where Lemma \ref{kp_prop} applies 
with $x = e + e_0$ for an element $e_0$ in a minimal nilpotent orbit of $\cg(\sg)$. 

\begin{table}[htp]
\caption{${\bf E_7}$:  cases with $e_0 \in \cg(\sg)$ of type $A_1$ in $\gg$}
\begin{center}
\begin{tabular}{|c|c|c|c|}  \hline
$e$  &  $e+e_0 \in \0$  & $\cg(\sg)$   &   Isomorphism \\ 
&&& type of $\SC_{\0, e}$ \\ \hline    
$A_1$  &   $2A_1$ &   $D_6$  &    $d_6$ \\ \hline
$2A_1$  &  $(3A_1)''$ &   $B_4+{\bf A_1}$    &  $A_1$ \\ \hline
             &            $(3A_1)'$  & ${\bf B_4} + A_1$    &  $b_4$   \\ \hline
$(3A_1)''$   &    $4A_1$  &  $F_4$      & $f_4$   \\ \hline
$(3A_1)'$   &  $4A_1$  &   ${\bf C_3}+A_1$   &   $c_3$   \\ \hline
                &  $A_2$  &  $C_3 + {\bf A_1}$   & $A_1$   \\ \hline
$A_2$ & $A_2+A_1$ &  $A_5$ & $a^+_5$   \\ \hline
$4A_1$ &  $5A_1 = A_2+A_1$   & $C_3$   &   $c_3$   \\ \hline
$A_2+A_1$  &  $A_2+2A_1$ & $A_3+T_1$  & $a^+_3$   \\ \hline
$A_2+2A_1$  &  $A_2+3A_1$  &  $A_1+A_1+ {\bf A_1}$ &  $A_1$   \\ \hline
$A_3$ &  $(A_3+A_1)''$   & $B_3+{\mathbf A_1}$   &   $A_1$   \\ \hline
  & $(A_3 + A_1)'$  &   ${\bf B_3}+A_1$   & $b_3$   \\ \hline
$2A_2$   &  $2A_2+A_1$  &   ${\bf G_2}+ A_1$   &  $g_2$   \\ \hline
$(A_3 + A_1)'$  & $A_3+2A_1$    &  $A_1+A_1+{\bf A_1}$  &  $A_1$    \\ \hline
    & $(A_3+2A_1)' = D_4(a_1)$    &  $A_1+{\bf A_1}+A_1$  &  $A_1$    \\ \hline
$(A_3 + A_1)''$  & $A_3+2A_1$    & $B_3$  & $b_3$   \\ \hline
$D_4(a_1)$  &  $D_4(a_1)+A_1$  & ${\bf 3A_1}$  &  $[3A_1]^{++}$  \\ \hline
$A_3+2A_1$    & $A_3+3A_1 = D_4(a_1)+A_1$  & $A_1+{\bf A_1}$  & $A_1$   \\ \hline
$D_4$ &  $D_4+A_1$   &   $C_3$  &  $c_3$   \\ \hline
$D_4(a_1)+A_1$ & $D_4(a_1)+2A_1= A_3+A_2$ &  ${\bf 2A_1}$  &  $[2A_1]^+$   \\ \hline
$A_3+A_2$ & $A_3+A_2+A_1$  & $A_1+T_1$ & $A_1$   \\ \hline
$A_4$ &  $A_4+A_1$  & $A_2+T_1$  &  $a^{+}_2$  \\ \hline
$D_4+A_1$  & $D_4+2A_1 = D_5(a_1)$ & $B_2$ & $b_2$   \\ \hline
$(A_5)''$ &   $A_5+A_1$ & $G_2$ & $g_2$   \\ \hline
$D_5(a_1)$   &  $D_5(a_1) + A_1$  &   $A_1+T_1$   &   $A_1$  \\ \hline
$(A_5)'$ &   $(A_5+A_1)' = E_6(a_3)$ & $A_1+{\bf A_1}$ & $A_1$  \\ \hline
$D_6(a_2)$  &  $D_6(a_2) + A_1 = E_7(a_5)$  &   $A_1$    &   $A_1$  \\ \hline
$D_5$ &  $D_5+A_1$  &  $A_1+{\bf A_1}$  &  $A_1$  \\ \hline
$D_6(a_1)$   &  $D_6(a_1) + A_1 = E_7(a_4)$  &   $A_1$   &   $A_1$  \\ \hline
$D_6$   &  $D_6 + A_1 = E_7(a_3)$  &   $A_1$ &   $A_1$  \\ \hline
\end{tabular}
\end{center}
\label{table1_e7}
\end{table}

The nine remaining cases (all of codimension two), involving $e$ from six different $G$-orbits, are listed in Table \ref{table4_e7}.  
The cases where $e$ is type $A_2+2A_1$ or $2A_2+A_1$ follow by restricting to a subalgebra of type $E_6$.
The case where $e$ is type $A_5+A_1$ proceeds as in Example \ref{mexample}.
The two cases where $e$ is type $D_5(a_1)+ A_1$ are similar to Example \ref{C3_example_1}.
The three minimal degenerations lying above the orbit $A_4+A_2$ and the one above the orbit $A_3+A_2+A_1$ 
satisfy part (2) of Lemma \ref{S_var_sl2}.  Since all the $m_i$ are even for $i \in \EC$, Corollary \ref{A1_or_m} gives that
these four degenerations are $A_1$-singularites and satisfy the proposition.  Still, we carry out 
an explicit computer calculation in GAP to show that both $x_2$ and $x_4$ are nonzero for these degenerations,
so that in each of these cases, $\slice$ takes the form $e + X(2,4,6)$.
The details are omitted.

\begin{table}[htp]
\caption{${\bf E_7}$:  Remaining relevant cases with $e_0$ minimal in $\cg(\sg)$}
\begin{center}
\begin{tabular}{|c|c|l|c|c|c|c|}  \hline
$e$  &  $e_0$   &  $e+e_0$  & $\0$ & $\cg(\sg)$   &  $(m_i, n_i)$ for  &  Isomorphism type  \\
&&&& & $i \in \EC$ &  of $\SC_{\0, e}$  \\ \hline 
$A_2+2A_1$  &  $2A_1$  &  $A_2$  &  $A_2$  & $A_1+ {\bf A_1}+A_1$ &  $\emptyset$  & $A_1$   \\ \hline
$A_2+3A_1$   &  $2A_1$  &  $2A_2+A_1$  &  $2A_2+A_1$  & $G_2$   &  $\emptyset$& $g_2$ \\ \hline
$2A_2$   &    $(3A_1)''$  &  $(A_3+A_1)''$  & $(A_3+A_1)''$  &  $G_2+{\bf A_1}$   &  $\emptyset$ &    $A_1$   \\ \hline
$(A_5)'$  & $(3A_1)''$  & $D_6(a_2)$ &$D_6(a_2)$ & ${\bf A_1}+A_1$ &  $\emptyset$ &  $A_1$  \\ \hline
$D_5$   &    $2A_1$  &  $D_6(a_1)$  &  $D_6(a_1)$  &  ${\bf A_1}+A_1$  &  $\emptyset$  & $A_1$  \\ \hline
$D_5+A_1$   &   $2A_1$  &  $E_7(a_4)$  &  $E_7(a_4)$  & $A_1$  &  $\emptyset$   &  $A_1$  \\ \hline
$A_6$   &    $A_2+3A_1$  &  $E_7(a_4)$  &  $E_7(a_4)$  &  $A_1$  &  $\emptyset$       &   $A_1$  \\ \hline
$E_6(a_3)$  &   $(3A_1)''$  &  $E_7(a_5)$  & $E_7(a_5)$  &  $A_1$   &  $\emptyset$     & $A_1$ \\ \hline
$E_6$   &    $(3A_1)''$  &  $E_7(a_2)$  &  $E_7(a_2)$  & $A_1$  &  $\emptyset$     &   $A_1$  \\ \hline
\hline
$A_2+2A_1$  & $A_2+2A_1$ & $D_4(a_1)$  & $A_3$     &${\bf A_1} +A_1+A_1$ &  $(1,3)^4, {\bf (2,4)}$ &  
$A_1$  \\ \hline
$2A_2+A_1$  & $(3A_1)'$ & $D_4(a_1)$ &    $(A_3+A_1)'$   &$A_1+{\bf A_1}$  & ${\bf (1,3)}$ & 
$m$   \\ \hline
$A_3+A_2+A_1$    &  $A_4+A_2$  &  $E_7(a_5)$  &  $D_4+A_1$  & $A_1$  & ${\bf (2,4)}, (2,8), {\bf (4,6)}$ &  
$A_1$   \\ \hline
$A_4+A_2$  & $A_3+A_2+A_1$ & $E_7(a_5)$ & $A_5+A_1$   &$A_1$  & ${\bf (2,4), (4,6)}$ & 
$A_1$   \\ \hline
&&&  $(A_5)'$  &&${\bf (2,4), (4,6)}$ & 
$A_1$   \\ \hline
&&&  $D_5(a_1)\!+\!A_1$  &&${\bf (2,4), (4,6)}$& 
$A_1$   \\ \hline
$A_5+A_1$  & $(3A_1)'$ & $E_7(a_5)$ & $D_6(a_2)$  &$A_1$ & ${\bf (1,3)}$ & 
$m$   \\ \hline
$ D_5(a_1)+A_1$  &  $2A_2$  &  $E_7(a_5)$   & $E_6(a_3)$      &$A_1$    &  ${\bf (2,4)}$  & 
$A_1$   \\ \hline 
&&&   $D_6(a_2)$   &&  ${\bf (2,4)}$ & 
$A_1$   \\ \hline 
\end{tabular}
\end{center}
\label{table4_e7}
\end{table}

\subsection{Remaining surface singularities} 

The results using \S \ref{section:codimension2} are collected in Table \ref{extra3_e7}.  
We have used the fact that $E_7(a_1)$, $E_7(a_2)$, $E_7(a_3)$, $E_6$, $E_6(a_1)$
have closure which is normal \cite[Section 7.8]{Broer:Decomposition}.   The method from \cite{Sommers:E6}
can be used to show $D_6$ has closure which is normal. 
The entry for $(E_7(a_1), E_7(a_2))$ is from \S \ref{example: E7_subregular}.
For the three degenerations above $E_7(a_5)$, the irreducible components 
of $\slice$ are normal (see \S \ref{additional_E7}).

The remaining six minimal degenerations are unibranch, but either the larger orbit has non-normal closure or it is not known whether the larger orbit has closure which is normal.  In all cases we are able to determine that the slice is normal and hence fully determine the singularity.  
The corresponding action of $A(e)$ is determined using \S \ref{section:codimension2}.
The degeneration $(D_5, E_6(a_3))$ is $C_3$ and $(D_5(a_1),A_4+A_1)$ is $A^+_2$ by restriction to $E_6$, see Table \ref{restriction_subalgebra}.
The other four degenerations follow from Table \ref{full_nilcones}.

\begin{table}[htp]
\caption{Surface singularities using \S \ref{section:codimension2}: $E_7$}
\begin{center}
\begin{tabular}{|c|c|c|c|c|c|c|c|}
\hline Degeneration &  Induced from  & $\sharp \mathbb{P}^1$'s &$A(e)$ & $\sharp$ orbits of $A(e)$ &  $\slice$ \\
\hline
$(E_7(a_1), E_7(a_2))$ & $(A_1, 0)$ & 6 & $1$   & & $D_6$ \\
\hline
$(E_7(a_2), E_7(a_3))$ & $(2A_1, 0)$ & 5 & $S_2$ & 4 &  $C_4$ \\
\hline
$(E_7(a_3),E_6(a_1))$ & $((3A_1)', 0)$ & 5 & $S_2$ & 3 &  $B_3$ \\
\hline
$(E_6, E_6(a_1))$ & $((3A_1)'', 0)$ & 6 & $S_2$ & 4 &  $F_4$ \\
\hline
$(E_6(a_1), E_7(a_4))$ & $(4A_1, 0)$ & 4 & $S_2$ & 3 &  $C_3$ \\ \hline
$(D_6, E_7(a_4))$ & $(D_4, 32^21)$ & 4 & $S_2$ & 3 &  $C_3$ \\
\hline $(A_6, E_7(a_5))$ & $(A_2+3A_1, 0)$ & 4 & $S_3$ & 2 
& $G_2$  \\ \hline
$(D_5+A_1, E_7(a_5))$ & $(2A_2, 0)$ & 4 & $S_3$ & 2 
& $G_2$  \\ \hline
 $(D_6(a_1), E_7(a_5))$ & $(A_3, 0)$ & 12 & $S_3$ & 3 & $3C_3$ \\ \hline
\end{tabular}
\end{center}
\label{extra3_e7}
\end{table}


\subsection{Additional calculations:  three degenerations above $E_7(a_5)$}   \label{additional_E7}


The proofs are similar to the one in \S \ref{4G2} and proceed  by first showing that a larger slice is isomorphic to the whole nilcone of a smaller Lie algebra.

For $(A_6,E_7(a_5))$ and $(D_5+A_1,E_7(a_5))$, we first show
that the degenerations $(A_6,A_5'')$ and $(D_5+A_1,A_5'')$ are both isomorphic to $\NC_{G_2}$. 
Then we use the fact that $E_7(a_5)$ corresponds to the subregular orbit in $G_2$.  
The result follows, as in \S \ref{4G2}, since these singularities are unibranch.  In more detail:
let $e$ be in the orbit $A_5''$.  Then $\cg(\sg)$ is of type $G_2$. Let $e_0$ be a regular nilpotent element in $\cg(\sg)$.  
Then $e + e_0$ lies in the orbit $E_7(a_4)$ and $(m_i,n_i) = (4,6)$ for the unique element in $\EC$.
The simple part of $\gg^h$ is $\so_8(\CM)$.  
Let  $w_i = (\ad f)^2 (M^3)$ with $M = e_0 \in \cg(\sg) \subset \so_7 \subset \so_8$ (\S \ref{powers_of_X}).
Using GAP we showed that there is a unique scalar $b \neq 0$ such that $e + e_0 + bw_i$ is in the orbit $A_6$,
and similarly for $D_5+A_1$. The rest of the proof in \S \ref{F4_non_min} applies to give the result.



For the case of $(D_6(a_1),E_7(a_5))$, we first show that
the degeneration $(D_6(a_1),D_4)$ has one branch which is 
isomorphic to $\NC_{C_3}$.
(There are two branches of $D_6(a_1)$ above $D_4$.)
Let $e$ be in the orbit $D_4$.  Then $\cg(\sg) \cong \sp_6$.   
Let $e_0$ be a regular nilpotent element in $\cg(\sg)$.  
Then $e + e_0$ lies in the orbit $E_7(a_4)$ and $\gg$ decomposes in \eqref{sl2_decomp}
as $$V(0,10) \oplus V(0,6) \oplus V(0,2) \oplus V(2,0) \oplus V(6,4) \oplus   V(6,8) \oplus V(10,0),$$
reflecting that $\cg(\sg)$ decomposes under $\sg_0$ as $V(10) \oplus V(6) \oplus V(2)$ and 
$\gg^f(-6)$ decomposes under $\sg_0$ as $V(4) \oplus V(8)$, which is $14$-dimensional and as
a representation of $\cg(\sg)$ is $V(\omega_2)$.
Also $(m_i,n_i) = (6,8)$ for the unique element in $\EC$.

The semisimple part of $\gg^h$ is isomorphic to $\sl_6(\CM)$. 
If we take $M = e_0$, 
then $M^4 \in \sl_6$ is nonzero since $M$ is regular in $\sl_6$.  It cannot be in $\sp_6$ since only odd powers of $M$ are.
It satisfies $[h_0, M^4] = 8M^4$ and 
so it must be a highest weight vector in $V(8)$ for $\sg_0$ with respect to $e_0$.   
Hence we can choose
$w_i = (\ad f)^3(M^4)$ (\S \ref{powers_of_X}).
We checked using GAP that there is an $x$ in the orbit $D_6(a_1)$ with
$$x = e + e_0 + bw_i$$ with $b \neq 0$.
Since the elements $\overline{C(\sg)\cdot (e_0+bw_i)}$ consist of pairs 
$(M,M^4) \in \sp_6 \oplus V(\omega_2) \cong \sl_6$
with $M \in \sp_6$ nilpotent, 
the slice for $(D_6(a_1),D_4)$ contains an irreducible component isomorphic to $\NC_{C_3}$ (the dimensions match).
Since elements in the slice belonging to the $E_7(a_5)$-orbit correspond to the subregular elements in $\NC_{C_3}$,
one branch of $(D_6(a_1),E_7(a_5))$ is isomorphic to $C_3$, hence the singularity is $3C_3$.

%

\section{Results for $E_8$}  \label{e8}

\subsection{Details in the proof of Proposition \ref{dim4+_1}}\label{E8:slice cases}

In Table \ref{table1_e8} we list the cases where Corollary \ref{slice_corollary} applies. 
The centralizer $\cg(\sg_0)$ is the semisimple part of a Levi subalgebra of type $E_7$.  The nonzero nilpotent $G$-orbits meeting $\cg(\sg_0)$ are those which appear in the table.  The first several lines of Table \ref{table4_e8} contain the remaining cases 
where Lemma \ref{kp_prop} applies 
with $x = e + e_0$ for an element $e_0$ in a minimal nilpotent orbit of $\cg(\sg)$.
  
\begin{table}[htp]
\caption{${\bf E_8}$:  cases with $e_0 \in \cg(\sg)$ of type $A_1$ in $\gg$}
\begin{center}
\begin{tabular}{|c|c|c|c|}  \hline
$e$  &  $e+e_0 \in \0$  & $\cg(\sg)$   &   Isomorphism \\
&&& type of $\SC_{\0, e}$  \\ \hline 
$A_1$  &   $2A_1$ &   $E_7$  &    $e_7$ \\ \hline
$2A_1$  &  $3A_1$ &   $B_6$    &  $b_6$ \\ \hline
$3A_1$   &  $4A_1$  &   ${\bf F_4}+A_1$   &   $f_4$   \\ \hline
                &  $A_2$  &  $F_4 + {\bf A_1}$   & $A_1$   \\ \hline
$A_2$ &         $A_2+A_1$ &  $E_6$ & $e^+_6$   \\ \hline
$4A_1$   &  $5A_1 = A_2+A_1$  &   $C_4$   &   $c_4$   \\ \hline
$A_2+A_1$  &  $A_2+2A_1$ &  $A_5$  &  $a^+_5$   \\ \hline
$A_2+2A_1$  &  $A_2+3A_1$  &  ${\bf B_3}+A_1$ &  $b_3$   \\ \hline
$A_2+3A_1$  &  $A_2+4A_1 = 2A_2$  &  $G_2 + {\bf A_1}$ &  $A_1$   \\ \hline
$A_3$ &  $A_3+A_1$   & $B_5$   &   $b_5$   \\ \hline
$2A_2$   &  $2A_2+A_1$  &   ${\bf 2G_2}$   &  $[2g_2]^+$   \\ \hline
$2A_2+A_1$  &  $2A_2+2A_1$ &  ${\bf G_2}+A_1$  &  $g_2$   \\ \hline
$A_3 + A_1$  & $A_3+2A_1$    &  ${\bf B_3}+ A_1$  &  $b_3$    \\ \hline
                & $(A_3+2A_1)'' = D_4(a_1)$    &  $B_3+{\bf A_1}$  &  $A_1$    \\ \hline
$A_3 + 2A_1$  & $A_3+3A_1 = D_4(a_1) + A_1$    &  ${\bf B_2}+ A_1$  &  $b_2$    \\ \hline
 $D_4(a_1)$  &  $D_4(a_1)+A_1$  & $D_4$  &  $d_4^{++}$  \\ \hline
$D_4(a_1)+A_1$ & $D_4(a_1)+2A_1= A_3+A_2$ &  ${\bf 3A_1}$  &  $[3A_1]^{++}$   \\ \hline
$A_3+A_2$ & $A_3+A_2+A_1$  & $B_2+T_1$ & $b_2$   \\ \hline
$A_3+A_2+A_1$ &  $A_3+A_2+2A_1=D_4(a_1)+A_2$  & $A_1+{\bf A_1}$  & $A_1$  \\ \hline
$A_4$ &  $A_4+A_1$  & $A_4$  &  $a^{+}_4$  \\ \hline
$D_4$ &  $D_4+A_1$   &   $F_4$  &  $f_4$   \\ \hline
$D_4+A_1$  & $D_4+2A_1 = D_5(a_1)$ & $C_3$  & $c_3$   \\ \hline
$A_4+A_1$ &  $A_4+2A_1$  & $A_2+T_1$  &  $a^{+}_2$  \\ \hline
$D_5(a_1)$   &  $D_5(a_1) + A_1$  &   $A_3$   &   $a^{+}_3$  \\ \hline
$A_4+A_2$ &  $A_4+A_2+A_1$  & $A_1+{\bf A_1}$  &  $A_1$  \\ \hline
$D_5(a_1)+A_1$   &  $D_5(a_1) +2A_1 = D_4+A_2$  &   $A_1+{\bf A_1}$   &   $A_1$  \\ \hline
$A_5$ &   $A_5+A_1$ & ${\bf G_2}+A_1$ & $g_2$   \\ \hline
           &   $A_5+A_1 = E_6(a_3)$ & $G_2+{\bf A_1}$ & $A_1$  \\ \hline
$A_5+A_1$ & $A_5+2A_1 = E_6(a_3)+A_1$ & $A_1+{\bf A_1}$ & $A_1$  \\ \hline
$E_6(a_3)$ & $E_6(a_3)+ A_1$  & $G_2$  & $g_2$   \\ \hline
$D_6(a_2)$  &  $D_6(a_2) + A_1 = E_7(a_5)$  &   ${\bf 2A_1}$    &   $[2A_1]^+$  \\ \hline
$D_5$ &  $D_5+A_1$  &  $B_3$  &  $b_3$  \\ \hline
$E_7(a_5)$ & $E_7(a_5)+A_1 = E_8(a_7)$  & $A_1$ & $A_1$  \\ \hline
$D_5+A_1$ &  $D_5+2A_1= D_6(a_1)$  &  $A_1+{\bf A_1}$  &  $A_1$  \\ \hline
$D_6(a_1)$   &  $D_6(a_1) + A_1 = E_7(a_4)$  &   ${\bf 2A_1}$  &   $[2A_1]^+$  \\ \hline
$A_6$   &  $A_6 + A_1$  &   $A_1+{\bf A_1}$  &   $A_1$  \\ \hline
$E_7(a_4)$ & $E_7(a_4)+A_1 = D_5+A_2$  & $A_1$ & $A_1$  \\ \hline
$E_6(a_1)$ & $E_6(a_1)+ A_1$  & $A_2$  & $a^{+}_2$   \\ \hline
$D_6$   &  $D_6 + A_1 = E_7(a_3)$  &   $B_2$ &   $b_2$  \\ \hline
$E_6$ & $E_6+ A_1$  & $G_2$  & $g_2$   \\ \hline
$E_7(a_3)$ &  $E_7(a_3)+A_1 = D_7(a_1)$  & $A_1$ & $A_1$  \\ \hline
$E_7(a_2)$ &  $E_7(a_2)+A_1 = E_8(b_5)$  & $A_1$ & $A_1$  \\ \hline
$E_7(a_1)$ &  $E_7(a_1)+A_1 =E_8(b_4)$  & $A_1$ & $A_1$  \\ \hline
$E_7$ &  $E_7+A_1 = E_8(a_3)$  & $A_1$ & $A_1$  \\ \hline
\end{tabular}
\end{center}
\label{table1_e8}
\end{table}

The remaining cases of the proposition, where $J \neq \emptyset$,
are listed in Table \ref{table4_e8} and include two non-surface cases. 
All cases follow by restriction to a subalgebra or 
by using Lemma \ref{S_var_sl2} and Corollary \ref{A1_or_m},
except for the two degenerations above $A_4+A_3$.
We now discuss those two cases and the two non-surface cases,
but omit the details for the other degenerations.
 
\subsubsection{The degeneration $(A_3+2A_1, 2A_2+2A_1)$}  \label{dim_4_case_1}

Here $e$ is in the orbit $2A_2+2A_1$ and $\cg(\sg) \cong \sp_4(\CM)$.  
Let $e_0$ be in the minimal nilpotent orbit of $\cg(\sg)$.
In this case $\EC$ has one element corresponding to $(m_i,n_i) = (1,3)$.   
Consider the Levi subalgebra $\lg$ of type $E_6+A_1$.  Then without loss of generality
$e \in \lg$ (with nonzero component on the $A_1$ factor) and $e_0 \in \lg$ (contained in the $E_6$ factor).  
By the results for $E_6$, there is an $x$ in the orbit $\0$ of type $A_3+2A_1$ (in $E_8$)
with $x = e + e_0 + e_1$ for a choice of $e_1 \in \gg^f(-1)$ corresponding to $(1,3)$.   
Moreover, writing $\cg(\sg) = V(2\omega_1)$, then $e_1$ is a highest weight vector for a $\cg(\sg)$-module $V(3\omega_1) \subset \gg^f(-1)$.  Hence $\slice = e + X(2\omega_1, 3\omega_1) \cong m'$ since \eqref{dim_equality_full_e0} holds and the singularity is unibranch.

\subsubsection{The degenerations $(A_4+A_1, D_4(a_1)+A_2)$ and $(2A_3, D_4(a_1)+A_2)$}  \label{dim_4_case_2}

Here $\cg(\sg) \cong \sl_3(\CM)$.
All the orbits meet the semisimple subalgebra $\lg$ of $\gg$ of type $D_5 + A_3$:
$\0_e$ meets $\lg$ in the orbit $[3^3, 1] \cup [4]$;
$A_4+A_1$ meets $\lg$ in the orbit $[5,2^2,1] \cup [4]$;
and $2A_3$ meets $\lg$ in the orbit $[4^2,1^2] \cup [4]$.
Then just as in the case $(B_2,  A_2+\tilde{A}_1)$ in \S \ref{F4hard_m}, 
there exists $x \in \0$ with
$x = e + e_0 + e_2$ for some $e_2 \in \gg^f(-2)$ corresponding to the pair $(2,4)$,
for $\0$ either of type $A_4+A_1$ or type $2A_3$.
Identifying $\cg(\sg)$ with $V(\theta)$ where $\theta$ is a highest root of $\cg(\sg)$, 
we have $e_2$ is a highest weight vector for 
a $\cg(\sg)$-module $V(2 \theta) \subset \gg^f(-2)$. 
Hence for both orbits $\slice = e + X(\theta, 2\theta) \cong X(\theta)$, as desired (for two different choices of $e_2$, related by a scalar).

\subsubsection{The degenerations $(A_5+A_1, A_4+A_3)$ and $(D_5(a_1)+A_2, A_4+A_3)$}  \label{e8_hard_m}

Corollary \ref{A1_or_m} applies, but is not sufficient to pin down the singularity, so we carry out an explicit computation.
In both of these cases, $e$ lies in the orbit $A_4+A_3$, for which $\cg(\sg) \cong \sl_2(\CM)$.  
This case is a more complicated version of \S \ref{F4hard_m} in $F_4$.
Using the information in \cite[p. 146]{Lawther-Testerman} (adjusted for sign differences in GAP), let $$e=-(4f_{\alpha_1}+6f_{\alpha_3}+6f_{\alpha_4}+4f_{\alpha_2}+3f_{\alpha_6}+4f_{\alpha_7}+3f_{\alpha_8}),\;\; f=\sum_{i\neq 5}e_{\alpha_i}\;\;\mbox{and}\;\; h=[e,f].$$
A nilpositive element in $\cg(\sg)$ is
$$e_0=2e_{\begin{aligned} 11 &22211 \\[-3mm] &1\end{aligned}}-e_{\begin{aligned} 12 &32110 \\[-3mm] &1\end{aligned}}+2e_{\begin{aligned} 01 &22221 \\[-3mm] &1\end{aligned}}+e_{\begin{aligned} 12 &32100 \\[-3mm] &2\end{aligned}}+e_{\begin{aligned} 12 &22210 \\[-3mm] &1\end{aligned}}+e_{\begin{aligned} 12 &22111 \\[-3mm] &1\end{aligned}},$$ embedded in an $\sl_2$-triple 
$\{ e_0, h_0, f_0\}$ for $\cg(\sg)$.
Then the three elements in $\EC$ correspond to $\{(1,3),(2,4),(3,5)\}$.
The spaces $\gg^f(-1)$, $\gg^f(-2)$ and $\gg^f(-3)$ contain highest weight modules for $\cg(\sg)$ with respective highest weights 3, 4 and 5, and highest weight vectors:
$$e_1=e_{\begin{aligned} 24 &43210 \\[-3mm] &2\end{aligned}}-e_{\begin{aligned} 13 &43211 \\[-3mm] &2\end{aligned}}+e_{\begin{aligned} 12 &43221 \\[-3mm] &2\end{aligned}}-e_{\begin{aligned} 12 &33321 \\[-3mm] &2\end{aligned}},\;\; 
e_2 = e_{\begin{aligned} 24 &54321 \\[-3mm] &2\end{aligned}}+e_{\begin{aligned} 23 &54321 \\[-3mm] &3\end{aligned}},\;\; 
e_3 = e_{\begin{aligned} 24 &65432 \\[-3mm] &3\end{aligned}}.$$
We checked in GAP that 
\begin{eqnarray*}
e+e_0+3e_2 \pm(2e_1+4e_3)\in D_5(a_1)+A_2,\;\;\mbox{and}\;\;  \\
e+e_0-\tfrac{7}{6}e_2\pm\sqrt{\tfrac{8}{27}}(e_1-\tfrac{19}{3}e_3)\in A_5+A_1.
\end{eqnarray*}
Hence in both cases $\slice = e + X(2,3,4,5) \cong X(2,3) = m$.

%

\begin{table}[htp]  
\caption{${\bf E_8}$:  Remaining relevant cases with $e_0$ minimal in $\cg(\sg)$}
\begin{center}
\begin{tabular}{|c|c|l|l|c|c|c|c|}  \hline
$e$  &  $e_0$   &  $e+e_0$  & $\0$ & $\cg(\sg)$   &   $(m_i, n_i)$ for  & Isomorphism \\
&&&& & $i \in \EC$ &  type of $\slice$  \\ \hline 
$2A_3$ &  $2A_1$ & $A_4+2A_1$  & $A_4+2A_1$  & $B_2$   &   $\emptyset$ &  $b_2$  \\ \hline  
$D_4+A_2$   &    $2A_1$  &  $D_5(a_1)+A_2$  &   $D_5(a_1)+A_2$  & $A_2$   &   $\emptyset$ & $a_2^+$  \\ \hline
$A_6$   &  $A_2+3A_1$  &  $E_7(a_4)$ &  $E_7(a_4)$ & ${\bf A_1}+A_1$ & $\emptyset$ & $A_1$  \\ \hline
$A_4+2A_1$   &  $2A_1$  &  $A_4+A_2$&  $A_4+A_2$  & $A_1+T_1$ & $\emptyset$ & $A_1$  \\ \hline
$A_6+A_1$   &  $A_2+3A_1$  &  $D_5+A_2$   &  $D_5+A_2$  & $A_1$ &$\emptyset$ & $A_1$  \\ \hline
$A_7$   &  $4A_1$  &  $E_8(b_6)$ &  $E_8(b_6)$  & $A_1$ & $\emptyset$ & $A_1$ \\ \hline
$D_7$   &  $2A_1$  &  $E_8(a_5)$ &  $E_8(a_5)$ & $A_1$ &  $\emptyset$ & $A_1$ \\ \hline
\hline
$A_2+2A_1$  & $A_2+2A_1$ & $D_4(a_1)$  &   $A_3$  & $B_3+{\bf A_1}$  &  $(1,3)^8, {\bf (2,4)}$ &  
$A_1$  \\ \hline
$2A_2+A_1$  & $3A_1$ & $D_4(a_1)$ &  $A_3+A_1$   &$G_2+ {\bf A_1}$  & ${\bf (1,3)}$ & 
$m$\\ \hline
 $2A_2+2A_1$  & $3A_1$ & $D_4(a_1)+A_1$ & $A_3+2A_1$ & $B_2$     & ${\bf (1,3)}$ & 
$m'$ \\ \hline
$A_3+A_2+A_1$    &  $A_4+A_2$  &  $E_7(a_5)$  &   $D_4+A_1$   & ${\bf A_1} +A_1$  & ${\bf (2,4)}, (2,8), {\bf (4,6)}$ &  
$A_1$\\ \hline

$D_4(a_1)+A_2$  &  $A_2+2A_1$   &    $A_4+2A_1$   &  $2A_3$     &$A_2$  &   ${\bf (2,4)}$  & 
$a_2^+$   \\ \hline
				&   &    &   $A_4+A_1$    &&    ${\bf (2,4)}$  & 
				$a_2^+$   \\ \hline
$A_4+A_2$  & $A_3+A_2+A_1$ & $E_7(a_5)$ &  $A_5$   &${\bf A_1}+A_1$ &  ${\bf (2,4)}, {\bf (4,6)}$ & 
$A_1$ \\ \hline
&&& $D_5(a_1)\!+\!A_1$   && ${\bf (2,4)}, {\bf (4,6)}$ & 
$A_1$\\ \hline
$D_5(a_1)+A_1$  &  $2A_2$  &  $E_7(a_5)$   &  $E_6(a_3)$   & ${\bf A_1}+A_1$    &  ${\bf (2,4)}$  & 
$A_1$\\ \hline
$A_4+A_2+A_1$  & $A_3+A_2+A_1$ & $E_8(a_7)$ & $D_4\!+\!A_2$  &$A_1$  & $(1,5), \bf{(2,4)}, $ & 
$A_1$  \\ 
&&&&& $(3,5), \bf{(4,6)}$  &  \\ \hline

$A_4+A_3$ & $2A_2+2A_1$ & $E_8(a_7)$ &  $A_5\!+\!A_1$   &$A_1$  & ${\bf (1,3), (2,4), (3,5)}$  & 
$m$\\ \hline
&&&  $D_5(a_1)\!+\!A_2$  &&  ${\bf (1,3), (2,4), (3,5)}$ & 
$m$\\ \hline
$A_5+A_1$  & $3A_1$ & $E_7(a_5)$ & $D_6(a_2)$     &${\bf A_1}+A_1$   & ${\bf (1,3)}$ & 
$m$ \\ \hline
$D_5(a_1)+A_2$  &  $A_2+2A_1$  &  $E_8(a_7)$  &  $E_6(a_3)\!+\!A_1$  &$A_1$  & ${\bf (1,3), (2,4)}$  &  
$m$ \\ \hline
		 & && $D_6(a_2)$   && $ (1,3), {\bf (2,4)}$  &  $A_1$ \\ \hline
$E_6(a_3)+A_1$ & $3A_1$ & $E_8(a_7)$ & $E_7(a_5)$    &$A_1$  & ${\bf (1,3)}$  & 
$m$ \\ \hline
$E_6+A_1$ & $3A_1$ & $E_8(b_5)$ &  $E_7(a_2)$   &$A_1$  & ${\bf (1,3)}$  & 
$m$ \\ \hline
\end{tabular}
\end{center}
\label{table4_e8}
\end{table}%

\subsection{Remaining surface singularities, and an exceptional degeneration}  \label{remaining_surfaces}
The results using \S \ref{section:codimension2} are collected in Table \ref{extra3_e8}.
We use the fact that $E_8$, $E_8(a_1)$, $E_8(a_2)$, $E_8(a_3)$, $E_8(a_4)$
have closure which is normal \cite[Section 7.8]{Broer:Decomposition}.   The method from \cite{Sommers:E6} 
can be used to show $E_7, E_8(b_4)$, and $E_8(a_5)$ have closure which is normal. 
The entry for $(E_8(a_1), E_8(a_2))$ is from \S \ref{example: E8_subregular}.
That each irreducible component of $(D_6(a_1), E_8(a_7))$ and $(A_6, E_8(a_7))$ is $G_2$ follows from the fact that the 
degeneration $(E_6, D_4)$ contains a branch isomorphic to the nilpotent cone in $F_4$, and then from the results in $F_4$. 

There are 19 other cases.  For nine of them,
the degenerations are unibranch, but either the larger orbit has non-normal closure or it is not known whether the larger orbit has closure which is normal.   Nevertheless, in these cases we are able to show that the slice is normal and hence fully determine the singularity.  
The action of $A(e)$ is determined using \S \ref{section:codimension2}.   The degeneration $(D_5, E_6(a_3))$ is $C_3$ 
and the degeneration $(D_5(a_1), A_4+A_1)$ is $A^+_2$, both by restriction to $E_6$ (see Table \ref{restriction_subalgebra}).
The other degenerations follow from Table \ref{full_nilcones}.
For the other ten cases, the result is determined up to normalization.  
In four of these cases, the orbit closure is known to be non-normal:
$(E_7(a_1),E_8(b_5)) ,(E_7(a_3),E_6(a_1)+A_1), (D_7(a_2),D_5+A_1), (D_6,D_5+A_2).$ 
The latter three are unibranched.  The orbit closures, and hence the slices, for the other six are expected to be normal.
We use $(Y)$ to denote a singularity with normalization $Y$.

The exceptional degeneration $(A_4 +A_3, A_4 +A_2 +A_1)$ of codimension four is treated in \S \ref{dim4_exceptions}.

\begin{table}[htp]
\caption{Surface singularities using \S \ref{section:codimension2}: $E_8$}
\begin{center}
\begin{tabular}{|c|c|c|c|c|c|c|} \hline 
Degeneration &  Induced from  & $\sharp \mathbb{P}^1$'s &$A(e)$ & $\sharp$ orbits   of $A(e)$  &  $\slice$ \\ \hline
$(E_8(a_1), E_8(a_2))$  & $(A_1, 0)$ &  7 & $1$ &  & $E_7$ \\ \hline
$(E_8(a_2), E_8(a_3))$ & $(2A_1, 0)$ & 7 & $S_2$ & 6 & $C_6$ \\ \hline
$(E_8(a_3), E_8(a_4))$ & $(3A_1, 0)$ & 6 & $S_2$ & 4 & $F_4$ \\ \hline
$(E_8(a_4), E_8(b_4))$ & $(4A_1, 0)$ & 5 & $S_2$ & 4 & $C_4$ \\  \hline
$(E_8(a_5), E_8(b_5))$ & $(A_2+3A_1, 0)$ & 4 & $S_3$ & 2 &  $G_2$ \\ \hline $(E_7(a_1), E_8(b_5))$ & $(A_3, 0)$ & 18 & $S_3$ & 5 & $3(C_5)$ \\ \hline
$(E_8(b_5), E_8(a_6))$ & $(2A_2+A_1, 0)$ & 4 & $S_3$ & 2 & $(G_2)$ \\ \hline
$(E_7(a_3), E_6(a_1)+A_1)$ & $(D_6, 3^22^21^2)$ & 4 & $S_2$ & 2 &  $(A_4^+)$ \\
\hline $(D_7(a_2), D_5+A_2)$ & $(2A_3, 0)$ & 3 & $S_2$ & 2 & $(C_2)$ \\
\hline $(E_7, E_8(b_4))$ & $(D_4, 32^21)$ & 6 & $S_2$ & 4 & $F_4$ \\
\hline $(D_7, E_8(a_6))$ & $(D_4+A_2, 32^21+0)$ & 4 & $S_3$ & 2  & $(G_2)$ \\
\hline $(E_8(b_4), E_8(a_5))$ & $(A_2+2A_1, 0)$ & 4 & $S_2$ & 3 & $C_3$ \\
\hline $(E_7(a_2), D_7(a_1))$ & $(D_5, 32^21^3)$ & 5 & $S_2$ & 3 & $(B_3)$ \\
\hline $(D_7(a_1), E_8(b_6))$ & $(A_3+A_2, 0)$ & 3 & $S_3$ & 2 & $(C_2) = \mu$ \\
\hline $ (E_6+A_1, E_8(b_6))$ & $(E_6, 2A_2+A_1)$ & 4 & $S_3$ & 2& $(G_2)$ \\
\hline $(A_7, D_7(a_2))$ &  $(D_5+A_2, 32^21^3+0)$ & 2 & $S_2$ & 1 & $(A_2^+)$ \\
\hline $ (E_6(a_1)+A_1, D_7(a_2))$ & $(E_7, A_4+A_1)$ &  2 & $S_2$ & 1 & $(A_2^+)$ \\
\hline $(D_6, D_5+A_2)$ & $(D_6, 32^41)$ & 3 & $S_2$ & 2& $(C_2)$ \\
\hline $(D_6(a_1), E_8(a_7))$ & $(A_5, 0)$ & 40 & $S_5$ & 2 
& $10G_2$\\
\hline  $(A_6, E_8(a_7))$ & $(D_4+A_2, 0)$ & 20 & $S_5$ & 2 & 
$5G_2$ \\
\hline
\end{tabular}
\end{center}
\label{extra3_e8}
\end{table}


\begin{table}[htp]
\caption{Some surface cases where Lemma \ref{singequi} can be applied}
\begin{center}
\begin{tabular}{|c|c|c|c|c|c|c|c|}  \hline
$\gg$& $e$  &  $x \in \0$  & subalgebra   &  $\slice$  \\ \hline

$E_7, E_8$  & $E_6(a_3)$ & $D_5$ & $E_6$ & $C_3$ \\ \hline 

$E_7, E_8$  & $A_4+A_1$ & $D_5(a_1)$ & $E_6$ & $A_2^+$\\ \hline
$E_7, E_8$  & $A_3+A_2$ & $A_4$ & $D_6$ &  $C_2$ \\ \hline
\end{tabular}
\end{center}
\label{restriction_subalgebra}
\end{table}

\section{Slices related to entire nilcones} \label{entire_nilcones}

The main goal of the paper was to study $\slice$ for a minimal degeneration.
Many of the same ideas can be used to show that $\slice$ has a familiar description when the degeneration is not minimal.  In particular, there are many cases where
$\slice$ is isomorphic to the closure of a non-minimal orbit in a nilcone for a subalgebra of $\gg$ or is isomorphic to a slice between two orbits in such a nilcone.  
Rather than listing all these cases here, we write down some cases where $\slice$, or one of its irreducible components, is isomorphic to an entire nilcone.   
Some of these were used to show in the surface case that $\slice$, or an irreducible component of $\slice$, is normal (e.g., starting with \S \ref{4G2}).
These examples are relevant for the duality discussed in \S \ref{dualitysubsection}, to be explored in future work.
They are also examples where $C(\sg)$ acts with a dense orbit.

\subsection{Exceptional groups}

The results are listed in Table \ref{full_nilcones}.   The notation $\NC_{X}$ refers to the nilcone in the Lie algebra of type $X$.
The proofs use Lemma \ref{kp_prop}, usually for $x \neq e+e_0$, and often require a computer calculation.

\begin{table}[htp]
\caption{Slices containing a smaller nilcone}
\begin{center}
\begin{tabular}{|c|c|}  \hline
$\gg$  &    Degeneration and nilcone \\ \hline
$F_4$ & $(B_3, \tilde{A}_2) = \NC_{G_2}$ 
\\ \hline
&  $(C_3, \tilde{A}_2) \supset \NC_{G_2}$ 
\\ \hline  \hline
$E_6$  & $(E_6(a_3), D_4) =  \NC_{A_2}$ \\ \hline
& $(A_4, A_3) \supset \NC_{C_2}$  \\ \hline
& $(2A_2, A_2) = [2\NC_{A_2}]^+$  \\ \hline  \hline
$E_7$   &   $(E_7(a_4), D_5) = \NC_{2A_1}$  \\ \hline
& $(D_6(a_1), D_4) \supset \NC_{C_3}$  \\ \hline
  & $(E_7(a_5), A'_5)   = \NC_{2A_1}$  \\ \hline
& $(A_6,A''_5) =  \NC_{G_2}$  \\ \hline
& $(D_5+A_1,A''_5) = \NC_{G_2}$  \\ \hline
& $(A_4+A_2, A_4) =  \NC^+_{A_2}$  \\ \hline 
 & $(D_4,A_2+3A_1) = \NC_{G_2}$  \\ \hline
& $(D_4,2A_2) = \NC_{G_2}$  \\ \hline
& $(D_4(a_1)+A_1, (A_3+A_1)')   = \NC_{2A_1}$  \\ \hline
& $(A''_5,A_3) \supset \NC_{B_3}$ \\ \hline  \hline
$E_8$ &  $(E_8(a_5), E_6) = \NC_{G_2}$ \\ \hline
& $(E_8(a_6), D_6) = \NC_{C_2}$ \\ \hline  
& $(E_6, D_4) \supset \NC_{F_4}$  \\ \hline 
& $(D_5+A_1,A_5) \supset \NC_{G_2}$  \\ \hline
& $(A_6,E_6(a_3)) \supset \NC_{G_2}$  \\ \hline
& $(D_6(a_1), E_6(a_3)) \supset \NC_{G_2}$ \\ \hline
&  $(E_8(b_6), E_6(a_1)) = \NC^+_{A_2}$ \\ \hline 
 & $(A_4,A_3+2A_1) \supset \NC_{C_2}$  \\ \hline
 &  $(D_4,2A_2) = 2 \NC_{G_2}$ \\ \hline 
\end{tabular}
\end{center}
\label{full_nilcones}
\end{table}%

\subsection{Slices isomorphic to entire nilcones:  two slices in $\sl_N$}

These two examples are special cases of isomorphisms discovered by Henderson \cite{Anthony_from_down_under:survey} using Maffei's work on quiver varieties
\cite{maffei:quiver}.   Here we give direct proofs that fit into the framework of Lemma \ref{kp_prop} and \S \ref{extra_eigenvalues}.   
We are grateful to Henderson for bringing these examples to our attention.

\subsubsection{First Slice} 
It is slightly more convenient to work in $\gg = \gl_{nk}$.  Assume $n \geq 2$ and $k \geq 1$.   Consider the nilpotent orbit $\OC'$ with partition $[n^k]$.   
Write  $k = p(n+1) + q$ with $0 \leq q < n+1$, which gives $kn = (pn+q-1)(n+1) + (n+1-q)$ for maximally 
dividing $kn$ by $n+1$ .   Let $\OC$ be the nilpotent orbit with partition $[(n+1)^{pn+q-1}, n+1- q ]$, which is a partition of $kn$.   
Then $\OC' \subset \overline{\OC}$ by the dominance order for partitions.
Moreover, $X \in \cl$ implies $X^{n+1} = 0$ and $\OC$ is maximal for nilpotent orbits in $\gl_{nk}$ with this property.

\begin{proposition}\cite[Corollary 9.5]{Anthony_from_down_under:survey}
Let $e \in \OC'$. The variety $\slice$
is isomorphic to $$\YC := \{ Y \in \gl_k \ | \ Y^{n+1} = 0 \}.$$
In particular, $\slice$ is isomorphic to 
the closure of the nilpotent orbit in $\gl_k$ with partition $[(n+1)^p, q]$, which is the whole nilcone when $k \leq n+1$.
\end{proposition}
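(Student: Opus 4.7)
The plan is to apply Lemma \ref{kp_prop} with $C(\sg) \cong \GL_k$ in an explicit tensor-product model for the Slodowy slice. Realize $V = \CM^{nk}$ as $V = W \otimes \CM^n$ with $W = \CM^k$ and $e = \id_W \otimes N$ for $N$ a regular nilpotent on $\CM^n$; extend to an $\sl_2$-triple $\sg = \id_W \otimes \sl_2$. Then $\cg(\sg) = \End(W) \otimes \id \cong \gl_k$, $C(\sg) \cong \GL_k$, and $\gg^f = \End(W) \otimes \CM[N^\vee]$, so every element of $\SC_e$ is of the form $x = e + \sum_{j=0}^{n-1} A_j \otimes (N^\vee)^j$ with $A_j \in \gl_k$; the natural $C(\sg) \times \CM^*$-equivariant projection $\pi_0 : \SC_e \to \cg(\sg)$ is $x \mapsto A_0$.

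Fix $A \in \cg(\sg)$ of Jordan type $[(n+1)^p, q]$, the generic Jordan type in $\YC$. The key step is to construct an explicit $x_A \in \OC \cap \slice$ with $\pi_0(x_A) = A$ by setting $x_A = e + A \otimes \id + \sum_{j \geq 1} A_j \otimes (N^\vee)^j$ with correction terms $A_j \in \CM[A]$ chosen recursively so that the rank sequence of powers of $x_A$ matches Jordan type $[(n+1)^{pn+q-1}, n+1-q]$. Since each $A_j$ is a polynomial in $A$, the $C(\sg)$-stabilizer of $x_A$ equals that of $A$; a comparison of the dimension formulas $\dim \OC_\lambda = (nk)^2 - \sum_i (\lambda^t)_i^2$ for the three relevant partitions then yields
\[
\dim(C(\sg) \cdot x_A) = \dim(\GL_k \cdot A) = \dim \YC = \codim_{\overline{\OC}}(\OC') = \dim \slice.
\]
Since $\overline{\OC}$ is normal by Kraft-Procesi \cite{Kraft-Procesi:GLn}, it is unibranch at $e$, so $\slice$ is irreducible by \S \ref{Component group action}. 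Lemma \ref{kp_prop} then gives $\slice = e + \overline{C(\sg) \cdot (x_A - e)}$.

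The $C(\sg)$-equivariant morphism $\pi_0|_\slice : \slice \to \YC$ is surjective (its image is $C(\sg)$-invariant and contains the dense orbit $\GL_k \cdot A$), birational (the fiber over $A$ is $\{x_A\}$ by the stabilizer match), and quasi-finite (the fiber over $0 \in \YC$ is $\{e\}$, and equivariance propagates this to all $\GL_k$-orbits in $\YC$). Normality of $\YC$ as a $\gl_k$ nilpotent orbit closure \cite{Kraft-Procesi:GLn} combined with Zariski's Main Theorem upgrades $\pi_0|_\slice$ to an algebraic isomorphism $\slice \xrightarrow{\sim} \YC$.

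The main obstacle is the construction of $x_A$ with the prescribed Jordan type: one must verify that appropriate polynomial-in-$A$ corrections $A_j$ exist and place $x_A$ in $\OC$. For $n = 2$ a direct calculation shows that $A_1 = -3A^2$ suffices; for general $n$ the system of equations is equivalent to the Maffei quiver variety identification underlying Henderson's general statement, and a self-contained proof reduces to a rank-sequence bookkeeping for the block companion-form matrix built from $(A, A_1, \ldots, A_{n-1})$.
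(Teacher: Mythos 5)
The paper's proof proceeds "top-down": it parametrizes $\SC_e$ by block matrices $Z(\{Y_i\})$ and exploits the explicit description $\cl = \{X : X^{n+1}=0\}$, showing by examination of block entries of $M^{n+1}$ that every $M \in \slice$ has $Y_i$ determined by $Y_0$ (namely $Y_i = c_iY^{i+1}$ with $Y = -nY_0$), then applying Cayley--Hamilton over $\CM[Y]$ to deduce $Y^{n+1}=0$. This yields a closed embedding of $\slice$ into a copy of $\gl_k$ with image inside $\YC$, and equality follows from irreducibility and dimension count. Your approach is "bottom-up": construct a single $x_A \in \OC\cap\slice$ with dense $C(\sg)$-orbit, invoke Lemma \ref{kp_prop} to realize $\slice$ as its orbit closure, and then upgrade the projection $\pi_0|_\slice$ to an isomorphism via Zariski's Main Theorem. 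This is a legitimate alternative strategy in principle (and it is the one the paper uses elsewhere for the slices studied in \S\ref{dim4_exceptions}), but as written it leaves two genuine gaps.

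First, and as you acknowledge yourself, the existence of $x_A = e + A\otimes\id + \sum_{j\geq 1}A_j\otimes(N^\vee)^j$ with $A_j \in \CM[A]$ lying in $\OC$ is not established for general $n$; you verify $n=2$ and for larger $n$ defer to Maffei/Henderson, which is circular here since the proposition is being proved \emph{as} a direct, self-contained alternative to Henderson's result. The paper's top-down analysis sidesteps this entirely: it never needs to exhibit a dense-orbit element, because the nilpotency equation itself forces $Y_i$ to be a polynomial in $Y_0$ for \emph{every} $M\in\slice$.

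Second, the ZMT step is not correctly justified. You assert that the fiber $\pi_0^{-1}(0)\cap\slice$ is $\{e\}$ and that ``equivariance propagates this to all $\GL_k$-orbits in $\YC$.'' Equivariance only identifies fibers over points in the \emph{same} $\GL_k$-orbit; it does not transfer finiteness from the orbit $\{0\}$ to other orbits. (The single-point fiber over the vertex would in fact suffice, by a graded-Nakayama argument showing $\CM[\slice]$ is a finite $\CM[\YC]$-module, but that is a different argument from the one you give.) More importantly, the assertion that the fiber over $0$ is a single point is itself unproved in your framework: a priori, $\slice$ could contain an element $e + 0 + x_1 + \cdots$ with $x_1\neq 0$, and nothing in Lemma \ref{oh_yeah} or \ref{kp_prop} forbids this since such elements have $C(\sg)\cdot x_0 = \{0\}$ and so fall outside the hypotheses of those lemmas. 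The paper's relation $Y_i = c_iY^{i+1}$ is exactly what rules this out, and your argument does not reproduce that step.
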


\newcommand{\quot}{\ensuremath{/ \hspace{-1mm}/}}

\begin{proof}
Let $I_k$ be the $k \times k$ identity matrix.     Define $e=(e_{ij})$, $h = (h_{ij})$, and $f = (f_{ij})$ to be
$n \times n$-block matrices, with blocks of size $k \times k$, as follows:
\begin{eqnarray*}
   e_{ij} = \begin{cases}
               j(n-j) I_k     & i = j+1  \\
               0           &  \text{else}
                 \end{cases},  &
   h_{ij} = \begin{cases}
               (2i - n -1) I_k        &  i = j  \\
               0           &  \text{else}
           \end{cases},  &
   f_{ij} = \begin{cases}
               I_k        &  j = i+1  \\
               0           &  \text{else}
           \end{cases}
 \end{eqnarray*}
The Jordan type of $e$ and $f$ is $[n^k]$, and so $e, f \in \0'$.  The elements $\{e, h, f\}$ are
a standard basis of an $\sl_2$-subalgebra $\sg$, as in the $k=1$ case.   Also, as in the $k=1$ case, 
the centralizer $\gg^f$ consists of $n \times n$-block matrices $Z=(z_{ij})$ of the form
 \begin{equation*}
   z_{ij} = \begin{cases}
               Y_{j-i}     &  j \geq i \\
               0           &  \text{otherwise}
           \end{cases}
\end{equation*}
for any choice of $Y_0, Y_1, \dots, Y_{n-1} \in \gl_k$.   We abbreviate this matrix by $Z(\{Y_i\})$.
In particular, $\cg(\sg) \cong \gl_k$ consists of the matrices of the form $Z(\{Y, 0, \dots, 0 \})$. 

We are interested in
$$\slice:= \SC_{e} \cap \cl = \SC_e \cap \{ X \in \gg \ | \ X^{n+1} = 0\},$$
where as before $\SC_e = e + \gg^f$.   Let $M = e + Z(\{Y_i\}) \in \SC_e$.
Set $Y_0 = -\frac{1}{n} Y$ for a fixed matrix $Y$ for reasons that will become clear shortly.
Since $M^{n+1} = 0$, we can find constraints on the entries of $M^{n+1}$.
The $(n, 1)$-entry of $M^{n+1}$ is equal to $rY_1 + s Y^2_0$ where $r$ is a sum of products of the coefficient in $e$, hence nonzero.  
Thus $rY_1 + s Y^2_0 = 0$ and $Y_1$ is proportional to $Y^2$.    Given this fact,  
the $(n,2)$-entry of $M^{n+1}$ is equal to $r'Y_2 + s' Y^3_0$ where $r'$ is nonzero.   Hence 
$Y_2$ is proportional to $Y^3$, and so on.  In this way, we conclude that 
$Y_i = c_i Y^{i+1}$ for all $i = 0, 1,2, \dots, n-1$, where the $c_i \in \CM$ are uniquely determined constants (which depend on $n$, but not $k$). 
Consequently $M \in \slice$ takes the form $e + Z(\{c_i Y^{i+1} \})$ for some $Y$.
We were not able to find a general formula for the $c_i$'s, but in all cases that we computed, the $c_i$'s were nonzero, which we expect to be true in general.

Now let $T^n + \sum_{i=1}^{n-1} a_i T^{n-i} \in \CM[T]$ be the characteristic polynomial for the $n \times n$-matrix $e + Z(\{c_i I_1\})$
in the $k=1$ case.  A direct computation  with block matrices then shows that 
 $p(T) := T^n + \sum_{i=1}^{n-1} a_i Y^i T^{n-i}$ is the characteristic polynomial of $M$, viewing $M$ as 
 an $n \times n$-matrix over the commutative ring $\CM[Y]$, where $Y$ acts 
by simultaneous multiplication on each of the block entries of $M$.
By the Cayley-Hamilton Theorem over $\CM[Y]$, it follows that $p(M)=0$.
In fact, $p(T)$ is the minimal polynomial of $M$ over $\CM[Y]$.   Indeed, for $1\leq i\leq n-1$,
the $i$-th block lower diagonal of $M^i$ consists of non-zero scalar matrices while everything below that diagonal is zero.  
Thus $M$ cannot satisfy a polynomial of degree less than $n$ over $\CM[Y]$.

The next step is to show that $Y^{n+1}$ must be the zero matrix.  
Since $p(M) = 0$, 
$$0 = Mp(M)-b_1 Yp(M)=\sum_{i=2}^n (a_i-a_1a_{i-1})Y^iM^{n-i+1}-a_1a_nY^{n+1}.$$
Since the minimal polynomial of $M$ over $\CM[Y]$ has degree $n$, 
it follows that $(a_i - a_1a_{i-1})Y^i = 0$ for $i=2, \dots, n$ and $a_1a_nY^{n+1} = 0$.
Note that $a_1 =1$ by taking the trace of $M$ since $c_0 =  -\frac{1}{n}$.
Now if $Y^{n+1} \neq 0$, then recursively $a_i= a_1^{i} = 1$, but also $a_1a_n = a_n = 0$, a contradiction.
Similarly, if $Y^\ell = 0$ and $Y^{\ell-1} \neq 0$ for some $\ell \leq n+1$, then $a_i = a_1^{i} = 1$ for $i = 1, 2, \dots, \ell-1$. 
We conclude that all elements in $\slice$ take the form $e + Z(\{c_i Y^i \})$ where $Y^{n+1} = 0.$
Hence $\slice$ is isomorphic to a subvariety of $\YC$
via restriction $\pi_0$ of the natural projection $\SC_e \to \cg(\sg)$ by the argument in \S \ref{powers_of_X}.
Now $\slice$ and $\YC$ both have dimension $p^2 n^2+2pqn+p^2n+q^2-q$, and the latter variety
is irreducible; hence $\pi_0$ gives an isomorphism of $\slice$ onto $\YC$.

One consequence is the following: since the $c_i$'s, and hence the $a_i$'s,
are independent of $k$, choosing $k > n$, we deduce that all $a_i =1$, an interesting fact in its own right.
\end{proof}

\begin{remark}
Fix $Y_0 = e_0 \in \cg(\sg)$ in the orbit $[(n+1)^p,q]$ and $Y = -n Y_0$. In the notation of \S \ref{extra_eigenvalues},
the vector $Z( \{ 0, \dots, 0, Y^{i+1}, 0, \dots, 0 \} )$ corresponds to the pair $(i, i+2)$, which lies in $\EC$
when $1 \leq i \leq \min(n, k-1)$.
The proof shows that there is an $x \in \0$ that can be written as in \eqref{x_form1} with $x_i:= c_iZ( \{ 0, \dots, 0, Y^{i+1}, 0, \dots, 0 \} )$
where $0 \leq i \leq \min(n, k-1)$ and such that \eqref{both_conditions} holds.
\end{remark} 

\subsubsection{Second Slice} 

Next, let $\0$ be the orbit in $\gl_{nk}$ with partition $[(n+k-1,(n-1)^{k-1}]$.   Then again $e \in \cl$.  
The elements in $\cl$ correspond to matrices which are nilpotent and which have $\mbox{rank}(M^i) = k(n-i)$ for $i = 1, 2, \dots, n-1$.

\begin{proposition}\cite[Corollary 9.3]{Anthony_from_down_under:survey}
The variety $\slice$ is isomorphic to the nilcone in $\gl_k$.
\end{proposition}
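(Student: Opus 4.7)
The plan is to parallel the proof of the previous proposition and use Lemmas~\ref{kp_prop} and \ref{oh_yeah} to reduce the identification of $\slice$ to exhibiting a single well-chosen element of $\slice \cap \OC$. I would retain the coordinates $e, h, f$ on $\gl_{nk}$, the parametrization $M = e + Z(\{Y_0, \dots, Y_{n-1}\})$ of $\SC_e$ with $Y_i \in \gl_k$, and the identification $\cg(\sg) \cong \gl_k$ via $Y \mapsto Z(\{Y, 0, \dots, 0\})$, noting that $C(\sg) = \GL_k$ acts on the $Y_i$ by simultaneous conjugation. The first and main step would be to exhibit an element $x = e + Z(\{Y_0, p_1(Y_0), \dots, p_{n-1}(Y_0)\}) \in \slice \cap \OC$ whose projection $Y_0$ to $\cg(\sg)$ is a \emph{regular} nilpotent of $\gl_k$, with the $p_i \in \CM[T]$ polynomials of degree $\le i+1$ chosen so that the Jordan type of $M$ in $\gl_{nk}$ equals $[(n+k-1), (n-1)^{k-1}]$. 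For $k \le 2$ one can take $p_i = 0$, recovering the naive block embedding via $Y \mapsto e + \iota(Y)$, but in general the $p_i$ are nonzero.

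Granting such an $x$, the remainder is clean. Since $Y_0$ is regular nilpotent, $\dim(C(\sg)\cdot Y_0) = k^2 - k = \codim_\cl(\OC')$, so Lemma~\ref{oh_yeah} gives $\dim(C(\sg)\cdot x) = k^2 - k$. Lemma~\ref{kp_prop} then implies that $e + \overline{C(\sg)\cdot x_+}$ is a union of irreducible components of $\slice$; normality of $\cl$ in $\gl_{nk}$ \cite{Kraft-Procesi:GLn} makes $\cl$ unibranch at $e$, so $\slice$ is irreducible and equal to $e + \overline{C(\sg)\cdot x_+}$. I would then analyze the $C(\sg)$-equivariant projection $\pi_0 : \slice \to \cg(\sg) = \gl_k$: it surjects onto $\NC_k$ (since $Y_0$ lies in the dense regular nilpotent orbit of $\NC_k$) and is birational with finite fibers---for instance the fiber over $0$ reduces to $\{e\}$ via the contracting $\CM^*$-action on $\slice$. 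Since $\NC_k$ is normal and both varieties are irreducible affine of dimension $k^2 - k$, Zariski's Main Theorem yields the isomorphism $\slice \cong \NC_k$.

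The hardest part will be the explicit construction of $x$ in the first step. In the previous proposition, the orbit in question was cut out by the single polynomial equation $M^{n+1} = 0$, which allowed the $p_i$ to be determined by a transparent Cayley--Hamilton-type recursion. Here the orbit $\OC = [(n+k-1), (n-1)^{k-1}]$ is defined by a system of rank conditions on the powers $M^j$ for $n-1 \le j \le n+k-2$, producing a much more intricate system of polynomial identities for the coefficients of the $p_i$; a naive attempt with $p_i = 0$ and $Y_0$ regular nilpotent lands in the Clebsch--Gordan partition $[n+k-1, n+k-3, \dots, |n-k|+1]$, which already disagrees with $\OC$ once $k \ge 3$. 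An alternative route, pointed out by Henderson, is to invoke Maffei's identification \cite{maffei:quiver} of type-$A$ Slodowy slices with quiver varieties: under this identification $\slice$ manifests directly as the nilcone of $\gl_k$ realized by a simple quiver, bypassing the explicit polynomial computations entirely.
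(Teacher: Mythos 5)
Your overall strategy is genuinely different from the paper's. You propose to exhibit a single $x \in \slice \cap \OC$ whose projection $Y_0$ to $\cg(\sg) \cong \gl_k$ is regular nilpotent, then invoke Lemmas~\ref{oh_yeah} and \ref{kp_prop} together with unibranchness to get $\slice = e + \overline{C(\sg)\cdot x_+}$, and finally identify this orbit closure with $\NC(\gl_k)$ via the projection $\pi_0$. This would bring the proposition into the framework of \S\ref{tools_explicit} and \S\ref{powers_of_X}. The paper instead never produces an explicit element of $\slice \cap \OC$: exactly as in the first proposition, it starts from an arbitrary $M = e + Z(\{Y_i\}) \in \slice$ and exploits the defining rank equalities $\mathrm{rank}(M^{n-1}) = k$, etc. Row reduction on the block entries of $M^{n-1}$ forces $Y_1$ to be proportional to $Y_0^2$, and inductively $Y_i$ proportional to $Y_0^{i+1}$ with scalars independent of $k$; vanishing of $M^{n+k-1}$ then forces $Y_0^k=0$. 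This gives a containment of $\slice$ in the graph of a polynomial morphism over $\NC(\gl_k)$, and equality follows by dimension, as in your last step (which, incidentally, does not need Zariski's Main Theorem: the graph of a morphism with closed domain is already closed).

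The genuine gap in your proposal is exactly the step you flag as hardest: you never actually produce $x$. As you correctly observe, the naive choice $p_i = 0$ with $Y_0$ regular yields the Clebsch--Gordan type $[n+k-1, n+k-3, \dots]$, which is wrong once $k\geq 3$. Note moreover that homogeneity under the $\CM^*$-action of \S\ref{group action} already forces each $p_i$ to be a monomial $c_iT^{i+1}$, so Step~1 reduces to determining the scalars $c_i$ and verifying that, for every regular nilpotent $Y_0 \in \gl_k$, the Jordan type of $e + Z(\{Y_0, c_1Y_0^2,\dots\})$ is precisely $[n+k-1,(n-1)^{k-1}]$. That verification is in substance what the paper's rank-condition analysis accomplishes, so a rigorous implementation of your Step~1 would essentially have to redo the paper's argument rather than bypass it. Citing Maffei's identification \cite{maffei:quiver} via \cite{Anthony_from_down_under:survey} is a legitimate alternative, but it outsources the work to a black box rather than closing the gap internally.
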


\begin{proof}
Up to smooth equivalence, this result is a consequence of \cite{Kraft-Procesi:GLn}, by cancellation of the first $n-1$ columns of the partitions for $\0$ and $\0'$.   
Here, we show that, in fact, $\slice \cong \NC_{A_{k-1}}$, which also follows from \cite[Corollary 9.3]{Anthony_from_down_under:survey}.

Keep the notation from the proof of the previous proposition.
Let $M \in \SC_e$ satisfying the rank conditions $\mbox{rank}(M^i) = k(n-i)$ for $i = 1, 2, \dots, n-1$.
The last rank condition is $\mbox{rank}(M^{n-1}) = k$.    The bottom, left $2 \times 2$-submatrix of $M^{n-1}$ consists
of 
$\left( \begin{smallmatrix}
 rY_0 & sY_1 \\  tI_k & rY_0 \end{smallmatrix} \right)$, with each of $r, s, t$ positive, since the coefficients of $e$ are positive.
Multiply the last row by $\frac{r}{t}Y_0$ and substract it from the second-to-last row to zero out the $(n-1,1)$-entry.   Then since $\mbox{rank}(tI_k) =k$, it follows that for $\mbox{rank}(M^{n-1}) = k$ to hold, necessarily the second-to-last row must be identically zero.  In particular, the $(n-1,2)$-entry is zero, that is, $Y_1$ is a scalar multiple of $Y_0^2$.    Continuing in this way for the smaller powers of $M$, we conclude that $Y_i = d_i Y_0^i$ for some $d_i \in \CM$, as in the previous proposition.  

Next a direct computation shows that $M^{n+k-1}$ has entry $(n,1)$ which is a scalar multiple of $Y_0^k$ and all other entries are scalar multiples of $Y_0^m$ for $m > k$.  If any of these scalar multiples are nonzero, then since $M^{n+k-1}=0$,
it follows that  $Y_0$ is nilpotent, whence $Y_0^k=0$ since $Y_0 \in \gl_k$.   
These multiples are independent of $k$.  The $k=1$ case implies that
the entries in $M^{n+k-1}$ cannot all be zero unless all $d_i=0$ since $e$
is the only nilpotent element in $\SC_e$.
We have therefore shown that $\slice$ is contained in a variety isomorphic to the nilcone of $\gl_k$.  By dimension reasons, this must be an equality as in the previous proof. 
\end{proof}

\subsubsection{Example}
An example of the first proposition is the degeneration $[2^3]<[3^2]$ and of the second proposition is the degeneration $[2^3] < [4,1^2]$, 
both in $\mathfrak{sl}_6$.  Both slices are isomorphic to the nilcone of $\mathfrak{sl}_3$.
In this setting, the common intermediate orbit $[3,2,1]$ corresponds to the minimal nilpotent orbit in $\mathfrak{sl}_3$.   
Upon restriction to $\sp_6$, the slice becomes isomorphic to the nilcone in $\so_3$, which is of type $A_1$.  This gives another proof of  \S \ref{C3_example_1}, one which does not require knowing that either $[3^2]$ or $[4,1^2]$ have closures which are unibranch at $[2^3]$.
 
\section{The remaining additional singularities}  \label{dim4_exceptions}

The singularities $\mu$ and $\ag_2/{\mathfrak S}_2$ will be discussed in subsequent work.  Here we discuss the minimal degenerations
$(2A_2+A_1,A_2+2A_1)$ in $E_6$ and $(A_4 +A_3, A_4 +A_2 +A_1)$ in $E_8$
and show that they are singularities of type $\tau$ and $\chi$, respectively.
Both cases are related to showing that a larger slice is the {\it cover} of the nilcone in a smaller Lie algebra
(compare this with the cases in \S \ref{entire_nilcones}).
For the case in $E_6$, we show for the degeneration 
$(2A_2+A_1, A_2)$ that the slice is isomorphic to the affinization of a $3$-fold cover 
of the regular nilpotent orbit in $\sl_3(\CM) \oplus \sl_3(\CM)$.
For the case in $E_8$, we show for the degeneration 
$(A_4+A_3,A_4)$ that the slice is isomorphic to the affinization of the universal cover 
of the regular nilpotent orbit in $\sl_5(\CM)$.

\subsection{Preliminaries}
We start with a lemma that extends the results in \S \ref{dim_condition}.  The lemma 
introduces an alternative transverse slice to some orbits, slightly different from the Slodowy slice.
This alternative slice will facilitate the determination of the singularities of the two degenerations in this section.  
It will also be used in subsequent work for other, non-mimimal degenerations.
Since this slice is different from the nilpotent Slodowy slice, we are not able to determine the isomorphism type of the 
nilpotent Slodowy slice, 
and thus the results 
in Theorem \ref{main_theorem}
are stated only up to smooth equivalence.


\begin{lemma}\label{slicelemma}
Let $e$ be a nilpotent element in $\gg$, and let $\sg := \langle e,h,f\rangle$ be an $\mathfrak{sl}_2$-subalgebra
containing it. Next, let $e_0$ be a nilpotent element in $\cg(\sg)$, and let $\sg_0 := \langle e_0, h_0, f_0 \rangle$
be an $\mathfrak{sl}_2$-subalgebra of $\cg(\sg)$.
Suppose condition \eqref{dim_equality_full_e0} is satisfied:
$$\dim C(\sg)\cdot e_0 = \codim_{\cl_{e+e_0}} \0_e.$$
Then $$\SC'_{e+e_0}:=e+e_0+\cg(\sg)^{f_0}\oplus\sum_{i<0}{\mathfrak g}^f(i)$$ is a
transverse slice in $\gg$ to $\0_{e+e_0}$ at $e+e_0$, where
 $\gg^f(i)$ denotes the $\ad h$-eigenspace for the eigenvalue $i$ in $\gg^f$.
\end{lemma}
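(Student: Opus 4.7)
The plan is to prove that the subspace $V:=\cg(\sg)^{f_0}\oplus\sum_{i<0}\gg^f(i)$ is a vector-space complement to $[\gg,e+e_0]=T_{e+e_0}(\OC_{e+e_0})$ in $\gg$, which will imply that the affine space $e+e_0+V$ is a transverse slice at $e+e_0$. First observe that since $\sg$ and $\sg_0$ commute, the triple $\{e+e_0,\,h+h_0,\,f+f_0\}$ is an $\sl_2$-triple, so the decomposition $\gg=\bigoplus_i V^{(i)}_{m_i,n_i}$ of \eqref{sl2_decomp} gives $\dim\gg^{e+e_0}=\sum_i(\min(m_i,n_i)+1)$, exactly as in the proof of Proposition~\ref{prop:codimensions_match}. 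The hypothesis, combined with that proposition, forces $m_i\geq n_i$ whenever $m_i>0$, so this simplifies to $|\{i:m_i=0\}|+\sum_{m_i>0}(n_i+1)$.

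I would next check that $\dim V$ equals this number. Since $\cg(\sg)=\bigoplus_{m_i=0}V^{(i)}_{0,n_i}$, the summand $\cg(\sg)^{f_0}$ contributes one $\sg_0$-lowest weight vector per zero-$m_i$ factor. For $m_i>0$ the space $\gg^f\cap V^{(i)}_{m_i,n_i}$ is the $h$-lowest weight space, of dimension $n_i+1$, and it lies in $\gg^f(-m_i)\subset\sum_{i<0}\gg^f(i)$; this accounts for the remaining contribution. So $\dim V=\dim\gg^{e+e_0}$, and proving the lemma reduces to verifying $V\cap[\gg,e+e_0]=0$.

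Both $V$ and $[\gg,e+e_0]=\ad(e+e_0)(\gg)$ respect the $\sg\oplus\sg_0$-isotypic decomposition, so I would check the intersection summand by summand on each $V_{m,n}\cong V(m)\otimes V(n)$. When $m=0$ the intersection $V\cap V_{0,n}=V(n)^{f_0}$ is the $\sg_0$-lowest weight line, which is not contained in $\ad(e_0)(V(n))$ because the image of $\ad(e_0)$ omits the $\sg_0$-highest weight line.

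The main technical step, which I expect to be the main obstacle, is the case $m>0$ with $m\geq n$. Using the weight basis $\{x^{m-a}y^a\}_{a=0}^m$ of $V(m)$, write $z\in V_{m,n}$ as $z=\sum_{a=0}^m x^{m-a}y^a\otimes u_a$ with $u_a\in V(n)$. A short calculation gives $\ad(e)(z)=\sum_a a\,x^{m-a+1}y^{a-1}\otimes u_a$ and $\ad(e_0)(z)=\sum_a x^{m-a}y^a\otimes e_0 u_a$. Requiring $\ad(e+e_0)(z)$ to contain no term of $h$-weight strictly greater than $-m$ forces the recursion $e_0 u_{a-1}=-a\,u_a$ for $a=1,\dots,m$, whence $u_m=\tfrac{(-1)^m}{m!}\,e_0^m u_0$. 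The resulting element of $V^h_{m,n}(-m)$ is then a scalar multiple of $y^m\otimes e_0^{m+1}u_0$, and this vanishes because $m+1>n$ and $\sl_2$-theory gives $e_0^{n+1}=0$ on $V(n)$. Thus the intersection is trivial on every summand, which will complete the proof.
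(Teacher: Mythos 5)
Your proof is correct and takes essentially the same approach as the paper: decompose $\gg$ under $\sg\oplus\sg_0$, use Proposition~\ref{prop:codimensions_match} to reduce to $m_i\geq n_i$, match dimensions, and show $V\cap[\gg,e+e_0]=0$ summand by summand via the bi-weight structure (your recursion on the $u_a$ is a streamlined version of the paper's inductive argument along diagonals in the $(h,h_0)$-eigenbasis). One small slip worth fixing: in the $m=0$ case the image of $\ad(e_0)$ omits the $\sg_0$-\emph{lowest} (not highest) weight line, which is precisely $V(n)^{f_0}$; that is the reason the intersection vanishes.
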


\begin{proof}
Decompose $\gg$ under $\sg \oplus \sg_0$ as in \eqref{sl2_decomp}.  Then by Proposition \ref{prop:codimensions_match},
the dimension hypothesis ensures that the summands $V^{(i)}_{m_i, n_i}$ satisfy $m_i \geq n_i$ whenever $m_i > 0$. 
 
 Let $V(m,n)$ be such a summand with $m \geq n$ and $m>0$, and consider the action of $\sg \oplus \sg_0$ on $V(m,n)$.
 Then $\dim \ker f = n+1$ and $\dim \ker f_0 = m+1$.
As discussed in \S \ref{dim_condition}, 
$V(m,n)$ decomposes into $n+1$ irreducible representations under the action of the $\sl_2$-subalgebra $\langle e+e_0, h+h_0, f+f_0 \rangle$.
Therefore $\dim \ker (f+f_0) = n+1$ and so $\dim \ker f = \dim \ker( f+f_0)$.
Now $\ker f \cap \im (e+e_0) = \{0\}$ on $V(m,n)$.  Indeed, if $[e+e_0, y] \in \ker f$, then 
write $y = \sum_{i,j} y_{i,j}$ in the common eigenbasis for $h$ and $h_0$, where $i, j \in \mathbb Z$. 
If $y_{-m,-n} \neq 0$, then  $[e+e_0, y]$ has nonzero component on the $(-m+2,-n)$-eigenspace since $m > 0$.
This contradicts $[e+e_0, y] \in \ker f$, since $\ker f$ coincides with the $(-m)$-eigenspace of $h$; hence $y_{-m,-n} = 0$.
Repeating this argument for $y_{-m+2,-n}$ and then $y_{-m,-n+2}$ shows that they are both zero.  Continuing inductively
along the diagonals, we get $y_{-m, i} = 0$ for all $i$.  Thus $[e+e_0,y] \in \ker f$ only if $[e+e_0,y] = 0$, as desired.
It follows that $\im (e+e_0) \oplus \ker f$ is a direct sum decomposition of $V(m,n)$ since $\dim \ker f = \dim \ker (f+f_0)$.

On $\cg(\sg)$, which is the direct sum of those $V^{(i)}_{m_i, n_i}$ with $m_i=0$, 
we clearly have $\cg(\sg) = \im (e+e_0) \oplus \cg(\sg)^{f_0}$ since $\sg$ acts trivially.
Therefore, ${\cg(\sg)}^{f_0}\oplus\sum_{i<0}{\mathfrak g}^f(i)$ is  a complementary subspace to $[e+e_0,{\mathfrak g}]$ in ${\mathfrak
g}$, and we are done.
\end{proof}

Let $(\0,\0')$ be either $(2A_2+A_1,A_2+2A_1)$ in type $E_6$ or $(A_4+A_3,A_4+A_2+A_1)$ in type $E_8$.
Let $\0''$ be the $A_2$ orbit in the $E_6$ case and the $A_4$ orbit in the $E_8$ case.
Let $e \in \0''$.   

Our strategy to study the singularity of $\cl$ along $\0'$
is to first describe $\slice$.  In both cases, 
 there exists $x \in \0$ of the form in \eqref{x_form1} 
 such that \eqref{both_conditions} holds with $x_0 \in \cg(\sg)$ regular nilpotent.
 Hence, $\slice$ has a dense $C(\sg)$-orbit, and this allows us to describe $\slice$ in a concrete way. 
 Of independent interest, $\slice$ is the affinization of a cover of the $C(\sg)$-orbit through $x_0$, so unlike
 in \S \ref{entire_nilcones}, the projection to $\cg(\sg)$ of a branch of $\slice$ is not an isomorphism.
The next step is to show for $e_0$ in the unique $C(\sg)$-orbit of codimension four in the nilcone of $\cg(\sg)$
that Lemma \ref{slicelemma} applies.
This allows for the singularity in question to be studied by studying 
  $\cl \cap \SC'_{e+e_0}$, which is manageable since
  $\SC'_{e+e_0} \subset \SC_e = e +\gg^f$, and therefore
  $\cl \cap \SC'_{e+e_0} = \slice \cap \SC'_{e+e_0}$, so it is enough to work 
  completely inside the concrete $\slice$.

Set $Z=C(\sg)$ and $\zg=\cg(\sg)$.
Having found $x \in \0$ of the form $e+x_0+x_1+\ldots +x_m$ as above, our approach then consists of the following series of steps:

\vspace{0.1cm}
\noindent
{\it 1. Describe the (closure of the) set of elements in $Z \cdot x_0$ which are in $e_0+{\mathfrak z}^{f_0}$.} 

\vspace{0.1cm}
\noindent
{\it 2. For each $y_0 \in Z \cdot x_0$ found in step 1, find an element $z\in Z$ such that $z \cdot x_0 = y_0$.}

\vspace{0.1cm}
\noindent
{\it 3. With $z$ as in step 2, determine the values of $z \cdot x_1$, $z\cdot x_2$ etc.}

\vspace{0.1cm}
\noindent
Then since $Z \cdot x$ is dense in $\slice$, we arrive at a parametrization of $\cl \cap \SC'_{e+e_0}$.

\subsection{$\mathbf {(2A_2+A_1,A_2+2A_1)}$ in $\mathbf {E_6}$}
\label{hard_E6}

Recall $\0''$ is of type $A_2$.  We choose $e \in \0''$ and the rest of $\sg$ as follows:
$$e=e_{\alpha_2}+e_{\begin{aligned}12&321 \\[-3mm]&1\end{aligned}},\;\;
f=2f_{\alpha_2}+2f_{\begin{aligned}12&321 \\[-3mm]&1\end{aligned}},\;\; h=[e,f].$$
Then $\zg \cong \mathfrak{sl}_3\oplus\mathfrak{sl}_3$, with basis of simple roots
$\{\alpha_1,\alpha_3,\alpha_5,\alpha_6\}$.
Let ${\mathfrak l}_1$ be the subalgebra of ${\mathfrak z}$ with 
simple roots $\{\alpha_1,\alpha_3\}$ and let ${\mathfrak l}_2$, 
with simple roots $\{\alpha_5,\alpha_6\}$, so that $\zg=\lg_1\oplus\lg_2$.
Similarly, $Z^\circ=L_1\times L_2\cong\SL_3\times\SL_3$, where
$\Lie(L_1)={\mathfrak l}_1$ and $\Lie(L_2)={\mathfrak l}_2$,
and $Z/Z^\circ$ is cyclic of order 2, generated by an
element which interchanges $L_1$ and $L_2$.

The $Z$-orbit structure of ${\mathcal N}(\zg)$ is therefore as follows: there is a unique open orbit, which is also connected. We call this the regular orbit. Its complement in ${\mathcal N}(\zg)$ has two irreducible components permuted transitively by $Z/Z^\circ$, and a unique open $Z$-orbit, which we call the subregular orbit, consisting of pairs $(x,y)$ where one of $x,y$ is regular nilpotent, and the other is subregular, in $\mathfrak{sl}_3$.
The closure of this orbit contains the $Z$-orbit of all pairs $(x,y)$ where both $x$ and $y$ are subregular nilpotent elements of $\mathfrak{sl}_3$.
There are three further $Z$-orbits with representatives
$(x,0)$, as $x$ ranges over all Jordan types in $\sg\lg_3$.

We recall \cite[p. 81]{Lawther-Testerman} that $\gg^f(-2)={\mathbb C}f\oplus V\oplus W$ where $V$ is isomorphic to the tensor product of the natural representation of $L_1$ with the dual of the natural representation of $L_2$, and $W\cong V^*$.
The only other non-trivial space $\gg^f(-i)$ is $\gg^f(-4)$, which is one-dimensional.
Moreover, $v_1 := 3f_{\beta}$ where $\beta = {\begin{aligned} 01 & 210 \\[-3mm] &1 \end{aligned}}$ is a highest weight vector in $V$ and 
$w_1:= 3f_{\alpha_2+\alpha_4}$ is a highest weight vector in $W$, 
relative to the choice of simple roots above.
With respect to the $Z^\circ$-action, we identify $V$ (respectively, $W$) with the space of $3\times 3$ matrices, on which $(g,h)\in L_1\times L_2$ acts via 
$$(g,h)\cdot M=gMh^{-1} \text{ (respectively, } (g,h)\cdot M=hMg^{-1}),$$ 
and we identify $v_1$ and $w_1$ with the matrix with $1$ in the top right entry, and zero everywhere else.


Let
$e_1=e_{\alpha_1+\alpha_3}$, $e_2=e_{\alpha_5+\alpha_6}$,
$\tilde{e}_1=e_{\alpha_1}+e_{\alpha_3}$,
$\tilde{e}_2=e_{\alpha_5}+e_{\alpha_6}$.
Let $x_0 := \tilde{e}_1+\tilde{e}_2$, which is a regular nilpotent element in $\zg$
and let $e_0: = e_1 + e_2$.   Then $e_0$ satisfies the dimension hypothesis \eqref{dim_equality_full_e0} 
and so we can apply Lemma \ref{slicelemma} to it.  On the other hand,
for $x_0$ the situation in \S \ref{extra_eigenvalues}
 applies:

\begin{lemma}\label{anelement}
The element 
$$x:= e+x_0 + v_1+ w_1$$ 
lies in $\slice \cap \0$.  Thus $S_{\0,e} = \ov{Z^\circ \cdot x}.$
\end{lemma}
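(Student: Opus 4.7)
The plan is first to confirm that $x=e+x_0+v_1+w_1$ lies in $\SC_e\cap\0$, and then to deduce $\slice=\overline{Z^\circ\cdot x}$ from the general machinery of \S\ref{tools_explicit}. Membership in $\SC_e=e+\gg^f$ is immediate from $x_0\in\gg^f(0)$ and $v_1,w_1\in\gg^f(-2)$, so the only nontrivial containment is $x\in\0$.

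For the orbit identification, with the chosen triple $\{e,h,f\}$ and the explicit highest weight vectors $v_1=3f_\beta$ and $w_1=3f_{\alpha_2+\alpha_4}$, the element $x$ is a concrete sum of root vectors in $\gg$. The natural route is to compute a standard $\mathfrak{sl}_2$-triple through $x$ in GAP, read off the weighted Dynkin diagram of its neutral element, and compare with the tables for $E_6$ to conclude that $x$ has Bala--Carter type $2A_2+A_1$. This is the one step with no abstract shortcut, and is the main obstacle; the remainder of the argument is a bookkeeping exercise.

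Once $x\in\slice\cap\0$ is established, the identification $\slice=\overline{Z^\circ\cdot x}$ follows formally. Since $x_0$ is regular nilpotent in $\zg\cong\mathfrak{sl}_3\oplus\mathfrak{sl}_3$ we have $\dim Z\cdot x_0=12$, and from the orbit-dimension tables in $E_6$, $\codim_\cl\0''=54-42=12=\dim\slice$ by \eqref{dim=codim}. Lemma \ref{oh_yeah} (with the role of its $\0'$ played by $\0''$) then gives $\dim Z\cdot x=12$, and Lemma \ref{kp_prop} implies that $\overline{Z\cdot x}$ is a union of irreducible components of $\slice$. The closure $\overline{Z^\circ\cdot x}$ is itself irreducible of dimension $12$, hence is one of those components. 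The non-trivial coset representative $\sigma\in Z/Z^\circ$ swaps the two $\mathfrak{sl}_3$ summands and simultaneously interchanges $V\leftrightarrow W$; by the symmetry of the expression $x=e+\tilde e_1+\tilde e_2+v_1+w_1$ under this swap, $\sigma\cdot x$ is $Z^\circ$-conjugate to $x$, so $\overline{Z\cdot x}=\overline{Z^\circ\cdot x}$. Finally, a branch count at $e$ from the tables of Green functions as in \S\ref{Component group action} confirms that $\cl$ is unibranch at $e$, ruling out further components and yielding $\slice=\overline{Z^\circ\cdot x}$.

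A mild additional care-point is that the degeneration $(\0,\0'')$ is not minimal, so Proposition \ref{transitive_components} is not directly available and the branch count at $e$ must be justified via Green-function data rather than that proposition. But this is a finite verification, and once it is in hand the lemma follows.
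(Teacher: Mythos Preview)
Your proposal is correct and follows essentially the same route as the paper: verify $x\in\0$ by direct computation (the paper simply says ``We verified by computer that $x\in\0$''), then use the dimension match $\dim Z^\circ\cdot x_0=12=\codim_{\cl}\0''$ together with the unibranch property of $\cl$ at $e$ to conclude $\slice=\overline{Z^\circ\cdot x}$ via the machinery of \S\ref{C_closures}. Your extra care in passing from $Z$ to $Z^\circ$ via the symmetry argument is fine but not needed---the paper observes directly that $Z^\circ\cdot x_0$ (hence $Z^\circ\cdot x$) already has dimension $12$, and irreducibility of $\slice$ finishes it without invoking the full $Z$-orbit.
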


\begin{proof}
We verified by computer that $x \in \0$.  
The last part follows, as in \S \ref{C_closures},
since both $\slice$ and $Z^\circ \cdot x_0$ have dimension $12$,
and $\slice$ is irreducible since $\cl$ is unibranch at $e$.
\end{proof}

We note that $x_0$ is in the regular nilpotent $Z$-orbit in $\zg$ and $e_1+\tilde{e}_2$ and $\tilde{e}_1+e_2$ both lie in the subregular nilpotent $Z$-orbit so that $\overline{Z\cdot x_0}\supset\overline{Z\cdot (e_1+\tilde{e_2})}\supset\overline{Z\cdot e_0}$.
Moreover, we observe that
 $e + e_1+\tilde{e}_2$ and $e + \tilde{e}_1+e_2$ both belong to $\OC \cap \SC_{\OC,e}$.
This fact can be used to give a conceptual proof of the previous lemma.
It is also useful for the next proposition.

The centralizer $(Z^\circ)^{x_0}$ of $x_0$ in $Z^\circ$ is generated by its identity component, 
a unipotent group of dimension four, and the nine scalar matrices in the center of $Z^{\circ} \cong \SL_3\times\SL_3$.
Let $U$ be the index $3$ subgroup of 
this centralizer containing the central cyclic group $\{ (\omega^i I, \omega^i I) \, | \, i \in \{ 0, 1, 2\} \}$,
where $\omega=e^{2\pi i/3}$.
Let $p: \slice \to \NC(\zg)$ be the restriction of the $Z$-equivariant projection of $e+\gg^f$ onto $\zg$.  
By the previous lemma, $p$ is surjective onto the nilpotent cone $\NC(\zg)$ in $\zg$.
The next proposition is not needed in the proof of the main result, but is of independent interest.

\begin{proposition}\label{dimsoforbits}
The slice $\slice$ is isomorphic to the affinization of the $3$-fold cover $Z^\circ/ U$ of
the regular nilpotent orbit $Z^\circ \cdot x_0$ in $\sl_3(\CM) \oplus \sl_3(\CM)$,
and hence is a normal variety.
Moreover, $p$ is finite and is an isomorphism when restricted to 
the complement of $Z^{\circ} \cdot x$.
Finally, $\slice$ (and hence the affinization) is smooth at points over the subregular  $Z$-orbit in $\NC(\zg)$.
\end{proposition}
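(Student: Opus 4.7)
The plan is to exploit normality of $\cl$ in $E_6$ (known from Sommers) and a $\CM^*$-action argument to identify $\slice$ with the affinization of $Z^\circ/U$, and deduce the remaining statements from this identification.

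First, we compute $(Z^\circ)^x$. Because $\CM^*$ commutes with $Z^\circ$ on $\slice$, the stabilizer preserves each weight component of $x = e + x_0 + v_1 + w_1$ separately, so $(Z^\circ)^x = (Z^\circ)^{x_0} \cap (Z^\circ)^{v_1} \cap (Z^\circ)^{w_1}$. The centralizer $(Z^\circ)^{x_0}$ of the regular nilpotent $x_0 = \tilde e_1 + \tilde e_2$ has a $4$-dimensional unipotent radical and component group $(\ZM/3)^2$ coming from the center of $\SL_3 \times \SL_3$. Working in the matrix realizations of $V$ and $W$ (where both $v_1$ and $w_1$ correspond to $E_{13}$), the unipotent radical fixes $E_{13}$ from either side, while $(\omega^i I, \omega^j I)$ acts on $v_1$ by $\omega^{i-j}$ and on $w_1$ by $\omega^{j-i}$. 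Hence the stabilizer is exactly $U$, and $Z^\circ \cdot x \cong Z^\circ/U$ is a $\ZM/3$-Galois \'etale cover of $Z^\circ \cdot x_0 \subset \NC(\zg)$.

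Next, we establish that $p$ is finite and that $\slice$ is the affinization of $Z^\circ/U$. Since $\cl$ is normal, $\slice$ inherits normality through the smooth morphism $G \times \SC_e \to \gg$. Finiteness of $p$ follows from the $\CM^*$-structure: both $\slice$ and $\NC(\zg)$ admit contracting $\CM^*$-actions with unique fixed points $e$ and $0$, and graded Nakayama reduces finiteness of $p$ to verifying that $p^{-1}(0) \cap \slice = \{e\}$ set-theoretically. This in turn follows from the description $\slice = \overline{Z^\circ \cdot x}$ of Lemma \ref{anelement}, once we check that no $Z^\circ$-orbit in the closure other than $\{e\}$ has zero $\gg^f(0)$-projection. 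Granted finiteness, the top-dimensional $Z^\circ$-orbits in $\NC(\zg)$ outside the regular orbit have dimension $10$, so $\slice \setminus Z^\circ \cdot x$ has dimension $10$ and codimension $2$ in $\slice$. Algebraic Hartogs then yields $\mathcal{O}(\slice) = \mathcal{O}(Z^\circ/U)$, hence $\slice \cong \Spec \mathcal{O}(Z^\circ/U) = \mathrm{Aff}(Z^\circ/U)$.

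For the isomorphism $p : \slice \setminus Z^\circ \cdot x \xrightarrow{\sim} \NC(\zg) \setminus Z^\circ \cdot x_0$, we use $Z^\circ$-equivariance to match $Z^\circ$-orbits with stabilizers. The elements $e + \tilde e_1 + e_2$ and $e + e_1 + \tilde e_2$ in $\OC \cap \slice$ (noted in the excerpt) supply explicit preimages of the two $Z^\circ$-orbits comprising the subregular $Z$-orbit, and a direct stabilizer comparison (together with analogous checks for the smaller orbits, combined with the finiteness of $p$) establishes the isomorphism. Smoothness of $\slice$ at points over the subregular $Z$-orbit then follows from the local geometry: near a subregular point $(\tilde e_1, e_2) \in \NC(\zg) = \NC(\sl_3) \times \NC(\sl_3)$, the nilpotent cone looks \'etale-locally like (smooth) $\times$ ($A_2$-surface singularity), and the $\ZM/3$-Galois cover resolves the $A_2$ direction via the identification $\CM^2/(\ZM/3) \cong A_2$ whose $3$-fold cover is $\CM^2$. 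The main obstacle will be verifying $p^{-1}(0) \cap \slice = \{e\}$ set-theoretically and pinning down the isomorphism on the complement: the former requires showing that limits within $\overline{Z^\circ \cdot x}$ with vanishing $\gg^f(0)$-projection force all higher weight components to vanish too, which amounts to an analysis of $1$-parameter subgroups of $Z^\circ$ along which $x_0$ degenerates toward $0$; the latter demands an orbit-by-orbit stabilizer analysis, tractable thanks to the explicit preimages but delicate in the details.
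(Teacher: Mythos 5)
The first step of your argument, the computation $(Z^\circ)^x = U$, matches the paper's proof exactly and is correct. But from there the two arguments diverge, and your route leaves the central step unfinished. The paper's proof hinges on a structural result of Graham that you do not invoke: $\CM[\widetilde{Y}]$ (where $\widetilde{Y} = Z^\circ/U$ is the $3$-fold cover of the regular orbit $Y$) is generated as a $\CM[\overline{Y}]$-module by the unique copies of $V$ and $W$ inside it. Since the sandwich $\CM[\overline{Y}] \subset \CM[\slice] \subset \CM[\widetilde{Y}]$ is immediate, and $\CM[\slice]$ visibly contains copies of $V$ and $W$ (via the $Z^\circ$-equivariant projections onto the $V$- and $W$-summands of $\gg^f$), it follows in one line that $\CM[\slice] = \CM[\widetilde{Y}]$ — which delivers the affinization statement, normality of $\slice$, and finiteness of $p$ all at once, with no reliance on normality of $\cl$ as an input.

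Your replacement route has a genuine gap precisely at this point. You reduce finiteness of $p$ via graded Nakayama to the claim $p^{-1}(0) \cap \slice = \{e\}$ (set-theoretically), and then explicitly flag verifying this as "the main obstacle" without completing it. It is not obvious: it requires showing that no sequence $g_n \in Z^\circ$ can simultaneously drive $g_n \cdot x_0 \to 0$ while keeping $g_n \cdot v_1$ or $g_n \cdot w_1$ (or the $\gg^f(-4)$ contributions that appear in the closure) bounded away from $0$. This is a non-trivial limit/one-parameter-subgroup analysis which you only gesture at. Moreover, the subsequent codimension-$2$ Hartogs argument that you use to identify $\CM[\slice]$ with $\CM[\widetilde{Y}]$ already presupposes finiteness of $p$, so the gap propagates. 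Your orbit-by-orbit stabilizer approach to the isomorphism on the complement of $Z^\circ \cdot x$ is likewise more laborious than the paper's: the paper simply notes that over a non-regular element the centralizer contains a torus acting without non-zero fixed vectors on $V$ and $W$, so finiteness of $p$ forces the $V$- and $W$-components to vanish. Your normality-from-normality-of-$\cl$ shortcut is valid (for $E_6$ all orbit closures are normal), but it addresses a point the paper's argument proves anyway; the essential missing ingredient is Graham's module-generation theorem, or an equivalent direct verification of the finiteness it makes automatic.
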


\begin{proof}
For dimension reasons the identity component of $U$ acts trivially on $v_1$ and $w_1$.
A pair of scalar matrices $( \omega^{i} I, \omega^{j} I)$ acts on $V$ and $W$ 
by the scalars $\omega^{i-j}$ and  $\omega^{j-i}$, respectively.  
Hence the subgroup of $(Z^\circ)^{x_0}$ that acts trivially on 
$x$ is exactly $U$.
This shows that $\widetilde{Y}:=Z^\circ \cdot x$ identifies with the $3$-fold cover 
$Z^\circ /U$ of the regular orbit $Y:= Z^\circ \cdot x_0$ in $\zg$.

Now the regular functions $\CM[\slice]$ 
on $\slice = \ov{Z^\circ \cdot x}$ embed in 
$\CM[\widetilde{Y}]$, since $\widetilde{Y}$ is dense in $\slice$.
Since $p$ is surjective onto $\ov{Y} = \NC(\zg)$,
we then have the inclusions $\CM[\ov{Y}] \subset \CM[\slice] \subset \CM[\widetilde{Y}]$.
Also $\CM[Y] \cong \CM[ \NC(\zg)]$ since $\NC(\zg)$ is normal.
Now from \cite{Graham}, the ring $\CM[\widetilde{Y}]$
is generated as a module over $\CM[Y]$ by the unique copies of $V$ and $W$
in $\CM[\widetilde{Y}]$.  But $\CM[\slice]$ contains a copy of both $V$ and $W$, via the $Z^\circ$-equivariant projection 
of $\slice$ onto the $V$ and $W$ factors in $\gg^f$, respectively.  Hence
$\CM[\slice] = \CM[\widetilde{Y}]$.
This shows in particular that $\slice$ is normal and $p$ is finite.

For any non-regular element in $\NC(\zg)$, its centralizer in $Z^\circ$ will contain a torus that acts non-trivially on any line in $V$ and $W$.  Thus, since $p$ is finite, $\slice$ must be zero on the $V$ and $W$ components over such elements.  It follows that 
$p$ is an isomorphism over such elements, that is, when restricted to the complement of $Z^{\circ} \cdot x$
in $\slice$.  

Moreover, $\0 \cap \slice$ consists of exactly two $Z$-orbits, 
corresponding to points over the regular and subregular $Z$-orbits in $\zg$.  Since
$\0 \cap \slice$ is smooth, it follows that $\slice$ is smooth at points over the subregular orbit.  
Alternatively, this follows from the fact that transverse slice of $\NC(\zg)$ at a subregular element is $\CM^2/\Gamma'$, where $\Gamma' \subset\SL_2$ is cyclic of order three.  The preimage under $p$ of this transverse slice must then be $\CM^2$.


\end{proof}

Before continuing, we make some observations about transverse slices in $\mathfrak{sl}_3$.
Following up on our identification of $V$ and $W$ with $3 \times 3$ matrices, we 
identify $\lg_1$ and $\lg_2$ with $\sl_3$ so that
$e_1$ and $e_2$ correspond to $\left(\begin{smallmatrix} 0 & 0 & 1 \\ 0 & 0 & 0 \\ 0 & 0 & 0\end{smallmatrix}\right)$.


\begin{lemma} \label{A2_slice}
With the above identification of $\lg_1$ with $\sl_3$, we have that
$$e_1 =\begin{pmatrix} 0 & 0 & 1 \\ 0 & 0 & 0 \\ 0 & 0 & 0
\end{pmatrix},\;h_1=\begin{pmatrix} 1 & 0 & 0 \\ 0 & 0 & 0 \\ 0 &
0 & -1\end{pmatrix},\; f_1=\begin{pmatrix} 0 & 0& 0 \\ 0 & 0 & 0
\\ 1 & 0 & 0 \end{pmatrix}.$$
is an $\sl_2$-triple through $e_1$.
The intersection of
$e_1 +{\mathfrak{sl}_3}^{f_1}$ with
the nilpotent cone in $\sl_3$ is the set of elements of the form
$$X_{st}:=\begin{pmatrix} \frac{1}{2}st & 0 & 1
\\ s^3 & -st & 0 \\ -\frac{3}{4}s^2t^2 & t^3 & \frac{1}{2}st \end{pmatrix}$$
for $s,t\in{\mathbb C}$.
\end{lemma}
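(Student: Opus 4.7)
The proof splits into two independent checks, both computational. First I would verify the $\sl_2$-triple relations $[e_1,f_1]=h_1$, $[h_1,e_1]=2e_1$, $[h_1,f_1]=-2f_1$ by direct matrix multiplication of the displayed $3\times 3$ matrices; this is immediate. Next, to describe $\sl_3^{f_1}$, I would solve the linear system $[A,f_1]=0$ for $A\in\sl_3$. A direct comparison of the two products $Af_1$ and $f_1A$ (which are supported only in the first column and the third row, respectively) forces $A_{12}=A_{13}=A_{23}=0$ and $A_{11}=A_{33}$; combined with the trace-zero condition $A_{22}=-2A_{11}$, the centralizer is the $4$-dimensional space of matrices
\[
\begin{pmatrix} a & 0 & 0 \\ b & -2a & 0 \\ d & e & a \end{pmatrix},\qquad a,b,d,e\in\CM.
\]

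A general element of $e_1+\sl_3^{f_1}$ is then
\[
M(a,b,d,e)=\begin{pmatrix} a & 0 & 1 \\ b & -2a & 0 \\ d & e & a \end{pmatrix},
\]
which is automatically trace-zero, so its characteristic polynomial has the form $\lambda^3-\tfrac12\operatorname{tr}(M^2)\,\lambda-\det M$. Consequently $M$ is nilpotent if and only if both $\operatorname{tr}(M^2)$ and $\det M$ vanish. A short calculation (summing the diagonal entries of $M^2$, and expanding $\det M$ along the first row) gives $\operatorname{tr}(M^2)=6a^2+2d$ and $\det M=-2a^3+2ad+be$, and together these cut out the variety
\[
d=-3a^2,\qquad be=8a^3.
\]

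Finally, I would identify this variety with the image of $(s,t)\mapsto X_{st}$. Substituting $a=\tfrac12 st$, $b=s^3$, $e=t^3$ into the nilpotency equations makes both hold automatically and forces $d=-\tfrac34 s^2t^2$, so every $X_{st}$ is nilpotent. For the reverse inclusion, given $(a,b,d,e)$ with $d=-3a^2$ and $be=8a^3$, I split into cases: if $a=0$ then $be=0$, so one of $b,e$ vanishes and the other is a cube, producing a suitable $(s,t)$ with the opposite coordinate zero; if $a\neq 0$, then $b,e\neq 0$, and for any cube root $s$ of $b$ the value $t=2a/s$ satisfies $s^3=b$, $\tfrac12 st=a$ and $t^3=8a^3/s^3=e$. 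In either case the solution arises as some $X_{st}$. There is no real obstacle beyond the linear-algebra bookkeeping; the generic three-to-one redundancy $(s,t)\leftrightarrow(\zeta s,\zeta^{-1}t)$ for $\zeta^3=1$ is consistent with the expectation that this nilpotent Slodowy slice is the simple surface singularity $A_2\cong\CM^2/(\ZM/3)$.
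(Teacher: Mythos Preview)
Your proof is correct and follows essentially the same approach as the paper. The paper's proof simply notes that the ideal of the nilpotent cone in $\sl_3$ is generated by the determinant and the sum of the three diagonal $2\times 2$ minors (equivalently, $-\tfrac12\operatorname{tr}(M^2)$), and asserts that their zero set in $e_1+\sl_3^{f_1}$ is the family $X_{st}$; you have supplied the computation that underlies this assertion, together with an explicit surjectivity argument for the parametrization.
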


\begin{proof}
The ideal of the nilpotent cone in $\sl_3$ is generated by the determinant and the sum of the three diagonal $2\times 2$
minors.   The zero set in $e_1+{\mathfrak{sl}_3}^{f_1}$ of these two functions  
is exactly the elements $X_{st}$ for $s,t\in{\mathbb C}$.
\end{proof}


Continuing the identification of $\lg_1$ and $\lg_2$ with $\sl_3$, we have 
$\tilde{e}_1$ and $\tilde{e}_2$ correspond to
$\left(\begin{smallmatrix} 0 & 1 & 0 \\ 0 & 0 & 1 \\ 0 & 0 & 0\end{smallmatrix}\right)$.

\begin{lemma}\label{conjugating}
If $s\neq 0$ then
$g_{st}\tilde{e}_1 g_{st}^{-1}=X_{st}$ where
$$g_{st}:=\begin{pmatrix} -t & -1/s & 0 \\ -s^2 &
0 & 0 \\ st^2/2 & -t/2 & -1/s \end{pmatrix}$$
Moreover, $g_{st} \in L_1$
and $g_{st}^{-1}=\begin{pmatrix} 0 & -1/s^2 & 0 \\ -s & t/s & 0 \\ s^2t/2 & -t^2 & -s \end{pmatrix}$.
\end{lemma}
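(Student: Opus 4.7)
The plan is essentially computational: Lemma \ref{conjugating} asserts three concrete matrix identities, all of which can be checked by direct multiplication in $3\times 3$ matrices.

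First I would verify that $g_{st} \in L_1 = \SL_3(\CM)$, i.e.\ that $\det g_{st} = 1$. Expanding along the third column (which has only the nonzero entry $-1/s$ in the bottom row) reduces this to computing a $2\times 2$ determinant, and one checks that the product $(-1/s)\cdot \det\bigl(\begin{smallmatrix} -t & -1/s \\ -s^2 & 0\end{smallmatrix}\bigr) = (-1/s)(-s) = 1$.

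Next I would verify the stated formula for $g_{st}^{-1}$ by direct multiplication: compute $g_{st}\cdot h_{st}$, where $h_{st}$ is the matrix claimed to be $g_{st}^{-1}$, and check entry by entry that one gets the identity matrix. This is nine scalar equations, each a sum of at most three products of entries.

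Finally I would compute the conjugate $g_{st}\tilde{e}_1 g_{st}^{-1}$. Since $\tilde{e}_1$ is the matrix with $1$'s in positions $(1,2)$ and $(2,3)$ and zeros elsewhere, right-multiplication $\tilde{e}_1 g_{st}^{-1}$ just reads off rows $2$ and $3$ of $g_{st}^{-1}$ placed into rows $1$ and $2$, with row $3$ zero; then left-multiplication by $g_{st}$ gives a specific $3\times 3$ matrix whose entries I would compare with those of $X_{st}$ from Lemma \ref{A2_slice}. Each entry comparison is a polynomial identity in $s,t,s^{-1}$ that is immediate to verify.

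There is no genuine obstacle: the only nontrivial content lies in how the formula for $g_{st}$ was discovered, namely by solving the system of equations $g\tilde{e}_1 = X_{st}g$ for a single unknown matrix $g$ under the constraint $\det g = 1$ (using that $X_{st}$ is conjugate to $\tilde{e}_1$ whenever both are regular nilpotent, which is automatic when $s\neq 0$ since the Jordan type of $X_{st}$ is then $[3]$). Once the formula is written down, the verification is mechanical, and I would present only enough of the calculation to make it reproducible.
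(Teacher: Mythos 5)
Your proposal is correct and matches the paper's approach: both are direct computational verifications of the three matrix identities. The only difference is organizational — rather than computing the full triple product $g_{st}\tilde{e}_1 g_{st}^{-1}$, the paper checks that the columns $c_1,c_2,c_3$ of $g_{st}$ satisfy $X_{st}c_1=0$, $X_{st}c_2=c_1$, $X_{st}c_3=c_2$ (so that $X_{st}g_{st}=g_{st}\tilde{e}_1$), which is the ``Jordan-basis'' form of the same calculation and saves a little arithmetic.
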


\begin{proof}
It is easy to check that $\det g_{st}=1$ (hence lies in $L_1$) 
and that $g_{st}^{-1}$ is as described.  The columns $c_1,c_2,c_3$ of $g_{st}$ satisfy 
$X_{st}c_1=0$, 
$X_{st}c_2=c_{1}$,
and $X_{st}c_3=c_{2}$, from which it follows that
$g_{st} \tilde{e}_1g_{st}^{-1}=X_{st}$.
\end{proof}

As noted above, Lemma \ref{slicelemma} applies to $e_0 = e_1+e_2$.
Furthermore, $e+e_0 \in \0'$.
Thus the affine linear space $\SC'_{e+e_0}=e+e_0+\lg_1^{f_1}+\lg_2^{f_2}+\gg^f(-2)+\gg^f(-4)$ is transverse to $\0'$, and 
hence $\Sing(\0,\0')$ can be determined by describing the intersection 
$\cl \cap \SC'_{e+e_0}$.

\begin{theorem}\label{2A2A1}
 The intersection $\cl \cap \SC'_{e+e_0}$ 
 consists of all elements of the form:
$$e+\left(X_{st},X_{uv},\begin{pmatrix} -\frac{1}{2}tu^2v & tv^2 & tu \\ -\frac{1}{2}s^2u^2v & s^2v^2 & s^2u \\ \frac{1}{4}st^2u^2v & -\frac{1}{2}st^2v^2 & -\frac{1}{2}st^2u \end{pmatrix}, \begin{pmatrix} -\frac{1}{2}s^2tv & t^2v & sv \\ -\frac{1}{2}s^2tu^2 & t^2u^2 & su^2 \\ \frac{1}{4}s^2tuv^2 & -\frac{1}{2}t^2uv^2 & -\frac{1}{2}suv^2 \end{pmatrix}\right)\in e+{\mathfrak l}_1\oplus{\mathfrak l}_2\oplus V\oplus W$$
where $s,t,u,v\in{\mathbb C}$.
\end{theorem}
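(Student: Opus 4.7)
The plan is to carry out the three-step strategy described immediately before Lemma \ref{slicelemma}, applied to the element $x=e+x_0+v_1+w_1$ provided by Lemma \ref{anelement}. The first key observation is that since $\SC'_{e+e_0}\subset\SC_e$, we have $\cl\cap\SC'_{e+e_0}=\slice\cap\SC'_{e+e_0}$, so we may work entirely inside the concrete description $\slice=\overline{Z^\circ\cdot x}$. The task reduces to parametrizing points of $\slice\cap\SC'_{e+e_0}$ by producing explicit $Z^\circ$-conjugates of $x$ whose $\zg$-component lies in $e_0+\zg^{f_0}$.

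For Step 1, an element $z\cdot x_0\in Z^\circ \cdot x_0$ lies in $e_0+\zg^{f_0}=(e_1+\lg_1^{f_1})\oplus(e_2+\lg_2^{f_2})$ if and only if each of its $\sl_3$-components lies in the corresponding transverse slice; by Lemma \ref{A2_slice} these are exactly the pairs $(X_{st},X_{uv})$ with $(s,t),(u,v)\in\CM^2$. For Step 2, Lemma \ref{conjugating} provides explicit conjugating elements $g_{st}\in L_1$ and $g_{uv}\in L_2$, defined when $s,u\neq 0$, satisfying $g_{st}\tilde{e}_1g_{st}^{-1}=X_{st}$ and $g_{uv}\tilde{e}_2g_{uv}^{-1}=X_{uv}$. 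For Step 3, we compute $(g_{st},g_{uv})\cdot v_1$ and $(g_{st},g_{uv})\cdot w_1$ using the identifications of $V,W$ with $3\times 3$ matrices and the actions $(g,h)\cdot M=gMh^{-1}$ on $V$ and $(g,h)\cdot M=hMg^{-1}$ on $W$. Since $v_1$ and $w_1$ both correspond to the elementary matrix $E_{13}$, these actions reduce to the products $g_{st}E_{13}g_{uv}^{-1}$ and $g_{uv}E_{13}g_{st}^{-1}$, which a routine matrix multiplication shows are the two displayed matrices. Note that since the $Z^\circ$-action preserves the $\ad h$-grading on $\gg^f$ and $x$ has zero component in $\gg^f(-4)$, no $\gg^f(-4)$ component is introduced, which accounts for the displayed element lying entirely in $e+\lg_1\oplus\lg_2\oplus V\oplus W$.

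This yields a polynomial map $\Phi:\CM^4\to\SC'_{e+e_0}$ whose image lies in $\slice$: for $s,u\neq 0$ we have $\Phi(s,t,u,v)=(g_{st},g_{uv})\cdot x\in Z^\circ\cdot x$, and the remaining values lie in $\overline{Z^\circ\cdot x}=\slice$ by continuity. Conversely, the open subset $(Z^\circ\cdot x)\cap\SC'_{e+e_0}$ is Zariski-open in $\slice\cap\SC'_{e+e_0}$ and contained in $\Phi(\CM^4)$ by construction; a dimension count gives $\dim(\slice\cap\SC'_{e+e_0})=\dim\slice-\dim(Z\cdot e_0)=12-8=4$, matching $\dim\Phi(\CM^4)$, so $\Phi$ surjects onto the full intersection. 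The main computational effort is the matrix multiplication in Step 3, and this is entirely mechanical; the main conceptual point is the identification of the correct $\Phi$ through the three-step strategy and the dimension count ensuring surjectivity.
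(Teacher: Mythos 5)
Your approach is structurally the same as the paper's: use Lemma \ref{anelement} to get $x\in\0$ with $\slice=\overline{Z^\circ\cdot x}$, conjugate by $(g_{st},g_{uv})$ using Lemmas \ref{A2_slice} and \ref{conjugating}, and carry out the matrix multiplication to produce the displayed parametrization. The computation is right, and the observation that no $\gg^f(-4)$ component arises is correct. But your argument for surjectivity --- that $(Z^\circ\cdot x)\cap\SC'_{e+e_0}$ is Zariski-open in $\slice\cap\SC'_{e+e_0}$, lies in $\Phi(\CM^4)$, and both sides of the containment have dimension four --- has a genuine gap.

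A dimension count of this kind only shows that $\overline{\Phi(\CM^4)}$ is \emph{one} irreducible component of $\cl\cap\SC'_{e+e_0}$; it does not exclude the existence of other four-dimensional components of $\cl\cap\SC'_{e+e_0}$ that fail to meet the open orbit $Z^\circ\cdot x$ (openness of a subset in a reducible variety gives no density). The ingredient that closes this gap in the paper is that $\cl$ is unibranch at points of $\0'$: since $\SC'_{e+e_0}$ is a transverse slice with a contracting $\CM^*$-action, the number of irreducible components of $\cl\cap\SC'_{e+e_0}$ equals the number of branches of $\cl$ at $e+e_0$, namely one. Once irreducibility is known, equality with the closure of the explicitly constructed four-dimensional locus follows at once. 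You should either invoke unibranchness directly, or else supply a positive argument that $(Z^\circ\cdot x)\cap\SC'_{e+e_0}$ is \emph{dense} in (every component of) $\cl\cap\SC'_{e+e_0}$, e.g.\ by arguing that each component dominates the base $\{(X_{st},X_{uv})\}\cong\CM^4$ under the finite projection $p$, whose generic fibre lies over a regular pair and hence inside $Z^\circ\cdot x$.

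One further small omission: the containment $(Z^\circ\cdot x)\cap\SC'_{e+e_0}\subset\Phi(\CM^4)$ is not quite ``by construction.'' Given $z\cdot x\in\SC'_{e+e_0}$, Steps 1--2 give $z\cdot x_0=(X_{st},X_{uv})$, hence $z=(g_{st},g_{uv})z'$ with $z'\in(Z^\circ)^{x_0}$; one must then note that the action of $(Z^\circ)^{x_0}$ on $v_1,w_1$ factors through the central $\mu_3\times\mu_3$, which only twists the $V$- and $W$-components by cube roots of unity, and that such a twist is absorbed by replacing $(s,t,u,v)$ with $(\omega^a s,\omega^{-a}t,\omega^b u,\omega^{-b}v)$, leaving $X_{st},X_{uv}$ unchanged. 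This is implicit in Proposition \ref{dimsoforbits} but deserves a sentence here.
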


\begin{proof}
Suppose $s,u\neq 0$.
Consider the action of  the element $(g_{st},g_{uv}) \in Z^\circ$ on $x$.
From Lemmas \ref{A2_slice}
and \ref{conjugating} (also for the $\lg_2$ version),
we have 
\begin{eqnarray}
\nonumber
(g_{st},g_{uv}) \cdot x  & = & e + (g_{st},g_{uv}).(\tilde{e}_1,\tilde{e}_2,v_1,w_1)  \\& = &  
\nonumber
e + \left(g_{st}\tilde{e}_1 g_{st}^{-1}, g_{uv} \tilde{e}_2 g_{uv}^{-1} ,g_{st}
 \left( \begin{smallmatrix} 0 & 0 & 1 \\ 0 & 0 & 0 \\ 0 & 0 & 0 \end{smallmatrix} \right)
g_{uv}^{-1},
g_{uv}  \left( \begin{smallmatrix} 0 & 0 & 1 \\ 0 & 0 & 0 \\ 0 & 0 & 0 \end{smallmatrix}  \right)
g_{st}^{-1} \right) \\
&= & 
e + \left(X_{st},X_{uv},
\begin{pmatrix} -\frac{1}{2}tu^2v & tv^2 & tu \\ -\frac{1}{2}s^2u^2v & s^2v^2 & s^2u \\ \frac{1}{4}st^2u^2v & -\frac{1}{2}st^2v^2 & -\frac{1}{2}st^2u \end{pmatrix}, \begin{pmatrix} -\frac{1}{2}s^2tv & t^2v & sv \\ -\frac{1}{2}s^2tu^2 & t^2u^2 & su^2 \\ \frac{1}{4}s^2tuv^2 & -\frac{1}{2}t^2uv^2 & -\frac{1}{2}suv^2 \end{pmatrix}   
\right). \label{explicit_parametrization}
\end{eqnarray}
By Lemma \ref{anelement}, we have $x \in \0$, so the elements in \eqref{explicit_parametrization}
lie in $\0$.  They also clearly are in $\SC'_{e+e_0}$, 
and so this  set of elements, of dimension four, 
lies in $\cl \cap \SC'_{e+e_0}$.  But the latter is of dimension four since 
this is the codimension of $\0'$ in $\cl$.
Moreover, $\cl$ is unibranch at points in ${\mathcal
O}'$.  Hence $\cl \cap \SC'_{e+e_0}$ is irreducible of dimension four,
and must be the closure of the set of elements in \eqref{explicit_parametrization} 
with $s,u\neq 0$.  The closure of this latter set is evidently those in \eqref{explicit_parametrization} where $s,t,u,v$ are unrestricted.
\end{proof}

Let $\Gamma$ be the subgroup of
$\Sp_4(\CM)$ generated by
$\diag(\omega,\omega^{-1},\omega,\omega^{-1})$.

\begin{corollary}
The singularity $\Sing(\CM^4/\Gamma, 0)$ is equal to $\Sing(2A_2+A_1,A_2+2A_1)$.
\end{corollary}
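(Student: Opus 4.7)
The plan is to use Theorem \ref{2A2A1} to exhibit a bijective morphism from $\CM^4/\Gamma$ onto the transverse slice $\cl \cap \SC'_{e+e_0}$. First I would invoke Lemma \ref{slicelemma}: since the dimension hypothesis \eqref{dim_equality_full_e0} is satisfied at $e_0 = e_1+e_2$, the affine space $\SC'_{e+e_0}$ is transverse to $\0'$ at $e+e_0$, and thus $\Sing(\0,\0') = \Sing(\cl \cap \SC'_{e+e_0}, e+e_0)$.

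Next, Theorem \ref{2A2A1} provides a surjective morphism $\phi : \CM^4 \to \cl \cap \SC'_{e+e_0}$ given by the explicit formula. The key step is to determine the fibers of $\phi$. Matching the $\lg_1$-block forces $X_{st} = X_{s't'}$, which by inspection of the entries $st$, $s^3$, $t^3$ forces $(s',t') = (\omega^a s, \omega^{-a} t)$ for some $a \in \mathbb{Z}/3$; matching the $\lg_2$-block gives $(u',v') = (\omega^c u, \omega^{-c} v)$ for some $c \in \mathbb{Z}/3$; and matching any single mixed entry of the $V$- or $W$-component (for instance the $tu$ entry of the $V$-block) forces $a \equiv c \pmod 3$. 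Hence the fibers of $\phi$ are exactly the orbits of the cyclic group $\Gamma = \langle \diag(\omega, \omega^{-1}, \omega, \omega^{-1}) \rangle$ from the corollary, yielding a bijective morphism $\bar\phi : \CM^4/\Gamma \to \cl \cap \SC'_{e+e_0}$ of irreducible $4$-dimensional varieties.

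To finish, I would upgrade $\bar\phi$ from a bijection to an algebraic or at least a local analytic isomorphism, which by \S \ref{smooth_equivalence} yields the smooth equivalence $\Sing(\CM^4/\Gamma, 0) = \Sing(\cl \cap \SC'_{e+e_0}, e+e_0)$. Since $\CM^4/\Gamma$ is normal and $\bar\phi$ is finite and birational, $\bar\phi$ coincides with the normalization of its target; the upgrade reduces to establishing that $\cl \cap \SC'_{e+e_0}$ is itself normal at $e+e_0$. This is expected to follow because $\widetilde{\SC}_{\0,e}$ is a symplectic variety (\S \ref{Symplectic_singularity}) and $\SC'_{e+e_0}$ should cut it out transversely, producing a symplectic, hence normal, subvariety whose underlying point set is $\cl \cap \SC'_{e+e_0}$. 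This normality step is the main obstacle, since smooth equivalence is genuinely finer than agreement after normalization; the fiber computation of the preceding paragraph is routine but demands care so that no constraint coming from the cross entries in $V$ and $W$ is overlooked.
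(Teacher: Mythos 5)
Your route stops at the main difficulty rather than resolving it. To upgrade the finite bijective birational $\bar\phi\colon\CM^4/\Gamma\to\cl\cap\SC'_{e+e_0}$ to an isomorphism you do indeed need normality of the target, but the sketch you offer is not a proof: $\SC'_{e+e_0}$ is not the Slodowy slice, so the symplectic machinery of \S\ref{Symplectic_singularity} does not apply verbatim, and even granting normality of $\slice$ (Proposition~\ref{dimsoforbits}) there is no general principle that normality survives intersection with an affine subspace. Without a genuine normality argument, your chain of reasoning only shows that $\CM^4/\Gamma$ is the \emph{normalization} of $\cl\cap\SC'_{e+e_0}$, which is strictly weaker than what the Corollary asserts.

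The paper's proof argues at the level of coordinate rings and never needs normality. Since $\cl\cap\SC'_{e+e_0}$ is a reduced closed subvariety of the affine space $\SC'_{e+e_0}$ and the parametrization of Theorem~\ref{2A2A1} is a surjective morphism from $\CM^4$, the ring $\CM[\cl\cap\SC'_{e+e_0}]$ embeds in $\CM[s,t,u,v]$, with image the subring generated by the matrix entries of the parametrization. Simplifying those entries, the image is exactly
$$\CM[st,\,s^3,\,t^3,\ uv,\,u^3,\,v^3,\ sv,\,tu,\,s^2u,\ su^2,\,t^2v,\,tv^2],$$
which one checks directly is $\CM[s,t,u,v]^\Gamma$: these twelve monomials are precisely the minimal monomials fixed by $\diag(\omega,\omega^{-1},\omega,\omega^{-1})$. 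This gives an algebraic isomorphism $\cl\cap\SC'_{e+e_0}\cong\CM^4/\Gamma$ outright, and normality of the slice falls out as a \emph{corollary} rather than being required as input. Note also that your fiber computation, carried out to completion, would amount to showing that these monomials separate $\Gamma$-orbits, which is the same content as the ring identity; so you would do better to phrase the argument ring-theoretically from the start and dispense with the normality detour entirely.
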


\begin{proof}
By the theorem, the variety $\cl \cap \SC'_{e+e_0}$
is isomorphic to the variety
with coordinate ring ${\mathbb C}[st,s^3,t^3,uv,u^3,v^3,sv,tu,s^2u,su^2,t^2v,tv^2]$. It is
straightforward to see that this is the invariant subring of
${\mathbb C}[s,t,u,v]$ for the induced action of $\Gamma$.
Also $e+e_0$ corresponds to the point $s=t=u=v=0$.
Since  $\Sing(\cl \cap \SC'_{e+e_0}, e+e_0) = \Sing(2A_2+A_1,A_2+2A_1)$, the result follows.
\end{proof}

We note an interesting consequence of the above description.
The closed subset given by setting $s=v$, $t=u$ has coordinate ring ${\mathbb C}[s^3,t^3,st,st^2,s^2t,s^2,t^2]$, which is exactly the coordinate ring of the singularity $m$. This amounts to taking fixed points in $\cl \cap \SC'_{e+e_0}$ under an appropriate outer involution of ${\mathfrak g}$, giving us another proof that the singularity $(\tilde{A}_2+A_1,A_2+\tilde{A}_1)$ in $F_4$ is 
smoothly equivalent to $m$.

\subsection{$\mathbf{(A_4+A_3, A_4+A_2+A_1)}$ in ${\mathbf E_8}$}
\label{hard_E8}

\subsubsection{}
We begin by describing a concrete model for the singularity.

Let $\Delta=\langle \sigma,\tau:\sigma^5=\tau^2=(\sigma\tau)^2=1\rangle$ be a dihedral group of order 10, acting on $V={\mathbb C}^4$ by: $\tau(u,v)=(v,u)$ and $\sigma(u,v)=(\zeta u,\zeta^{-1}v)$, where $\zeta=e^{\frac{2\pi i}{5}}$ and $(u,v)\in{\mathbb C}^2\oplus{\mathbb C}^2={\mathbb C}^4$.
 
Denote by $p,q$ (resp. $s,t$) the coordinate functions on the first (resp. second) copy of ${\mathbb C}^2$.
In particular, ${\mathbb C}[V]={\mathbb C}[p,q,s,t]$.
It is easy to show that the ring of invariants ${\mathbb C}[V]^\Delta$ is generated by $A=pt+qs$, $B=-2ps$, $C=2qt$ and the functions $F_i=p^{5-i}q^i+s^{5-i}t^i$ for $0\leq i\leq 5$.
We note that $A^2+BC=(pt-qs)^2$.
Since none of the elements of $\Delta$ act as complex reflections on $V$, it follows that the singular points of the quotient $V/\Delta$ are the $\Delta$-orbits of points with non-trivial centralizer, hence are the images in $V/\Delta$ of the points of the form $(u,u)$ (or equivalently, $(u,\zeta^i u)$) for $u\in{\mathbb C}^2$.
Thus the singular locus is properly contained in the zero set of $(A^2+BC)$ in $V/\Delta$.
Let $D=A^2+BC$ and for $0\leq i\leq 5$ let $G_i=(p^{5-i}q^i-s^{5-i}t^i)/(pt-qs)\in\Frac({\mathbb C}[V]^\Delta)={\mathbb C}(V)^\Delta$.
It is easy to see that for $0\leq i\leq 5$, $DG_i\in{\mathbb C}[V]^\Delta$  vanishes on the singular locus of $V/\Delta$, and that $F_i=AG_i+BG_{i+1}$ for $i\leq 4$ (resp. $F_i=CG_{i-1}-AG_i$ for $i\geq 1$), whence the $G_i$ satisfy: $2AG_i-CG_{i-1}+BG_{i+1}=0$ for $1\leq i\leq 4$.
(These equations are also satisfied by the $F_i$'s.)

Let $Y=\Spec({\mathbb C}[A,B,C,G_0,\ldots ,G_5])$.

\begin{remark}
a) The singularity $Y$ can be obtained by blowing up $V/\Delta$ at its singular locus, as follows.
It is not hard to show that the ideal of elements of ${\mathbb C}[V]^\Delta$ which vanish at the singular points is generated by $D$ and $DG_0,\ldots ,DG_5$.
Thus the blowup of $V/\Delta$ can be described as the subset of ${\mathbb A}^9\times{\mathbb P}^6$ which is the closure of the set of elements of the form $(A,B,C,F_0,\ldots ,F_5,[D:DG_0:\ldots :DG_5])$ with at least one of $D,DG_0,\ldots ,DG_5\neq 0$.
Clearly, the affine open subset given by $D\neq 0$ has affine coordinates $A,B,C,F_i,G_i$, and hence is isomorphic to $Y$.
An immediate consequence of this description is that $Y$ is birational to $V/\Delta$.

b) It can be shown that the ideal of relations satisfied by $A,B,C,G_0,\ldots ,G_5$ is generated by the expressions $2AG_i+BG_{i+1}-CG_{i-1}=0$ together with ten identities of the form $G_iG_j-G_{i+1}G_{j-1}-p(A,B,C)=0$, where $p$ is a cubic polynomial.
For example, $G_iG_{i+2}-G_{i+1}^2=\frac{(-1)^i}{8}B^{3-i}C^i$ for $i\leq 3$ and $G_iG_{i+3}-G_{i+1}G_{i+2}=\frac{(-1)^{i+1}}{4}AB^{2-i}C^i$ for $i\leq 2$.


c) It can be shown that all of the remaining affine open subsets of the blow-up given by $DG_i\neq 0$ are smooth, in fact are isomorphic to ${\mathbb A}^4$.
For example, the open subset given by $DG_0\neq 0$ is the affine variety with coordinate ring $R={\mathbb C}[A,B,C,F_0,\ldots, F_5,1/G_0,G_1/G_0,\ldots , G_5/G_0]$.
It is an easy calculation (using the identities for the $G_i$ mentioned above) to check that this ring is generated by $B,F_0,{1}/{G_0}$ and ${G_1}/{G_0}$, hence by dimensions is a polynomial ring of rank four.
Thus the point of $Y$ corresponding to the maximal ideal $(A,B,C,G_i)$ is the unique singular point of the blow-up of ${\mathbb C}^4/\Delta$.
This justifies the more succinct description of $\Sing(\0,\0')$ given in the introduction.

d) In general, a blow-up of a symplectic singularity is not a symplectic singularity.
In our case, $\overline\0$ inherits a symplectic structure from that of $\gg$, and so (subject to our claim) $Y$ is a symplectic singularity.
More generally, it can be shown that the blow-up (at the singular locus) of the quotient of ${\mathbb C}^4$ by any dihedral group (with ${\mathbb C}^4$ identified with two copies of its reflection representation) is a symplectic singularity.
\end{remark}

We will next show that $\Sing(\overline\0,\0')$ is equivalent to $Y$.

\subsubsection{}
Let $e=e_{\alpha_1}+e_{\alpha_3}+e_{\alpha_4}+e_{\alpha_2}$, $f=4f_{\alpha_1}+6f_{\alpha_3}+6f_{\alpha_4}+4f_{\alpha_2}$, $h=[e,f]$. 
Then $e \in \0''$, the orbit of type $A_4$,
and  $\zg \cong\mathfrak{sl}_5(\CM)$ with basis of simple roots $\{\beta_1,\beta_2,\beta_3,\beta_4\}:=\{\alpha_8,\alpha_7,\alpha_6,
\begin{aligned} 24 &65321 \\[-3mm] &3\end{aligned}\}$, 
and $Z$ is isomorphic to the semidirect product of $\SL_5({\mathbb C})$ by an outer involution.
Let $x_0$ belong to the regular nilpotent orbit in $\zg$.

For the purposes of calculation we identify $Z^\circ$ with $\SL_5(\CM)$ by identifying the basis of simple roots 
$\{ \beta_1,\beta_2,\beta_3,\beta_4\}$ with the basis of simple roots of $\SL_5(\CM)$ coming from the choice
of diagonal maximal torus and upper triangular Borel subgroup, with the usual ordering of simple roots.
Let $W$ be the natural module for $Z^\circ$, corresponding to the defining representation of $\SL_5$.
The $Z^\circ$-module structure of ${\mathfrak g}^f$ includes the following spaces:
$${\mathfrak g}^f(-2)\cong W\oplus W^*\oplus{\mathbb C},\;\; {\mathfrak g}^f(-4)\cong \Lambda^2(W)\oplus\Lambda^2(W^*)\oplus{\mathbb C}$$
The following vectors are highest weight vectors, relative to the simple roots $\{\beta_i\}$:
$$w_1=3e_{\begin{aligned} 00 &11111 \\[-3mm] &1\end{aligned}}-2e_{\begin{aligned} 01 &11111 \\[-3mm] &0\end{aligned}}\in W,\;\; u_1=2e_{\begin{aligned} 13 &54321 \\[-3mm] &2\end{aligned}}-3e_{\begin{aligned} 23 &44321 \\[-3mm] &2\end{aligned}}\in W^*,$$ $$y_1=e_{\begin{aligned} 12 &33321 \\[-3mm] &1 \end{aligned}}\in\Lambda^2(W),\;\; z_1=e_{\begin{aligned} 01 &22221 \\[-3mm] &1 \end{aligned}}\in\Lambda^2(W^*).$$
Let $x_0 := e_{\beta_1}+e_{\beta_2}+e_{\beta_3}+e_{\beta_4}$, a regular nilpotent element in $\zg$.
Then we verified that 
$$x=e+ x_0 -w_1+u_1+10y_1-10z_1\in\0,$$
where recall $\0$ is of type $A_4+A_3$,
and so it follows that $\slice = \ov{Z^\circ \cdot x}$ since both sides are dimension $20$ and $\cl$ is unibranch at $e$.
This leads to a description of $\slice$, whose details, which we omit, are similar to those in Proposition \ref{dimsoforbits}.
Recall that $p: \slice \to \NC(\zg)$ is the $Z$-equivariant projection.

\begin{proposition}\label{A4_cover}
The slice $\slice$ is isomorphic to the affinization of the universal cover of the regular nilpotent orbit in $\sl_5(\CM)$,
and hence is a normal variety.
Moreover, $p$ is finite and is an isomorphism when restricted to 
the complement of $Z^{\circ} \cdot x$.
Finally, $\slice$ (and hence the affinization) is smooth at points over the subregular $Z^{\circ}$-orbit in $\NC(\zg)$.
\end{proposition}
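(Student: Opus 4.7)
The plan mirrors the proof of Proposition \ref{dimsoforbits}. First I would identify the stabilizer of $x$ in $Z^\circ\cong\SL_5(\CM)$. Since $x_0$ is regular nilpotent, $(Z^\circ)^{x_0}$ is the direct product of a four-dimensional connected abelian unipotent group $U'$ and the center $\mu_5$ of $\SL_5(\CM)$. For dimension reasons $U'$ acts trivially on each of the highest weight vectors $w_1\in W$, $u_1\in W^*$, $y_1\in\Lambda^2 W$, $z_1\in\Lambda^2 W^*$, and hence fixes $x$. A generator $\zeta I\in\mu_5$ acts on $W,W^*,\Lambda^2 W,\Lambda^2 W^*$ by the scalars $\zeta,\zeta^{-1},\zeta^2,\zeta^{-2}$ respectively, all non-trivial for $\zeta\neq 1$, so no non-trivial central element fixes $x$. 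Therefore the stabilizer of $x$ is exactly $U'$, and $\widetilde Y := Z^\circ\cdot x$ is canonically identified with the universal ($5$-fold) cover of the regular nilpotent orbit $Y:=Z^\circ\cdot x_0$ in $\zg$.

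Next, just as in the previous proof, we have inclusions $\CM[\ov Y]\subset\CM[\slice]\subset\CM[\widetilde Y]$, where $\CM[\ov Y]\cong\CM[\NC(\zg)]$ by normality of the nilpotent cone. The theorem of Graham invoked in the proof of Proposition \ref{dimsoforbits} asserts that $\CM[\widetilde Y]$ is generated as a $\CM[\ov Y]$-module by one copy of each fundamental representation of $\SL_5(\CM)$, namely $W,W^*,\Lambda^2 W,\Lambda^2 W^*$. Each of these appears in $\CM[\slice]$: the $Z^\circ$-equivariant projections of $\slice\subset e+\gg^f$ onto the summands of $\gg^f(-2)\cong W\oplus W^*\oplus\CM$ and $\gg^f(-4)\cong\Lambda^2 W\oplus\Lambda^2 W^*\oplus\CM$ are non-zero on $x$ by the explicit formula for $x$. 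Consequently $\CM[\slice]=\CM[\widetilde Y]$, so $\slice$ is normal (being the affinization of the smooth variety $\widetilde Y$), and $p$ is finite.

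For the remaining claims, the argument from the previous proof transfers directly. Any non-regular nilpotent $y_0\in\zg$ has centralizer in $Z^\circ$ containing a non-trivial torus; since each of $W,W^*,\Lambda^2 W,\Lambda^2 W^*$ is a faithful representation of $\SL_5(\CM)$, this torus acts non-trivially on each of them. Equivariance together with the finiteness of $p$ then force the corresponding four coordinate components of any point of $\slice$ over $y_0$ to vanish, so $p$ is an isomorphism over $\ov Y\setminus Y$. Finally, for smoothness of $\slice$ over the subregular orbit in $\zg$, the Brieskorn--Slodowy theorem identifies a transverse slice in $\NC(\zg)$ at a subregular element with the $A_4$ simple surface singularity $\CM^2/\mu_5$; the preimage of this slice in $\slice$ is, by the coordinate-ring description above, the integral closure of $\CM[\CM^2/\mu_5]$ in the generic $5$-fold cover, hence is $\CM[\CM^2]$, which is smooth.

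The main obstacle is the explicit verification that the element $x=e+x_0-w_1+u_1+10y_1-10z_1$ actually lies in $\0$, so that $\slice=\ov{Z^\circ\cdot x}$, together with confirming that $w_1,u_1,y_1,z_1$ sit as highest-weight vectors in four distinct irreducible $Z^\circ$-summands of $\gg^f$. Both points are recorded in the excerpt via direct (computer-aided) calculation, after which the argument proceeds formally and in complete parallel with the case of Proposition \ref{dimsoforbits}.
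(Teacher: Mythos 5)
Your proposal correctly fills in the proof that the paper deliberately omits, following exactly the blueprint of Proposition \ref{dimsoforbits}: identification of $(Z^\circ)^{x_0} = U' \times \mu_5$ and the faithfulness of the $\mu_5$-action on the four fundamental-representation components to get the universal $5$-fold cover, the sandwich $\CM[\ov Y]\subset\CM[\slice]\subset\CM[\widetilde Y]$ together with Graham's module-generation statement, the torus-fixed-line argument for the isomorphism over the non-regular locus, and the $A_4\to\CM^2$ base change for smoothness over the subregular orbit. This matches the approach the authors explicitly say should be used here, so it is the intended proof.
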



We also note that $\0 \cap \slice$ is the union of two $Z^\circ$-orbits, one of which projects under $p$ to the regular orbit
and the other, to the subregular orbit in $\NC(\zg)$.

Let $e_0 \in \zg$ be an element in the orbit with partition type $[3,2]$,
which is codimension four in $\NC(\zg)$.   Then $e+e_0 \in \0'$.
Moreover, $e_0$ satisfies the condition in Lemma \ref{slicelemma}
and so we can study the singularity $(\0, \0')$ by
studying  $\cl  \cap \SC'_{e+e_0}$

\begin{lemma}
The intersection $\cl \cap \SC'_{e+e_0}$ is isomorphic to the closure of the set of all $(M,w'_1,w'_1\wedge w'_2,w'_1\wedge w'_2\wedge w'_3,w'_1\wedge w'_2\wedge w'_3\wedge w'_4)\in (e_0+{\mathfrak z}^{f_0})\times (W\oplus\Lambda^2(W)\oplus\Lambda^3(W)\oplus\Lambda^4(W))$ such that there is a basis $\{ w'_1,w'_2,\ldots ,w'_5\}$ for $W$ with $w'_1\wedge \ldots \wedge w'_5=1$ and $Mw'_i=w'_{i-1}$ ($i\geq 2$), $Mw'_1=0$.
\end{lemma}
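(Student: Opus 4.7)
The plan is to translate the explicit description $\slice = \overline{Z^\circ \cdot x}$ from Proposition~\ref{A4_cover} (with $x = e + x_0 - w_1 + u_1 + 10y_1 - 10z_1$) into the claimed parametrization. Since $\SC'_{e+e_0} \subset e + \gg^f$, one has $\cl \cap \SC'_{e+e_0} = \slice \cap \SC'_{e+e_0}$, so the additional condition defining $\SC'_{e+e_0}$ reduces to requiring the $\zg$-component of $y-e$ to lie in $e_0 + \zg^{f_0}$. The trivial $Z^\circ$-summands appearing in $\gg^f(-2)$ and $\gg^f(-4)$ need not enter the tuple: $x$ has zero component in them and $Z^\circ$ acts trivially there, so every element of $\slice$ has zero component in those summands.

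I would then identify $Z^\circ \cong \SL(W)$ acting on $W \cong \CM^5$, and pick a Jordan basis $\{v_1,\dots,v_5\}$ with $x_0 v_i = v_{i-1}$ for $i \geq 2$, $x_0 v_1 = 0$, and $v_1 \wedge \cdots \wedge v_5 = 1$. After rescaling, the highest-weight vectors become $w_1 = v_1$ and $y_1 = v_1\wedge v_2$. The canonical $\SL(W)$-equivariant isomorphisms $W^* \cong \Lambda^4 W$ and $\Lambda^2 W^* \cong \Lambda^3 W$ induced by the volume form identify $u_1$ and $z_1$ with $v_1\wedge v_2\wedge v_3\wedge v_4$ and $v_1\wedge v_2\wedge v_3$ respectively (up to scalars which can be absorbed into the identification of abstract representations). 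For $z\in Z^\circ$, setting $M := z x_0 z^{-1}$ and $w'_i := z v_i$, the conditions $Mw'_i = w'_{i-1}$, $M w'_1 = 0$, $w'_1\wedge\cdots\wedge w'_5 = 1$ are automatic, and by equivariance the components of $z\cdot x$ in $\zg$, $W$, $\Lambda^2 W$, $\Lambda^3 W \cong \Lambda^2 W^*$, and $\Lambda^4 W \cong W^*$ are exactly $M$, $w'_1$, $w'_1\wedge w'_2$, $w'_1\wedge w'_2\wedge w'_3$, and $w'_1\wedge w'_2\wedge w'_3\wedge w'_4$.

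This yields a morphism from $Z^\circ$ to the set $S$ of the lemma whose image is the open subvariety where $M$ is regular nilpotent; the fiber modulo the connected centralizer $U' \subset (Z^\circ)^{x_0}$ picks up precisely the $\mu_5$-scaling of the basis that is consistent with $w'_1\wedge\cdots\wedge w'_5 = 1$, matching the $5$-fold universal cover description in Proposition~\ref{A4_cover}. Imposing $M\in e_0+\zg^{f_0}$ cuts out a $4$-dimensional subvariety on both sides that agrees on this open stratum. Since $\cl$ is unibranch at $e+e_0$, the variety $\cl\cap\SC'_{e+e_0}$ is irreducible of dimension $\codim_{\cl}\0' = 4$, so it coincides with the Zariski closure of the open stratum, and on the parameter side this closure is exactly $\overline{S}$.

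The main obstacle is handling the closure on the parameter side: when $M$ degenerates to a more singular Jordan type, no genuine basis with the Jordan condition exists, and the tuples in $\overline{S}$ are limits in which the $w'_i$ become linearly dependent. Showing that this closure agrees with $\cl\cap\SC'_{e+e_0}$ requires the finiteness and surjectivity of $p:\slice\to\NC(\zg)$ from Proposition~\ref{A4_cover}, together with an irreducibility-plus-dimension argument to rule out extraneous components. A secondary calibration issue is to verify that the numerical constants $\pm 1, \pm 10$ in the definition of $x$ match the canonical normalizations of the exterior-power isomorphisms; this is a finite GAP check using the Lawther-Testerman structure constants and does not affect the isomorphism type.
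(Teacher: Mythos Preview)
Your proposal is correct and follows essentially the same approach as the paper: identify $\slice$ with the closure of the $\SL_5$-orbit of $\tilde{M}_0=(M_0,w_1,w_1\wedge w_2,w_1\wedge w_2\wedge w_3,w_1\wedge w_2\wedge w_3\wedge w_4)$ in $\mathfrak{sl}_5\oplus W\oplus\Lambda^2 W\oplus\Lambda^3 W\oplus\Lambda^4 W$, then intersect with $\SC'_{e+e_0}$ by imposing $M\in e_0+\zg^{f_0}$ and take the closure. You are more explicit than the paper about the trivial summands, the duality identifications $W^*\cong\Lambda^4 W$ and $\Lambda^2 W^*\cong\Lambda^3 W$, and the irreducibility-plus-dimension closure argument, but these are exactly the points the paper leaves implicit in its short proof.
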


\begin{proof}
We can describe $\slice$ in the following way:
let $\{ w_1,w_2,w_3,w_4,w_5\}$ be the standard basis for ${\mathbb C}^5$ and
let $$M_0=\begin{pmatrix} 0 & 1 & 0 & 0 & 0 \\ 0 & 0 & 1 & 0 & 0 \\ 0 & 0 & 0 & 1 & 0 \\  0 & 0 & 0 & 0 & 1 \\ 0 & 0 & 0 & 0 & 0 \end{pmatrix}$$
so that $M_0w_1=0$ and $M_0w_i=w_{i-1}$ for $2\leq i\leq 5$.
Then $\slice$ is isomorphic to the closure in $\mathfrak{sl}_5\oplus W\oplus\Lambda^2(W)\oplus\Lambda^3(W)\oplus\Lambda^4(W)$ of the $\SL_5$-orbit of $\tilde{M}_0:=(M_0,w_1,w_1\wedge w_2,w_1\wedge w_2\wedge w_3,w_1\wedge w_2\wedge w_3\wedge w_4)$.

To describe the subvariety $\cl \cap \SC'_{e+e_0} = \slice \cap \SC'_{e+e_0}$, we note that
if $M\in e_0+\zg^{f_0}$ is nilpotent then generically $M$ is regular and therefore there exists a basis ${\mathcal B}=\{ w'_1,w'_2,w'_3,w'_4,w'_5\}$ of ${\mathbb C}^5$ such that $Mw'_1=0$ and $Mw'_i=w'_{i-1}$ for $i\geq 2$.
After scaling, we may assume that $g_{\mathcal B} :=(w'_1\; w'_2\; w'_3\; w'_4\; w'_5)$ has determinant one.
Then the tuple $(M,w'_1,w'_1\wedge w'_2,w'_1\wedge w'_2\wedge w'_3,w'_1\wedge w'_2\wedge w'_3\wedge w'_4)=g_{\mathcal B}\cdot\tilde{M}_0$.
\end{proof}

Next, we concretely describe the variety $\NC(\zg) \cap (e_0+\zg^{f_0})$. 
Let $$e_0=\begin{pmatrix} 0 & 0 & 0 & 0 & 0 \\ 0 & 0 & 0 & 0 & 0 \\ 2 & 0 & 0 & 0 & 0 \\ 0 & 1 & 0 & 0 & 0 \\ 0 & 0 & 2 & 0 & 0 \end{pmatrix},\;\; f_0=\begin{pmatrix} 0 & 0 & -1 & 0 & 0 \\ 0 & 0 & 0 & -1 & 0 \\ 0 & 0 & 0 & 0 & -1 \\ 0 & 0 & 0 & 0 & 0 \\ 0 & 0 & 0 & 0 & 0\end{pmatrix}$$
and then $e_0+{\mathfrak z}^{f_0}$ consists of all matrices of the form $$\begin{pmatrix} 2a & b & c & d & g \\ 0 & -3a & h & k & l \\ 2 & 0 & 2a & b & c \\ 0 & 1 & 0 & -3a & h \\ 0 & 0 & 2 & 0 & 2a\end{pmatrix},$$ where $a,b,c,d,g,h,k,l\in{\mathbb C}$.
For the purposes of our calculation, we consider the matrices in $e_0+{\mathfrak z}^{f_0}$ of the form:
$$M=\begin{pmatrix} 2a & b & c-6a^2 & d-2ab & 40a^3-10ac-\frac{5}{4}bh \\ 0 & -3a & h & 9a^2-4c & l-2ah \\ 2 & 0 & 2a & b & c-6a^2 \\ 0 & 1 & 0 & -3a & h \\ 0 & 0 & 2 & 0 & 2a \end{pmatrix}$$
A calculation confirms that any such matrix satisfies $\Tr M^2=\Tr M^3=0$, and that the conditions $\Tr M^4=0$ and $\Tr M^5=0$ are expressed in terms of the coordinates $a,b,c,d,h,l$ as:
\begin{equation}\label{nilpeqns} dh+bl+\frac{8}{3}c^2 = a(9bh-216a^3+72ac),\;\; dl=c(9bh-216a^3+48ac).\end{equation}
Since every irreducible component of the set of $(a,b,c,d,h,l)$ satisfying these two equations has dimension at least four, it follows that the set of matrices given by the coordinates satisfying (\ref{nilpeqns}) is equal to the set of nilpotent elements of $e_0+{\mathfrak z}^{f_0}$ (and is therefore irreducible).

It is easy to verify that the rational functions $a={A}/{6}$, $b=-{G_0}/{3}$, $c=-{BC}/{16}$, $d={BG_1}/{4}$, $h={G_5}/{3}$, $l={CG_4}/{4}$ in ${\mathbb C}(p,q,s,t)$ satisfy (\ref{nilpeqns}).
Since $A,BC,G_0,G_5,BG_1,CG_4$ are regular functions on $Y$, we have therefore constructed a morphism from $Y$ to ${\mathcal N}({\mathfrak z})\cap(e_0+{\mathfrak z}^{f_0})$, corresponding to the inclusion ${\mathbb C}[A,BC,G_0,G_5,BG_1,CG_4]\subset{\mathbb C}[Y]$.
In fact, this morphism corresponds to quotienting $Y$ by the action of a group of order five, as follows: let $\rho$ be the automorphism of order five of $V$ which sends $(p,q,s,t)$ to $(\zeta p,\zeta^{-1}q,\zeta s,\zeta^{-1}t)$.
Then $\rho$ normalizes $\Gamma$ and has an induced action on $Y$ satisfying ${\mathbb C}[Y]^\rho = {\mathbb C}[A,BC,G_0,G_5,BG_1,CG_4]$.
(The invariants $B^2G_2$ and $C^2G_3$ are contained in this ring, since $BG_2=CG_0-2AG_1$ and $CG_3=2AG_4+BG_5$.)
It follows that the coordinates $a=A/6$ etc. given above define an isomorphism from $Y/\langle\rho\rangle$ to ${\mathcal N}(\mathfrak{sl}_5)\cap (e_0+{\mathfrak z}^{f_0})$.

\begin{remark}
The above discussion indicates an interesting way to view the singularity $([5],[3,2])$ in $\mathfrak{sl}_5$, as an affine open subset of the blow up of the quotient of ${\mathbb C}^4$ by a group of order 50.
Indeed, the group generated by $\Gamma$ and $\rho$ is isomorphic to the complex reflection group $G(5,1,2)$, acting on ${\mathbb C}^4=U\oplus U^*$ where $U$ is the defining representation for $G(5,1,2)$.
Blowing up the quotient at the set of orbits of points of the form $(u,u)$, and restricting to the affine open subset given by $D\neq 0$, one obtains the variety $Y/\langle\rho\rangle$.
\end{remark}

We will first give an ad hoc justification that $\SC'_{\0,e+e_0} := \cl \cap \SC'_{e+e_0}$ is isomorphic to $Y$, 
and then a more rigorous proof.
Fix a matrix $M$ as above with coordinates $a=A/6$, etc., which we think of as depending on the point $(A,B,C,G_0,\ldots ,G_5)\in Y$.
The space of (column) vectors in $W$ which are annihilated by $M$ is generically of dimension one, spanned by $$w'_1=\begin{pmatrix} -\frac{1}{6}G_0G_4+\frac{1}{9}A^2B+\frac{1}{32}B^2C \\ -\frac{1}{4}CG_3-\frac{1}{12}BG_5 \\ -\frac{1}{6}AB \\ -G_4 \\ B \end{pmatrix}$$
Similarly, the space of (row) vectors in $W^*$ which are annihilated by $M$ is also generically of dimension one, spanned by $u'_1=(C,\; -G_1,\; -\frac{1}{6}AC,\; \frac{1}{3}CG_0-\frac{1}{2}AG_1,\; \frac{1}{9}A^2C+\frac{1}{32}BC^2+\frac{1}{6}G_1G_5)$.
It follows that if $z\in Z^\circ=\SL_5$ is such that $\Ad z(M_0)=M$, then $zw_1$ is a scalar multiple of $w'_1$, and $u_1z$ is a scalar multiple of $u'_1$.
Our more rigorous argument below will (essentially) consist of showing that these scalars, up to multiplication by a fifth root of unity, are independent of $p,q,s,t$.
Thus the ring of regular functions on $\SC'_{\0,e+e_0}$ also contains elements which naturally correspond to $B$, $C$, $G_1$ and $G_4$.
To continue along this line, we would have to find a vector $w'_2\in W$ such that $Mw'_2=w'_1$, and similarly for $u'_1$.
Then it turns out that the coordinates of $w'_1\wedge w'_2$ and $u'_1\wedge u'_2$ are contained in ${\mathbb C
}[Y]$, and include scalar multiples of $G_2$ and $G_3$.
Thus one obtains a morphism $\varphi:Y\rightarrow \SC'_{\0,e+e_0}$, which (since all of the generators $A,B,C,G_i$ appear somewhere in the coordinates describing $\varphi$) is evidently a closed immersion, hence an isomorphism by equality of dimensions and reducedness.

For a more careful analysis, we note that finding a basis $\{ w'_1,\ldots ,w'_5\}$ for ${\mathbb C}^5$ such that $Mw'_i=w'_{i-1}$ for $i\geq 2$ and $Mw'_1=0$ is essentially equivalent to finding an element $w'_5\in{\mathbb C}^5$ such that $M^4w'_5\neq 0$.
Moreover, any transformation of the form $w'_5\mapsto w'_5+\alpha w'_4+\beta w'_3+\gamma w'_2+\delta w'_1$ preserves the elements $w'_1$, $w'_1\wedge w'_2$, $w'_1\wedge w'_2\wedge w'_3$, $w'_1\wedge w'_2\wedge w'_3\wedge w'_4$.
Thus, to find $z\cdot\tilde{M}_0$ where $\Ad z(M_0)=M$, it suffices to choose an element $w'_5$ such that $M^4w'_5\neq 0$, and then to multiply $w'_5$ by an appropriate scalar such that $\det (w'_1\; w'_2\; w'_3\; w'_4\; w'_5)=1$.
For this purpose, we first choose $$w'_5=\begin{pmatrix} 1 \\ 0 \\ 0 \\ 0 \\ 0 \end{pmatrix}\,\;\mbox{; then}\; w'_4=\begin{pmatrix} \frac{1}{3}A \\ 0 \\ 2 \\ 0 \\ 0 \end{pmatrix},\; w'_3=\begin{pmatrix} -\frac{2}{9}A^2-\frac{1}{8}BC \\ \frac{2}{3}G_5 \\ \frac{4}{3}A \\ 0 \\ 4 \end{pmatrix},\; w'_2=\begin{pmatrix} \frac{4}{9}A^3+\frac{7}{24}ABC+\frac{1}{3}G_0G_5 \\ CG_4-\frac{1}{3}AG_5 \\ -\frac{2}{3}A^2-\frac{1}{2}BC \\ 2G_5 \\ 4A\end{pmatrix}$$ and finally $$w'_1=\begin{pmatrix} A^4+\frac{23}{36}A^2BC+\frac{1}{32}B^2C^2+AG_0G_5+\frac{1}{2}BG_1G_5-\frac{1}{3}CG_0G_4 \\ \frac{1}{2}ACG_4+\frac{1}{3}BCG_5 \\ \frac{1}{6}ABC \\ CG_4 \\ -BC\end{pmatrix}=-C\begin{pmatrix} -\frac{1}{6}G_0G_4+\frac{1}{9}A^2B+\frac{1}{32}B^2C \\ -\frac{1}{4}CG_3-\frac{1}{12}BG_5 \\ -\frac{1}{6}AB \\ -G_4 \\ B \end{pmatrix}$$
Then one can show that the determinant of the matrix $(w'_1\; w'_2\; w'_3\; w'_4\; w'_5)$ is $-C^5$.
Thus we replace each of $w'_i$, $1\leq i\leq 5$ by $w''_i=-w'_i/C$, which is well-defined whenever $C\neq 0$.
In other words, whenever $C\neq 0$ we can construct a matrix $g_{\mathcal B}=(w_1''\; w_2''\; w_3''\; w_4''\; w_5'')$ of determinant 1 such that $g_{\mathcal B}M_0g_{\mathcal B}^{-1}=M$.

It is now a routine computer calculation to verify that, relative to the obvious basis for $\Lambda^2(W)$, we have $$w''_1\wedge w''_2 = \begin{pmatrix} \frac{1}{18}G_0G_4^2-\frac{5}{288}C^3G_0+\frac{11}{288}AC^2G_1-
\frac{5}{144}A^2CG_2+\frac{1}{18}A^3G_3 \\ \frac{1}{36}A^2B^2+\frac{1}{64}B^3C-\frac{1}{18}A G_0G_3-\frac{1}{12}B G_0G_4 \\ -\frac{2}{9}A^2 G_3-\frac{7}{24}AB G_4-\frac{1}{16}B^2 G_5 \\ \frac{1}{6} AB^2+\frac{1}{3} G_0G_3 \\ -\frac{1}{12}C(AG_2+2BG_3) \\ \frac{2}{3}G_3G_5-G_4^2 \\ AG_3+BG_4 \\ \frac{1}{3}AG_3+\frac{1}{2}BG_4 \\ -\frac{1}{2}B^2 \\ 2G_3 \end{pmatrix}$$
and similarly $$w''_1\wedge w''_2\wedge w''_3 = \begin{pmatrix} -\frac{1}{18}A^3G_2+\frac{1}{36}A^2CG_1+\frac{5}{288}ABCG_2-\frac{1}{96}BC^2G_1-\frac{1}{18}G_0G_3G_4 \\ -\frac{1}{9}AG_3G_4+\frac{1}{24}BG_3G_5-\frac{1}{8}BG_4^2
 \\ \frac{1}{3}A^2G_2+\frac{1}{12}ABG_3+\frac{1}{4}BCG_2-\frac{1}{12}C^2G_0 \\ -\frac{1}{24}ABG_3+\frac{1}{16}B^2G_4-\frac{1}{9}A^2G_2 \\
-\frac{1}{8}B^3+\frac{1}{3}G_0G_2 \\ \frac{2}{3}AG_2+\frac{1}{2}BG_3 \\ -\frac{1}{3}G_3G_4-\frac{1}{12}AC^2 \\ AG_2+BG_3 \\ \frac{1}{2}C^2 \\ -2G_2 \end{pmatrix}.$$
Finally, it is straightforward to show using the identification of $\Lambda^4(W)$ with $W^*$ that $$w''_1\wedge w''_2\wedge w''_3\wedge w''_4 = \begin{pmatrix} -C & G_1 & \frac{1}{6}AC & \frac{1}{3}CG_0-\frac{1}{2}AG_1 & \frac{1}{9}A^2C+\frac{1}{32}BC^2+\frac{1}{6}G_1G_5\end{pmatrix}.$$

What these computations amount to is the following:

\begin{theorem}
There is a morphism from the open subset of $Y$ given by $C\neq 0$ to $\0 \cap \SC'_{e+e_0}$, 
given by letting the matrix $g_{\mathcal B}$ act on $\tilde{M}_0$.
Moreover, this morphism extends to an isomorphism from $Y$ 
to $\SC'_{\0,e+e_0}$.
\end{theorem}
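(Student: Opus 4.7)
The plan is to show that the explicit formulas computed just above the theorem assemble into a morphism $\varphi:Y\to\SC'_{\0,e+e_0}$ defined on all of $Y$, and then to verify that $\varphi$ is an isomorphism. First, I would observe that the entries of $M$, as well as the coordinates of $w''_1$, $w''_1\wedge w''_2$, $w''_1\wedge w''_2\wedge w''_3$, and $w''_1\wedge w''_2\wedge w''_3\wedge w''_4$, are all genuine \emph{polynomials} in $A,B,C,G_0,\ldots,G_5$ (even though the individual vectors $w''_i$ for $i\geq 2$ only make sense when $C\neq 0$, since each is $-w'_i/C$). This is exactly what the preceding computation establishes. Consequently the assignment
\[
\varphi:(A,B,C,G_0,\ldots,G_5)\;\longmapsto\;\bigl(M,\,w''_1,\,w''_1\wedge w''_2,\,w''_1\wedge w''_2\wedge w''_3,\,w''_1\wedge w''_2\wedge w''_3\wedge w''_4\bigr)
\]
defines a morphism $\varphi:Y\to\mg=(e_0+\zg^{f_0})\oplus W\oplus\Lambda^2 W\oplus\Lambda^3 W\oplus\Lambda^4 W$, which by Lemma \ref{slicelemma} is the ambient affine space containing $\SC'_{\0,e+e_0}$.

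Next, I would show that $\im\varphi\subset\SC'_{\0,e+e_0}$. On the open locus $C\neq 0$ of $Y$, the matrix $g_{\BC}=(w''_1\;w''_2\;w''_3\;w''_4\;w''_5)$ has determinant $1$ (as the factor $-C^5$ is cancelled by $(-1/C)^5$), so $g_{\BC}\in Z^\circ=\SL_5(\CM)$, and the image is $g_{\BC}\cdot\tilde M_0\in Z^\circ\cdot\tilde M_0\subset\0\cap\SC'_{e+e_0}\subset\SC'_{\0,e+e_0}$. Since $\SC'_{\0,e+e_0}$ is Zariski-closed in $\mg$ and the $C\neq 0$ locus is dense in $Y$, continuity forces $\varphi(Y)\subset\SC'_{\0,e+e_0}$.

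To conclude that $\varphi$ is an isomorphism, I would inspect the coordinates of the image to show that the comorphism $\varphi^*:\CM[\SC'_{\0,e+e_0}]\to\CM[Y]$ is surjective. Looking at the last wedge,
\[
w''_1\wedge w''_2\wedge w''_3\wedge w''_4=\bigl(-C,\;G_1,\;\tfrac{1}{6}AC,\;\tfrac{1}{3}CG_0-\tfrac{1}{2}AG_1,\;\tfrac{1}{9}A^2C+\tfrac{1}{32}BC^2+\tfrac{1}{6}G_1G_5\bigr),
\]
we already recover $A$, $C$, $G_0$, $G_1$, and $G_5$ from pullbacks of coordinate functions on $\mg$; inspecting $w''_1$ and $w''_1\wedge w''_2$ recovers $B$, $G_2$, $G_3$, $G_4$ in the same way. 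Hence $\varphi^*$ is surjective, so $\varphi$ is a closed immersion. Because $\overline\0$ is unibranch at $e+e_0$ (by Proposition \ref{transitive_components} and the general discussion in \S\ref{Component group action}) and $\SC'_{\0,e+e_0}$ is a transverse slice of dimension equal to $\codim_{\overline\0}\0'=4=\dim Y$, the target is irreducible of the same dimension as $Y$, so $\varphi$ is an isomorphism onto $\SC'_{\0,e+e_0}$. (Reducedness of $\SC'_{\0,e+e_0}$ is automatic because $\widetilde\0$ has rational, hence reduced, singularities and $\SC'_{e+e_0}$ is transverse.)

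The main obstacle is really bookkeeping rather than conceptual: one must verify, honestly, that the explicit formulas displayed for $w''_1$ and the three higher wedges really are polynomial in $A,B,C,G_0,\ldots,G_5$ (so that $\varphi$ extends off of $\{C\neq 0\}$), and that enough of the generators of $\CM[Y]$ appear as pullbacks to force surjectivity of $\varphi^*$. Both of these are already done in the calculation preceding the theorem, so in practice the proof is a short argument built on top of those formulas; the delicate piece is the determinant computation $\det g_{\BC}=1$, which is what allows $g_{\BC}$ to act via $Z^\circ$ and thereby forces the image of the open subset $\{C\neq 0\}$ into $\0\cap\SC'_{e+e_0}$.
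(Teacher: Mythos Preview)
Your proposal is correct and follows essentially the same approach as the paper: extend the morphism from $\{C\neq 0\}$ to all of $Y$ using the fact that the displayed coordinates of $M$ and of the successive wedges lie in $\CM[Y]$, then observe that every generator $A,B,C,G_0,\ldots,G_5$ occurs (up to scalar) as one of these coordinates, so $\varphi$ is a closed immersion and hence an isomorphism by equality of dimensions together with irreducibility and reducedness of the target. A couple of minor inaccuracies: your attribution of $A$ and $G_5$ to the last wedge is off (they actually come from the entries $a=A/6$ and $h=G_5/3$ of $M$, while $B$ and $G_4$ come from $w''_1$, and $G_2,G_3$ from the higher wedges); and Proposition~\ref{transitive_components} by itself does not give unibranchness---you need that $\cl$ is unibranch at points of $\0'$, which here follows from the tables of branching data discussed in \S\ref{Component group action}.
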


\begin{proof}The first part follows from the above discussion, since $g_{\mathcal B}$ has coordinates in the localized ring ${\mathbb C}[Y]_C$.
But we can see by our calculations that in fact, the coordinates of $M$ and $w''_1,\ldots ,w''_1\wedge w''_2\wedge w''_3\wedge w''_4$ all lie in ${\mathbb C}[Y]$.
Thus we can extend the morphism to a morphism $\varphi$ from $Y$ to $\SC'_{\0,e+e_0}$.
Since each of the generators $A,B,C,G_0,\ldots ,G_5$ appears (up to multiplication by a scalar) as a coordinate of the map $\varphi$, it follows that $\varphi$ is a closed immersion, and hence by dimensions, irreducibility and reducedness, is an isomorphism onto $\SC'_{\0,e+e_0}$.
\end{proof}


\section{Graphs}  \label{sec:graphs}

Capital letters are used to denote simple singularities and lower-case letters to denote singularities of closures of minimal nilpotent orbits.
The notation $m$, $m'$, $\mu$, $\chi$, $a_2/\mathfrak{S}_2$ and $\tau$ are explained in \S \ref{unexpected_singularities}.
The intrinsic symmetry action induced from $A(e)$ is explained in \S \ref{section:split_and_intrinsic} and the notation is explained in \S \ref{full_intrinsic_action}.
We use $(Y)$ to denote a singularity with normalization $Y$.


  \begin{center}
\begin{tikzpicture}
  \matrix (G2all)
    [matrix of math nodes,%
     nodes in empty cells,
     nodes={outer sep=0pt,minimum size=0pt},
     column sep={1.7cm,between origins},
     row sep={.8cm,between origins}]
  {
& | (G2) | G_2  \\
& | (G2a1) | G_2(a_1) \\
&  | (tA1) |  \tilde{A}_1 \\
& | (A1) | A_1 \\
&  | (0) | 0 \\
  };

\begin{scope}[]
\draw (G2) -- (G2a1) node[midway, right] {$G_2$};
\draw (G2a1) -- (tA1) node[midway, right] {$A_1$};
\draw (tA1) -- (A1) node[midway, right]{$m$};   
\draw (A1) -- (0) node[midway,right] {$g_2$};
\end{scope}
\end{tikzpicture}
\end{center}

  \begin{center}
\begin{tikzpicture}

  \matrix (F4all)
    [matrix of math nodes,%
     nodes in empty cells,
     nodes={outer sep=0pt,minimum size=0pt},
     column sep={1.7cm,between origins},
     row sep={.7cm,between origins}]
  {
&&& | (F4) | F_4  \\
&&& | (F4a1) | F_4(a_1) \\
&&& | (F4a2) | F_4(a_2) \\
&& | (B3) | B_3 && | (C3) | C_3  \\
&&& | (F4a3) | F_4(a_3) \\
&&& | (C3a1) | C_3(a_1) \\
&&  | (tA2+A1) | \tilde{A}_2 \! +\!A_1  && | (B2) | B_2  \\
&&&&  | (A2+tA1) | A_2  \! +\! \tilde{A}_1 \\
&&  | (tA2) | \tilde{A}_2  && | (A2) | A_2  \\
&&&  | (A1+tA1) | A_1 \! +\! \tilde{A}_1 \\
&&&  | (tA1) |  \tilde{A_1} \\
&&& | (A1) | A_1 \\
&&&  | (0) | 0 \\
  };

 \begin{scope}[]

\draw (F4) -- (F4a1) node[midway, right] {$F_4$};
\draw (F4a1) -- (F4a2) node[midway, right] {$C_3$};
\draw (F4a2) -- (B3) node[midway, above]{$A_1$};   
\draw (F4a2) -- (C3) node[midway, above] {$A_1$}; \draw (C3) --
(F4a3) node[midway,  above=2.5pt, left=.3pt] {$4 G_2$}; \draw (B3)
-- (F4a3) node[midway, above=2pt, right] {$G_2$}; \draw (F4a3) --
(C3a1) node[midway, right] {$A_1$}; \draw (C3a1) -- (tA2+A1)
node[midway, above left] {$m$}; \draw (C3a1) -- (B2) node[midway,
above right] {$[2A_1]^+$};
\draw (B2) -- (A2+tA1) node[midway, right] {$A_1$};  

\draw (A2+tA1) -- (A2) node[midway, right] {$a^{+}_2$};

\draw (tA2+A1) -- (A2+tA1) node[midway, above=3pt, right] {$m$};
\draw (tA2+A1) -- (tA2) node[midway, left] {$g_2$};

\draw (tA2) -- (A1+tA1) node[midway, below] {$A_1$};    
\draw (A2) -- (A1+tA1) node[midway, below] {$A_1$};
\draw (A1+tA1) -- (tA1)  node[midway, right] {$a^{+}_3$};
\draw (tA1) -- (A1)  node[midway, right] {$c_3$};
\draw (A1) -- (0) node[midway,right] {$f_4$};

\end{scope}

\end{tikzpicture}
\end{center}

\begin{tikzpicture}
  \matrix (E6all)
    [matrix of math nodes,%
     nodes in empty cells,
     nodes={outer sep=0pt,minimum size=0pt},
     column sep={1.5cm,between origins},
     row sep={.68cm,between origins}]
  {
&&& | (E6) | E_6  \\
&&& | (E6a1) | E_6(a_1) \\
&&& | (D5) | D_5 \\
&&& | (E6a3) | E_6(a_3)  \\
&&  |(A5) | A_5 \\
&&& | (D5a1) | D_5(a_1) \\
&&  | (A4+A1) | A_4 \! +\!A_1 \\
&&&& | (D4) | D_4 \\
&& | (A4) | A_4 \\
&&& | (D4a1) | D_4(a_1)  \\
&&& | (A3+A1) | A_3+A_1 \\
&& | (A3) | A_3  && | (2A2+A1) | 2A_2+A_1 \\ \\
&& | (A2+2A1) | A_2+2A_1 && | (2A2) | 2A_2 \\
&&&  | (A2+A1) | A_2+A_1 \\
&&&   | (A2) | A_2 \\
&&&  | (3A1) | 3A_1 \\
&&& | (2A1) | 2A_1 \\
&&& | (A1) | A_1 \\
&&&| (0) | 0 \\
  };
  \begin{scope}[]

\draw (E6) -- (E6a1) node[midway, right] {$E_6$};
\draw (E6a1)--(D5) node[midway, right] {$A_5$};    
\draw (D5) --   (E6a3) node[midway,right] {$C_3$};   
\draw (E6a3) -- (A5) node[midway,above] {$A_1$};
\draw (E6a3)  --  (D5a1) node[midway,right] {$A_2$};   

\draw (A5) -- (A4+A1) node[midway,left] {$A_2$};
\draw (D5a1) -- (A4+A1) node[midway, above] {$A_2$};
\draw (D5a1) -- (D4) node[midway, right] {$a_2$};   

\draw (A4+A1) -- (A4) node[midway, left] {$A_1$}; \draw (D4) --
(D4a1) node[midway, above=2pt,left=1pt] {$G_2$}; \draw (A4) -- (D4a1)
node[near start, above=2pt, right=1pt] {$3C_2$}; \draw (D4a1)  -- (A3+A1)
node[midway, left] {$A_1$};

\draw (A3+A1)  -- (A3) node[midway,above left] {$b_2$};   
\draw (A3+A1)  -- (2A2+A1) node[midway,above right] {$m$};

\draw (A3) -- (A2+2A1) node[midway,left] {$A_1$};    
\draw (2A2+A1) -- (2A2) node[midway, right] {$g_2$};
\draw (2A2+A1) -- (A2+2A1) node[midway,above left ] {$\tau$};

\draw (2A2) -- (A2+A1) node[midway, below right] {$A_2$};     
\draw (A2+2A1) -- (A2+A1) node[midway,below left] {$a_2$};   
\draw (A2+A1) -- (A2) node[midway,right] {$[2a_2]^+$}; \draw (A2)
-- (3A1) node[midway, right] {$A_1$}; \draw (3A1) -- (2A1)
node[midway, right] {$b_3$}; \draw (2A1) -- (A1)
node[midway,right] {$a_5$}; \draw (A1) -- (0) node[midway,right]
{$e_6$};

\end{scope}
\end{tikzpicture}

\begin{tikzpicture}
  \matrix (E7all)
    [matrix of math nodes,%
     nodes in empty cells,
     nodes={outer sep=0pt,minimum size=0pt},
     column sep={1.5cm,between origins},
     row sep={.68cm,between origins}]
  {
&&&| (E7) | E_7 \\
&&& | (E7a1) | E_7(a_1)  \\
&&& | (E7a2) | E_7(a_2) \\
&& | (E6) | E_6 &&  | (E7a3) | E_7(a_3) \\
&&&| (E6a1) | E_6(a_1) & | (D6) | D_6 \\
&&& | (E7a4) | E_7(a_4)  \\
& | (A6) | A_6  &&  | (D5+A1) | D_5+A_1  && | (D6a1) | D_6(a_1)\\
& | (E7a5) | E_7(a_5) &&&& | (D5) | D_5 \\
& | (D6a2) | D_6(a_2) &&&& | (E6a3) | E_6(a_3)  \\
& | (A5+A1) | A_5 \! + \! A_1 && | (A5') | A_5' &&  | (D5a1+A1) | D_5(a_1)\! +\!A_1  \\
\\
& | (A5'') | A^{''}_5 && | (D5a1) | D_5(a_1) && | (A4+A2) | A_4 \! +\!A_2 \\
&&& | (D4+A1) | D_4 \! +\!A_1  &&  | (A4+A1) | A_4 \! +\!A_1 \\
& | (D4) | D_4 && | (A4) | A_4 && | (A3+A2+A1) | A_3 \! +\!A_2 \! +\!A_1 \\
&&&&& | (A3+A2) | A_3 \! +\!A_2 \\
&&&&& | (D4a1+A1) | D_4(a_1) \! +\!A_1 \\
& | (D4a1) | D_4(a_1) &&&&  | (A3+2A1) | A_3 \! +\!2A_1 \\
&&& | (A3+A1') | (A_3+A_1)'  && | (A3+A1'') | (A_3+A_1)''  \\
&& | (2A2+A1) | 2A_2+A_1 \\
& | (A2+3A1) | A_2+3A_1 && | (2A2) | 2A_2  && | (A3) | A_3\\
&&& | (A2+2A1) | A_2+2A_1 \\
&&& | (A2+A1) | A_2+A_1 \\
&& | (4A1) | 4A_1 \\
& | (3A1'') | (3A_1)'' &&&& | (A2) | A_2 \\
&&&& | (3A1') | (3A_1)' \\
&&& | (2A1) | 2A_1 \\
&&& | (A1) | A_1 \\
&&&| (0) | 0 \\
  };
  \begin{scope}[]

\draw (E7) -- (E7a1) node[midway, right] {$E_7$};
\draw (E7a1) -- (E7a2) node[midway,right] {$D_6$};
\draw (E7a2) -- (E6) node[midway,above] {$A_1$};
\draw (E7a2) -- (E7a3) node[midway, above] {$C_4$};
\draw (E6) -- (E6a1) node[midway, below] {$F_4$};  
\draw (E7a3) -- (D6) node[midway,right] {$A_1$}; \draw (E7a3) --(E6a1) node[midway,above] {$B_3$}; 
\draw (D6)--(E7a4) node[midway,above] {$C_3$}; \draw (E6a1)--(E7a4)
node[midway,right] {$C_3$}; \draw (E7a4) --   (A6)
node[midway,above] {$A_1$}; \draw (E7a4) --   (D5+A1)
node[midway,right] {$A_1$}; \draw (E7a4) --   (D6a1)
node[midway,above right]{$A_1$}; \draw (D6a1) --   (D5)
node[midway, right] {$A_1$}; \draw (D6a1) --   (E7a5) node[midway,below]
{$3 C_3$}; \draw (A6) --   (E7a5) node[midway,left] {$G_2$};
\draw (D5+A1) --  (D5) node[midway,below right] {$A_1$}; \draw
(D5+A1) -- (E7a5)  node[midway,above] {$G_2$};

\draw (D5) --   (E6a3) node[midway,right] {$C_3$};
\draw (E7a5) --  (E6a3) node[midway,below] {$A_1$};
\draw (E7a5) --  (D6a2) node[midway,left] {$A_1$};
\draw (D6a2) --  (A5+A1) node[midway, left] {$m$};
\draw (D6a2) --  (A5') node[midway, below] {$A_1$};
\draw (D6a2) --  (D5a1+A1) node[near start, above=-1pt] {$A_1$};

\draw (E6a3) -- (A5') node[midway,above=-1pt] {$A_1$};
\draw (E6a3)  --  (D5a1+A1) node[midway,right] {$A_1$};

\draw (A5+A1) -- (A5'') node[midway,left] {$g_2$};
\draw (A5+A1) -- (A4+A2)  node[near start,below] {$A_1$};

\draw (A5') -- (A4+A2) node[near start,above=4pt, right=0pt] {$A_1$};
\draw (A5'') -- (A4) node[midway,above] {$B_3$};

\draw (D5a1+A1) -- (A4+A2) node[midway,right] {$A_1$}; \draw
(D5a1+A1) -- (D5a1) node[near start,below=2pt,right=2pt] {$A_1$}; \draw (D5a1) --
(D4+A1) node[midway, left] {$b_2$}; \draw (D5a1) -- (A4+A1)
node[midway,above] {$A^{+}_2$}; \draw (A4+A2) -- (A4+A1)
node[midway,right] {$A^{+}_2$};

\draw (D4+A1) -- (D4) node[midway, below] {$c_3$};
\draw (D4+A1) -- (A3+A2+A1) node[near start,above] {$A_1$};
\draw (A4+A1) -- (A3+A2+A1) node[midway,right] {$a_2/\mathfrak{S}_2$};
\draw (A4+A1) -- (A4) node[midway,below=7pt,left=-3pt] {$a^{+}_2$};

\draw (A3+A2+A1)  -- (A3+A2) node[midway, right] {$A_1$}; \draw
(A4) -- (A3+A2) node[midway,below] {$C_2$}; \draw (A3+A2)  --
(D4a1+A1) node[midway,right] {$[2A_1]^+$};

\draw (D4) -- (D4a1) node[midway,left] {$G_2$};  

\draw (D4a1+A1) -- (D4a1) node[midway,above] {$[3A_1]^{++}$};
\draw (D4a1+A1) -- (A3+2A1) node[midway, right] {$A_1$};

\draw (D4a1)  -- (A3+A1') node[midway, below left] {$A_1$};
\draw (A3+2A1)  -- (A3+A1') node[midway,above] {$A_1$};
\draw (A3+2A1)  -- (A3+A1'') node[midway,right] {$b_3$};

\draw (A3+A1')  -- (A3) node[midway,below right] {$b_3$};
\draw (A3+A1')  -- (2A2+A1) node[midway,above=2pt, left=1pt] {$m$};

\draw (A3+A1'')  -- (A3) node[midway,right] {$A_1$};
\draw (A3+A1'')  -- (2A2) node[near end,above] {$A_1$};

\draw (2A2+A1) -- (2A2) node[midway,above=2pt, right=1pt] {$g_2$};
\draw (2A2+A1) -- (A2+3A1) node[midway,above=2pt, left=1pt ] {$g_2$};

\draw (2A2) -- (A2+2A1) node[midway, right] {$A_1$};
\draw (A3) -- (A2+2A1) node[midway,below] {$A_1$};
\draw (A2+3A1) -- (A2+2A1) node[midway,below] {$A_1$};

\draw (A2+2A1) -- (A2+A1) node[midway,right] {$a^{+}_3$};
\draw (A2+A1) -- (A2) node[midway,above] {$a^{+}_5$};
\draw (A2+A1) -- (4A1) node[midway,above] {$c_3$};
\draw (4A1) -- (3A1') node[midway,above] {$c_3$};
\draw (4A1) -- (3A1'') node[midway,above] {$f_4$};
\draw (A2) -- (3A1') node[midway,below] {$A_1$};
\draw (3A1') -- (2A1) node[midway,below] {$b_4$};
\draw (3A1'') -- (2A1) node[midway, above] {$A_1$};
\draw (2A1) -- (A1) node[midway,right] {$d_6$};
\draw (A1) -- (0) node[midway,right] {$e_7$};

\end{scope}
\end{tikzpicture}

\begin{tikzpicture}
\tiny
  \matrix (galois)
    [matrix of math nodes,%
     nodes in empty cells,
     nodes={outer sep=0pt,minimum size=0pt},
     column sep={1.5cm,between origins},
     row sep={.56cm,between origins}]
  {
&&&&   | (E8) | E_8 \\
&&&&  | (E8a1) | E_8(a_1) & & &\\
&&&& | (E8a2) | E_8(a_2) \\
&&&& |  (E8a3) | E_8(a_3) \\
&&&| (E7) | E_7 & | (E8a4) | E_8(a_4) \\
&&&& | (E8b4) | E_8(b_4) \\
&&& | (E7a1) | E_7(a_1) && | (E8a5) | E_8(a_5) \\
&&&&| (E8b5) | E_8(b_5) & | (D7) | D_7 \\
&&& | (E7a2) | E_7(a_2) & | (E8a6) | E_8(a_6) \\
&&| (E6+A1) | E_6 \! + \! A_1 &&| (D7a1) | D_7(a_1) \\
& | (E6) | E_6 &&  | (E7a3) | E_7(a_3) && | (E8b6) | E_8(b_6) \\
&&&| (D6) | D_6 & | (E6a1+A1) | E_6(a_1)\!+\! A_1 && | (A7) | A_7 \\
&&&&& | (D7a2) | D_7(a_2) \\
&&& | (E6a1) | E_6(a_1) && | (D5+A2) | D_5+A_2 \\
&&&& | (E7a4) | E_7(a_4) && | (A6+A1) | A_6+A_1 \\
&&& | (D6a1) | D_6(a_1) &&| (A6) | A_6 \\
&& | (D5+A1) | D_5+A_1 && | (E8a7) | E_8(a_7) \\
&| (D5) | D_5 && | (E7a5) | E_7(a_5) \\
&&& | (E6a3+A1) | E_6(a_3) \! +\! A_1 &&& | (D6a2) | D_6(a_2) \\
&& | (E6a3) | E_6(a_3) &&| (A5+A1) | A_5 \! + \! A_1 && | (D5a1+A2) | D_5(a_1) \! + \! A_2 \\
&&&& | (A5) | A_5 && | (A4+A3) | A_4 \! + \! A_3 && | (D4+A2) | D_4 \! + \! A_2 \\
&&&&| (D5a1+A1) | D_5(a_1)\! +\!A_1  &&&& | (A4+A2+A1) | A_4\! +\!A_2\! +\!A_1 \\
&&& | (D5a1) | D_5(a_1) &&& | (A4+A2) | A_4 \! +\!A_2 \\
&& | (D4+A1) | D_4 \! +\!A_1 &&& | (A4+2A1) | A_4 \! +\!2A_1 \\
& | (D4) | D_4 &&& | (A4+A1) | A_4 \! +\!A_1 && | (2A3) | 2A_3 \\
&&& && | (D4a1+A2) | D_4(a_1) \! +\! A_2 \\
&&& | (A4) | A_4 && | (A3+A2+A1) | A_3 \! +\!A_2 \! +\!A_1 \\
&&&& | (A3+A2) | A_3 \! +\!A_2 \\
&&&& | (D4a1+A1) | D_4(a_1) \! +\!A_1 \\
&&&& | (D4a1) | D_4(a_1) & | (A3+2A1) | A_3 \! +\!2A_1 \\
&&&&& | (A3+A1) | A_3+A_1 & | (2A2+2A1) | 2A_2+2A_1 \\
&&| (A3) | A_3 &&& | (2A2+A1) | 2A_2+A_1 \\
&&&&& | (2A2) | 2A_2 \\
&&&&| (A2+3A1) | A_2+3A_1 \\
&&& | (A2+2A1) | A_2+2A_1 \\
&&& | (A2+A1) | A_2+A_1 \\
&&| (4A1) | 4A_1 & | (A2) | A_2 \\
&&& | (3A1) | 3A_1 \\
&&& | (2A1) | 2A_1 \\
&&& | (A1) | A_1 \\
&&&| (0) | 0 \\
  };
  \begin{scope}[]
\draw (E8) -- (E8a1) node[midway,right] {$E_8$};
\draw (E8a1) -- (E8a2) node[midway,right] {$E_7$};
\draw (E8a2) -- (E8a3) node[midway,right] {$C_6$};
\draw (E8a3) -- (E8a4) node[midway,right] {$F_4$};
\draw (E8a3) -- (E7) node[midway,above] {$A_1$};   
\draw (E7) -- (E8b4) node[midway,above=4pt,right=-7pt] {$F_4$}; \draw (E8a4) --
(E8b4) node[midway,right] {$C_4$};
\draw (E8b4) -- (E7a1) node[midway, above] {$A_1$};   
\draw (E8b4) -- (E8a5) node[near end,above=-1pt] {$C_3$}; \draw
(E7a1) -- (E8b5) node[midway,right=5pt, above=-2pt] {$3 (C_5)$}; \draw
(E8a5) -- (E8b5) node[near end,above=-1pt] {$G_2$};
\draw (E8a5) -- (D7) node[midway,right=6pt, below=-5pt] {$A_1$};   
\draw (E8b5) -- (E7a2) node[midway, above=2pt,left=-3pt] {$A_1$};    
\draw (E8b5) -- (E8a6) node[midway,right=-3.5pt] {$(G_2)$}; \draw (D7) --
(E8a6) node[midway, right=3pt, below=-1.5pt] {$(G_2)$};

\draw (E7a2) -- (E6+A1) node[midway,above] {$m$};   
\draw (E7a2) -- (D7a1) node[near start,above=5pt,right=-8pt] {$(B_3)$}; \draw (E8a6)
-- (D7a1) node[midway,right=-1pt] {$C_2$};

\draw (E6+A1) -- (E6) node[midway, above] {$g_2$};
\draw (E6+A1) -- (E8b6) node[near start,right, above=-1pt] {$(G_2)$};

\draw (D7a1) -- (E7a3) node[midway,above=5pt,left=-10pt] {$A_1$};    
\draw (D7a1) -- (E8b6) node[midway,above=-1pt] {$\mu$};

\draw (E6) -- (E6a1) node[midway,right] {$F_4$};  
\draw (E7a3) -- (D6) node[midway,left] {$b_2$};
\draw (E7a3) -- (E6a1+A1) node[midway,above=5pt, right=-4pt] {$(A^{+}_4)$};  
\draw (E8b6) -- (E6a1+A1) node[midway, right=8pt,  below=-5.5pt] {$A^{+}_2$};   
\draw (E8b6) -- (A7) node[midway,right] {$A_1$};   

\draw (D6)--(D5+A2) node[midway,right] {$(C_2)$};
\draw (E6a1+A1)--(E6a1) node[near end,above=1pt] {$a^{+}_2$};  
\draw (E6a1+A1)--(D7a2) node[midway, right=7pt, above=-4pt]
{$(A_2^+)$}; \draw (A7) -- (D7a2) node[midway, right=4pt, below=-2pt]
{$(A_2^+)$};
\draw (D7a2) --  (D5+A2) node[midway,right] {$(C_2)$};  

\draw (E6a1)--(E7a4) node[midway,above] {$C_3$}; 
\draw (D5+A2)--(E7a4) node[midway, above] {$A_1$};    
\draw (D5+A2)--(A6+A1) node[midway,above=-1pt, right] {$A_1$};    

\draw (E7a4) --   (D6a1)   node[midway,below=2pt, right] {$[2A_1]^+$};   
\draw (E7a4) --   (A6)  node[midway,right] {$A_1$};  
\draw (A6+A1) --  (A6) node[midway,below] {$A_1$};  
\draw (D6a1) --   (D5+A1) node[midway,below right] {$A_1$};   
\draw (D6a1) --   (E8a7) node[midway,right] {$10G_2$}; \draw
(A6) --   (E8a7) node[midway,below] {$5G_2$};
\draw (D5+A1) --  (D5) node[midway,below right] {$b_3$};   
\draw (D5+A1) -- (E7a5)  node[midway,right] {$G_2$};
\draw (E8a7) --  (E7a5) node[midway, above left] {$A_1$};    

\draw (E7a5) --  (E6a3+A1) node[midway,right] {$m$};     
\draw (E7a5) --  (D6a2) node[midway,above] {$[2A_1]^+$};   
\draw (D6a2) --  (A5+A1) node[near start, above] {$m$};
\draw (D6a2) --  (D5a1+A2) node[midway,right] {$A_1$};  

\draw (D5) --   (E6a3) node[midway,right] {$C_3$};

\draw (E6a3+A1) --  (E6a3) node[midway, above left] {$g_2$};   
\draw (E6a3+A1) -- (A5+A1) node[midway,right] {$A_1$};   
\draw (E6a3+A1) -- (D5a1+A2) node[near start,above=-1pt] {$m$};   

\draw (E6a3) -- (A5) node[midway,right] {$A_1$};   
\draw (E6a3)  --  (D5a1+A1) node[midway,right] {$A_1$};

\draw (A5+A1) -- (A5) node[midway,right] {$g_2$};
\draw (A5+A1) -- (A4+A3)  node[midway, above=-1pt] {$m$};  
\draw (D5a1+A2) -- (A4+A3) node[midway,right] {$m$}; \draw
(D5a1+A2) -- (D4+A2) node[midway,above] {$a^{+}_2$};

\draw (A4+A3) -- (A4+A2+A1)node[midway,below] {$\chi$};
\draw (D4+A2) -- (A4+A2+A1)node[midway,right] {$A_1$};     
\draw (D4+A2) -- (D5a1+A1) node[midway,below right] {$A_1$};   

\draw (A5) -- (A4+A2) node[near start,above=-2pt] {$A_1$};
\draw (A4+A2+A1) -- (A4+A2) node[midway,below right] {$A_1$};   
\draw (D5a1+A1) -- (A4+A2) node[midway,below right] {$A_1$};   
\draw (D5a1+A1) -- (D5a1) node[midway,above left] {$a^{+}_3$};
\draw (D5a1) -- (D4+A1) node[midway,above] {$c_3$}; \draw (D5a1)
-- (A4+A1) node[midway,right] { $A_2^+$};
\draw (A4+A2) -- (A4+2A1) node[midway,right] {$A_1$};  

\draw (A4+2A1) -- (2A3) node[midway,above right] {$b_2$};
\draw (A4+2A1) -- (A4+A1) node[midway,above left] {$a^{+}_2$};

\draw (D4+A1) -- (D4) node[midway,above] {$f_4$};
\draw (D4+A1) -- (A3+A2+A1) node[near start,above] {$A_1$};
\draw (A4+A1) -- (D4a1+A2) node[midway,above right] {$a^{+}_2$};
\draw (A4+A1) -- (A4) node[near end, below, right=2pt] {$a^{+}_4$};
\draw (2A3) -- (D4a1+A2) node[midway,right] {$a^{+}_2$};
\draw (D4a1+A2) -- (A3+A2+A1) node[midway,right] {$A_1$};
\draw (A3+A2+A1)  -- (A3+A2) node[midway, below right] {$b_2$};

\draw (A4) -- (A3+A2) node[midway,below] {$C_2$};   

\draw (A3+A2)  -- (D4a1+A1) node[midway,right] {$[3A_1]^{++}$};
\draw (D4) -- (D4a1) node[midway,left] {$G_2$};  

\draw (D4a1+A1) -- (D4a1) node[midway,right] {$d^{++}_4$};
\draw (D4a1+A1) -- (A3+2A1) node[midway, above right] {$b_2$};

\draw (D4a1)  -- (A3+A1) node[midway, below left] {$A_1$};
\draw (A3+2A1)  -- (A3+A1) node[midway,right] {$b_3$};
\draw (A3+2A1)  -- (2A2+2A1) node[midway,above right] {$m'$};

\draw (A3+A1)  -- (A3) node[midway,above] {$b_5$};
\draw (A3+A1)  -- (2A2+A1) node[midway,right] {$m$}; 

\draw (2A2+2A1) -- (2A2+A1) node[midway, below right] {$g_2$};
\draw (2A2+A1) -- (2A2) node[midway,right] {$[2g_2]^+$};

\draw (2A2) -- (A2+3A1) node[midway,above left] {$A_1$};
\draw (A3) -- (A2+2A1) node[midway,above left] {$A_1$};  
\draw (A2+3A1) -- (A2+2A1) node[midway,above left] {$b_3$};
\draw (A2+2A1) -- (A2+A1) node[midway,right] {$a^{+}_5$};
\draw (A2+A1) -- (4A1) node[midway,above] {$c_4$};
\draw (A2+A1) -- (A2) node[midway,right] {$e^{+}_6$};
\draw (A2) -- (3A1) node[midway,right] {$A_1$};
\draw (4A1) -- (3A1) node[midway,below] {$f_4$};
\draw (3A1) -- (2A1) node[midway,right] {$b_6$};
\draw (2A1) -- (A1) node[midway,right] {$e_7$};
\draw (A1) -- (0) node[midway,right] {$e_8$};

\end{scope}
\end{tikzpicture}

\bibliographystyle{myalpha}
\bibliography{fjls}

\def\cprime{$'$} \def\cprime{$'$} \def\cprime{$'$}
\begin{thebibliography}{GAP15}

\bibitem[Ale79]{alexeevsky}
A.~V. Alekseevski{\u\i}.
\newblock Component groups of centralizers of unipotent elements in semisimple
  algebraic groups.
\newblock {\em Akad. Nauk Gruzin. SSR Trudy Tbiliss. Mat. Inst. Razmadze},
  62:5--27, 1979.
\newblock Collection of articles on algebra, 2.

\bibitem[AS09]{Achar-Sage}
P.~N. Achar and D.~S. Sage.
\newblock Perverse coherent sheaves and the geometry of special pieces in the
  unipotent variety.
\newblock {\em Adv. Math.}, 220(4):1265--1296, 2009.

\bibitem[AW14]{Andreatta-Wisniewski}
M.~Andreatta and J.~A. Wi{\'s}niewski.
\newblock 4-dimensional symplectic contractions.
\newblock {\em Geom. Dedicata}, 168:311--337, 2014.

\bibitem[Bea00]{Beauville}
A.~Beauville.
\newblock Symplectic singularities.
\newblock {\em Invent. Math.}, 139(3):541--549, 2000.

\bibitem[Bel14]{bellamy}
G.~Bellamy.
\newblock Counting resolutions of symplectic quotient singularities.
\newblock Preprint \href{http://arxiv.org/abs/1405.6925}{1405.6925}, 2014.

\bibitem[BM81]{Borho-MacPherson:homologie}
W.~Borho and R.~MacPherson.
\newblock Repr\'esentations des groupes de {W}eyl et homologie d'intersection
  pour les vari\'et\'es nilpotentes.
\newblock {\em C. R. Acad. Sci. Paris S\'er. I Math.}, 292(15):707--710, 1981.

\bibitem[BM83]{Borho-MacPherson}
W.~Borho and R.~MacPherson.
\newblock Partial resolutions of nilpotent varieties.
\newblock In {\em Analysis and topology on singular spaces, {II}, {III}
  ({L}uminy, 1981)}, volume 101 of {\em Ast\'erisque}, pages 23--74. Soc. Math.
  France, Paris, 1983.

\bibitem[Bri71]{Brieskorn}
E.~Brieskorn.
\newblock Singular elements of semi-simple algebraic groups.
\newblock In {\em Actes du {C}ongr\`es {I}nternational des {M}ath\'ematiciens
  ({N}ice, 1970), {T}ome 2}, pages 279--284. Gauthier-Villars, Paris, 1971.

\bibitem[Bro98a]{Broer:Decomposition}
A.~Broer.
\newblock Decomposition varieties in semisimple {L}ie algebras.
\newblock {\em Canad. J. Math.}, 50(5):929--971, 1998.

\bibitem[Bro98b]{Broer}
A.~Broer.
\newblock Normal nilpotent varieties in {$F_4$}.
\newblock {\em J. Algebra}, 207(2):427--448, 1998.

\bibitem[BS84]{Beynon-Spaltenstein}
W.~M. Beynon and N.~Spaltenstein.
\newblock Green functions of finite {C}hevalley groups of type {$E_{n}$}
  {$(n=6,\,7,\,8)$}.
\newblock {\em J. Algebra}, 88(2):584--614, 1984.

\bibitem[Car93]{Carter}
R.~W. Carter.
\newblock {\em Finite groups of {L}ie type}.
\newblock Wiley Classics Library. John Wiley \& Sons Ltd., Chichester, 1993.
\newblock Conjugacy classes and complex characters, Reprint of the 1985
  original, A Wiley-Interscience Publication.

\bibitem[Cat87]{Catanese}
F.~Catanese.
\newblock Automorphisms of rational double points and moduli spaces of surfaces
  of general type.
\newblock {\em Compositio Math.}, 61(1):81--102, 1987.

\bibitem[CM93]{C-M}
D.~H. Collingwood and W.~M. McGovern.
\newblock {\em Nilpotent Orbits in Semisimple Lie Algebras}.
\newblock Van Nostrand Reinhold Co, New York, 1993.

\bibitem[dG08]{deGraaf}
W.~A. de~Graaf.
\newblock Computing with nilpotent orbits in simple {L}ie algebras of
  exceptional type.
\newblock {\em LMS J. Comput. Math.}, 11:280--297, 2008.

\bibitem[Dyn52]{Dynkin}
E.~B. Dynkin.
\newblock Semisimple subalgebras of semisimple {L}ie algebras.
\newblock {\em Mat. Sbornik N.S.}, 30(72):349--462 (3 plates), 1952.

\bibitem[FJLSa]{FJLS:Duality}
B.~Fu, D.~Juteau, P.~Levy, and E.~Sommers.
\newblock Duality for generic singularities of special slices in simple lie
  algebras.
\newblock In preparation.

\bibitem[FJLSb]{Fu-Juteau-Levy-Sommers:GeomSP}
B.~Fu, D.~Juteau, P.~Levy, and E.~Sommers.
\newblock Geometry of special pieces in nilpotent orbits.
\newblock In preparation.

\bibitem[Fu03]{Fu:sympl}
B.~Fu.
\newblock Symplectic resolutions for nilpotent orbits.
\newblock {\em Invent. Math.}, 151(1):167--186, 2003.

\bibitem[Fu10]{Fu:Qfact}
B.~Fu.
\newblock On {$\Bbb Q$}-factorial terminalizations of nilpotent orbits.
\newblock {\em J. Math. Pures Appl. (9)}, 93(6):623--635, 2010.

\bibitem[FZ03]{Flenner-Zaidenberg}
H.~Flenner and M.~Zaidenberg.
\newblock Rational curves and rational singularities.
\newblock {\em Math. Z.}, 244(3):549--575, 2003.

\bibitem[GAP15]{GAP4}
The GAP~Group.
\newblock {\em {GAP -- Groups, Algorithms, and Programming, Version 4.7.8}},
  2015.

\bibitem[GG02]{Gan-Ginzburg}
W.~L. Gan and V.~Ginzburg.
\newblock Quantization of {S}lodowy slices.
\newblock {\em Int. Math. Res. Not.}, 2002(5):243--255, 2002.

\bibitem[Gra92]{Graham}
W.~A. Graham.
\newblock Functions on the universal cover of the principal nilpotent orbit.
\newblock {\em Invent. Math.}, 108(1):15--27, 1992.

\bibitem[Hen14]{Anthony_from_down_under:survey}
A.~Henderson.
\newblock Singularities of nilpotent orbit closures.
\newblock Preprint \href{http://arxiv.org/abs/1408.3888}{1408.3888}, to appear
  in the Proceedings of the 5th Japanese-Australian Workshop on Real and
  Complex Singularities, 2014.

\bibitem[Hes76]{Hesselink}
W.~Hesselink.
\newblock Singularities in the nilpotent scheme of a classical group.
\newblock {\em Trans. Amer. Math. Soc.}, 222:1--32, 1976.

\bibitem[Hin91]{Hinich}
V.~Hinich.
\newblock On the singularities of nilpotent orbits.
\newblock {\em Israel J. Math.}, 73(3):297--308, 1991.

\bibitem[Jam81]{James}
G.~D. James.
\newblock On the decomposition matrices of the symmetric groups {III}.
\newblock {\em Journal of Algebra}, 71:115--122, 1981.

\bibitem[Jos84]{Joseph}
A.~Joseph.
\newblock On the variety of a highest weight module.
\newblock {\em J. Algebra}, 88(1):238--278, 1984.

\bibitem[Jut07]{juteau}
D.~Juteau.
\newblock {\em Modular Springer correspondence and decomposition numbers}.
\newblock PhD thesis, Universit\'e Paris 7 -- Denis-Diderot, 2007.

\bibitem[Kal06]{Kaledin}
D.~Kaledin.
\newblock Symplectic singularities from the {P}oisson point of view.
\newblock {\em J. Reine Angew. Math.}, 600:135--156, 2006.

\bibitem[KP81]{Kraft-Procesi:GLn}
H.~Kraft and C.~Procesi.
\newblock Minimal singularities in {${\rm GL}_{n}$}.
\newblock {\em Invent. Math.}, 62(3):503--515, 1981.

\bibitem[KP82]{Kraft-Procesi:classical}
H.~Kraft and C.~Procesi.
\newblock On the geometry of conjugacy classes in classical groups.
\newblock {\em Comment. Math. Helv.}, 57(4):539--602, 1982.

\bibitem[KP89]{Kraft-Procesi:special}
H.~Kraft and C.~Procesi.
\newblock A special decomposition of the nilpotent cone of a classical lie
  algebra.
\newblock {\em Ast\'erisque}, 173-174:271--279, 1989.

\bibitem[Kra89]{Kraft}
H.~Kraft.
\newblock Closures of conjugacy classes in {$G_2$}.
\newblock {\em J. Algebra}, 126(2):454--465, 1989.

\bibitem[LeB95]{LeBrun}
C.~LeBrun.
\newblock Fano manifolds, contact structures, and quaternionic geometry.
\newblock {\em Internat. J. Math.}, 6(3):419--437, 1995.

\bibitem[Los10]{Losev}
I.~Losev.
\newblock Finite {W}-algebras.
\newblock In {\em Proceedings of the {I}nternational {C}ongress of
  {M}athematicians. {V}olume {III}}, pages 1281--1307. Hindustan Book Agency,
  New Delhi, 2010.

\bibitem[LS79]{Lusztig-Spaltenstein}
G.~Lusztig and N.~Spaltenstein.
\newblock Induced unipotent classes.
\newblock {\em J. London Math. Soc. (2)}, 19(1):41--52, 1979.

\bibitem[LS88]{Levasseur-Smith}
T.~Levasseur and S.~P. Smith.
\newblock Primitive ideals and nilpotent orbits in type {$G_2$}.
\newblock {\em J. Algebra}, 114(1):81--105, 1988.

\bibitem[LT11]{Lawther-Testerman}
R.~Lawther and D.~M. Testerman.
\newblock Centres of centralizers of unipotent elements in simple algebraic
  groups.
\newblock {\em Mem. Amer. Math. Soc.}, 210(988):188, 2011.

\bibitem[Lus81]{Lusztig:Green_sing}
G.~Lusztig.
\newblock Green polynomials and singularities of unipotent classes.
\newblock {\em Adv. in Math.}, 42(2):169--178, 1981.

\bibitem[Lus90]{Lusztig:Green_character}
G.~Lusztig.
\newblock Green functions and character sheaves.
\newblock {\em Ann. of Math. (2)}, 131(2):355--408, 1990.

\bibitem[Lus97]{Lusztig:unipotent}
G.~Lusztig.
\newblock Notes on unipotent classes.
\newblock {\em Asian J. Math.}, 1(1):194--207, 1997.

\bibitem[Maf05]{maffei:quiver}
A.~Maffei.
\newblock Quiver varieties of type {A}.
\newblock {\em Comment. Math. Helv.}, 80(1):1--27, 2005.

\bibitem[Mat02]{Matsuki}
K.~Matsuki.
\newblock {\em Introduction to the {M}ori program}.
\newblock Universitext. Springer-Verlag, New York, 2002.

\bibitem[Nam]{Nam3}
Y.~Namikawa.
\newblock Birational geometry of symplectic resolutions of nilpotent orbits 2
  (version 1).
\newblock arXiv:math/0408274.

\bibitem[Nam01]{Namikawa}
Y.~Namikawa.
\newblock Extension of 2-forms and symplectic varieties.
\newblock {\em J. Reine Angew. Math.}, 539(2):123--137, 2001.

\bibitem[OV90]{Onishchik-Vinberg:book}
A.~L. Onishchik and {\`E}.~B. Vinberg.
\newblock {\em Lie groups and algebraic groups}.
\newblock Springer Series in Soviet Mathematics. Springer-Verlag, Berlin, 1990.
\newblock Translated from the Russian and with a preface by D. A. Leites.

\bibitem[Pan91]{Panyushev}
D.~I. Panyushev.
\newblock Rationality of singularities and the {G}orenstein property of
  nilpotent orbits.
\newblock {\em Funktsional. Anal. i Prilozhen.}, 25(3):76--78, 1991.

\bibitem[Sho80]{Sho}
T.~Shoji.
\newblock On the {S}pringer representations of {C}hevalley groups of type
  {$F_{4}$}.
\newblock {\em Comm. Algebra}, 8(5):409--440, 1980.

\bibitem[Slo80]{Slodowy:book}
P.~Slodowy.
\newblock {\em Simple singularities and simple algebraic groups}, volume 815 of
  {\em Lecture Notes in Mathematics}.
\newblock Springer, Berlin, 1980.

\bibitem[Som98]{Sommers:Bala-Carter}
E.~Sommers.
\newblock A generalization of the {B}ala-{C}arter theorem for nilpotent orbits.
\newblock {\em Internat. Math. Res. Notices}, 1998(11):539--562, 1998.

\bibitem[Som03]{Sommers:E6}
E.~Sommers.
\newblock Normality of nilpotent varieties in {$E_6$}.
\newblock {\em J. Algebra}, 270(1):288--306, 2003.

\bibitem[Som05]{Sommers:D}
E.~Sommers.
\newblock Normality of very even nilpotent varieties in {$D_{2l}$}.
\newblock {\em Bull. London Math. Soc.}, 37(3):351--360, 2005.

\bibitem[Spa82]{Spaltenstein}
N.~Spaltenstein.
\newblock {\em Classes unipotentes et sous-groupes de {B}orel}, volume 946 of
  {\em Lecture Notes in Mathematics}.
\newblock Springer-Verlag, Berlin, 1982.

\bibitem[Spr76]{Springer:trig}
T.~A. Springer.
\newblock Trigonometric sums, {G}reen functions of finite groups and
  representations of {W}eyl groups.
\newblock {\em Invent. Math.}, 36:173--207, 1976.

\bibitem[VP72]{PV:S_variety}
{\`E}.~B. Vinberg and V.~L. Popov.
\newblock A certain class of quasihomogeneous affine varieties.
\newblock {\em Izv. Akad. Nauk SSSR Ser. Mat.}, 36:749--764, 1972.

\bibitem[Wie03]{Wierzba}
J.~Wierzba.
\newblock Contractions of symplectic varieties.
\newblock {\em J. Algebraic Geom.}, 12(3):507--534, 2003.

\end{thebibliography}

%
%
%
%
%
%
%
%
%
%
%
%
%

\end{document}